\documentclass[a4paper,
              twoside,
              ]{amsart}
\usepackage{amssymb} 
\usepackage{MnSymbol} 
\usepackage{mathtools} 
\usepackage[all]{xy} 
\usepackage{color} 
\usepackage{ifdraft} 
\usepackage{lastpage} 
\usepackage[pdfstartview=FitH]{hyperref} 

\vfuzz2pt 
\hfuzz2pt 


\DeclareMathAlphabet{\mathpzc}{OT1}{pzc}{m}{it}

\newtheorem{thm}{Theorem}[section]

\newtheorem{cor}[thm]{Corollary}
\newtheorem{lem}[thm]{Lemma}

\newtheorem{prop}[thm]{Proposition}

\theoremstyle{definition}
\newtheorem{defn}[thm]{Definition}

\theoremstyle{remark}
\newtheorem{rem}[thm]{Remark}

\newtheorem{exmpl}[thm]{Example}

\numberwithin{equation}{section}

\providecommand\given{}
\newcommand\SetSymbol[1][]{%
\nonscript\:#1\vert
\allowbreak
\nonscript\:
\mathopen{}}
\DeclarePairedDelimiterX\set[1]\{\}{%
\renewcommand\given{\SetSymbol[\delimsize]}
#1
}

\newcommand*{\cat}[1]{\mathbf{#1}} 
\newcommand*{\mto}{\rightarrow} 
\newcommand*{\wto}{\xrightarrow{\sim}} 
\newcommand*{\cg}[1]{\prescript{\sharp}{}{\!#1}} 
\newcommand*{\bh}{d} 
\newcommand*{\Int}{\mathbb{Z}} 
\newcommand*{\Rat}{\mathbb{Q}} 
\newcommand*{\Real}{\mathbb{R}} 
\newcommand*{\FF}{\mathbb{F}} 
\newcommand*{\cmplx}[1]{{#1}^\bullet} 
\newcommand*{\tensor}{\otimes} 
\newcommand*{\ctensor}{\hat{\otimes}} 
\newcommand*{\Ltensor}{\tensor^{\mathbb{L}}} 
\newcommand*{\isomorph}{\cong} 
\newcommand*{\op}{\mathrm{op}} 
\newcommand*{\cont}{\mathrm{cont}} 
\newcommand{\cyc}{\mathrm{cyc}} 
\newcommand*{\cycchar}{\varepsilon_\mathrm{cyc}} 
\newcommand*{\algc}[1]{\overline{#1}} 
\newcommand*{\sheaf}[1]{\mathpzc{#1}} 
\newcommand{\openideals}{\mathfrak{I}} 
\newcommand{\Sect}{\Gamma} 
\newcommand{\Val}{\mathcal{O}} 
\newcommand{\ncL}{\mathcal{L}} 
\newcommand{\X}{\mathcal{X}} 
\newcommand{\decomp}{\mathcal{D}} 
\newcommand{\inertia}{\mathcal{I}} 
\newcommand{\kinert}{\mathcal{K}} 
\newcommand{\ginert}{\mathcal{J}} 
\newcommand*{\dual}[1]{{#1}^{\vee}} 
\newcommand{\rquot}{\backslash} 
\newcommand{\Motive}{\mathcal{M}} 
\newcommand{\Gm}{{\mathbb{G}_{\mathrm{m}}}} 
\newcommand{\mdual}{\ast} 
\newcommand{\Mdual}{\circledast} 
\newcommand{\epsfact}{\varepsilon} 

\newcommand{\ringtransf}{\Psi} 
\newcommand{\id}{\mathrm{id}} 
\newcommand{\Frob}{\mathfrak{F}} 
\newcommand{\eval}{\Phi} 

\DeclareMathOperator{\Cone}{Cone} 
\DeclareMathOperator{\HF}{H} 
\DeclareMathOperator{\Hom}{Hom} 
\DeclareMathOperator{\Ext}{Ext} 
\DeclareMathOperator{\Tor}{Tor} 
\DeclareMathOperator{\Jac}{Jac} 
\DeclareMathOperator{\Aut}{Aut} 
\DeclareMathOperator{\sheafHom}{\mathpzc{Hom}} 
\DeclareMathOperator{\coker}{coker} 
\DeclareMathOperator{\Spec}{Spec} 
\DeclareMathOperator{\RDer}{R} 
\DeclareMathOperator{\Gal}{Gal} 
\DeclareMathOperator{\KTh}{K} 
\DeclareMathOperator{\Sel}{Sel} 
\DeclareMathOperator{\Tate}{T} 
\DeclareMathOperator{\Pic}{Pic} 
\DeclareMathOperator{\Div}{\sheaf{Div}} 
\DeclareMathOperator{\SC}{SC} 
\DeclareMathOperator{\GL}{GL} 
\DeclareMathOperator{\SL}{SL} 


\newcommand{\comment}[1]{\ifdraft{\textcolor{red}{#1}}{}}

\newdir{ >}{{}*!/-5pt/@{>}} 
\newdir{O}{{}*-\cir<2.5pt>{}} 
\xyoption{2cell} 
\UseTwocells %
\xyoption{poly}

\allowdisplaybreaks[1]

\begin{document}

\title[Noncommutative Iwasawa Main Conjecture]{On a Noncommutative Iwasawa Main Conjecture for Function Fields}
\author{Malte Witte}%

\address{Malte Witte\newline Ruprecht-Karls-Universit\"at Heidelberg\newline
Mathematisches Institut\newline
Im Neuenheimer Feld 288\newline
D-69120 Heidelberg }%
\email{witte@mathi.uni-heidelberg.de}

\subjclass[2000]{11R23 (11G20 11S40 11G10)}
\keywords{Non-commutative Iwasawa theory, function fields, $L$-functions, Selmer complexes}

\date{\today}%

\begin{abstract}
We formulate and prove an analogue of the non-commutative Iwasawa Main Conjecture for $\ell$-adic representations of the Galois group of a function field of characteristic $p$. We also prove a functional equation for the resulting non-commutative $L$-functions. As corollaries, we obtain non-commutative generalisations of the main conjecture for Picard-$1$-motives of Greither and Popescu and a main conjecture for abelian varieties over function fields in precise analogy to the $\GL_2$ main conjecture of Coates, Fukaya, Kato, Sujatha and Venjakob.
\end{abstract}

\maketitle

\section{Introduction}

\noindent In \cite{CFKSV}, Coates, Fukaya, Kato, Sujatha and Venjakob formulate a non-commu\-tative Iwasawa Main Conjecture for $\ell$-adic Lie extensions of number fields. Other, partly more general versions are formulated in \cite{HK:EBKC+IMC}, \cite{RW:EquivIwaTh2}, and \cite{FK:CNCIT}. A geometric version for separated schemes of finite type over a finite field is formulated and proved in \cite{Witte:MCVarFF}, \cite{Burns:MCinGIwTh+RelConj}, and \cite{Witte:UnitLFunctions}.

Building on the ideas of \cite{Witte:MCVarFF} we will consider in this article  admissible $\ell$-adic Lie extensions $K_{\infty}/K$ of a function field $K$ of transcendence degree $1$ over a finite field $\FF$ of characteristic $p$. Here, an extension $K_\infty/K$ inside a fixed separable closure of $K$ is called an \emph{admissible $\ell$-adic Lie extension} if
\begin{enumerate}
 \item $K_\infty/K$ is Galois and the Galois group $G\coloneqq\Gal(K_\infty/K)$ is an $\ell$-adic Lie group,
 \item $K_\infty/K$ contains the unique $\Int_\ell$-extension of $\FF$,
 \item $K_\infty/K$ is unramified outside a finite set of places.
\end{enumerate}

We will formulate and prove a non-commutative main conjecture for the extension $K_\infty/K$ and any continuous representation $T$ of the absolute Galois group of $K$ which is unramified outside a finite set of primes (Thm.~\ref{thm:MCforRep l p different}, Thm.~\ref{thm:MCforRep l equal p myLfunc}). We will also prove a functional equation for the corresponding non-commutative $L$-functions (Thm.~\ref{thm:functional equation}). From this main conjecture, we will deduce a main conjecture for Greenberg's Selmer group of $T$ (Cor.~\ref{cor:mc for selmer groups}), non-commutative generalisations of the main conjecture for the $\ell$-adic Tate module of a Picard-$1$-motive from \cite{GreitherPopescu:PicardOneMotives} (Cor.~\ref{cor:picard motives Tate twist 1}, Cor.~\ref{cor:Picard motives Tate twist 0}), a non-commutative generalisation of the classical main conjecture for the Galois group of the maximal abelian $\ell$-extension of $K_\infty$ unramified outside a finite set of places (Cor.~\ref{cor:classical mc}), and an analogue for abelian varieties over function fields of the $\operatorname{Gl}_2$ main conjecture in \cite{CFKSV} for $\ell\neq p$ (Cor.~\ref{cor:MCforAbVars}).

Let $\Int_{\ell}[[G]]$ be the Iwasawa algebra of $G$ and let $\Int_{\ell}[[G]]_S$ be its localisation at Venjakob's canonical Ore set $S$.  Let $C$ be the smooth and proper curve corresponding to $K$ and assume for simplicity that $C$ is geometrically connected. We also fix two disjoint, possibly empty sets $\Sigma$ and $\Sigma'$ of closed points in $C$. To the representation $T$ we will associate a certain Selmer complex $\cmplx{\SC}_{\Sigma,\Sigma'}(T)$. The non-commutative main conjecture claims that $\cmplx{\SC}_{\Sigma,\Sigma'}(T)$  is a perfect complex of $\Int_{\ell}[[G]]$-modules which is $S$-torsion and hence, gives rise to a class $[\cmplx{\SC}_{\Sigma,\Sigma'}(T)]$ in the relative $\KTh$-group $\KTh_0(\Int_{\ell}[[G]],S)$. Moreover, it postulates the existence of a non-commutative $L$-function $\ncL_{\Sigma,\Sigma'}(T)\in \KTh_1(\Int_{\ell}[[G]]_S)$ which maps to $-[\cmplx{\SC}_{\Sigma,\Sigma'}(T)]$ under the connecting homomorphism
$$
\bh\colon \KTh_1(\Int_{\ell}[[G]]_S)\mto \KTh_0(\Int_{\ell}[[G]],S)
$$
and which satisfies an interpolation property with respect to values of the $\Sigma$-truncated $\Sigma'$-modified $L$-function of the twist $T(\rho)$ of $T$ by representations $\rho$ of $G$.

The functional equation states that
\[
 \ncL_{\Sigma',\Sigma}(T^{\mdual})^{\Mdual}\epsfact_{\Sigma,\Sigma'}(T)\ncL_{\Sigma,\Sigma'}(T)=1,
\]
where $T^{\mdual}$ is the dual representation, $\epsfact_{\Sigma,\Sigma'}(T)$ is the global $\epsfact$-factor, and
\[
 \Mdual\colon\KTh_1(\Int_{\ell}[[G]]_S)\mto\KTh_1(\Int_{\ell}[[G]]_S)
\]
is induced by sending an invertible matrix to the transposed of its inverse.

Usually, one assumes that $\Sigma$ contains the ramification locus of $K_\infty/K$ and of $T$, but we will show that this condition can be weakened. Moreover, our main conjecture also applies to Iwasawa algebras over much more general coefficient rings, for example, the power series rings $\Int_\ell[[X_1,\dots,X_n]]$ and their factor rings as well as rings that are itself Iwasawa algebras of $\ell$-adic Lie groups.

Greenberg's Selmer group may be identified with the Pontryagin dual of the second cohomology group of the complex $\cmplx{\SC}_{\Sigma,\emptyset}(T)$. The generalisation of the main conjecture for Picard-$1$-motives of Greither and Popescu follows from the special cases $T=\Int_\ell$ and $T=\Int_\ell(1)$ with $\Sigma$ and $\Sigma'$ non-empty, the generalisation of the classical main conjecture follows from the case $T=\Int_\ell(1)$ and $\Sigma'=\emptyset$. For the non-commutative main conjecture for abelian varieties one chooses $T$ to be the Tate module of the dual abelian variety.

The article is structured as follows. In Section~\ref{sec:Preliminaries} we recall the $\KTh$-theoretic framework. As in  \cite{Witte:MCVarFF} we will use the language of Waldhausen categories as a convenient tool to handle the relevant elements in $\KTh$-groups. In particular, we recall the construction of the Waldhausen category of perfect complexes of adic sheaves and the construction of the total derived section functor as a Waldhausen exact functor.
In Section~\ref{sec:Selmer Complexes}, this will be used to construct the Selmer complex $\cmplx{\SC}_{\Sigma,\Sigma'}(T)$ and to analyse its properties. Section~\ref{sec:nc main conjectures} contains the definition of the $\Sigma$-truncated $\Sigma'$-modified $L$-function as well as the formulation and proofs of the different main conjectures, including the description of the cohomology of $\cmplx{\SC}_{\Sigma,\Sigma'}(T)$.

Large parts of this article have been inspired by \cite{FK:CNCIT}. There has been previous work on a non-commutative main conjecture for elliptic curves over function fields in the case $\ell\neq p$ \cite{Sechi:IMCoverGlobalFunctionFields} and independent work on the $\cat{M}_H(G)$-conjecture for $\ell$-adic Selmer groups of abelian varieties over function fields \cite{BandiniValentino:ControlTheorems}. We do not treat non-commutative main conjectures for abelian varieties in the case $\ell=p$, which necessitates different methods. In the recent preprint \cite{VauclairTrihan:NCITAVFF}, Trihan and Vauclair announce a proof of the conjectue in this case.

This article is an extended and improved version of a preprint that has been circulated since 2013. The author wants to thank A. Huber for suggesting to work on this question and A. Schmidt as well as O. Venjakob for valuable discussions.

\section{Preliminaries}\label{sec:Preliminaries}

\subsection{The \texorpdfstring{$\KTh$}{K}-theoretic framework}\label{ss:Framework}

We will recall some $\KTh$-theoretic constructions from \cite{Witte:MCVarFF}. All rings in this article will be associative with identity; a module over a ring will always refer to a left unitary module. For a ring $R$, $R^\op$ will denote the opposite ring.
Recall that an \emph{adic ring} is a ring  $\Lambda$ such that for each $n\geq 1$ the $n$-th power of the Jacobson radical $\Jac(\Lambda)^n$ is of finite index in $\Lambda$ and
$$
\Lambda=\varprojlim_{n\geq 1}\Lambda/\Jac(\Lambda)^n.
$$
In particular, $\Lambda$ carries a natural profinite topology. Note that a commutative adic ring is always noetherian and semi-local \cite[Cor. 36.35]{Warner:TopRings}.

Write $\openideals_{\Lambda}$ for the set of open two-sided ideals of $\Lambda$, partially ordered by inclusion. To compute the Quillen $\KTh$-groups of $\Lambda$ one can apply the Waldhausen $S$-construction to one of a variety of different Waldhausen categories associated to $\Lambda$. As in \cite{Witte:MCVarFF}, we will use the following, which turns out to be particularly convenient for our purposes.

We recall that for any ring $R$, a complex $\cmplx{M}$ of $R$-modules is called \emph{$DG$-flat} if every module $M^n$ is flat and for every acyclic complex $\cmplx{N}$ of $R^\op$-modules, the total complex $\cmplx{(N\tensor_R M)}$ is acyclic. The complex $\cmplx{M}$ is called \emph{perfect} if it is quasi-isomorphic to a complex $\cmplx{P}$ such that $P^n$ is finitely generated and projective and $P^n=0$ for almost all $n$.

\begin{defn}\label{defn:PDG(Lambda)}
Let $\Lambda$ be an adic ring. We denote by
$\cat{PDG}^{\cont}(\Lambda)$ the following Waldhausen category.
The objects of $\cat{PDG}^{\cont}(\Lambda)$ are inverse system
$(\cmplx{P}_I)_{I\in \openideals_{\Lambda}}$ satisfying the
following conditions:
\begin{enumerate}
\item for each $I\in\openideals_{\Lambda}$, $\cmplx{P}_I$ is a
$DG$-flat perfect complex of $\Lambda/I$-modules,

\item for each $I\subset J\in\openideals_{\Lambda}$, the
transition morphism of the system
$$
\varphi_{IJ}\colon\cmplx{P}_I\mto \cmplx{P}_J
$$
induces an isomorphism
$$
\Lambda/J\tensor_{\Lambda/I}\cmplx{P}_I\isomorph \cmplx{P}_J.
$$
\end{enumerate}
A morphism of inverse systems $(f_I\colon \cmplx{P}_I\mto
\cmplx{Q}_I)_{I\in\openideals_{\Lambda}}$ in
$\cat{PDG}^{\cont}(\Lambda)$ is a weak equivalence if every $f_I$
is a quasi-isomorphism. It is a cofibration if every $f_I$ is
injective and the system $(\coker f_I)$ is in $\cat{PDG}^{\cont}(\Lambda)$.
\end{defn}

By \cite[Prop.~3.7]{Witte:MCVarFF} the complex
$$
\varprojlim_{I\in\openideals_{\Lambda}}\cmplx{Q}_I
$$
is a perfect complex for every system $(\cmplx{Q}_I)_{I\in\openideals_{\Lambda}}$ in $\cat{PDG}^{\cont}(\Lambda)$ and the $\KTh$-groups $\KTh_n(\cat{PDG}^{\cont}(\Lambda))$ of the Waldhausen category coincide with the Quillen $\KTh$-groups $\KTh_n(\Lambda)$ of the adic ring $\Lambda$.

If $\Lambda'$ is a second adic ring and $M$ is a $\Lambda'$-$\Lambda$-bimodule which is finitely generated and projective as $\Lambda'$-module, then the derived tensor product with $M$ induces natural homomorphisms $\KTh_{n}(\Lambda)\mto \KTh_{n}(\Lambda')$. It also follows from \cite[Prop.~3.7]{Witte:MCVarFF} that these homomorphisms coincide with the homomorphisms induced from the following Waldhausen exact functor.

\begin{defn}\label{defn:change of ring functor}
For $(\cmplx{P}_I)_{I\in\openideals_{\Lambda}}\in
\cat{PDG}^{\cont}(\Lambda)$
we define a Waldhausen exact functor
$$
\ringtransf_{M}\colon
\cat{PDG}^{\cont}(\Lambda)\mto\cat{PDG}^{\cont}(\Lambda'),\qquad \cmplx{P}\mto(\varprojlim_{J\in\openideals_{\Lambda}}
\Lambda'/I\tensor_{\Lambda'}\cmplx{(M\tensor_{\Lambda}P_{J})})_{I\in\openideals_{\Lambda'}}.
$$
\end{defn}

Let now $G$ be a profinite group which is the semi-direct product $H\rtimes\Gamma$ of a closed normal subgroup $H$ and a closed subgroup $\Gamma$ which isomorphic to $\Int_{\ell}$. We assume that $H$ is a topologically finitely generated and a \emph{virtual pro-$\ell$-group}, i.\,e.\ it contains an open pro-$\ell$-group. If $\Lambda$ is an adic $\Int_{\ell}$-algebra, then so are the profinite group rings $\Lambda[[G]]$ and $\Lambda[[H]]$ \cite[Prop. 3.2]{Witte:MCVarFF}.

Assume in addition that $\Lambda[[H]]$ is noetherian. Then the set
$$
S\coloneqq\set{f\in \Lambda[[G]]\given \text{$\Lambda[[G]]/\Lambda[[G]]f$ is finitely generated as $\Lambda[[H]]$-module}}
$$
is a left and right denominator set in $\Lambda[[G]]$ such that the localisation $\Lambda[[G]]_S$ exists \cite[Lemma 3.6]{Witte:BCP}. Moreover, the $\KTh$-theory localisation sequence for $S\subset \Lambda[[G]]$ splits into short split exact sequences \cite[Cor.~3.3]{Witte:Splitting}. In particular, we obtain a split exact sequence
$$
0\mto \KTh_1(\Lambda[[G]])\mto \KTh_1(\Lambda[[G]]_S)\xrightarrow{\bh} \KTh_0(\Lambda[[G]],S)\mto 0.
$$

We will describe this sequence in more detail, introducing the following Waldhausen categories.

\begin{defn}\label{defn:wHPDG}
Let $\Lambda$ be any adic $\Int_\ell$-algebra and $G=H\rtimes\Gamma$ with $\Gamma\isomorph\Int_\ell$ and $H$ a topologically finitely generated virtual pro-$\ell$-group. We write $\cat{PDG}^{\cont,w_H}(\Lambda[[G]])$ for the full
Waldhausen subcategory of $\cat{PDG}^{\cont}(\Lambda[[G]])$ of
objects $(\cmplx{P}_J)_{J\in\openideals_{\Lambda[[G]]}}$ such that
$$
\varprojlim_{J\in\openideals_{\Lambda[[G]]}} \cmplx{P}_J
$$
is a perfect complex of $\Lambda[[H]]$-modules.

We write $w_H\cat{PDG}^{\cont}(\Lambda[[G]])$ for the Waldhausen
category with the same objects, morphisms and cofibrations as
$\cat{PDG}^{\cont}(\Lambda[[G]])$, but with a new set of weak
equivalences given by those morphisms whose cones are objects of the category
$\cat{PDG}^{\cont,w_H}(\Lambda[[G]])$.
\end{defn}

If $\Lambda[[H]]$ is noetherian, we may then identify for all $n\geq 0$
\begin{equation}\label{eqn:def of K-groups}
\begin{aligned}
\KTh_{n}(\Lambda[[G]],S)&=\KTh_{n}(\cat{PDG}^{\cont,w_H}(\Lambda[[G]])),\\
\KTh_{n+1}(\Lambda[[G]]_S)&=\KTh_{n+1}(w_H\cat{PDG}^{\cont}(\Lambda[[G]]))
\end{aligned}
\end{equation}
\cite[\S~4]{Witte:MCVarFF}. For example, $\Lambda[[H]]$ is noetherian if $\Lambda$ is a commutative adic $\Int_\ell$-algebra and $H$ is a compact $\ell$-adic Lie group \cite[Cor.~3.4]{Witte:Splitting}.

If $\Lambda[[H]]$ is not noetherian, then $S$ fails to be a left or right denominator set in general \cite[Ex.~3.7]{Witte:BCP}. We will then take \eqref{eqn:def of K-groups} as a sensible definition such that the splitting result \cite[Cor.~3.3]{Witte:Splitting} remains true.

In particular, $\KTh_{0}(\Lambda[[G]],S)$ is the abelian group generated by the symbols $[\cmplx{P}]$ with $\cmplx{P}$ an object in $\cat{PDG}^{\cont,w_H}(\Lambda[[G]])$ modulo the relations
\begin{align*}
[\cmplx{P}]&=[\cmplx{Q}]&&\text{if $\cmplx{P}$ and $\cmplx{Q}$ are quasi-isomorphic,}\\
[\cmplx{P}_2]&=[\cmplx{P}_1]+[\cmplx{P}_3]&&\text{if $0\mto\cmplx{P}_1\mto\cmplx{P}_2\mto\cmplx{P}_3\mto 0$ is an exact sequence.}
\end{align*}
The groups $\KTh_1(\Lambda[[G]])$ and $\KTh_1(\Lambda[[G]]_S)$ are generated by the symbols $[f\lcirclearrowright \cmplx{P}]$ where $f\colon \cmplx{P}\mto \cmplx{P}$ is an endomorphism that is a weak equivalence in the Waldhausen categories $\cat{PDG}^{\cont}(\Lambda[[G]])$ and $w_H\cat{PDG}^{\cont}(\Lambda[[G]])$, respectively \cite[Prop.~3.10]{Witte:MCVarFF}. The map $\KTh_1(\Lambda[[G]])\mto \KTh_1(\Lambda[[G]]_S)$ is the obvious one; the boundary map
$$
\bh\colon\KTh_1(\Lambda[[G]]_S)\mto\KTh_0(\Lambda[[G]],S)
$$
is given by
$$
\bh[f\lcirclearrowright \cmplx{P}]=-[\cmplx{\Cone(f)}]
$$
where $\cmplx{\Cone(f)}$ denotes the cone of $f$ \cite[Thm.~A.5]{Witte:MCVarFF}. (We note that other authors use $-\bh$ instead.)
\begin{rem}\label{rem:class of a projective RH module}
Let $M$ be an $\Lambda[[G]]$-module which has a resolution by a strictly perfect complex of $\Lambda[[H]]$-modules $\cmplx{Q}$. By \cite[Lemma 3.11]{Witte:BCP}, $M$ then also has a resolution by a strictly perfect complex $\cmplx{P}$ of $\Lambda[[G]]$-modules. We may identify $\cmplx{P}$ with the object
$$
(\Lambda[[G]]/J\tensor_{\Lambda[[G]]}\cmplx{P})_{J\in\openideals_{\Lambda[[G]]}}
$$
in $\cat{PDG}^{\cont, w_H}(\Lambda[[G]])$ and set
$$
[M]\coloneqq[\cmplx{P}]\in\KTh_0(\Lambda[[G]],S).
$$
Note that $[M]$ does not depend on the particular choice of the resolutions $\cmplx{P}$ or $\cmplx{Q}$.
\end{rem}

Fix a second adic $\Int_\ell$-algebra $\Lambda'$ and let $\rho$ be a $\Lambda'$-$\Lambda[[G]]$-bimodule which is finitely generated and projective as $\Lambda'$-module. Consider
\[
E_\rho\coloneqq \Lambda'[[\Gamma]]\tensor_{\Lambda'}\rho
\]
as $\Lambda'[[\Gamma]]$-$\Lambda[[G]]$-bimodule with the left $\Gamma$-operation on the first factor $\Lambda'[[\Gamma]]$ and the right diagonal operation of $g\in G$ by its image in $\Gamma$ on $\Lambda'[[\Gamma]]$ and by $g$ on $\rho$. Note that
\[
\ringtransf_{E_\rho}\colon\cat{PDG}^{\cont}(\Lambda[[G]])\mto\cat{PDG}^{\cont}(\Lambda'[[\Gamma]])
\]
takes objects of $\cat{PDG}^{\cont,w_H}(\Lambda[[G]])$ to objects of $w_{1}\cat{PDG}^{\cont,w_1}(\Lambda'[[\Gamma]])$ , where $1$ denotes the trivial subgroup of $\Gamma$ \cite[Prop.~4.6]{Witte:MCVarFF}.

We define the evaluation map
\begin{equation}\label{eq:def of evaluation map}
\eval_\rho\colon \KTh_1(\Lambda[[G]]_S)\mto\KTh_1(\Lambda'[[\Gamma]]_S)
\end{equation}
as the map induced by $\ringtransf_{E_\rho}$ on the $\KTh$-groups. If $\Lambda'=\Lambda=\Int_\ell$, then $\rho$ corresponds to a representation $\rho^\sharp$ of $G$ over $\Int_\ell$ by considering the left action of $g\in G$ on $\rho$ by $g^{-1}$. Beware that we deviate from the sign convention used in \cite[(22)]{CFKSV}. In terms of the cited article, our $\eval_\rho$ corresponds to the evaluation at the representation dual to $\rho^\sharp$.

Assume that $R$ is a commutative, local, and regular adic $\Int_\ell$-algebra. By the Cohen Structure Theorem \cite[Ch. VIII, \S 5, Thm.~2]{Bourbaki:CommAlg}, we have
\[
R\isomorph R_0[[X_1,\dots,X_n]],
\]
where $R_0$ is either a finite field of characteristic $\ell$ or the valuation ring of a finite field extension of $\Rat_\ell$ and $X_1,\dots, X_n$ are indeterminates. In particular, we may identify $R$ with the profinite group algebra of $\Int_{\ell}^n$ with coefficients in $R_0$.

If $G=H\rtimes\Gamma$ is a $\ell$-adic Lie group without elements of order $\ell$, then the rings $R[[G]]$ and $R[[H]]$ are both noetherian and of finite global dimension \cite[Thm.~4.1]{Brumer:PseudocompactAlgebras}. Let $\cat{N}_H(R[[G]])$ denote the abelian category of finitely generated $R[[G]]$-modules which are also finitely generated as $R[[H]]$-modules. Note that
\begin{equation}\label{eq:NHG computes relative K-group}
\begin{split}
\KTh_0(\cat{PDG}^{\cont,w_H}(R[[G]]))\mto \KTh_0(\cat{N}_H(R[[G]])),\\
[(\cmplx{P}_I)_{I\in\openideals_{R[[G]]}}]\mapsto \sum_{i=-\infty}^{\infty} (-1)^i[\HF^i(\varprojlim_{I\in\openideals_{R[[G]]}}\cmplx{P}_I)]
\end{split}
\end{equation}
is an isomorphism. The inverse is given by the construction in Rem.~\ref{rem:class of a projective RH module}.

If the quotient field of $R$ is of characteristic $0$, one may also consider the abelian category $\cat{M}_H(R[[G]])$ of finitely generated $R[[G]]$-modules whose $\ell$-torsion-free part is finitely generated as $R[[H]]$-module and the left denominator set
$$
S^*\coloneqq\bigcup_{n}\ell^n S\subset R[[G]].
$$
Still assuming that $G$ has no element of order $\ell$ it is known that the natural maps
$$
\KTh_1(R[[G]]_{S})\mto\KTh_1(R[[G]]_{S^*}),\qquad \KTh_0(\cat{N}_H(R[[G]]))\mto\KTh_0(\cat{M}_H(R[[G]]))
$$
are split injective \cite[Prop.~3.4]{BV:DescentTheory} and fit into a commutative diagram
$$
\xymatrix{
0\ar[r]&\KTh_1(R[[G]])\ar[r]\ar[d]^{=}&\KTh_1(R[[G]]_S)\ar[r]^-\bh\ar[d]&\KTh_0(\cat{N}_H(R[[G]]))\ar[r]\ar[d]&0\\
0\ar[r]&\KTh_1(R[[G]])\ar[r]&\KTh_1(R[[G]]_{S^*})\ar[r]^-\bh&\KTh_0(\cat{M}_H(R[[G]]))\ar[r]&0
}
$$
In particular, an identity of the type $f=\bh g$ in $\KTh_0(\cat{N}_H(R[[G]]))$ will imply a corresponding identity in $\KTh_0(\cat{M}_H(R[[G]]))$. It is $\cat{M}_H(R[[G]])$ which plays a central role in the original formulation of the non-commutative Iwasawa Main Conjecture \cite{CFKSV}. However, we will not make use of $\cat{M}_H(R[[G]])$ in the following.

\subsection{Perfect complexes of adic sheaves}\label{ss:perfect complexes of adic sheaves}

In this section we recall some constructions from \cite[\S~5.4--5.5]{Witte:PhD}.

Let $X$ be a separated scheme of finite type over a finite field $\FF$ (We will only need the case $\dim X\leq 1$). Recall that for a finite ring $R$, a complex $\cmplx{\sheaf{F}}$ of \'etale sheaves of left $R$-modules on $X$ is called \emph{strictly perfect} if it is strictly bounded and each $\sheaf{F}^n$ is constructible and flat. It is \emph{perfect} if it is quasi-isomorphic to a strictly perfect complex. We call it \emph{$DG$-flat} if for each geometric point of $X$, the complex of stalks is $DG$-flat.

\begin{defn}\label{defn:PDGcont(X,Lambda)}
Let $X$ be a separated scheme of finite type over a finite field and let $\Lambda$ be an
adic ring. The \emph{category of perfect complexes of adic
sheaves} $\cat{PDG}^{\cont}(X,\Lambda)$ is the following
Waldhausen category. The objects of $\cat{PDG}^{\cont}(X,\Lambda)$
are inverse systems $(\cmplx{\sheaf{F}}_I)_{I\in
\openideals_{\Lambda}}$ such that:
\begin{enumerate}
\item for each $I\in\openideals_{\Lambda}$, $\cmplx{\sheaf{F}}_I$
is $DG$-flat perfect complex of \'etale sheaves of $\Lambda/I$-modules on $X$,

\item for each $I\subset J\in\openideals_{\Lambda}$, the
transition morphism
$$
\varphi_{IJ}\colon\cmplx{\sheaf{F}}_I\mto \cmplx{\sheaf{F}}_J
$$
of the system induces an isomorphism
$$
\Lambda/J\tensor_{\Lambda/I}\cmplx{\sheaf{F}}_I\wto
\cmplx{\sheaf{F}}_J.
$$
\end{enumerate}
Weak equivalences and cofibrations are defined as in Def.~\ref{defn:PDG(Lambda)}.
\end{defn}

For an arbitrary inverse system $\sheaf{F}\coloneqq(\sheaf{F}_I)_{I\in\openideals_{\Lambda}}$ of sheaves of $\Lambda$-modules on $X$ we let
 $\HF^k(X,\sheaf{F})$ denote the $k$-th continuous cohomology \cite[\S 3]{Jannsen:ContCohom}. Likewise, we write $\HF^k(\algc{X},\sheaf{F})$ for the continuous cohomology over the base change $\algc{X}$ of $X$ with respect to a fixed algebraic closure of $\FF$.

Assume that either $X$ is proper or that the characteristic of $\FF$ is a unit in $\Lambda$.
Using Godement resolution we construct in \cite[Def.~5.4.13]{Witte:PhD} Waldhausen exact functors
$$
\RDer\Sect(X,\cdot),\RDer\Sect(\algc{X},\cdot)\colon \cat{PDG}^{\cont}(X,\Lambda)\mto \cat{PDG}^{\cont}(\Lambda)
$$
such that for each $(\sheaf{F}_I)_{I\in\openideals_\Lambda}$ in $\cat{PDG}^{\cont}(X,\Lambda)$ concentrated in degree $0$, we have
\begin{align*}
\HF^i(X,(\sheaf{F}_I)_{I\in\openideals_\Lambda})&=\varprojlim_{I\in\openideals_{\Lambda}}\HF^i(X,\sheaf{F}_I)=\HF^i(\varprojlim_{I\in\openideals_{\Lambda}}\RDer\Sect(X,(\sheaf{F}_I)_{I\in\openideals_\Lambda})),\\
\HF^i(\algc{X},(\sheaf{F}_I)_{I\in\openideals_\Lambda})&=\varprojlim_{I\in\openideals_{\Lambda}}\HF^i(\algc{X},\sheaf{F}_I)=\HF^i(\varprojlim_{I\in\openideals_{\Lambda}}\RDer\Sect(\algc{X},(\sheaf{F}_I)_{I\in\openideals_\Lambda})).
\end{align*}
Moreover, there is an exact sequence
$$
0\mto\RDer\Sect(X,\cdot)\mto\RDer\Sect(\algc{X},\cdot)\xrightarrow{\id-\Frob_{\FF}} \RDer\Sect(\algc{X},\cdot)\mto 0
$$
where $\Frob_{\FF}$ denotes the geometric Frobenius over $\FF$ acting on $\algc{X}$ \cite[Prop.~6.1.2]{Witte:PhD}.

We also recall that given a second adic ring $\Lambda'$ and a $\Lambda'$-$\Lambda$-bimodule $M$ which is finitely generated and projective as $\Lambda'$-module, we may extend $\ringtransf_M$ to a Waldhausen exact functor
\begin{gather*}
\ringtransf_M\colon \cat{PDG}^{\cont}(X,\Lambda)\mto \cat{PDG}^{\cont}(X,\Lambda'),\\
(\cmplx{\sheaf{P}}_J)_{J\in\openideals_{\Lambda}}\mapsto (\varprojlim_{J\in\openideals_{\Lambda}} M/IM\tensor_{\Lambda}\cmplx{\sheaf{P}_J})_{I\in\openideals_{\Lambda'}}
\end{gather*}
such that
\begin{align*}
\ringtransf_M\RDer\Sect(X,\cmplx{\sheaf{P}})&\mto\RDer\Sect(X,\ringtransf_M(\cmplx{\sheaf{P}})),\\
\ringtransf_M\RDer\Sect(\algc{X},\cmplx{\sheaf{P}})&\mto\RDer\Sect(\algc{X},\ringtransf_M(\cmplx{\sheaf{P}}))
\end{align*}
are quasi-isomorphisms in $\cat{PDG}^{\cont}(\Lambda')$ \cite[Prop.~5.5.7]{Witte:PhD}.

\section{Selmer complexes}\label{sec:Selmer Complexes}

\noindent Selmer complexes have been introduced by Nekov{\'a}{\v{r}} in \cite{Nekovar:SelmerComplexes}. They are perfect complexes defined by modifying Galois cohomology by certain local conditions. In the geometric situation, one can identify the cohomology of the Selmer complexes with the \'etale cohomology of certain constructible sheaves. This is the point of view that we are adapting in the present article. In order to associate in a canonical manner classes in $\KTh$-groups to these complexes, we will construct them as elements of the Waldhausen categories introduced in the previous section.

We point out that the constructions given in this section also work in the number field case. The interested reader may consult \cite[Ch.~5]{Witte:Habil} for a detailed exposition.

\subsection{Notational conventions}\label{ss:conventions}

In the following, we let $K$ denote a fixed function field of characteristic $p>0$, i.\,e.\ a field of transcendence degree $1$ over $\FF_p$. We also fix a separable closure $\algc{K}$ of $K$ and write $\Gal_K$ for the Galois group of $\algc{K}/K$. Let $\FF$ denote the algebraic closure of $\FF_p$ inside $K$ and $\algc{\FF}$ the algebraic closure of $\FF$ inside $\algc{K}$. We let
$$
q\coloneqq p^{[\FF:\FF_p]}
$$
denote the number of elements of $\FF$.

For a fixed prime number $\ell$, we write $\FF_\cyc/\FF$ for the unique $\Int_\ell$-extension of $\FF$ and write $K_\cyc$ for the composite field $\FF_\cyc K$.

Write $C$ for the smooth and projective curve over $\FF$ whose closed points are the places of $K$. Note that by the definition of $\FF$, $C$ is geometrically connected. This is a convenient, but not necessary restriction. One can proceed as in \cite{Witte:MCVarFF} to deal also with the non-connected case.

For any open dense subscheme $U$ of $C$ and any extension field $L$ of $K$ inside $\algc{K}$, we write $U_L$ for the normalisation of $U$ in $L$ and   $j_L\colon \Spec L\mto U_L$ for the canonical inclusion of the generic point. More generally, for any open subscheme $V$ of $C$, we will write $j_{U}\colon U\mto V$ for the inclusion of an open subscheme $U$ of $V$ and $i_\Sigma\colon \Sigma\mto V$ for the inclusion of a closed subscheme $\Sigma$ of $V$.

Furthermore, we fix for each closed point $v\in C$ a strict henselisation of the local ring at $v$ inside $\algc{K}$. We write $\inertia_v\subset\decomp_v\subset\Gal_K$ for the corresponding inertia group and decomposition group, $\xi_v$ for the geometric point over $v$ corresponding to the residue field of the strict henselisation, and $\Frob_v$ for the corresponding geometric Frobenius at $\xi_v$. We let $\FF(v)$ denote the residue field of $v$ and $\deg(v)\coloneqq [\FF(v):\FF]$ the degree of $v$ over $\FF$.

For any compact or discrete abelian group $A$ we let
$$
\dual{A}\coloneqq\Hom_{\cont}(A,\Real/\Int)
$$
denote its Pontryagin dual.

If $\ell\neq p$, let $\cycchar\colon \Gal_K\mto \Int_{\ell}^{\times}$ denote the cyclotomic character:
$$
\sigma(\zeta)=\zeta^{\cycchar(\sigma)}
$$
for any $\ell^k$-th root of unity $\zeta\in\algc{K}$ and any $\sigma\in \Gal_K$. If $k\in\Int$ and $T$ is a $\Gal_K$-module, we let $T(k)$ denote the $k$-th Tate twist of $T$, i.\,e.\ the $\Gal_K$-module obtained from $T$ by multiplying the action of $\Gal_K$ by $\cycchar^k$.

\subsection{Admissible extensions and finitely ramified representations}\label{ss:AdmissibleLieExt}

Fix a prime number $\ell$.

\begin{defn}
An extension $K_\infty/K$ inside $\algc{K}$ is called an \emph{admissible extension} if
\begin{enumerate}
 \item $K_\infty/K$ is Galois and unramified outside a finite set of places.
 \item $K_\infty$ contains $K_\cyc$,
 \item $\Gal(K_\infty/K_\cyc)$ is a topologically finitely generated virtual pro-$\ell$-group.
\end{enumerate}
\end{defn}

If $K_\infty/K$ is an admissible extension, we let $G\coloneqq\Gal(K_\infty/K)$ denote its Galois group and set $H\coloneqq\Gal(K_\infty/K_\cyc)$, $\Gamma\coloneqq\Gal(K_\cyc/K)$. We may then choose a continuous splitting $\Gamma\mto G$ to identify $G$ with the corresponding semi-direct product $G=H\rtimes\Gamma$. Note that we do not assume that $G$ is an $\ell$-adic Lie group.

\begin{defn}
Let $\Lambda$ be an adic ring. We call a compact $\Lambda[[\Gal_K]]$-module $T$ a \emph{finitely ramified representation} of $\Gal_K$ over $\Lambda$ if
\begin{enumerate}
\item it is finitely generated and projective as $\Lambda$-module,
\item $T$ is unramified outside a finite set of places, i.\,e.\ the set
$$
\set{v\in C\given\text{$T^{\inertia_v}\neq T$}}
$$
is finite.
\end{enumerate}
\end{defn}

Recall that for a finite ring $R$, taking the stalk in the geometric point $\Spec \algc{K}$ is an equivalence of categories between the category of \'etale sheaves of $R$-modules on $\Spec K$ and the category of discrete $R[[\Gal_K]]$-modules \cite[Thm.~II.1.9]{Milne:EtCohom}. In our notation, we will not distinguish between the discrete $R[[\Gal_K]]$-module and the corresponding sheaf on $\Spec K$.

A finitely ramified representation $T$ of $\Gal_K$ over an adic ring $\Lambda$ then corresponds to a projective system of sheaves on $\Spec K$. We want to consider the system of direct image sheaves under the inclusion
\[
j_{F}\colon \Spec F\mto U.
\]
of the generic point into the open, dense subscheme $U$ of $C$. However, the naive definition, applying $j_{K*}$ to each element of the system, does not necessarily lead to an adic sheaf in our sense. We will consider a stabilised version instead, redefining the direct image sheaf as follows.

\begin{defn}\label{defn:modified direct image}
Let $\Lambda$ be an adic ring, $U\subset C$ an open non-empty subscheme and $T$ a finitely ramified representation of $\Gal_K$ over $\Lambda$. We define an inverse system of \'etale sheaves of $\Lambda$-modules $j_{K*}T\coloneqq(j_{K*}T_I)_{I\in\openideals_{\Lambda}}$ on $U$ by setting
$$
j_{K*}T_I\coloneqq\varprojlim_{J\in\openideals_{\Lambda}}\Lambda/I\tensor_{\Lambda}j_{K*}T/JT.
$$
\end{defn}

\begin{prop}\label{prop:stalk of modified direct image}
Let $\Lambda$ be an adic ring and $T$ be a finitely ramified representation of $\Gal_K$ over $\Lambda$ such that $T^{\inertia_v}$ is a finitely generated $\Lambda$-module for each closed point $v$ of $U$. Then $j_{K*}T_I$ is a constructible \'etale sheaf of $\Lambda/I$-modules on $U$ for any $I\in\openideals_{\Lambda}$. If $v$ is a closed point of $U$, then the stalk of $j_{K*}T_I$ in the geometric point $\xi_v$ is given by
$$
(j_{K*}T_I)_{\xi_v}
=\Lambda/I\tensor_{\Lambda}T^{\inertia_v}.
$$
In particular, $j_{K*}T$ is an object in $\cat{PDG}^{\cont}(U,\Lambda)$ if $T^{\inertia_v}$ is a finitely generated projective $\Lambda$-module for each closed point $v$ of $U$.
\end{prop}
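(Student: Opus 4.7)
The plan is to deduce the whole statement from an explicit computation of the stalks of $j_{K*}T_I$ at the geometric points $\xi_v$ over closed points $v\in U$. First I would invoke the classical stalk formula for direct images from the generic point: for any finite discrete $\Lambda[[\Gal_K]]$-module $M$ one has $(j_{K*}M)_{\xi_v}=M^{\inertia_v}$. Applying this to $M=T/JT$ (which is finite because $T$ is finitely generated over $\Lambda$ and $\Lambda/J$ is finite) and using that stalk formation commutes with tensoring by the finite ring $\Lambda/I$ and with the inverse limit of a pro-system of constructible sheaves whose transition maps are maps of finite abelian groups (so Mittag--Leffler is automatic), I obtain
\[
(j_{K*}T_I)_{\xi_v}=\varprojlim_{J\in\openideals_{\Lambda}}\Lambda/I\tensor_{\Lambda}(T/JT)^{\inertia_v}.
\]

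Second, I would identify this limit with $\Lambda/I\tensor_{\Lambda}T^{\inertia_v}$. Continuity of the $\inertia_v$-action on $T=\varprojlim_{J}T/JT$ gives $T^{\inertia_v}=\varprojlim_{J}(T/JT)^{\inertia_v}$, yielding a natural comparison map from $\Lambda/I\tensor_{\Lambda}T^{\inertia_v}$ into the desired limit which I must show is an isomorphism. The hypothesis that $T^{\inertia_v}$ is finitely generated over $\Lambda$ enters decisively here: on the one hand it makes $\Lambda/I\tensor_{\Lambda}T^{\inertia_v}$ a finite $\Lambda/I$-module and forces its intrinsic $\Lambda$-adic topology to coincide with the subspace topology from $T$; on the other hand, any ``exotic'' $\inertia_v$-invariants in $T/JT$ that fail to lift to $T^{\inertia_v}$ become eventually annihilated by $I$ along the pro-system and thus vanish in the $\Lambda/I$-tensored limit. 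I expect this Mittag--Leffler/stabilisation argument, carried out directly on the inverse system of finite modules since $\Lambda$ need not be noetherian and no Artin--Rees lemma is available, to be the principal technical obstacle.

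Once the stalk formula is established, constructibility of $j_{K*}T_I$ follows easily: outside the finite ramification locus of $T$ inside $U$ the inertia acts trivially, so the stalks agree with the locally constant finite $\Lambda/I$-module $\Lambda/I\tensor_{\Lambda}T$, while at the finitely many remaining closed points the stalks are the finite $\Lambda/I$-modules $\Lambda/I\tensor_{\Lambda}T^{\inertia_v}$; this yields the required stratification.

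For the final assertion, assuming each $T^{\inertia_v}$ is finitely generated projective over $\Lambda$, the stalks $\Lambda/I\tensor_{\Lambda}T^{\inertia_v}$ become flat over $\Lambda/I$, so $j_{K*}T_I$ viewed as a complex concentrated in degree zero is a $DG$-flat perfect constructible complex. Moreover, the transition morphisms $\Lambda/J\tensor_{\Lambda/I}j_{K*}T_I\mto j_{K*}T_J$ for $I\subset J$ become isomorphisms on every stalk by the stalk formula, hence are isomorphisms of sheaves. This verifies the conditions of Definition~\ref{defn:PDGcont(X,Lambda)}, so that $j_{K*}T$ indeed defines an object of $\cat{PDG}^{\cont}(U,\Lambda)$.
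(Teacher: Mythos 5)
Your overall strategy --- compute the stalks, identify them with $\Lambda/I\tensor_{\Lambda}T^{\inertia_v}$, then read off constructibility and the conditions of Def.~\ref{defn:PDGcont(X,Lambda)} --- is the same as the paper's, and your last two paragraphs are fine. But the two steps that do the real work each have a problem.

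First, the interchange giving $(j_{K*}T_I)_{\xi_v}=\varprojlim_{J}\Lambda/I\tensor_{\Lambda}(T/JT)^{\inertia_v}$ is not a Mittag--Leffler issue and your justification for it does not work. The projective limit of \'etale sheaves computes, at $\xi_v$, the limit of the stalks taken in the category of \emph{discrete} $\Gal_{\FF(v)}$-modules, and this can be strictly smaller than the naive limit of abelian groups: for instance $\varprojlim_{n}\Int/\ell^n$ with a topological generator acting by $1+\ell$ has trivial discrete part although the naive limit is $\Int_\ell$. The paper sidesteps this by passing to the equivalent category of tuples $(M,(M_v,\phi_v))$ (Milne, Ex.~II.3.16), in which the limit is computed componentwise in discrete modules, and then showing a posteriori that the naive limit is finite, hence already discrete. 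So your step one only becomes valid after your step two is complete and cannot be quoted as an input to it.

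Second, the identification $\varprojlim_{J}\Lambda/I\tensor_{\Lambda}(T/JT)^{\inertia_v}\isomorph\Lambda/I\tensor_{\Lambda}T^{\inertia_v}$, which you correctly single out as the principal obstacle, is exactly where you stop: the heuristic that ``exotic'' invariants are eventually annihilated by $I$ is not an argument, and finite generation of $T^{\inertia_v}$ is not what makes this step work. The missing ingredient is that in any adic ring $\Jac(\Lambda)$ is finitely generated as a right ideal, so $\Lambda/I$ is a finitely presented right $\Lambda$-module; choosing a presentation $\Lambda^a\mto\Lambda^b\mto\Lambda/I\mto 0$ and using that projective limits of finite modules are exact, one sees that $\Lambda/I\tensor_{\Lambda}(-)$ commutes with the projective limit, while $\varprojlim_{J}(T/JT)^{\inertia_v}=T^{\inertia_v}$ by left exactness of invariants and of limits. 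The hypothesis that $T^{\inertia_v}$ is finitely generated is then used only to conclude that $\Lambda/I\tensor_{\Lambda}T^{\inertia_v}$ is finite, which is what makes the stalk discrete (repairing your first step) and the sheaf constructible.
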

\begin{proof}
Let $V$ be the open complement of $U$ by the set of points $v$ with $T^{\inertia_v}\neq T$. Consider a connected \'etale open set $W$ of $V$ and let $L\subset \algc{K}$ be the function field of $W$. Then for any $J\subset I$
$$
(\Lambda/I\tensor_{\Lambda}j_{K*}T/JT)(W)=(T/IT)^{\Gal_L}=(j_{K*}T/IT)(W).
$$
In particular, the restriction of $\Lambda/I\tensor_{\Lambda}j_{K*}T/JT$ to $V$ is a locally constant \'etale sheaf of $\Lambda/I$-modules which is independent of $J$. Now the category of \'etale sheaves of $\Lambda/I$-modules on $U$ which are locally constant on $V$ is equivalent to the category of tuples $(M,(M_v,\phi_v)_{v\in U-V})$ where $M$ is a discrete $\Lambda/I[[\Gal_K]]$-module unramified over $V$, the $M_v$ are discrete $\Gal_{\FF(v)}$-modules,  and $\phi_v\colon M_v\mto M^{\inertia_v}$ are homomorphisms of discrete $\Gal_{\FF(v)}$-modules \cite[Ex.~II.3.16]{Milne:EtCohom}.

By the above considerations, it is clear that the projective limit of the system $(\Lambda/I\tensor_{\Lambda}j_{K*}T/JT)_{J\in\openideals_{\Lambda}}$ exists in the category of \'etale sheaves of $\Lambda/I$-modules which are locally constant on $V$ and coincides with the projective limit taken in the category of all \'etale sheaves of $\Lambda/I$-modules. Moreover, it corresponds to the tuple
$$
(T/IT,(\varprojlim_{J\in\openideals_{\Lambda}}\Lambda/I\tensor_{\Lambda}(T/JT)^{\inertia_v},\phi_v\colon \varprojlim_{J\in\openideals_{\Lambda}}\Lambda/I\tensor_{\Lambda}(T/JT)^{\inertia_v}\mto (T/IT)^{\inertia_v})_{v\in U-V}).
$$
Beware that the projective limit
$$\varprojlim_{J\in\openideals_{\Lambda}}\Lambda/I\tensor_{\Lambda}(T/JT)^{\inertia_v}$$
is a priori taken in the category of discrete $\Lambda/I[[\Gal_{\FF(v)}]]$-modules (i.\,e.\ such that the stabiliser of every element is open in $\Gal_{\FF(v)}$).

In the category of abstract $\Lambda/I[[\Gal_{\FF(v)}]]$-modules, we have
$$
\varprojlim_{J\in\openideals_{\Lambda}}\Lambda/I\tensor_{\Lambda}(T/JT)^{\inertia_v}
=\Lambda/I\tensor_\Lambda\varprojlim_{J\in\openideals_{\Lambda}}(T/JT)^{\inertia_v}
=\Lambda/I\tensor_\Lambda T^{\inertia_v}.
$$
Here, the first equality is justified because projective limits of finite $\Lambda/I$-modules are exact and because $\Lambda/I$ is finitely presented as $\Lambda^{\op}$-module: In any adic ring $\Lambda$, the Jacobson radical $\Jac(\Lambda)$ is finitely generated both as left and as right ideal \cite[Thm.~36.39]{Warner:TopRings}. Therefore, the same is true for all open ideals $I\in\openideals_\Lambda$ and thus,  $\Lambda/I$ is a finitely presented $\Lambda^{\op}$-module.

By assumption, $T^{\inertia_v}$ is a finitely generated $\Lambda$-module. Hence,   $\Lambda/I\tensor_\Lambda T^{\inertia_v}$ is finite and the equality
$$
\varprojlim_{J\in\openideals_{\Lambda}}\Lambda/I\tensor_{\Lambda}(T/JT)^{\inertia_v}=\Lambda/I\tensor_\Lambda T^{\inertia_v}
$$
also holds in the category of discrete $\Lambda/I[[\Gal_{\FF(v)}]]$-modules. This shows that $j_{K*}T_I$ is constructible and that the stalks have the given form.

From the description of the stalks it is also immediate that
$$\Lambda/I\tensor_{\Lambda/J}j_{K*}T_J\isomorph j_{K*}T_I
$$
such that $j_{K*}T$ is indeed an object of $\cat{PDG}^{\cont}(U,\Lambda)$ if $T^{\inertia_v}$ is finitely generated and projective for all closed points $v$ in $U$.
\end{proof}

\begin{rem}\label{rem:non fg stalk}
Note that if $\Lambda$ is noetherian, $T^{\inertia_v}$ is automatically finitely generated. For general adic rings $\Lambda$, this is not true. Assume that $\ell$ is a prime dividing $p-1$ and let $\Lambda$ be the power series ring over $\FF_{\ell}$ in three non-commuting indeterminates $a,b,c$, modulo the relations $ab=0$, $ac=ca$, $(b+1)(c+1)=(c+1)(b+1)^p$. Set $K=\FF_p(t)$ and let $K_\infty=K_\cyc(\sqrt[\ell^\infty]{t})$ be the Kummer extension of $K_\cyc$ obtained by adjoining all $\ell^n$-th roots of $t$. Let $v$ be the point of $\Spec \FF_p[t]$ corresponding to the prime ideal $(t)$. Then $K_\infty/K$ is unramified over the complement $U$ of $v$ in $\Spec \FF_p[t]$ and $K_\infty/K_\cyc$ is totally and tamely
ramified in $v$. The Galois group $G=\Gal(K_\infty/K)$ is the pro-$\ell$-group topologically generated by two elements $\tau$ and $\sigma$, subject to the relation $\sigma\tau=\tau^p\sigma$. We obtain a finitely ramified representation $T$ of $\Gal_K$ over $\Lambda$ by letting $\tau$ act on $\Lambda$ by right multiplication with $b+1$ and $\sigma$ act by right multiplication with $c+1$. Hence, $T^{\inertia_v}$ is the kernel of the right multiplication with $b$, which is the left ideal of $\Lambda$ topologically generated by $ba^i$ for all $i>0$. Clearly, this ideal is not finitely generated.
\end{rem}

\begin{prop}\label{prop:limit property}
Let $\Lambda$ be an adic $\Int_\ell$-algebra, $T$ be a finitely ramified representation of $\Gal_K$ over $\Lambda$, and $U\subset V$ be open dense subschemes of $C$. Assume that $T^{\inertia_v}$ is a finitely generated $\Lambda$-module for all $v\in U$. If $\ell=p$, assume that $V=C$. Then
$$
\HF^k(V,j_{U!}j_{K*}T)=\varprojlim_{J\in\openideals_{\Lambda}}\HF^k(V,j_{U!}j_{K*}T/JT).
$$
for all $k\in\Int$.
\end{prop}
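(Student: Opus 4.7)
The plan is to realise both sides as iterated inverse limits of a common double system and then compare via cofinality.

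I would first establish finiteness of all the relevant cohomology groups. By Proposition~\ref{prop:stalk of modified direct image}, the stalk of $j_{K*}T_I$ at $\xi_v$ is the finite $\Lambda/I$-module $\Lambda/I\tensor_\Lambda T^{\inertia_v}$ (finite because $T^{\inertia_v}$ is finitely generated over $\Lambda$ and $\Lambda/I$ is finite). Hence $j_{U!}j_{K*}T_I$ is a constructible \'etale sheaf on $V$ with finite stalks, and the same holds for $j_{U!}(\Lambda/I\tensor_\Lambda j_{K*}(T/JT))$ and for $j_{U!}j_{K*}(T/JT)$. The standard finiteness theorems for \'etale cohomology of constructible torsion sheaves on the curve $V$ over the finite field $\FF$ then yield finiteness of all groups $\HF^k(V,\cdot)$ appearing below; the requirement $V=C$ in the case $\ell=p$ is needed here because finiteness for coefficients of characteristic $p$ demands properness.

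Finiteness makes every inverse system of cohomology groups automatically Mittag-Leffler. The Milnor $\lim^1$-sequence for Jannsen's continuous cohomology therefore gives
\[
\HF^k(V,j_{U!}j_{K*}T)=\varprojlim_{I\in\openideals_\Lambda}\HF^k(V,j_{U!}j_{K*}T_I).
\]
For each fixed $I$, I would then show
\[
\HF^k(V,j_{U!}j_{K*}T_I)=\varprojlim_{J\in\openideals_\Lambda}\HF^k(V,j_{U!}(\Lambda/I\tensor_\Lambda j_{K*}(T/JT))).
\]
The key point is that $j_{U!}$ is stalk-wise and thus commutes with the inverse limit $j_{K*}T_I=\varprojlim_J \Lambda/I\tensor_\Lambda j_{K*}(T/JT)$ defining $j_{K*}T_I$, while the Godement resolution is also built functorially from stalks and commutes with this limit. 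Combined with the fact that $\Sect(V,-)$ commutes with limits as a right adjoint, one obtains $\RDer\Sect(V,j_{U!}j_{K*}T_I)\isomorph\varprojlim_J\RDer\Sect(V,j_{U!}(\Lambda/I\tensor_\Lambda j_{K*}(T/JT)))$, and another application of the Milnor sequence, using the finiteness above, yields the claim.

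The proof is then completed by cofinality of the diagonal in the double system $(\Lambda/I\tensor_\Lambda j_{K*}(T/JT))_{(I,J)\in\openideals_\Lambda\times\openideals_\Lambda}$: for any $(I_0,J_0)$, every $J\subseteq I_0\cap J_0$ gives $(J,J)\leq(I_0,J_0)$, and moreover $\Lambda/J\tensor_\Lambda j_{K*}(T/JT)=j_{K*}(T/JT)$ since the latter is already a sheaf of $\Lambda/J$-modules. Together with the commutation of iterated inverse limits this gives
\[
\varprojlim_I\varprojlim_J\HF^k(V,j_{U!}(\Lambda/I\tensor_\Lambda j_{K*}(T/JT)))=\varprojlim_J\HF^k(V,j_{U!}j_{K*}(T/JT)),
\]
which combined with the identifications above is the desired identity. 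The main technical obstacle is the commutation of $\RDer\Sect$ with the inverse limit defining $j_{K*}T_I$: it relies both on the Godement resolution being stalk-wise and on the vanishing of $\lim^1$ for the resulting inverse system of complexes, which in turn uses the finiteness from the first step---this is ultimately the reason for the hypothesis $V=C$ when $\ell=p$ and for the assumption that $T^{\inertia_v}$ be finitely generated.
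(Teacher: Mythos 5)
The overall architecture of your argument (finiteness of the finite-level cohomology groups, Milnor sequences, cofinality of the diagonal) is sensible, and your first and last steps are fine; the gap is in the middle step. The claimed identification
\[
\RDer\Sect(V,j_{U!}j_{K*}T_I)\isomorph\varprojlim_{J\in\openideals_\Lambda}\RDer\Sect\bigl(V,j_{U!}(\Lambda/I\tensor_\Lambda j_{K*}(T/JT))\bigr)
\]
does not follow from ``the Godement resolution is stalk-wise'' plus ``$\Sect(V,-)$ commutes with limits'' plus finiteness of the cohomology at each finite level. That combination of principles would equally prove $\HF^1(V,\varprojlim_n\Int/\ell^n)=\varprojlim_n\HF^1(V,\Int/\ell^n)$ for the constant sheaves $\Int/\ell^n$; this is false, since the limit sheaf is the constant sheaf with value $\Int_\ell$, whose $\HF^1$ consists of homomorphisms of $\fundG(V)$ into the \emph{discrete} group $\Int_\ell$ and hence vanishes, while the right-hand side is the usually non-zero continuous $\HF^1$ with $\Int_\ell$-coefficients. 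Ordinary \'etale cohomology simply does not commute with inverse limits of sheaves, even when every finite-level cohomology group is finite --- this is exactly why Jannsen's continuous cohomology and the stabilised direct image of Def.~\ref{defn:modified direct image} are needed --- and correspondingly the Godement resolution of a limit sheaf is not the limit of the Godement resolutions.

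What makes the statement true here is not finiteness of cohomology but the fact that the system $(\Lambda/I\tensor_\Lambda j_{K*}(T/JT))_J$ is pro-isomorphic to the essentially constant one: the kernel and cokernel of the natural map of systems $(j_{K*}T_J)_J\mto(j_{K*}(T/JT))_J$ are concentrated on the finitely many ramified points of $U$, have finite stalks there, and have vanishing stalkwise inverse limits by Prop.~\ref{prop:stalk of modified direct image} (this is where the hypothesis that $T^{\inertia_v}$ be finitely generated is used); hence these systems are Mittag-Leffler zero and have vanishing continuous cohomology by \cite[Lem.~1.11]{Jannsen:ContCohom}. Since the left-hand side of the proposition is by convention the continuous cohomology of the system $(j_{U!}j_{K*}T_J)_J$, this already identifies it with the continuous cohomology of $(j_{U!}j_{K*}(T/JT))_J$, and finiteness of the groups $\HF^k(V,j_{U!}j_{K*}T/JT)$ (Deligne's finiteness theorem for $\ell\neq p$, properness of $C$ for $\ell=p$) then kills the $\varprojlim^1$-term and gives the asserted inverse limit; no double limit is needed. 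This Mittag-Leffler-zero comparison is the actual content of the proof and is the idea missing from your argument.
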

\begin{proof}
Let $(\sheaf{K}_J)_{J\in\openideals_{\Lambda}}$ and $(\sheaf{C}_J)_{J\in\openideals_{\Lambda}}$ denote the kernel and cokernel of the natural morphism of systems
$$
(j_{K*}T_J)_{J\in\openideals_{\Lambda}}\mto (j_{K*}T/JT)_{J\in\openideals_{\Lambda}}
$$
of \'etale sheaves on $U$. The restriction of $(\sheaf{K}_J)_{J\in\openideals_{\Lambda}}$ to the complement of
$$
\Sigma\coloneqq\set{v\in U\given\text{$T^{\inertia_v}\neq T$}}
$$
in $U$ is $0$. For $v\in\Sigma$ the stalk of $\sheaf{K}_J$ in the geometric point $\xi_v$ is a finite abelian group for each $J\in\openideals_{\Lambda}$. From Prop.~\ref{prop:stalk of modified direct image} we conclude
$$
T^{\inertia_v}=\varprojlim_{J\in\openideals_{\Lambda}}(j_{K*}T_J)_{\xi_v}=\varprojlim_{J\in\openideals_{\Lambda}}(j_{K*}T/JT)_{\xi_v}.
$$
Hence, the projective limit of the system $((\sheaf{K}_J)_{\xi_v})_{J\in\openideals_{\Lambda}}$ is $0$. It follows that the system must be Mittag-Leffler zero in the sense of \cite[Def.~1.10]{Jannsen:ContCohom}: For each natural number $n$ there exists a $m>n$ such that the transition maps
$$
(\sheaf{K}_{\Jac(\Lambda)^m})_{\xi_v}\mto (\sheaf{K}_{\Jac(\Lambda)^n})_{\xi_v}
$$
is the zero map. We conclude that the system of sheaves $(\sheaf{K}_J)_{J\in\openideals_{\Lambda}}$ are also Mittag-Leffler zero. The same remains true for $(j_{U!}\sheaf{K}_J)_{J\in\openideals_{\Lambda}}$ with $j_U\colon U\mto V$ denoting the natural inclusion. Now \cite[Lem.~1.11]{Jannsen:ContCohom} implies
$$
\HF^k(V,(j_{U!}\sheaf{K}_J)_{J\in\openideals_{\Lambda}})=0.
$$
The same argumentation also shows
$$
\HF^k(V,(j_{U!}\sheaf{C}_J)_{J\in\openideals_{\Lambda}})=0.
$$
Since the cohomology  groups $\HF^k(V,j_{U!}j_{K*}T/JT)$ are finite if $\ell\neq p$ \cite[Th. finitude, Cor.~1.10]{SGA4h} or if $V=C$  \cite[Cor.~VI.2.8]{Milne:EtCohom} we conclude
$$
\HF^k(V,j_{U!}j_{K*}T)=\varprojlim_{J\in\openideals_{\Lambda}}\HF^k(V,j_{U!}j_{K*}T/JT).
$$
\end{proof}

We will now fix an admissible extension $K_\infty/K$ with Galois group $G=H\rtimes \Gamma$. For a closed point $v$ of $C$ we will write $\kinert_v$ and $\ginert_v$ for the kernel and the image of the homomorphism $\inertia_v\mto G$, respectively. We also fix an open dense subscheme $U$ of $C$, an adic $\Int_\ell$-algebra $\Lambda$ and a finitely ramified representation $T$ of $\Gal_K$ over $\Lambda$. We let $\Lambda[[G]]^{\sharp}$ denote the $\Lambda[[G]][[\Gal_K]]$-module $\Lambda[[G]]$ with $g\in\Gal_K$ acting by the image of $g^{-1}$ in $G$ from the right. Note that $\Lambda[[G]]^{\sharp}\tensor_\Lambda T$ is a finitely ramified representation of $\Gal_K$ over $\Lambda[[G]]$.

\begin{cor}\label{cor:limit porperty for admissible extension}
Assume that $(\Lambda[[G]]^{\sharp}\tensor_\Lambda T)^{\inertia_v}$ is finitely generated for each closed point $v$ in $U$. If $\ell= p$, assume $V=C$. Then
$$
\HF^k(V,j_{U!}j_{K*}(\Lambda[[G]]^{\sharp}\tensor_\Lambda T))=\varprojlim_{K\subset_f L\subset K_\infty}\HF^k(V_L,j_{U_{L}!}j_{L*}T)
$$
for each $k\in\Int$, where $L$ runs through the finite Galois extensions of $K$ inside $K_\infty$.
\end{cor}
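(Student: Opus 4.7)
The plan is to apply Proposition~\ref{prop:limit property} to the finitely ramified representation $\Lambda[[G]]^\sharp\tensor_\Lambda T$ over the adic ring $\Lambda[[G]]$, whose hypothesis is exactly the one imposed in the corollary, and then rewrite the resulting inverse limit over $\openideals_{\Lambda[[G]]}$ by means of the cofinal family of ideals $J_{N,I}\coloneqq\ker(\Lambda[[G]]\to\Lambda/I[G/N])$, indexed by $I\in\openideals_\Lambda$ and by open normal subgroups $N$ of $G$. For such $J_{N,I}$, setting $L\coloneqq K_\infty^N$, the quotient $(\Lambda/I[G/N])^\sharp\tensor_\Lambda T$ is, as a discrete $\Lambda/I[[\Gal_K]]$-module, the induced representation $\mathrm{Ind}_{\Gal_L}^{\Gal_K}(T/IT)$, where $T/IT$ is restricted along $\Gal_L\subset\Gal_K$ (the preimage of $N$).

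The heart of the proof is a Shapiro-type identification. Write $\pi\colon V_L\to V$, $\pi_U\colon U_L\to U$ and $\pi''\colon\Spec L\to\Spec K$ for the finite morphisms induced by $L/K$. The standard correspondence between induction of Galois modules and \'etale direct image gives $\pi''_*(T/IT)=\mathrm{Ind}_{\Gal_L}^{\Gal_K}(T/IT)$ on $\Spec K$. Since on finite coefficients the modified direct image of Definition~\ref{defn:modified direct image} agrees with the ordinary $j_*$, composing direct images along the two factorisations of $\Spec L\to U$ yields
\[
j_{K*}((\Lambda/I[G/N])^\sharp\tensor_\Lambda T)=\pi_{U,*}j_{L*}(T/IT).
\]
Furthermore, $\pi$ is finite and hence proper with $\pi_*=\pi_!$, so $\pi_*$ commutes with $j_!$; exactness of $\pi_*$ on \'etale sheaves under a finite morphism then gives
\[
\HF^k(V,j_{U!}j_{K*}((\Lambda/I[G/N])^\sharp\tensor_\Lambda T))=\HF^k(V_L,j_{U_L!}j_{L*}(T/IT)).
\]

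Assembling these steps produces
\[
\HF^k(V,j_{U!}j_{K*}(\Lambda[[G]]^\sharp\tensor_\Lambda T))=\varprojlim_{N,\,I}\HF^k(V_L,j_{U_L!}j_{L*}(T/IT)).
\]
Swapping the two filtered inverse limits and applying Proposition~\ref{prop:limit property} once more, this time to $T$ over $L$ with ring $\Lambda$, collapses the inner limit over $I$ into $\HF^k(V_L,j_{U_L!}j_{L*}T)$. The hypotheses of this second application follow from ours: the condition $V_L=C_L$ in the $\ell=p$ case is inherited from $V=C$, and the finite generation of $T^{\inertia_w}$ over $\Lambda$ for $w$ above $v$ can be extracted from the Mackey-type decomposition $(\Lambda[G/N]^\sharp\tensor_\Lambda T)^{\inertia_v}\cong\bigoplus_{w\mid v}T^{\inertia_w}$ combined with the hypothesis on $(\Lambda[[G]]^\sharp\tensor_\Lambda T)^{\inertia_v}$. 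The remaining outer limit over open normal subgroups of $G$ is the limit over finite Galois extensions $L/K$ inside $K_\infty$, which yields the claim. The main technical obstacle is the Shapiro-type compatibility in the middle paragraph: although it rests on standard base-change and exactness properties of finite morphisms, one must be careful with the modified direct image of Definition~\ref{defn:modified direct image} and with the interaction of $\pi_*$ and the extension by zero in the adic setting.
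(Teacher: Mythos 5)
Your argument is essentially the paper's own proof, just written out in more detail: apply Prop.~\ref{prop:limit property} to $\Lambda[[G]]^{\sharp}\tensor_\Lambda T$ over $\Lambda[[G]]$, use the cofinal family of ideals coming from finite subextensions $L$ and open ideals of $\Lambda$, identify the finite-level induced sheaf with the pushforward $f_*j_{U_L!}j_{L*}(T/IT)$ along the finite morphism $f\colon V_L\to V$ (the paper cites \cite[Cor.~II.3.6]{Milne:EtCohom} for the exactness of $f_*$), and reassemble the double limit. The one point where you go beyond what the paper writes — deducing finite generation of $T^{\inertia_w}$ over $\Lambda$ from the hypothesis on $(\Lambda[[G]]^{\sharp}\tensor_\Lambda T)^{\inertia_v}$ via the Mackey decomposition — is a step the paper leaves entirely implicit; note only that since taking $\inertia_v$-invariants is merely left exact, this descent is not automatic and deserves the care you signal.
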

\begin{proof}
By Prop.~\ref{prop:limit property} we have
$$
\HF^k(V,j_{U!}j_{K*}(\Lambda[[G]]^{\sharp}\tensor_\Lambda T))=\varprojlim_{K\subset_f L\subset K_\infty}\HF^k(V,j_{U!}j_{K*}(\Lambda[\Gal(L/K)]^{\sharp}\tensor_\Lambda T)).
$$
Let $f\colon V_L\mto V$ denote the finite morphism of schemes corresponding to the finite extension $L/K$. Then
$$
\HF^k(V,j_{U!}j_{K*}(\Lambda[\Gal(L/K)]^{\sharp}\tensor_\Lambda T))=\HF^k(V,f_*j_{U_{L}!}j_{L*}T)=\HF^k(V_L,j_{U_{L}!}j_{L*}T)
$$
by \cite[Cor.~II.3.6]{Milne:EtCohom}.
\end{proof}

\begin{prop}\label{prop:constructability condition}
Assume that for every closed point $v$ of $U$ one of the following conditions is satisfied:
\begin{enumerate}
\item $T^{\kinert_v}=0$,
\item $\ginert_v$ contains an element of infinite order,
\item $T^{\kinert_v}$ is a finitely generated, projective $\Lambda$-module and $\ginert_v$ contains no element of order $\ell$.
\end{enumerate}
Then $j_{K*}(\Lambda[[G]]^{\sharp}\tensor_\Lambda T)$ is an object in $\cat{PDG}^{\cont}(U,\Lambda[[G]])$ and for every $I\in\openideals_{\Lambda[[G]]}$,
$$
(j_{K*}(\Lambda[[G]]^{\sharp}\tensor_\Lambda T)_I)_{\xi_v}=0
$$
if $v$ satisfies condition $(1)$ or $(2)$,
$$
(j_{K*}(\Lambda[[G]]^{\sharp}\tensor_\Lambda T)_I)_{\xi_v}=
\Lambda[[G]]/I\tensor_{\Lambda[[G]]}(\Lambda[[G]]^{\sharp}\tensor_\Lambda T^{\kinert_v})^{\ginert_v}
$$
if $v$ satisfies condition $(3)$.
\end{prop}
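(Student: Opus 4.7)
The plan is to reduce the claim to Proposition~\ref{prop:stalk of modified direct image} applied to the adic ring $\Lambda[[G]]$ and the finitely ramified representation $T'\coloneqq\Lambda[[G]]^\sharp\tensor_\Lambda T$ over $\Lambda[[G]]$. That proposition requires $(T')^{\inertia_v}$ to be finitely generated projective as a $\Lambda[[G]]$-module for every closed point $v\in U$; its conclusion then identifies the stalk of $j_{K*}T'_I$ at $\xi_v$ with $\Lambda[[G]]/I\tensor_{\Lambda[[G]]}(T')^{\inertia_v}$, matching the formulae in the statement. So the entire task is to compute $(T')^{\inertia_v}=((T')^{\kinert_v})^{\ginert_v}$ case by case and verify the required projectivity.

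First I would handle the $\kinert_v$-layer. Since $\kinert_v=\ker(\inertia_v\to G)$, the $\sharp$-twisted action of any $\sigma\in\kinert_v$ on $\Lambda[[G]]^\sharp$ is trivial, so the action on $\Lambda[[G]]^\sharp\tensor_\Lambda T$ is $1\tensor(\sigma\text{ on }T)$. Using that $T$ is finitely generated projective over $\Lambda$ (write $T\oplus T''\isomorph\Lambda^n$), the functor $\Lambda[[G]]^\sharp\tensor_\Lambda(-)$ preserves equalizers on $\Lambda$-modules and in particular commutes with the formation of $\kinert_v$-fixed points, yielding
\[
 (\Lambda[[G]]^\sharp\tensor_\Lambda T)^{\kinert_v}=\Lambda[[G]]^\sharp\tensor_\Lambda T^{\kinert_v}.
\]
In case~(1), this right-hand side already vanishes, so $(T')^{\inertia_v}=0$ and Prop.~\ref{prop:stalk of modified direct image} gives the asserted vanishing of the stalk.

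In cases~(2) and~(3) I would then study the residual $\ginert_v$-action on $\Lambda[[G]]^\sharp\tensor_\Lambda T^{\kinert_v}$, which is well-defined on $T^{\kinert_v}$ independently of any choice of lift of $\inertia_v/\kinert_v$ into $\inertia_v$. For case~(2), fix $\gamma\in\ginert_v$ of infinite order; then $\overline{\langle\gamma\rangle}\subset G$ is infinite procyclic, $\Lambda[[G]]$ is pro-free as a right $\Lambda[[\overline{\langle\gamma\rangle}]]$-module, and $\gamma-1$ acts as a non-zero-divisor on $\Lambda[[G]]^\sharp$ by right multiplication. A direct computation using this non-zero-divisor property then shows that the $\gamma$-fixed submodule of $\Lambda[[G]]^\sharp\tensor_\Lambda T^{\kinert_v}$ vanishes, hence so does $(T')^{\inertia_v}$. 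For case~(3), the exclusion of cases~(1) and~(2) together with the structural decomposition $G=H\rtimes\Gamma$, $\Gamma\isomorph\Int_\ell$ and $H$ virtually pro-$\ell$ forces $\ginert_v\subset H$ to be finite of order coprime to $\ell$: any element projecting non-trivially to $\Gamma$ has infinite order; inside an open pro-$\ell$ subgroup of $H$ every non-trivial element has $\ell$-power or infinite order, both excluded, so $\ginert_v$ is finite; Cauchy's theorem plus the absence of order-$\ell$ elements rules out $\ell$ in its order. Consequently $|\ginert_v|$ is invertible in the adic $\Int_\ell$-algebra $\Lambda[[G]]$, the averaging idempotent $e\coloneqq|\ginert_v|^{-1}\sum_{\gamma\in\ginert_v}\gamma\in\Lambda[[G]]$ is defined, and $(\Lambda[[G]]^\sharp\tensor_\Lambda T^{\kinert_v})^{\ginert_v}=e\cdot(\Lambda[[G]]^\sharp\tensor_\Lambda T^{\kinert_v})$ is a direct summand of the finitely generated projective $\Lambda[[G]]$-module $\Lambda[[G]]^\sharp\tensor_\Lambda T^{\kinert_v}$. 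Invoking Prop.~\ref{prop:stalk of modified direct image} then yields both the required projectivity of $(T')^{\inertia_v}$ and the stalk formula.

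The technical heart of the argument — and the main obstacle I anticipate — is the commutation $(\Lambda[[G]]^\sharp\tensor_\Lambda T)^{\kinert_v}=\Lambda[[G]]^\sharp\tensor_\Lambda T^{\kinert_v}$ at the level of profinite modules, for which the finite generation and projectivity of $T$ over $\Lambda$ are crucial, together with the non-zero-divisor calculation in case~(2), which rests on the pro-freeness of $\Lambda[[G]]$ over $\Lambda[[\overline{\langle\gamma\rangle}]]$ and requires some care since $T^{\kinert_v}$ need not be finitely generated over a non-noetherian $\Lambda$ (cf.\ Rem.~\ref{rem:non fg stalk}).
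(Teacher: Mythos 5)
Your overall strategy is the paper's: compute $(\Lambda[[G]]^\sharp\tensor_\Lambda T)^{\inertia_v}$ case by case and feed the result into Prop.~\ref{prop:stalk of modified direct image}, with case (3) handled exactly as in the paper (exclusion of (1)--(2) forces $\ginert_v\subset H$ finite of order prime to $\ell$, then the averaging idempotent). However, two steps as written would fail. First, the displayed identity $(\Lambda[[G]]^\sharp\tensor_\Lambda T)^{\kinert_v}=\Lambda[[G]]^\sharp\tensor_\Lambda T^{\kinert_v}$ with the \emph{ordinary} tensor product is not correct in general: forming $\kinert_v$-invariants is the kernel of a map into the (infinite) product $\prod_{\sigma\in\kinert_v}T$, and the ordinary tensor product neither commutes with infinite products nor is left exact on non-finitely-presented modules. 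The paper works with the completed tensor product: $\Lambda[[G]]\ctensor_\Lambda(-)$ is exact on compact $\Lambda$-modules (because $\Lambda[[G]]$ is projective as a compact $\Lambda^\op$-module) and commutes with products, and it agrees with $\tensor_\Lambda$ only on finitely presented modules. The correct statement is therefore $(\Lambda[[G]]^\sharp\tensor_\Lambda T)^{\kinert_v}=\Lambda[[G]]^\sharp\ctensor_\Lambda T^{\kinert_v}$; since $T^{\kinert_v}$ need not be finitely generated (Rem.~\ref{rem:non fg stalk}), the distinction matters precisely in case (2). You flag this as the ``technical heart,'' but the fix is not a refinement of the equalizer argument; it requires introducing $\ctensor$.

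Second, and more substantively, your case (2): the element $\gamma\in\ginert_v$ acts \emph{diagonally} on $\Lambda[[G]]^\sharp\ctensor_\Lambda T^{\kinert_v}$, by right multiplication with $\gamma^{-1}$ on the first factor and by the residual inertia action on the second. The non-zero-divisor property of $1-\gamma$ on $\Lambda[[G]]^\sharp$ controls only the first factor and does not by itself give injectivity of the diagonal endomorphism $\id-\gamma$: already for $T^{\kinert_v}\isomorph\Lambda$ with $\gamma$ acting by a unit $\lambda$ one is reduced to showing $1-\lambda\gamma^{-1}$ is injective, which is a different (and for general adic $\Lambda$ not automatic) statement. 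The missing idea is the paper's reduction: write $T^{\kinert_v}$ (or $T$) as a limit of finite modules $M$, replace $\gamma$ by a power $\tau=\gamma^N$ of infinite order acting \emph{trivially} on $M$, so that $\id-\tau$ becomes $(1-\tau)\tensor\id_M$, and deduce injectivity from the exact sequence $0\mto\Lambda[[G]]^\sharp\xrightarrow{1-\tau}\Lambda[[G]]^\sharp\mto\Lambda[[G/\Upsilon]]\mto 0$ of projective compact $\Lambda^\op$-modules, which remains exact after $\tensor_\Lambda M$; the vanishing for $T^{\kinert_v}$ then follows by passing to the limit. Without this reduction your ``direct computation'' does not go through.
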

\begin{proof}
For each compact $\Lambda$-module $M$, write $\mathcal{U}_M$ for the lattice of open submodules of $M$. We note that $\Lambda[[G]]$ is a projective limit of finitely generated, free $\Lambda^{\op}$-modules and hence, a projective object in the category of compact $\Lambda^{\op}$-modules.  The completed tensor product
\[
\Lambda[[G]]\ctensor_{\Lambda} M=\varprojlim_{J\in\openideals_{\Lambda[[G]]}}\varprojlim_{U\in\mathcal{U}_M}\Lambda[[G]]/J\tensor_{\Lambda}M/U
\]
is thus an exact functor from the category of compact $\Lambda$-modules to the category of compact $\Lambda[[G]]$-modules. Moreover, we have
\[
\Lambda[[G]]\ctensor_{\Lambda}M=\Lambda[[G]]\tensor_{\Lambda}M
\]
if $M$ is finitely presented \cite[Prop. 1.14]{Witte:Splitting}.
In particular,
\[
\Lambda[[G]]\ctensor_{\Lambda} T^{\kinert_v}\isomorph (\Lambda[[G]]\ctensor_{\Lambda}T)^{\kinert_v}\isomorph (\Lambda[[G]]\tensor_{\Lambda}T)^{\kinert_v}.
\]

If $T^{\kinert_v}=0$, this obviously implies
\[
(\Lambda[[G]]\tensor_{\Lambda}T)^{\inertia_v}=0.
\]

Assume that $\ginert_v$ contains an element of infinite order and let $M$ be any finite $\Lambda$-module with a continuous $\ginert_v$-action. We can then find an element $\tau$ of infinite order in an $\ell$-Sylow subgroup of $\ginert_v$ which operates trivially on $M$. Consider the subgroup $\Upsilon\isomorph\Int_{\ell}$ of $\ginert_v$ which is topologically generated by $\tau$. By choosing a continuous map of profinite spaces
\[
G/\Upsilon \mto G
\]
that is a section of the projection map, we can view $\Lambda[[G]]^{\sharp}$ as a projective limit of finitely generated, free $\Lambda^{\op}[[\Upsilon]]$-modules and conclude that $1-\tau$ acts as non-zero divisor. In particular, we obtain an exact sequence of projective compact $\Lambda^{\op}$-modules
\[
0\mto\Lambda[[G]]^{\sharp}\xrightarrow{1-\tau}\Lambda[[G]]^{\sharp}\mto \Lambda[[G/\Upsilon]]\mto 0.
\]
The sequence remains exact after taking the tensor product over $\Lambda$ with $M$. Hence,
\[
(\Lambda[[G]]^{\sharp}\tensor_{\Lambda}M)^{\Upsilon}=
\ker\left(\Lambda[[G]]^{\sharp}\tensor_{\Lambda}M\xrightarrow{\id-\tau\tensor 1}\Lambda[[G]]^{\sharp}\tensor_{\Lambda}M\right)
=0.
\]

Recall that the powers of the Jacobson radical $\Jac(\Lambda)$ are finitely generated as left $\Lambda$-modules \cite[Thm.~36.39]{Warner:TopRings}. In particular, any finite $\Lambda$-module $M$ is finitely presented: For some $k$, the kernel $K$ of a surjection
\[
P\mto M
\]
with $P$ a finitely generated, free $\Lambda$-module contains the finitely generated module $\Jac(\Lambda)^kP$ as an open submodule. Hence, the tensor product of $M$ with a compact $\Lambda$-module agrees with the completed tensor product.

Writing the compact $\Lambda[[\ginert_v]]$-module $T^{\kinert_v}$ as projective limit of finite $\Lambda[[\ginert_v]]$-modules, we conclude
\[
(\Lambda[[G]]^{\sharp}\tensor_{\Lambda}T)^{\inertia_v}=(\Lambda[[G]]^{\sharp}\ctensor_{\Lambda}T^{\kinert_v})^{\ginert_v}=0.
\]

Assume that $\ginert_v$ contains no element of infinite order nor an element of order $\ell$ and that $T^{\kinert_v}$ is a finitely generated, projective $\Lambda$-module. Then $\ginert_v$ is a finite group of order $d$ prime to $\ell$. Set
\[
e_{\ginert_v}=\frac{1}{d}\sum_{\sigma\in\ginert_v}\sigma.
\]
Then $e_{\ginert_v}$ is a central idempotent in $\Lambda[\ginert_v]$ and
\[
(\Lambda[[G]]^{\sharp}\tensor_{\Lambda}T)^{\inertia_v}=e_{\ginert_v}(\Lambda[[G]]^{\sharp}\tensor_{\Lambda}T^{\kinert_v})
\]
is a finitely generated and projective $\Lambda[[G]]$-module.

We may now apply Prop.~\ref{prop:stalk of modified direct image} to conclude that $j_{K*}(\Lambda[[G]]^{\sharp}\tensor_\Lambda T)$ is an object in $\cat{PDG}^{\cont}(U,\Lambda[[G]])$.
\end{proof}

\begin{rem}\label{rem:principal covering}
If $K_{\infty}/K$ is unramified over $U$ and $f\colon U_{K_\infty}\mto U$ is the corresponding principal covering with Galois group $G$ in the sense of \cite[Def.~2.1]{Witte:MCVarFF}, then
$$
j_{K*}(\Lambda[[G]]^{\sharp}\tensor_\Lambda T)=f_!f^*j_{K*}T
$$
in the notation of \cite[\S 6]{Witte:MCVarFF}, see also \cite[Rem.~6.10]{Witte:MCVarFF}.
\end{rem}

\begin{defn}
Let $\Lambda$ be an adic $\Int_\ell$-algebra, $K_\infty/K$ an admissible extension, $v$ a closed point of $C$ and $T$ a finitely ramified representation of $\Gal_K$ over $\Lambda$.
\begin{enumerate}
\item We say that $T$ has projective stalks in $v$ if $T^{\inertia_v}$ is a finitely generated, projective $\Lambda$-module.
\item We say that $T$ has projective local cohomology in $v$ if $\ell\neq p$ and $\HF^1(\inertia_v,T)$ is a finitely generated, projective $\Lambda$-module.
\item We say that $T$ has projective stalks in $v$ over $K_\infty$ if $T^{\kinert_v}$ is finitely generated and projective.
\item We say that $T$ has projective local cohomology in $v$ over $K_\infty$ if $\ell\neq p$ and $\HF^1(\kinert_v,T)$ is a finitely generated, projective $\Lambda$-module.
\item We say that $T$ has ramification prime to $\ell$ in $v$ if the image of $\inertia_v$ in the automorphism group of $T$ has trivial $\ell$-Sylow subgroups.
\item We say that $K_\infty/K$ has ramification prime to $\ell$ in $v$ if $\ginert_v$ has trivial $\ell$-Sylow subgroups.
\item We say that $K_\infty/K$ has non-torsion ramification in $v$ if $\ginert_v$ contains an element of infinite order.
\end{enumerate}
\end{defn}

In particular, if $T$ has projective stalks over $U$, then $j_{K*}T$ is an object of $\cat{PDG}^{\cont}(U,\Lambda)$.

\begin{exmpl}\label{exmpl:ramification types}
Let $p$ denote the characteristic of $K$.
\begin{enumerate}
\item If $T$ has ramification prime to $\ell$ in a closed point $v$ and $\ell\neq p$, then it also has projective local cohomology in $v$. If $\ell=p$, then it has projective stalks in $v$.
\item If $T$ has projective local cohomology in $v$ (in $v$ over $K_\infty$), then it also has projective stalks in $v$ (in $v$ over $K_\infty$).
\item Assume $\ell\neq p$ and that $\Lambda$ has small finitistic projective dimension $0$, i.\,e.\ every finitely generated $\Lambda$-module of finite projective dimension is projective. Then $T$ has projective local cohomology in $v$ if and only if it has projective stalks in $v$. For example, this is true if $\Lambda$ is finite and commutative \cite[Thm. (Kaplansky)]{Bass:FinitisticDimension}. More generally, for any finite $\Lambda$, it is true precisely if the left annihilator of every proper right ideal of $\Lambda$ is non-zero \cite[Thm.~6.3]{Bass:FinitisticDimension}. It is not true for
    \[
    \Lambda=\begin{pmatrix}
    \Int/(\ell^2) & (\ell)/(\ell^2) \\
    \Int/(\ell^2) & \Int/(\ell^2)
    \end{pmatrix}
    \]
    \cite[Cor.~1.12]{KirkmanKuzSmall:Finitistic}.
\item If $\Lambda$ is noetherian of global dimension less or equal to $2$, then $T$ has projective stalks in all closed points $v$ of $C$, as $T^{\inertia_v}$ is the kernel of the continuous homomorphism of projective compact $\Lambda$-modules
   \[
   T\mto \prod_{\sigma\in\inertia_v}T,\qquad t\mapsto (t-\sigma t)_{\sigma\in\inertia_v}.
   \]
   As the global dimension is assumed to be less or equal to $2$, $T^{\inertia_v}$ is projective as compact $\Lambda$-module. As $\Lambda$ is noetherian, $T^{\inertia_v}$ is finitely generated and therefore, also projective as abstract $\Lambda$-module. The same argument shows that $T$ has projective stalks over $K_\infty$ in all closed points $v$ of $C$.
\comment{To do: Does projective stalks of $\Lambda[[G]]^\sharp$ in $v$ imply no elements of order $\ell$ in $\ginert_v$?}
\item Assume that $T$ has projective stalks  over $K_\infty$ and $K_\infty/K$ has ramification prime to $\ell$ in all closed points of $U$. Then $T$ has projective stalks over $U$. Moreover, $\Lambda[[G]]^\sharp\tensor_{\Lambda} T$ also has projective stalks over $U$ by Prop.~\ref{prop:constructability condition}, but neither of the two assumptions is necessary for this conclusion. The same remains true if one replaces ``projective stalks'' by ``projective local cohomology''.
\item It may happen that $T$ has projective stalks, but does not have projective stalks over $K_\infty$ in a closed point $v$ of $U$. For example, $T^{\inertia_v}$ can be trivial, while $T^{\kinert_v}$ is a non-trivial $\Lambda$-module that is not projective.
\item If $\ell\neq p$, then $K_\infty/K$ has non-torsion ramification in $v$ if and only if $\ginert_v$ is infinite. If $\ell=p$, it may happen that $\ginert_v$ is an infinite torsion group.
\end{enumerate}
\end{exmpl}

\begin{rem}
If $\Lambda$ is a noetherian adic $\Int_\ell$-algebra of finite global dimension, then one can modify Def.~\ref{defn:modified direct image} by choosing for each of the finitely many points $v$ for which $T^{\inertia_v}$ is not projective a resolution $\cmplx{P}$ of $T^{\inertia_v}$ by finitely generated, projective $\Lambda$-modules and replacing the stalk of $j_{K*}T_I$ in $v$ by $\Lambda/I\tensor_{\Lambda}\cmplx{P}$ for each open two-sided ideal $I$ of $\Lambda$.
\end{rem}



\subsection{Properties of Selmer complexes}

From now on, we fix two open dense subschemes $V$ and $W$ of $C$ with $V\cup W=C$ and set
$$
\Sigma_V\coloneqq C-V,\qquad \Sigma_W\coloneqq C-W,\qquad U\coloneqq V\cap W.
$$
We also fix a prime $\ell$, an adic $\Int_\ell$-algebra $\Lambda$ and a finitely ramified representation $T$ of $\Gal_K$ over $\Lambda$.

The complexes
$$
\cmplx{\SC}_{\Sigma_V,\Sigma_W}(T)\coloneqq\RDer\Sect(V,j_{U!}j_{K*}(\Lambda[[G]]^{\sharp}\tensor_\Lambda T))
$$
may be viewed as Selmer complexes in the sense of Nekov{\'a}{\v{r}} \cite[\S 6]{Nekovar:SelmerComplexes}, with unramified local conditions for each point $v$ of $U$ where $(\Lambda[[G]]^{\sharp}\tensor_\Lambda T)$ is ramified, full local conditions in each point of $\Sigma_V$ and empty local conditions in each point in $\Sigma_W$. Below, we will investigate their properties.

\begin{lem}\label{lem:perfectness}
Assume that $\Lambda[[G]]^{\sharp}\tensor_{\Lambda}T$ has projective stalks over $U$. If $\ell=p$, we also assume $V=C$. \begin{enumerate}
\item The complex
$$
\RDer\Sect(V,j_{U!}j_{K*}(\Lambda[[G]]^{\sharp}\tensor_\Lambda T))
$$
is an object of $\cat{PDG}^{\cont}(\Lambda[[G]])$.
\item If $U'\subset U$ is an open dense subscheme such that $K_\infty/K$ has non-torsion ramification over the complement of $U'$ in $U$, then
$$
\RDer\Sect(V,j_{U'!}j_{K*}(\Lambda[[G]]^{\sharp}\tensor_\Lambda T))\mto\RDer\Sect(V,j_{U!}j_{K*}(\Lambda[[G]]^{\sharp}\tensor_\Lambda T))
$$
is a weak equivalence in $\cat{PDG}^{\cont}(\Lambda[[G]])$.
\end{enumerate}
The same is true for $V$ replaced by $\algc{V}$.
\end{lem}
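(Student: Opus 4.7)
The plan is to reduce both claims to the functoriality established in Sections~\ref{ss:Framework} and~\ref{ss:perfect complexes of adic sheaves} together with the stalk computations of Prop.~\ref{prop:stalk of modified direct image} and Prop.~\ref{prop:constructability condition}.

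For part (1), I would first apply Prop.~\ref{prop:stalk of modified direct image} with the adic ring $\Lambda[[G]]$ in place of $\Lambda$ and the finitely ramified representation $\Lambda[[G]]^\sharp\tensor_\Lambda T$ in place of $T$. The projective-stalks hypothesis is precisely what is required by that proposition to conclude that $j_{K*}(\Lambda[[G]]^\sharp\tensor_\Lambda T)$ is an object of $\cat{PDG}^{\cont}(U,\Lambda[[G]])$. Next, extension by zero along the open immersion $j_U\colon U\mto V$ preserves flatness of stalks (the new stalks are either unchanged or zero), preserves constructibility, and commutes with the tensor products entering Def.~\ref{defn:PDGcont(X,Lambda)}, so $j_{U!}$ defines a Waldhausen exact functor $\cat{PDG}^{\cont}(U,\Lambda[[G]])\mto\cat{PDG}^{\cont}(V,\Lambda[[G]])$. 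Composing with $\RDer\Sect(V,\cdot)$ from \S\ref{ss:perfect complexes of adic sheaves}, which is Waldhausen exact into $\cat{PDG}^{\cont}(\Lambda[[G]])$ whenever either $V$ is proper or the characteristic is invertible in the coefficient ring, gives the desired assertion: if $\ell\neq p$ then $p$ is invertible in the $\Int_\ell$-algebra $\Lambda[[G]]$, while if $\ell=p$ the hypothesis $V=C$ ensures properness.

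For part (2), set $\Sigma'\coloneqq U\setminus U'$. The short exact sequence of inverse systems of \'etale sheaves on $V$
\[
0\mto j_{U'!}\bigl(j_{K*}(\Lambda[[G]]^\sharp\tensor_\Lambda T)|_{U'}\bigr)\mto j_{U!}j_{K*}(\Lambda[[G]]^\sharp\tensor_\Lambda T)\mto\bigoplus_{v\in\Sigma'}(i_v)_*\bigl(j_{K*}(\Lambda[[G]]^\sharp\tensor_\Lambda T)\bigr)_{\xi_v}\mto 0
\]
reduces the claim to showing that the third term vanishes at every level of the inverse system. This is exactly case (2) of Prop.~\ref{prop:constructability condition}: by the non-torsion ramification hypothesis, $\ginert_v$ contains an element of infinite order for every $v\in\Sigma'$, hence $(\Lambda[[G]]^\sharp\tensor_\Lambda T)^{\inertia_v}=0$, so by the stalk formula of Prop.~\ref{prop:stalk of modified direct image} the stalks in question vanish. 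The first arrow is therefore already an isomorphism of inverse systems, and applying the Waldhausen exact functor $\RDer\Sect(V,\cdot)$ gives the required weak equivalence. The statement for $\algc{V}$ follows verbatim from the same argument, using $\RDer\Sect(\algc{V},\cdot)$ in place of $\RDer\Sect(V,\cdot)$.

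No step is a substantial obstacle: the entire technical content has already been packaged in the two propositions just cited and in the Waldhausen-category formalism of \S\ref{ss:perfect complexes of adic sheaves}. The only routine bookkeeping is to verify that $j_{U!}$ respects the compatibility condition (2) of Def.~\ref{defn:PDGcont(X,Lambda)}, which is immediate from the fact that $j_{U!}$ is exact on abelian sheaves and commutes with the formation of $\Lambda/J\tensor_{\Lambda/I}(\cdot)$ on flat constructible sheaves.
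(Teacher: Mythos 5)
Your proposal is correct and follows essentially the same route as the paper: part (1) reduces to the projective-stalks criterion of Prop.~\ref{prop:stalk of modified direct image} (which is what Prop.~\ref{prop:constructability condition} invokes) together with the Waldhausen exactness of $j_{U!}$ and of the derived section functor, and part (2) reduces to the vanishing of the stalks at the points of $U\setminus U'$ guaranteed by case (2) of Prop.~\ref{prop:constructability condition}. The paper phrases the second step as the isomorphism $j'_!j_{K*}(\Lambda[[G]]^\sharp\tensor_\Lambda T)\wto j'_*j_{K*}(\Lambda[[G]]^\sharp\tensor_\Lambda T)$ rather than via your short exact sequence with skyscraper quotient, but the content is identical.
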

\begin{proof}
The first assertion follows from Prop.~\ref{prop:constructability condition} and the fact that the derived section functors over $V$ and $\algc{V}$ take objects of $\cat{PDG}^\cont(V,\Lambda[[G]])$ to objects of $\cat{PDG}^{\cont}(\Lambda[[G]])$.

We prove the second assertion. Let $j'\colon U'\mto U$ denote the inclusion map. By Prop.~\ref{prop:constructability condition} we have $j_{K*}(\Lambda[[G]]^\sharp\tensor_\Lambda T)_{\xi_v}=0$ for each $v\in U-U'$. Hence, the canonical morphism
$$
j'_!j_{K*}(\Lambda[[G]]^\sharp\tensor_\Lambda T)\mto j'_*j_{K*}(\Lambda[[G]]^\sharp\tensor_\Lambda T)
$$
is an isomorphism in $\cat{PDG}^{\cont}(U,\Lambda[[G]])$. The second assertion is an immediate consequence.
\end{proof}

\begin{rem}
In particular, by using Lem.~\ref{lem:perfectness}, we may exclude without loss of generality from $U$ all points in which $K_\infty/K$ has non-torsion ramification. We will also neglect the remaining points in which $K_\infty/K$ does not have ramification prime to $\ell$ or $T$ has no projective stalks over $K_\infty$, but $\Lambda[[G]]^{\sharp}\tensor_\Lambda T$ has projective stalks. These points may be considered as degenerate and it is not clear that their corresponding non-commutative Euler factors are well-behaved.
\end{rem}

Let $\Lambda'$ be another adic $\Int_{\ell}$-algebra and $\rho$ be a $\Lambda'$-$\Lambda[[G]]$-bimodule  which is finitely generated and projective as $\Lambda'$-module. We will write $T(\rho^\sharp)$ for the finitely ramified $\Gal_K$-representation $\rho\tensor_{\Lambda}T$ over $\Lambda'$, with $g\in\Gal_K$ acting by the image in $G$ of its inverse on $\rho$.

\begin{lem}\label{lem:commutativity with tensor product}
Assume that $K_\infty/K$ has ramification prime to $\ell$ over $U$. If $\ell=p$ we will also assume $V=C$.
\begin{enumerate}
\item If $T$ has ramification prime to $\ell$ over $U$, then $T(\rho^{\sharp})$ has ramification prime to $\ell$ over $U$ and
$$
\ringtransf_\rho\RDer\Sect(V,j_{U!}j_{K*}(\Lambda[[G]]^{\sharp}\tensor_\Lambda T))\mto \RDer\Sect(V,j_{U!}j_{K*}(T(\rho^\sharp)))
$$
is a weak equivalence in $\cat{PDG}^{\cont}(\Lambda')$.
\item If $T$ has projective local cohomology over $K_\infty$ in all closed points of $U$, then $T(\rho^{\sharp})$ has projective local cohomology over $U$ and
$$
\ringtransf_\rho\RDer\Sect(V,j_{U!}j_{K*}(\Lambda[[G]]^{\sharp}\tensor_\Lambda T))\mto \RDer\Sect(V,j_{U!}j_{K*}(T(\rho^\sharp)))
$$
is a weak equivalence in $\cat{PDG}^{\cont}(\Lambda')$.
\item If $T$ has projective stalks over $K_\infty$ in all closed points of $U$ and $\rho$ is projective as compact $\Lambda^{\op}$-module, then $T(\rho^\sharp)$ has projective stalks over $U$ and the canonical morphism
$$
\ringtransf_\rho\RDer\Sect(V,j_{U!}j_{K*}(\Lambda[[G]]^{\sharp}\tensor_\Lambda T))\mto \RDer\Sect(V,j_{U!}j_{K*}(T(\rho^\sharp)))
$$
is a weak equivalence in $\cat{PDG}^{\cont}(\Lambda')$.
\end{enumerate}
The same is true for $V$ replaced by $\algc{V}$.
\end{lem}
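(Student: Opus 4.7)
All three parts share a common strategy: reduce from the derived sections over $V$ (resp.\ $\algc{V}$) to the sheaf level using the compatibility of $\ringtransf_\rho$ with $\RDer\Sect$ recalled in Section~\ref{ss:perfect complexes of adic sheaves} (citing \cite[Prop.~5.5.7]{Witte:PhD}), and then check the resulting comparison of adic sheaves on stalks. Since $\ringtransf_\rho$ commutes with $j_{U!}$, it suffices to prove that the canonical morphism
\[
 \ringtransf_\rho\bigl(j_{K*}(\Lambda[[G]]^\sharp\tensor_\Lambda T)\bigr)\mto j_{K*}(T(\rho^\sharp))
\]
induces an isomorphism on stalks at every geometric point $\xi_v$, $v\in U$. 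Under the standing hypothesis that $K_\infty/K$ has ramification prime to $\ell$ over $U$, the group $\ginert_v$ is finite of order $d$ prime to $\ell$ (after excluding via Lemma~\ref{lem:perfectness}~(2) the closed points with non-torsion ramification), so the averaging idempotent $e_{\ginert_v}\in\Lambda[\ginert_v]$ is available.

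In all three cases the source stalk is given by Prop.~\ref{prop:constructability condition}~(3), namely
\[
 (\Lambda[[G]]^\sharp\tensor_\Lambda T)^{\inertia_v}=e_{\ginert_v}(\Lambda[[G]]^\sharp\tensor_\Lambda T^{\kinert_v}),
\]
a finitely generated projective $\Lambda[[G]]$-module. Applicability of Prop.~\ref{prop:constructability condition}~(3) requires $T^{\kinert_v}$ to be projective as $\Lambda$-module, which in case~(3) is given directly, while in cases~(1) and~(2) it follows from the respective hypotheses applied to the subgroup $\kinert_v\subset\inertia_v$ via Example~\ref{exmpl:ramification types}~(1)--(2). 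Because the source stalk is $\Lambda[[G]]$-projective, $\ringtransf_\rho$ on it coincides with the ordinary tensor product with $\rho$ over $\Lambda[[G]]$, and a direct calculation on simple tensors—using that $\kinert_v\subset\ker(\inertia_v\to G)$ acts trivially on $\rho$—yields
\[
 \rho\tensor_{\Lambda[[G]]}\bigl(e_{\ginert_v}(\Lambda[[G]]^\sharp\tensor_\Lambda T^{\kinert_v})\bigr)=e_{\ginert_v}(\rho\tensor_\Lambda T^{\kinert_v}).
\]

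Next, I would verify that $T(\rho^\sharp)$ itself satisfies the projectivity-of-stalks hypothesis at every $v\in U$, so that Prop.~\ref{prop:stalk of modified direct image} applies and identifies the target stalk with $\Lambda'/I\tensor_{\Lambda'}T(\rho^\sharp)^{\inertia_v}$. This is a direct verification in each case: the image of $\inertia_v$ in $\Aut(T(\rho^\sharp))$ lies in the product of its images in $\Aut(\rho)$ (which factors through $\ginert_v$) and $\Aut(T)$, both of which satisfy the relevant condition by hypothesis, so Example~\ref{exmpl:ramification types} transfers it to $T(\rho^\sharp)$.

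The main obstacle, and the reason the three cases must be distinguished, is the remaining step: identifying $e_{\ginert_v}(\rho\tensor_\Lambda T^{\kinert_v})$ with $T(\rho^\sharp)^{\inertia_v}=e_{\ginert_v}(\rho\tensor_\Lambda T)^{\kinert_v}$. This amounts to comparing $\rho\tensor_\Lambda T^{\kinert_v}$ with $(\rho\tensor_\Lambda T)^{\kinert_v}$. In case~(3) this is immediate, since projectivity of $\rho$ as compact $\Lambda^{\op}$-module gives flatness of $\rho$ as right $\Lambda$-module, and flatness applied to the defining left exact sequence $0\mto T^{\kinert_v}\mto T\mto\prod_{\sigma\in\kinert_v}T$ yields the desired equality. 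In cases~(1) and~(2) one must work without this flatness; the plan is to exploit the stronger local hypothesis to realise the passage to $\kinert_v$-invariants by an idempotent that commutes with $\rho\tensor_\Lambda-$ automatically. Concretely, ramification prime to $\ell$ (case~(1)) makes the $\kinert_v$-action on the $\ell$-adic module $T$ factor through a finite quotient of order prime to $\ell$, exhibiting $T^{\kinert_v}$ as the image of such an idempotent in a finite group algebra; in case~(2) the same argument is carried out on the perfect complex $\RDer\Sect(\kinert_v,T)$, using projectivity of $\HF^\bullet(\kinert_v,T)$ to ensure that this perfect complex (and its splittings) survive the derived tensor with $\rho$. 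Once this identification is in place on stalks, the weak equivalence follows, and the assertion over $\algc{V}$ is obtained either by the same argument or from the exact triangle $\RDer\Sect(V,-)\mto\RDer\Sect(\algc{V},-)\xrightarrow{\id-\Frob_\FF}\RDer\Sect(\algc{V},-)$ recalled in Section~\ref{ss:perfect complexes of adic sheaves}.
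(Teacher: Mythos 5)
Your proposal follows essentially the same route as the paper: reduce via the compatibility of $\ringtransf_\rho$ with $\RDer\Sect$ to a stalkwise comparison, dispose of the $\ginert_v$-invariants using that $\ginert_v$ has order prime to $\ell$ (the averaging idempotent is exactly the paper's exactness argument), and then handle the comparison of $\rho^\sharp\tensor_\Lambda T^{\kinert_v}$ with $(\rho^\sharp\tensor_\Lambda T)^{\kinert_v}$ by the same three case-specific arguments (finite prime-to-$\ell$ image of $\kinert_v$ in $\Aut(T)$ for (1), vanishing of the $\Tor$-obstructions against the projective $\HF^1(\kinert_v,T)$ for (2), flatness of $\rho$ applied to the defining left-exact sequence of $T^{\kinert_v}$ for (3)). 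This matches the paper's proof; no substantive differences or gaps.
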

\begin{proof}
As explained in Section \ref{ss:perfect complexes of adic sheaves}, $\ringtransf_\rho$ commutes with $\RDer\Sect(V,\cdot)$ and $\RDer\Sect(\algc{V},\cdot)$.
So, we need to prove that
$$
\ringtransf_\rho j_{K*}(\Lambda[[G]]^{\sharp}\tensor_\Lambda T)\mto j_{K*}(T(\rho^\sharp))
$$
is an isomorphism in $\cat{PDG}^{\cont}(U,\Lambda')$. Since this can be checked on stalks, we need to prove that
$$
\rho\tensor_{\Lambda[[G]]}(\Lambda[[G]]^{\sharp}\tensor_\Lambda T)^{\inertia_v}=(T(\rho^\sharp))^{\inertia_v}
$$
for all closed points $v\in U$. Since $K_\infty/K$ has ramification prime to $\ell$ in $v$, the $\ell$-Sylow subgroup of $\ginert_v$ is trivial such that taking invariants under $\ginert_v$ is an exact functor on the category of compact $\Int_{\ell}[[\ginert_v]]$-modules. Moreover, $T^{\kinert_v}$ is finitely generated and projective as $\Lambda$-module by assumption. Hence
$$
\rho\tensor_{\Lambda[[G]]}(\Lambda[[G]]^{\sharp}\tensor_\Lambda T^{\kinert_v})^{\ginert_v}=(\rho^{\sharp}\tensor_\Lambda T^{\kinert_v})^{\ginert_v}.
$$

If $\rho$ is projective as compact $\Lambda^{\op}$-module, then taking the completed tensor product with $\rho^\sharp$ over $\Lambda$ is an exact functor on compact $\Lambda$-modules. Moreover, the completed tensor product commutes with arbitrary direct products and agrees with the usual tensor product on finitely presented modules \cite[Prop.~1.7, Prop.~1.14]{Witte:Splitting}. By taking the completed tensor product with $\rho^{\sharp}$ of the left exact sequence
\[
0\mto T^{\kinert_v}\mto T\xrightarrow{x\mapsto (x-\sigma x)_{\sigma \in\kinert_v}}\prod_{\sigma\in\kinert_v} T,
\]
we obtain
\[
\rho^\sharp\tensor_{\Lambda} T^{\kinert_v}=\rho^\sharp\ctensor_{\Lambda} T^{\kinert_v}=(\rho^\sharp\tensor_{\Lambda} T)^{\kinert_v},
\]
as desired.

If $\ell$ is different from the characteristic of $F$, the $\Tor$ spectral sequence for the derived tensor product of $\rho^\sharp$ with the cochain complex of the $\kinert_v$-module $T$ gives us an exact sequence
\[
0\mto\Tor_2^{\Lambda}(\rho^\sharp,\HF^1(\kinert_v,T))\mto \rho^\sharp\tensor_{\Lambda} T^{\kinert_v}\mto \Tor_1^{\Lambda}(\rho^\sharp,\HF^1(\kinert_v,T))\mto 0
\]
Hence, if $\HF^1(\kinert_v,T)$ is finitely generated and projective as a $\Lambda$-module, then
$$
\rho^\sharp\tensor_\Lambda T^{\kinert_v}\isomorph(\rho^\sharp\tensor_\Lambda T)^{\kinert_v}.
$$

If $T$ has ramification prime to $\ell$, then one can replace $\kinert_v$ by its image in the automorphism group of $T$. Since this group is of order prime to $\ell$, the natural map
$$
\rho^\sharp\tensor_\Lambda T^{\kinert_v}\mto(\rho^\sharp\tensor_\Lambda T)^{\kinert_v}
$$
is again an isomorphism. This completes the proof of the lemma.
\end{proof}

\begin{lem}\label{lem:S torsion of additional Euler factors}
Assume that $K_\infty/K$ has ramification prime to $\ell$ and that $T$ has projective stalks over $K_\infty$ in the closed point $v$ of $U$. Write $i_v\colon v\mto U$ for the closed immersion. Then
\[
\RDer\Sect(v,i_v^*j_{K*}(\Lambda[[G]]^\sharp\tensor_{\Lambda}T))
\]
is in $\cat{PDG}^{\cont,w_H}(\Lambda[[G]])$.
\end{lem}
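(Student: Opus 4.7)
The plan is to identify the complex explicitly as a two-term complex of projective $\Lambda[[G]]$-modules and then to deduce $\Lambda[[H]]$-perfectness from the fact that $\Frob_v$ has non-torsion image in $\Gamma$.

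First, since $K_\infty/K$ has ramification prime to $\ell$ in $v$ (so $\ginert_v$ has no element of order $\ell$) and $T$ has projective stalks over $K_\infty$ in $v$, case~(3) of Prop.~\ref{prop:constructability condition} applies, giving
\[
M := \bigl(j_{K*}(\Lambda[[G]]^\sharp\tensor_\Lambda T)\bigr)_{\xi_v} = e_{\ginert_v}(\Lambda[[G]]^\sharp\tensor_\Lambda T^{\kinert_v}),
\]
a finitely generated projective $\Lambda[[G]]$-module, cut out by $e_{\ginert_v} = |\ginert_v|^{-1}\sum_{g\in\ginert_v}g$ (well-defined since $|\ginert_v|$ is prime to $\ell$). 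Since $v=\Spec\FF(v)$ and $\Gal_{\FF(v)}\isomorph\hat{\Int}$ is topologically generated by the geometric Frobenius, the derived section functor on the corresponding stalk is modelled by the two-term complex $[M\xrightarrow{\id-\Frob_v}M]$ in degrees $0$ and $1$; this lies in $\cat{PDG}^{\cont}(\Lambda[[G]])$ because both terms are $\Lambda[[G]]$-projective and $\id-\Frob_v$ is $\Lambda[[G]]$-linear (Frobenius commutes with left multiplication by $\Lambda[[G]]$).

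For the $w_H$-condition, the key observation is that $f := \bar\Frob_v\in G$ has image $\deg(v)\in\Int_\ell = \Gamma$, because $\Frob_v$ acts on $\FF_\cyc$ as the $q^{\deg(v)}$-th power. Thus $\overline{\langle f\rangle}\subset G$ projects onto the open subgroup $\deg(v)\Int_\ell$ of $\Gamma$, so the double coset space $H\backslash G/\overline{\langle f\rangle}\cong \Gamma/\deg(v)\Gamma$ is finite; moreover, because $\overline{\langle f\rangle}$ is pro-cyclic and its $\ell$-part must surject onto $\Int_\ell$, the intersection $H\cap\overline{\langle f\rangle}$ is finite of order prime to $\ell$. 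I would then identify
\[
\coker\bigl(\id-\Frob_v\ \text{on}\ \Lambda[[G]]^\sharp\tensor_\Lambda T^{\kinert_v}\bigr) \isomorph \Lambda[[G]] \tensor_{\Lambda[[\overline{\langle f\rangle}]]} T^{\kinert_v},
\]
with $\overline{\langle f\rangle}$ acting on $T^{\kinert_v}$ via $f\mapsto \Frob_v|_{T^{\kinert_v}}$. Writing $\Lambda[[G]]$ as a topologically free right $\Lambda[[\overline{\langle f\rangle}]]$-module on a continuous section of $G\mto G/\overline{\langle f\rangle}_R$ and decomposing by $H$-orbits, this cokernel becomes a finite direct sum of modules of the form $\Lambda[[H]]\tensor_{\Lambda[H\cap\overline{\langle f\rangle}]} T^{\kinert_v}$, each finitely generated projective over $\Lambda[[H]]$ by Maschke's theorem (since $|H\cap\overline{\langle f\rangle}|$ is invertible in $\Lambda$). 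The $e_{\ginert_v}$-summand inherits this projectivity; combined with the vanishing $M^{\Frob_v}=0$ (the infinite-order action of $f$ has no fixed vectors on the topological $\Lambda$-basis of $\Lambda[[G]]^\sharp\tensor_\Lambda T^{\kinert_v}$ indexed by $G$), the complex is quasi-isomorphic to a finitely generated projective $\Lambda[[H]]$-module placed in degree $1$, hence perfect over $\Lambda[[H]]$.

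The main obstacle will be carrying out the cokernel identification cleanly: in full generality $\overline{\langle f\rangle}$ need not be isomorphic to $\Int_\ell$, as it may pick up a finite prime-to-$\ell$ cyclic factor when $H$ has prime-to-$\ell$ torsion, so one must work with the general pro-cyclic structure $\overline{\langle f\rangle}\isomorph \Int_\ell\times F$ with $F$ finite of order prime to $\ell$ and verify that the Maschke-type descent through $\Lambda[H\cap\overline{\langle f\rangle}]$ genuinely yields a finitely generated projective $\Lambda[[H]]$-module. A clean alternative is to pass to the open subgroup $G'=H\rtimes\overline{\langle\gamma^{\deg(v)}\rangle}\subset G$ of finite index, where the splitting $G'=H\rtimes\Int_\ell$ accommodates $f$ more comfortably, run the argument there, and then descend using that $\Lambda[[G]]$ is finitely generated free over $\Lambda[[G']]$.
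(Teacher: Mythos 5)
Your reduction to the two-term complex $[M\xrightarrow{\id-\Frob_v}M]$ with $M=e_{\ginert_v}(\Lambda[[G]]^\sharp\tensor_\Lambda T^{\kinert_v})$ matches the paper's first step, but the core of your argument has a genuine gap: the identification $\coker(\id-\Frob_v)\isomorph\Lambda[[G]]\tensor_{\Lambda[[\overline{\langle f\rangle}]]}T^{\kinert_v}$ presupposes that the action of $\Frob_v$ on $T^{\kinert_v}$ factors through the closed subgroup $\overline{\langle f\rangle}$ of $G$. It does not: $T$ is a representation of $\Gal_K$ that in general does not factor through $G$, and the closure of $\langle\Frob_v|_{T^{\kinert_v}}\rangle$ in $\Aut_\Lambda(T^{\kinert_v})$ is a procyclic group whose finite part can involve torsion invisible in $\overline{\langle f\rangle}\isomorph\Int_\ell\times F$. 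So the $\Lambda[[\overline{\langle f\rangle}]]$-module structure you put on $T^{\kinert_v}$ is not well defined; you would have to replace $\overline{\langle f\rangle}$ by the closure of the graph of $\varphi\mapsto(\bar\varphi,\varphi|_{T^{\kinert_v}})$, and then redo the Mackey/Maschke decomposition for that group, whose projection to $G$ is no longer injective. The obstacle you flag at the end (the prime-to-$\ell$ factor $F$) is real but secondary to this one. A smaller gap of the same flavour: your injectivity claim for $\id-\Frob_v$ via ``no fixed vectors on the topological basis indexed by $G$'' ignores that $\Frob_v$ acts diagonally, nontrivially on $T^{\kinert_v}$; the correct argument (as in the proof of Prop.~\ref{prop:constructability condition}(2)) writes $T^{\kinert_v}$ as a limit of finite quotients, replaces $\Frob_v$ by a power acting trivially on each, and uses that this power still has infinite order in $G$.

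You are also proving considerably more than the lemma asks. The paper sidesteps all of the above by citing the criterion that membership in $\cat{PDG}^{\cont,w_H}(\Lambda[[G]])$ may be tested after base change to $\Lambda/\Jac(\Lambda)[[G/H']]$ for an open pro-$\ell$ subgroup $H'\trianglelefteq G$ of $H$, where one only has to check that the cohomology is \emph{finite}. After that base change the relevant module is finitely generated projective over $Z[[\Gamma]]$ ($Z$ the centre of the semisimple ring $\Lambda/\Jac(\Lambda)$, a product of finite fields), $\Frob_v$ genuinely acts through a finite quotient on $T^{\kinert_v}/\Jac(\Lambda)T^{\kinert_v}$ and on $\ginert_v$ — which dissolves your factorization problem — and injectivity of $\id-\Frob_v^n$ together with the elementary divisor theorem over $Z[[\Gamma]]$ gives finiteness of the cokernel at once. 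If you want to keep your direct route (which, if completed, yields the stronger conclusion that the cohomology is a finitely generated projective $\Lambda[[H]]$-module concentrated in degree $1$), your own suggested fix of passing to the open subgroup $H\rtimes\overline{\langle\gamma^{\deg(v)}\rangle}$ does not address the factorization issue either; you must work with the graph subgroup or argue at finite level as the paper does.
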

\begin{proof}
The complex $\RDer\Sect(v,i_v^*j_{K*}(\Lambda[[G]]^\sharp\tensor_{\Lambda}T))$ may be identified with the strictly perfect complex of $\Lambda[[G]]$-modules
\[
\cmplx{C}\colon\quad (\Lambda[[G]]^\sharp\tensor_{\Lambda}T)^{\inertia_v}\xrightarrow{\id-\Frob_v}(\Lambda[[G]]^\sharp\tensor_{\Lambda}T)^{\inertia_v}
\]
sitting in degree $0$ and $1$. Choose an open pro-$\ell$-subgroup $H'$ of $H$ which is normal in $G$. By \cite[Prop.~4.8]{Witte:MCVarFF}, it suffices to show that $\Lambda/\Jac(\Lambda)[[G/H']]\tensor_{\Lambda[[G]]}\cmplx{C}$ has finite cohomology groups. Let $Z$ be the centre of $\Lambda/\Jac(\Lambda)$, which is a finite product of finite fields of characteristic $\ell$ and consider
\[
\begin{split}
P&\coloneqq\Lambda/\Jac(\Lambda)[[G/H']]\tensor_{\Lambda[[G]]}(\Lambda[[G]]^\sharp\tensor_{\Lambda}T^{\inertia_v})\\
&\isomorph (\Lambda/\Jac(\Lambda)[[G/H']]^\sharp\tensor_{\Lambda/\Jac(\Lambda)}T^{\kinert_v}/\Jac(\Lambda)T^{\kinert_v})^{\ginert_v}
\end{split}
\]
as finitely generated, projective $Z[[\Gamma]]$-module. Choose $n$ large enough, such that $\Frob_v^{n}$ operates trivially on the finite groups $\ginert_v$ and $T^{\kinert_v}/\Jac(\Lambda)T^{\kinert_v}$. Then
\[
\id-\Frob_v^n=(\id-\Frob_v)(\sum_{s=0}^{n-1}\Frob_v^s)
\]
is an injective endomorphism of $P$. The same is then also true for $\id-\Frob_v$. We conclude from the elementary divisor theorem that the cokernel of $\id-\Frob_v$ is finite, as desired.
\end{proof}

\begin{lem}\label{lem:HS argument}
Assume that $H=\Gal(K_\infty/K_\cyc)$ and $\Lambda$ are finite. If $\ell=p$, we also assume that $V=C$. Then for each $k\in\Int$
\begin{align*}
\HF^{k}(V,j_{U!}j_{K*}(\Lambda[[G]]^\sharp\tensor_\Lambda T))&=\HF^{k-1}(V_{K_{\infty}},j_{U_{K_{\infty}}!}j_{K_{\infty}*}T)\\
&=\HF^{k-1}(V_{\algc{\FF}K_{\infty}},j_{U_{\algc{\FF}K_{\infty}}!}j_{\algc{\FF}K_{\infty}*}T)^{\Gal(\algc{\FF}K_{\infty}/K_{\infty})}.
\end{align*}
In particular, $\HF^k(V,j_{U!}j_{K*}(\Lambda[[G]]^\sharp\tensor_\Lambda T))$ is finitely generated as $\Lambda$-module.
\end{lem}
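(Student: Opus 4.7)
The plan is to reduce cohomology over $V$ with Iwasawa-type coefficients to ordinary étale cohomology over the schemes $V_{K_\infty}$ and $V_{\algc{\FF}K_\infty}$, using two specializations of Hochschild--Serre: one for the $\Int_\ell$-quotient $\Gamma$ of $G$, and one for the pro-$\ell'$-extension $\algc{\FF}K_\infty/K_\infty$.

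I would begin by applying Cor.~\ref{cor:limit porperty for admissible extension} to obtain
\[
\HF^k(V,j_{U!}j_{K*}(\Lambda[[G]]^\sharp\tensor_\Lambda T))=\varprojlim_{K\subset_f L\subset K_\infty}\HF^k(V_L,j_{U_L!}j_{L*}T).
\]
Since $H$ is finite, the indexing set has a cofinal subsystem given by $L_n=K_\infty^{\Gamma_n}$, where $\Gamma_n\subset\Gamma\isomorph\Int_\ell$ is the open subgroup of index $\ell^n$; moreover $V_{K_\infty}=\varprojlim_n V_{L_n}$ as schemes with affine transition maps. For each finite level, $\HF^k(V_{L_n},j_{U_{L_n}!}j_{L_n*}T)$ is a finite $\Lambda$-module by the standard finiteness theorems (\cite[Th.~finitude, Cor.~1.10]{SGA4h} when $\ell\neq p$, or \cite[Cor.~VI.2.8]{Milne:EtCohom} when $V=C$ is proper), which controls the inverse limit. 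The first equality—with its degree shift by $1$—will then emerge by combining the continuity theorem for étale cohomology of pro-systems of schemes with the Hochschild--Serre spectral sequence for the $\Int_\ell$-extension $K_\infty/L_n$. As $\cd_\ell(\Int_\ell)=1$, this spectral sequence collapses to two-term exact sequences; the transitions in the inverse system are trace maps, so the $\Gamma$-invariant piece is killed in the limit while the $\Gamma$-coinvariant piece (carrying the shift by $1$) accumulates to yield $\HF^{k-1}(V_{K_\infty},j_{U_{K_\infty}!}j_{K_\infty*}T)$.

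The second equality follows from Hochschild--Serre applied to $\algc{\FF}K_\infty/K_\infty$: its Galois group is $\Gal(\algc{\FF}/k_\infty)$, where $k_\infty$ denotes the algebraic closure of $\FF$ in $K_\infty$. Because $K_\infty\supset K_\cyc=\FF_\cyc K$, the constant field $k_\infty$ contains $\FF_\cyc$, so $\Gal(\algc{\FF}/k_\infty)$ has trivial pro-$\ell$-quotient. Consequently its cohomology with $\Lambda$-module coefficients vanishes in positive degrees, and the spectral sequence collapses to the functor of $\Gal(\algc{\FF}K_\infty/K_\infty)$-invariants. Finiteness as a $\Lambda$-module is preserved, since $\HF^{k-1}(V_{\algc{\FF}K_\infty},\cdots)$ is already finite by the same finiteness theorems applied over the algebraically closed base, and taking invariants under a profinite group preserves finiteness.

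The main obstacle will be the first equality, and in particular making the degree shift precise. Concretely, one must verify that the trace transitions in $\varprojlim_n\HF^k(V_{L_n},\cdots)$ interact with the two-term Hochschild--Serre sequences for the $\Int_\ell$-tower so that the $\Gamma$-invariant contribution vanishes in the limit and the resulting isomorphism is exactly $\HF^{k-1}(V_{K_\infty},\cdots)$, with no spurious extension terms surviving. Once this is pinned down, everything else is formal.
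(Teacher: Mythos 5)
Your argument is correct and is essentially the proof the paper gives: the paper likewise first invokes Cor.~\ref{cor:limit porperty for admissible extension} (after noting that $\Lambda[[G]]$ is noetherian, so the stalk hypothesis holds automatically) and then defers exactly this Hochschild--Serre/inverse-limit computation along the $\Int_\ell$-tower to an external reference. Two details worth pinning down, both of which are where the hypothesis that $H$ is finite actually enters: the covering $V_{K_\infty}\mto V_{L_n}$ is pro-\'etale (as required for Hochschild--Serre) because every inertia image $\ginert_v$ lies in the finite group $H$, the constant-field extension being everywhere unramified; and the finiteness of $\HF^{k-1}(V_{\algc{\FF}K_{\infty}},\cdot)$ holds because $\algc{\FF}K_\infty/\algc{\FF}K$ is a \emph{finite} extension, so $V_{\algc{\FF}K_\infty}$ is an honest curve of finite type over $\algc{\FF}$ rather than a pro-scheme of growing genus (for infinite $H$ this group can be infinite, so the finiteness theorems cannot be applied to the tower termwise and passed to the limit).
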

\begin{proof}
Note that $\Lambda[[G]]$ is noetherian. Hence, $(\Lambda[[G]]^\sharp\tensor_{\Lambda}T)^{\inertia_v}$ is finitely generated as a $\Lambda[[G]]$-module for each closed point $v$ of $U$. We can apply Cor.~\ref{cor:limit porperty for admissible extension} and then proceed as in the proof of \cite[Prop.~13]{Witte:Survey}.
\end{proof}

\section{Noncommutative main conjectures}\label{sec:nc main conjectures}

\noindent As before, we let $\Lambda$ denote an adic $\Int_\ell$-algebra and fix an admissible extension $K_\infty/K$ with Galois group $G=H\rtimes\Gamma$, a finitely ramified representation $T$ of $\Gal_K$ over $\Lambda$, and two open dense subschemes $V$ and $W$ of the proper smooth curve $C$ with function field $K$ such that
$$
C=V\cup W,\qquad U=V\cap W,\qquad \Sigma_V=C-V,\qquad \Sigma_W=C-W.
$$
If $X$ is any open subscheme of $C$, we will write $X^0$ for its set of closed points. We will also continue to use the general conventions from the beginning of Section~\ref{sec:Selmer Complexes}.

\subsection{\texorpdfstring{$L$}{L}-functions}

Assume that $\Lambda$ is a commutative adic $\Int_\ell$-algebra. Below, we will define a $\Sigma_W$-truncated $\Sigma_V$-modified $L$-function of $T$. We need to assume that $T$ has projective stalks over $U$. Usually, one also requires that $\Sigma_V$ does not contain any $v$ for which $T$ is ramified. If $\ell\neq p$, however, there exists a natural extension of the usual definition. In the case $\ell=p$ one can still give a sensible definition under the condition that $T$ is at most tamely ramified for all $v\in \Sigma_V$.

Under our ramification assumption we know by Prop.~\ref{prop:stalk of modified direct image} that $j_{K*}T$ is an object in $\cat{PDG}^{\cont}(U,\Lambda)$. Hence, assuming $\ell\neq p$, the complex $i_v^*\RDer j_{V*}j_{U!}j_{K*}T$ is in $\cat{PDG}^{\cont}(v,\Lambda)$ and we may define its $L$-function
$$
L(i_v^*\RDer j_{V*}j_{U!}j_{K*}T,t)\in \KTh_1(\Lambda[[t]])\isomorph\Lambda[[t]]^\times.
$$
as in \cite[\S 8]{Witte:MCVarFF}.

In more concrete terms, we can proceed as follows. Let $\decomp_{v,\ell}$ denote the quotient of $\decomp_v$ by the group $\inertia_{v,\neg\ell}$ generated by all $\ell'$-Sylow subgroups of $\inertia_v$ for $\ell'\neq\ell$.  Fix a choice of $\tau,\varphi\in \decomp_{v,\ell}$ such that $\tau$ topologically generates the image of the inertia group in $\decomp_{v,\ell}$ and $\varphi$ is a lift of the geometric Frobenius $\Frob_v$. It is well known that $\tau$ and $\varphi$ topologically generate $\decomp_{v,\ell}$ and that
$$
\varphi\tau\varphi^{-1}=\tau^{-q^{\deg(x)}}
$$
\cite[Thm.~7.5.2]{NSW:CohomNumFields}. We define a complex $\cmplx{D}_v(T)$ of $\Lambda[[\Gal_{\FF(v)}]]$-modules as follows. For $k\neq 0,1$ we set $D^k_v(T)\coloneqq 0$. As $\Lambda$-modules we have $D^0_v(T)\coloneqq D^1_v(T)\coloneqq T^{\inertia_{v,\neg\ell}}$ and the differential is given by $\id-\tau$. The geometric Frobenius $\Frob_v$ acts on $D^0_v(T)$ via $\varphi$ and on $D^1_v(T)$ via
$$
\varphi\left(\sum_{m=0}^{q^{\deg(v)}-1}\tau^m\right)\in \Lambda[[\decomp_{v,\ell}]]^\times.
$$
The complex $\cmplx{D}_v(T)$ may be viewed as an object in $\cat{PDG}^{\cont}(v,\Lambda)$ which is weakly equivalent to $i_v^*(\RDer j_{V*}j_{U!}j_{K*}T)$ for $v\in\Sigma_V$ and we have
$$
L(i_v^*\RDer j_{V*}j_{U!}j_{K*}T,t)=\det(1-t^{\deg(v)}\Frob_v\lcirclearrowright D^0_v(T))^{-1}
\det(1-t^{\deg(v)}
\Frob_v\lcirclearrowright D^1_v(T)).
$$
Note that if $T$ has ramification prime to $\ell$ in $v$, then $D^0_v(T)=D^1_v(T)=T^{\inertia_v}$ and
$$
L(i_v^*\RDer j_{V*}j_{U!}j_{K*}T,t)=\det(1-t^{\deg(v)}\Frob_v\lcirclearrowright T^{\inertia_v})^{-1}\det(1-(tq)^{\deg(v)}\Frob_v\lcirclearrowright T^{\inertia_v}).
$$
In the case that $\ell\neq p$, we can take this line as a definition for $L(i_v^*\RDer j_{V*}j_{U!}j_{K*}T,t)$ if $T$ is at most tamely ramified in $v$.

\begin{defn}\label{defn:L-function product formula}
Assume that $T$ has projective stalks over $U$. If $\ell=p$ we must also assume that $T$ is at most tamely ramified at $v$ for all $v\in \Sigma_V$. The \emph{$\Sigma_W$-truncated $\Sigma_V$-modified $L$-function of $T$} is given by
$$
L_{\Sigma_W,\Sigma_V}(T,t)\coloneqq\prod_{v\in U^0}\det(1-\Frob_v t^{\deg(v)}\lcirclearrowright T^{\inertia_v})^{-1}\prod_{v\in \Sigma_V}L(i_{v}^*\RDer j_{V*}j_{U!}j_{K*}T,t)
$$
\end{defn}

Set
$$
\Lambda\langle t \rangle\coloneqq\varprojlim_{I\in\openideals_\Lambda}\Lambda/I[t]
$$
and consider the multiplicatively closed subset
$$
\tilde{S}\coloneqq\set{f(t)\in \Lambda \langle t \rangle\given\text{ $f(0)\in \Lambda^{\times}$}}\subset \Lambda\langle t \rangle.
$$
Let $\gamma$ be the image of $\Frob_{\FF}$ in $\Gamma=\Gal(\FF_\cyc/\FF)$. We recall from \cite[Lem.~1.10]{Witte:Survey} that
$$
\Lambda\langle t \rangle_{\tilde{S}}\mto \Lambda[[\Gamma]]_S,\qquad t\mapsto \gamma^{-1}
$$
is a ring homomorphism.

It follows from \cite[Thm. 8.6]{Witte:MCVarFF} ($\ell\neq p$) and \cite[Thm. 5.5]{Witte:UnitLFunctions} ($\ell=p$) that
$$
L_{\Sigma_W,\Sigma_V}(T,t)\in \Lambda\langle t \rangle_{\tilde{S}}^\times
$$
such that we obtain elements
$$
L_{\Sigma_W,\Sigma_V}(T,\gamma^{-1})\in \Lambda[[\Gamma]]_S^\times.
$$
These elements are the natural analogues of the classical $\ell$-adic $L$-function in the number field case.

\begin{rem}
It is possible to extend the construction of $L_{\Sigma_W,\Sigma_V}(T,t)$ also to the case that $\Lambda$ is a non-commutative adic $\Int_\ell$-algebra. If $\Lambda$ is noetherian, then the subset $\tilde{S}\subset\Lambda\langle t\rangle$ is still a left denominator set and one can define $L_{\Sigma_W,\Sigma_V}(T,t)$ as an element of $\KTh_1(\Lambda\langle t\rangle_{\tilde{S}})$. If $\Lambda$ is not noetherian, then one has to replace $\KTh_1(\Lambda\langle t\rangle_{\tilde{S}})$ by the first $\KTh$-group of a certain Waldhausen category. The image of $L_{\Sigma_W,\Sigma_V}(T,t)$ in $\KTh_1(\Lambda[[t]])$ satisfies the product formula from Def.~\ref{defn:L-function product formula}. Note, however, that $\KTh_1(\Lambda\langle t\rangle_{\tilde{S}})\mto\KTh_1(\Lambda[[t]])$ does not need to be injective.
\end{rem}

\begin{rem}
If $T=\rho$ is an Artin representation of $\Gal_K$ over $\Int_\ell$, note that our definition of $L_{\Sigma_W,\Sigma_V}(\rho,t)$ follows Grothendieck's convention. If one uses Artin's original definition, then it corresponds to the $L$-function of the dual representations of $\rho$, see \cite[Ex.~V.2.20]{Milne:EtCohom}.
\end{rem}

\subsection{The main conjecture for Galois representations}\label{ss:MCforladicReps}

In this section we will discuss non-commutative Iwasawa Main Conjectures for the finitely ramified representation  $T$ of $\Gal_K$ over any adic $\Int_\ell$-algebra $\Lambda$. We will need the Waldhausen categories $\cat{PDG}^{\cont,w_H}(\Lambda[[G]])$ and $w_H\cat{PDG}^{\cont}(\Lambda[[G]])$ from Def.~\ref{defn:wHPDG}. Recall also the evaluation map $\eval_\rho$ from \eqref{eq:def of evaluation map}.

First, we treat the case $\ell\neq p$.

\begin{thm}\label{thm:MCforRep l p different}
Assume that $\ell\neq p$. Further, assume that $K_\infty/K$ has ramification prime to $\ell$ and that $T$ has projective stalks over $K_\infty$ in all closed points of $U$. Then
\begin{enumerate}
\item $\RDer\Sect(V,j_{U!}j_{K*}(\Lambda[[G]]^{\sharp}\tensor_\Lambda T))$ is in $\cat{PDG}^{\cont,w_H}(\Lambda[[G]])$ and the endomorphism $\id-\Frob_{\FF}$ of $\RDer\Sect(\algc{V},j_{U!}j_{K*}(\Lambda[[G]]^{\sharp}\tensor_\Lambda T))$ is a weak equivalence in $w_H\cat{PDG}^{\cont}(\Lambda[[G]])$.
\item Set
$$
\ncL_{K_{\infty}/K,\Sigma_W,\Sigma_V}(T)\coloneqq[\id-\Frob_{\FF}\lcirclearrowright \RDer\Sect(\algc{V},j_{U!}j_{K*}(\Lambda[[G]]^{\sharp}\tensor_\Lambda T))]^{-1}
$$
in $\KTh_1(\Lambda[[G]]_S)$. Then
$$
\bh \ncL_{K_{\infty}/K,\Sigma_W,\Sigma_V}(T)=-[\RDer\Sect(V,j_{U!}j_{K*}(\Lambda[[G]]^{\sharp}\tensor_\Lambda T))]
$$
\item Let $\Lambda'$ be a commutative adic $\Int_\ell$-algebra and  $\rho$ be a $\Lambda'$-$\Lambda[[G]]$-bimodule which is finitely generated and projective as $\Lambda'$-module. Assume either that $T$ has projective local cohomology over $K_\infty$ in all closed points of $U$ or that $\rho$ is projective as compact $\Lambda^{\op}$-module. Then
    $$
    \eval_\rho(\ncL_{K_{\infty}/K,\Sigma_W,\Sigma_V}(T))=L_{\Sigma_W,\Sigma_V}(T(\rho^\sharp),\gamma^{-1}).
    $$
\end{enumerate}
\end{thm}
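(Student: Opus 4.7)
The argument breaks naturally into the three claims. For part~(1), Lemma~\ref{lem:perfectness} already places $\RDer\Sect(V, j_{U!} j_{K*}(\Lambda[[G]]^\sharp \tensor_\Lambda T))$ in $\cat{PDG}^{\cont}(\Lambda[[G]])$, so what remains is the $w_H$-condition, namely that its inverse limit is perfect as a complex of $\Lambda[[H]]$-modules. I would verify this via the criterion of \cite[Prop.~4.8]{Witte:MCVarFF}, which reduces the question to showing that the cohomology becomes bounded after base change to $\Lambda/\Jac(\Lambda)[[G/H']]$ for some open pro-$\ell$-subgroup $H' \trianglelefteq G$ contained in $H$. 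Since this quotient ring is finite and $H/H'$ is finite, Lemma~\ref{lem:HS argument} identifies the base-changed cohomology with $\Gal(\algc{\FF}K_\infty^{H'}/K_\infty^{H'})$-invariants of the étale cohomology of $j_{U_{\algc{\FF}K_\infty^{H'}}!}j_{\algc{\FF}K_\infty^{H'}*}(T/\Jac(\Lambda)T)$ on $V_{\algc{\FF}K_\infty^{H'}}$; for $\ell \neq p$ the required boundedness then follows from the finiteness theorem for $\ell$-adic étale cohomology on a variety over a separably closed base of characteristic prime to $\ell$. The $w_H$-equivalence of $\id - \Frob_\FF$ on $\RDer\Sect(\algc V, \cdot)$ now follows from the fundamental exact triangle recalled in Section~\ref{ss:perfect complexes of adic sheaves}, whose cone is a shift of $\RDer\Sect(V, \cdot)$, which we have just shown to lie in $\cat{PDG}^{\cont, w_H}(\Lambda[[G]])$.

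Part~(2) is an immediate calculation using the formula $\bh[f \lcirclearrowright \cmplx{P}] = -[\cmplx{\Cone(f)}]$ from Section~\ref{ss:Framework} together with the quasi-isomorphism $\Cone(\id - \Frob_\FF) \simeq \RDer\Sect(V, \cdot)[1]$ coming from the exact triangle:
$$
\bh\,\ncL_{K_\infty/K, \Sigma_W, \Sigma_V}(T) = -\bh[\id-\Frob_\FF \lcirclearrowright \RDer\Sect(\algc V,\cdot)] = [\RDer\Sect(V,\cdot)[1]] = -[\RDer\Sect(V,\cdot)].
$$

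For part~(3), the map $\eval_\rho$ is induced by $\ringtransf_{E_\rho}$ with $E_\rho = \Lambda'[[\Gamma]] \tensor_{\Lambda'} \rho$. Projectivity of $\rho$ as a compact $\Lambda^{\op}$-module, when assumed, transfers to $E_\rho$ because $\Lambda'[[\Gamma]]$ is $\Lambda'$-flat, so Lemma~\ref{lem:commutativity with tensor product} applied to the bimodule $E_\rho$ — combined with the natural identification $E_\rho \tensor_{\Lambda[[G]]}(\Lambda[[G]]^\sharp \tensor_\Lambda T) \isomorph \Lambda'[[\Gamma]]^\sharp \tensor_{\Lambda'} T(\rho^\sharp)$ — yields a weak equivalence
$$
\ringtransf_{E_\rho} \RDer\Sect(\algc V, j_{U!} j_{K*}(\Lambda[[G]]^\sharp \tensor_\Lambda T)) \wto \RDer\Sect(\algc V, j_{U!} j_{K*}(\Lambda'[[\Gamma]]^\sharp \tensor_{\Lambda'} T(\rho^\sharp))).
$$
Consequently $\eval_\rho(\ncL_{K_\infty/K,\Sigma_W,\Sigma_V}(T))$ equals $[\id - \Frob_\FF \lcirclearrowright \cdot]^{-1}$ on the right-hand complex in $\KTh_1(\Lambda'[[\Gamma]]_S)$. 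To identify this class with $L_{\Sigma_W,\Sigma_V}(T(\rho^\sharp), \gamma^{-1})$, I would use the identification $\RDer\Sect(\algc V, \cdot) \isomorph \RDer\Sect(\algc C, Rj_{V*}\cdot)$, which produces precisely the $\Sigma_V$-modifying Euler factors from the stalks $i_v^* Rj_{V*}(\cdots)$ appearing in Def.~\ref{defn:L-function product formula}, and then invoke the $\KTh_1$-valued $L$-function formula of \cite[Thm.~8.6]{Witte:MCVarFF} on the proper curve $C$ together with the ring homomorphism $\Lambda'\langle t \rangle_{\tilde S} \to \Lambda'[[\Gamma]]_S$, $t \mapsto \gamma^{-1}$.

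The main obstacle is step~(3): matching the $\KTh_1$-class built from the global cohomology over $\algc V$ with the Euler product definition of $L_{\Sigma_W,\Sigma_V}$, including the correct treatment of the modifying Euler factors at points of $\Sigma_V$ via the passage $V \leadsto C$ through $Rj_{V*}$. Part~(1) also requires some technical work with the criterion of \cite[Prop.~4.8]{Witte:MCVarFF} and the Hochschild–Serre–type identification of Lemma~\ref{lem:HS argument}, but the key finiteness input is standard under the assumption $\ell \neq p$; the corresponding statement for $\ell = p$ (Thm.~\ref{thm:MCforRep l equal p myLfunc}) necessitates substantially different methods.
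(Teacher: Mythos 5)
Your overall strategy coincides with the paper's: parts (2) and (3) are handled exactly as in the text (the cone formula for $\bh$; reduction of $\eval_\rho$ to the $\Gamma$-level via Lemma~\ref{lem:commutativity with tensor product} applied to $E_\rho$, followed by \cite[Thm.~8.6]{Witte:MCVarFF} on $C$ with the modifying factors at $\Sigma_V$ produced by $\RDer j_{V*}$), and your outline of part~(1) via \cite[Prop.~4.8]{Witte:MCVarFF} and Lemma~\ref{lem:HS argument} is the paper's first two reduction steps.

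There is, however, a genuine gap in part~(1). The theorem only assumes that $K_\infty/K$ has ramification prime to $\ell$ and that $T$ has projective stalks over $K_\infty$ on $U$; it does \emph{not} assume that $T$ itself has ramification prime to $\ell$, so the image of $\inertia_v$ (more precisely of $\kinert_v$) in $\Aut(T)$ may have non-trivial $\ell$-part at some closed points of $U$. Your reduction silently identifies $\ringtransf_{\Lambda/\Jac(\Lambda)[[G/H']]}\RDer\Sect(V,j_{U!}j_{K*}(\Lambda[[G]]^\sharp\tensor_\Lambda T))$ with $\RDer\Sect(V,j_{U!}j_{K*}(\Lambda/\Jac(\Lambda)[[G/H']]^\sharp\tensor_\Lambda T))$ (this is what is needed to feed Lemma~\ref{lem:HS argument} into the criterion of \cite[Prop.~4.8]{Witte:MCVarFF}). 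That identification is an instance of Lemma~\ref{lem:commutativity with tensor product}, and at the points just described none of its three hypotheses holds: $T$ need not have ramification prime to $\ell$ there, projective local cohomology over $K_\infty$ is not assumed, and the bimodule $\Lambda/\Jac(\Lambda)[[G/H']]$ is essentially never projective as a compact $\Lambda^{\op}$-module (already $\FF_\ell$ over $\Int_\ell$ fails). Concretely, formation of $(-)^{\inertia_v}$ does not commute with reduction mod $\Jac(\Lambda)$ at such $v$. The paper closes this gap by an excision step: letting $Z\subset U$ be the locus where $T$ does not have ramification prime to $\ell$ and $j'\colon U'\mto U$ its complement, one has the exact sequence
\[
0\mto \RDer\Sect(V,j_{U!}j'_!j_{K*}(\cdots))\mto \RDer\Sect(V,j_{U!}j'_*j_{K*}(\cdots))\mto \RDer\Sect(Z,i_Z^*j'_*j_{K*}(\cdots))\mto 0,
\]
where the left term is covered by your argument and the right term is shown to lie in $\cat{PDG}^{\cont,w_H}(\Lambda[[G]])$ by Lemma~\ref{lem:S torsion of additional Euler factors}; the middle term, which is the one in the theorem, then follows. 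Without this step your proof only establishes the theorem under the stronger hypothesis that $T$ has ramification prime to $\ell$ over all of $U$.
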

\begin{proof}
Set
$$
\sheaf{F}\coloneqq j_{U!}j_{K*}(\Lambda[[G]]^{\sharp}\tensor_\Lambda T).
$$
We begin by showing $(1)$. We first assume that $T$ has only ramification prime to $\ell$ over $U$ and that $\Lambda$ and  $H$ are finite. By Lem.~\ref{lem:perfectness} the complex $\RDer\Sect(V,\sheaf{F})$
is an object in $\cat{PDG}^{\cont}(\Lambda[[G]])$. Hence, we can find a strictly perfect complex $\cmplx{D}$ of $\Lambda[[G]]$-modules and a quasi-isomorphism
$$
\cmplx{D}\mto \varprojlim_{I\in\openideals_{\Lambda[[G]]}}\RDer\Sect(V,\sheaf{F}_I).
$$
By Lem.~\ref{lem:HS argument} the cohomology groups of $\cmplx{D}$ are finite $\Lambda[H]$-modules. Since $\Lambda[H]$ is noetherian, every finitely generated projective $\Lambda[[G]]$-module is flat as $\Lambda[H]$-module. Hence, the complex $\cmplx{D}$ is also of finite flat dimension over $\Lambda[H]$. Hence, it is a perfect complex of $\Lambda[H]$-modules and $\RDer\Sect(V,\sheaf{F})$ is in $\cat{PDG}^{\cont,w_H}(\Lambda[[G]])$.

For general $\Lambda$ and  $H$ we may choose an open pro-$\ell$-subgroup $H'$ in $H$ which is normal in $G$. By \cite[Prop.~4.8]{Witte:MCVarFF} an object in $\cat{PDG}^{\cont}(\Lambda[[G]])$ is an object in $\cat{PDG}^{\cont,w_H}(\Lambda[[G]])$ if its image under $\ringtransf_{\Lambda/\Jac(\Lambda)[[G/H']]}$ is an object in $\cat{PDG}^{\cont,w_{H/H'}}(\Lambda/\Jac(\Lambda)[[G/H']])$. But
\[
\begin{aligned}
\ringtransf_{\Lambda/\Jac(\Lambda)[[G/H']]}(\RDer\Sect(&V,j_{U!}j_{K*}(\Lambda[[G]]^{\sharp}\tensor_\Lambda T)))\\
&\downarrow\\
\RDer\Sect(&V,j_{U!}j_{K*}(\Lambda/\Jac(\Lambda)[[G/H']]^{\sharp}\tensor_\Lambda T))
\end{aligned}
\]
is a weak equivalence by Lem.~\ref{lem:commutativity with tensor product}.

For general $U$, let $i_Z\colon Z\mto U$ be the closed reduced subscheme over which $T$ does not have ramification prime to $\ell$ and write $j'\colon U'\mto U$ for the open complement of $Z$ in $U$. Consider the generic point $j_K\colon \Spec K\mto U'$. Then
\begin{multline*}
0\mto \RDer\Sect(V,j_{U!}j'_!j_{K*}(\Lambda[[G]]^{\sharp}\tensor_\Lambda T))\mto \RDer\Sect(V,j_{U!}j'_*j_{K*}(\Lambda[[G]]^{\sharp}\tensor_\Lambda T))\\
\mto \RDer\Sect(Z,i_Z^*j'_*j_{K*}(\Lambda[[G]]^{\sharp}\tensor_\Lambda T))\mto 0
\end{multline*}
is an exact sequence in $\cat{PDG}^{\cont}(\Lambda[[G]])$. The two outer terms are objects in $\cat{PDG}^{\cont,w_H}(\Lambda[[G]])$ by our previous considerations and by Lem.~\ref{lem:S torsion of additional Euler factors}. Hence, so is the middle term.

Finally, recall from Section~\ref{ss:perfect complexes of adic sheaves}
that we have an exact sequence
$$
0\mto \RDer\Sect(V,\sheaf{F})\mto \RDer\Sect(\algc{V},\sheaf{F})\xrightarrow{\id-\Frob_{\FF}}\RDer\Sect(\algc{V},\sheaf{F})\mto 0
$$
in $\cat{PDG}^{\cont}(\Lambda[[G]])$. In particular, the endomorphism $\id-\Frob_{\FF}$ is a weak equivalence in $w_H \cat{PDG}^{\cont}(\Lambda[[G]])$. This finishes the proof of $(1)$.

By the explicit description of the boundary homomorphism $\bh$ from Section~\ref{ss:Framework}
we conclude
$$
\bh [\id-\Frob_{\FF}]=[\RDer\Sect(V,j_{U!}j_{K*}(\Lambda[[G]]^{\sharp}\tensor_\Lambda T))].
$$
This proves $(2)$.

For $(3)$ we use Lem.~\ref{lem:commutativity with tensor product}
to show
$$
\eval_\rho(\ncL_{K_{\infty}/K,\Sigma_W,\Sigma_V}(T))=\ncL_{K_\cyc/K,\Sigma_W,\Sigma_V}(T(\rho^\sharp)).
$$
Let $(f\colon C_{K_\cyc}\mto C,\Gamma)$ denote the principal covering with Galois group $\Gamma$ in the sense of \cite[Def.~2.1]{Witte:MCVarFF} and consider the Waldhausen exact functor
$$
f_!f^*\colon \cat{PDG}^{\cont}(C,\Lambda)\mto\cat{PDG}^{\cont}(C,\Lambda[[\Gamma]])
$$
from \cite[Def.~6.1]{Witte:MCVarFF}. Since $f$ is everywhere unramified, one checks easily on the stalks that the natural morphism
$$
f_!f^*(\RDer j_{V*})j_{U!}j_{K*}(T)\mto (\RDer j_{V*})j_{U!}j_{K*}(\Lambda[[\Gamma]]^{\sharp}\tensor_\Lambda T)
$$
is a weak equivalence in $\cat{PDG}^{\cont}(C,\Lambda[[\Gamma]])$. From \cite[Thm.~8.6]{Witte:MCVarFF} we thus conclude
$$
\ncL_{K_\cyc/K,\Sigma_W,\Sigma_V}(T)=L((\RDer j_{V*})j_{U!}j_{K*}T,\gamma^{-1})=L_{\Sigma_W,\Sigma_V}(T,\gamma^{-1})
$$
and hence,
$$
\eval_\rho(\ncL_{K_{\infty}/K,\Sigma_W,\Sigma_V}(T))=L_{\Sigma_W,\Sigma_V}(T(\rho^\sharp),\gamma^{-1})
$$
as claimed.
\end{proof}

Assume that $\ell=p$. If $\Lambda[[G]]^{\sharp}\tensor_\Lambda T$ has projective stalks over $U$, then we can associate to $j_{U!}j_{K*}(\Lambda[[G]]^{\sharp}\tensor_\Lambda T)$ an element
$$
Q(j_{U!}j_{K*}(\Lambda[[G]]^{\sharp}\tensor_\Lambda T),t)\in\KTh_1(\Lambda[[G]]\langle t \rangle)
$$
that measures the failure of the Grothendieck trace formula \cite[Thm. 4.1]{Witte:UnitLFunctions}. It is well-determined up to elements in the kernel of the surjection
\[
\KTh_1(\Lambda[[G]]\langle t \rangle)\mto \varprojlim_{I\in\openideals_{\Lambda[[G]]}}\KTh_1(\Lambda[[G]]/I[t]).
\]
We will write
$$
q(\Lambda[[G]]^{\sharp}\tensor_\Lambda T)\coloneqq Q(j_{U!}j_{K*}(\Lambda[[G]]^{\sharp}\tensor_\Lambda T),1)\in\KTh_1(\Lambda[[G]]).
$$
By \cite[Prop. 4.3]{Witte:UnitLFunctions}, this element is in fact independent of $U$.

\begin{thm}\label{thm:MCforRep l equal p myLfunc}
Assume that $\ell = p$, that $K_\infty/K$ has ramification prime to $p$ and $T$ has projective stalks over $K_\infty$ in each closed point of $U$, and that $K_\infty/K$ and $T$ have ramification prime to $p$ in each point of $\Sigma_V$. Let $j_K\colon \Spec K\mto W$ be the generic point of $W$. Then
\begin{enumerate}
\item $\RDer\Sect(C,j_{W!}j_{K*}(\Lambda[[G]]^{\sharp}\tensor_\Lambda T))$ is in $\cat{PDG}^{\cont,w_H}(\Lambda[[G]])$ and the endomorphism $\id-\Frob_{\FF}$ of $\RDer\Sect(\algc{C},j_{W!}j_{K*}(\Lambda[[G]]^{\sharp}\tensor_\Lambda T))$ is a weak equivalence in $w_H\cat{PDG}^{\cont}(\Lambda[[G]])$.
\item Set
\begin{multline*}
\ncL_{K_{\infty}/K,\Sigma_W,\Sigma_V}(T)\coloneqq[\id-\Frob_{\FF}\lcirclearrowright \RDer\Sect(\algc{C},j_{W!}j_{K*}(\Lambda[[G]]^{\sharp}\tensor_\Lambda T))]^{-1}\\
q(\Lambda[[G]]^{\sharp}\tensor_\Lambda T)\prod_{v\in\Sigma_V}[\id-\Frob_v q^{\deg(v)}\lcirclearrowright (\Lambda[[G]]^\sharp \tensor_\Lambda T)^{\inertia_v}]
\end{multline*}
in $\KTh_1(\Lambda[[G]]_S)$. Then
$$
\bh \ncL_{K_{\infty}/K,\Sigma_W,\Sigma_V}(T)=-[\RDer\Sect(C,j_{W!}j_{K*}(\Lambda[[G]]^{\sharp}\tensor_\Lambda T))]
$$
\item Let $\Lambda'$ be a commutative adic $\Int_\ell$-algebra and  $\rho$ be a $\Lambda'$-$\Lambda[[G]]$-bimodule which is finitely generated and projective as $\Lambda'$-module. Assume either that $T$ has only ramification prime to $p$ over $U$ or that $\rho$ is projective as compact $\Lambda^{\op}$-module. Then
    $$
    \eval_\rho(\ncL_{K_{\infty}/K,\Sigma_W,\Sigma_V}(T))=L_{\Sigma_W,\Sigma_V}(T(\rho^\sharp),\gamma^{-1}).
    $$
\end{enumerate}
\end{thm}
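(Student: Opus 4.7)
The plan is to mirror the proof of Theorem~\ref{thm:MCforRep l p different} with modifications that specifically account for the unit $L$-function correction present in characteristic $\ell=p$. Set $\sheaf{G}\coloneqq j_{W!}j_{K*}(\Lambda[[G]]^{\sharp}\tensor_\Lambda T)$. For part (1), first check that $\Lambda[[G]]^{\sharp}\tensor_\Lambda T$ has projective stalks over $W$: at $v\in U$ this is case (3) of Prop.~\ref{prop:constructability condition} from the hypothesis, while at $v\in\Sigma_V=W\setminus U$ the hypothesis forces the image of $\inertia_v$ in $\Aut(T)$ to be finite of order prime to $p$, so the averaging idempotent furnishes a splitting of $T^{\kinert_v}\hookrightarrow T$ and Prop.~\ref{prop:constructability condition}(3) again applies. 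Hence $\sheaf{G}\in\cat{PDG}^{\cont}(C,\Lambda[[G]])$, and Lem.~\ref{lem:perfectness} (used with $V=C$, $U=W$) puts $\RDer\Sect(C,\sheaf{G})$ in $\cat{PDG}^{\cont}(\Lambda[[G]])$. The promotion to $\cat{PDG}^{\cont,w_H}(\Lambda[[G]])$ proceeds exactly as in the proof of Theorem~\ref{thm:MCforRep l p different}(1): reduce to finite $\Lambda$ and $H$ via $\ringtransf_{\Lambda/\Jac(\Lambda)[[G/H']]}$ for an open pro-$\ell$-subgroup $H'\triangleleft G$, invoking Lem.~\ref{lem:commutativity with tensor product}(3) and \cite[Prop.~4.8]{Witte:MCVarFF}; in the finite case Lem.~\ref{lem:HS argument} (whose $V=C$ requirement is met here) gives finite cohomology and noetherianity of $\Lambda[H]$ yields finite flat dimension. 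Bad points in $U$ at which $T$ lacks ramification prime to $p$ are removed via the exact-sequence trick combined with Lem.~\ref{lem:S torsion of additional Euler factors}. That $\id-\Frob_\FF$ is a $w_H$-weak equivalence on $\RDer\Sect(\algc{C},\sheaf{G})$ follows immediately from the short exact sequence recalled in Section~\ref{ss:perfect complexes of adic sheaves}.

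For part (2) I would split $\bh\ncL_{K_\infty/K,\Sigma_W,\Sigma_V}(T)$ into three contributions. The first factor contributes $-[\RDer\Sect(C,\sheaf{G})]$ via the identity $\bh[f\lcirclearrowright\cmplx{P}]=-[\cmplx{\Cone(f)}]$ from Section~\ref{ss:Framework} combined with the quasi-isomorphism $\cmplx{\Cone(\id-\Frob_\FF)}\simeq\RDer\Sect(C,\sheaf{G})[1]$ coming from the exact sequence above. The term $q(\Lambda[[G]]^{\sharp}\tensor_\Lambda T)$ lies in $\KTh_1(\Lambda[[G]])$, the kernel of $\bh$ by the split exact sequence of Section~\ref{ss:Framework}, so its contribution vanishes. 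For each Euler factor, the key observation is that in any adic $\Int_p$-algebra $\Lambda$ the element $p$ lies in $\Jac(\Lambda)$: indeed $\Lambda/\Jac(\Lambda)$ is a finite semisimple ring, and any finite quotient of $\Int_p$ has characteristic a power of $p$, so $p$ vanishes modulo $\Jac(\Lambda)$. Consequently $q^{\deg(v)}\in\Jac(\Lambda[[G]])$ and Nakayama's lemma shows $\id-\Frob_v q^{\deg(v)}$ to be an automorphism of the finitely generated projective $\Lambda[[G]]$-module $(\Lambda[[G]]^{\sharp}\tensor_\Lambda T)^{\inertia_v}$ (projectivity from Prop.~\ref{prop:constructability condition}(3)). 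Its class therefore lies in the image of $\KTh_1(\Lambda[[G]])\mto\KTh_1(\Lambda[[G]]_S)$ and is annihilated by $\bh$. Summing the three contributions yields the desired identity.

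For part (3) I would apply $\eval_\rho$ factor by factor. Lem.~\ref{lem:commutativity with tensor product}, in case (1) or case (3) according to which of the stated hypotheses holds, supplies the weak equivalence
\[
\ringtransf_{E_\rho}\RDer\Sect(\algc{C},\sheaf{G})\wto\RDer\Sect(\algc{C},j_{W!}j_{K*}(T(\rho^\sharp)))
\]
which transports $\id-\Frob_\FF$ to itself, so $\eval_\rho$ of the first factor is the corresponding Frobenius determinant for $T(\rho^\sharp)$. The analogous base-change compatibility of the unit $L$-function construction of \cite{Witte:UnitLFunctions} identifies $\eval_\rho q(\Lambda[[G]]^{\sharp}\tensor_\Lambda T)$ with $q(T(\rho^\sharp))$, and by the interpretation of $q(\cdot)$ as the failure of the Grothendieck trace formula in characteristic $p$ given in \cite[Thm.~5.5]{Witte:UnitLFunctions}, the product of these two equals the $\Sigma_W$-truncated, $\Sigma_V$-unmodified $L$-function $L_{\Sigma_W}(T(\rho^\sharp),\gamma^{-1})$. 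For each $v\in\Sigma_V$, the evaluation $\eval_\rho[\id-\Frob_v q^{\deg(v)}\lcirclearrowright(\Lambda[[G]]^{\sharp}\tensor_\Lambda T)^{\inertia_v}]$ reduces by the same tensor-product computation used in Lem.~\ref{lem:commutativity with tensor product} to $\det(\id-\Frob_v(q\gamma^{-1})^{\deg(v)}\mid T(\rho^\sharp)^{\inertia_v})$, which is exactly the $\Sigma_V$-modification factor in the tame-ramification formula for $L(i_v^{*}\RDer j_{V*}j_{U!}j_{K*}T,t)$ preceding Def.~\ref{defn:L-function product formula}. Multiplying reconstructs $L_{\Sigma_W,\Sigma_V}(T(\rho^\sharp),\gamma^{-1})$.

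The principal obstacle lies in part (3): one must synthesize three heterogeneous inputs, namely the cohomological Frobenius determinant, the unit $L$-function correction $q(\cdot)$ from \cite{Witte:UnitLFunctions}, and the explicit Euler factors for $v\in\Sigma_V$, and verify that they reassemble into the definition of the $\Sigma_W$-truncated $\Sigma_V$-modified $L$-function. The peculiarity that $q(\cdot)$ already lives in $\KTh_1(\Lambda[[G]])$ is what makes the boundary identity of part (2) formally identical to the $\ell\neq p$ case, but the interpolation draws on the deeper cohomological content of \cite{Witte:UnitLFunctions}.
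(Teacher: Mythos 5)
Your proposal is correct and follows essentially the same route as the paper: the paper reduces the $\Sigma_V=\emptyset$ case to the proof of Theorem~\ref{thm:MCforRep l p different} with the Grothendieck trace formula reference replaced by the unit-$L$-function result of \cite{Witte:UnitLFunctions}, and then, exactly as you do, observes that the extra Euler factors at $\Sigma_V$ are automorphisms of finitely generated projective $\Lambda[[G]]$-modules, hence killed by $\bh$, and evaluate under $\eval_\rho$ to the tame modification factors of $L_{\Sigma_W,\Sigma_V}$. Your added details (projectivity of the stalks at $v\in\Sigma_V$ via the averaging idempotent, and $q^{\deg(v)}\in\Jac(\Lambda[[G]])$ plus Nakayama for the automorphism claim) correctly fill in what the paper leaves implicit.
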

\begin{proof}
If $\Sigma_V=\emptyset$ and hence, $W=U$, then one proceeds exactly as in the proof of Thm.~\ref{thm:MCforRep l p different}, replacing the reference to \cite[Thm.~8.6]{Witte:MCVarFF} by \cite[Thm. 5.5]{Witte:UnitLFunctions}. For $\Sigma_V\neq \emptyset$ it remains to notice that $\id-\Frob_v q^{\deg(v)}$ is an automorphism of the finitely generated projective $\Lambda[[G]]$-module $(\Lambda[[G]]^\sharp\tensor_\Lambda T)^{\inertia_v}$ such that its class lies in $\KTh_1(\Lambda[[G]])\subset\KTh_1(\Lambda[[G]]_S)$ and hence, has trivial image under the boundary homomorphism $\bh$. Moreover,
\begin{multline*}
\eval_\rho([\id-\Frob_v q^{\deg(v)}\lcirclearrowright (\Lambda[[G]]^\sharp \tensor_\Lambda T)^{\inertia_v}])=\\
\det[\id-\gamma^{-1}\Frob_v q^{\deg(v)}\lcirclearrowright \Lambda[[\Gamma]]\tensor_\Lambda (T\tensor_\Lambda\rho)^{\inertia_v}]
\end{multline*}
such that
\begin{multline*}
\eval_\rho(\ncL_{K_{\infty}/K,\Sigma_W,\Sigma_V}(T))=\\
\eval_\rho\left(\ncL_{K_{\infty}/K,\Sigma_W,\emptyset}(T)\prod_{v\in\Sigma_V}[\id-\Frob_v q^{\deg(v)}\lcirclearrowright (\Lambda[[G]]^\sharp \tensor_\Lambda T)^{\inertia_v}]\right)\\
=L_{\Sigma_W,\Sigma_V}(T(\rho^\sharp),\gamma^{-1}).
\end{multline*}
\end{proof}

Note that in both cases, one can also allow $U$ to contain points $v$ in which $K_\infty/K$ has non-torsion ramification. However, by Lem.~\ref{lem:perfectness}, the corresponding non-commutative $L$-function $\ncL_{K_\infty/K,\Sigma_W,\Sigma_V}(T)$ then agrees with the $L$-function $\ncL_{K_\infty/K,\Sigma_W\cup\Sigma_0,\Sigma_V}(T)$ where the Euler factors in the set $\Sigma_0\subset U$ of points in which $K_\infty/K$ has non-torsion ramification are removed. The same Euler factors need then also be omitted in the interpolation property, i.\,e.\
$$
\eval_\rho(\ncL_{K_{\infty}/K,\Sigma_W,\Sigma_V}(T))=L_{\Sigma_W\cup \Sigma_0,\Sigma_V}(T(\rho^\sharp),\gamma^{-1}).
$$




\subsection{Duality and the functional equation}\label{ss:functional equation}

We retain the notation of the previous subsection. If $P$ is a finitely generated, projective $\Lambda[[G]]$-module, then
\[
P^{\Mdual}\coloneqq\Hom_{\Lambda[[G]]}(P,\Lambda[[G]])
\]
is a finitely generated, projective $\Lambda^\op[[G]]$-module if we let $g\in G$ act on $\phi\in P^{\Mdual}$ via
\[
g\phi\colon P\mto \Lambda[[G]],\qquad p\mapsto \phi(p)g^{-1}.
\]
The functor $\Mdual$ induces isomorphisms
\[
\begin{aligned}
\Mdual&\colon\KTh_1(\Lambda[[G]]_S)\mto\KTh_1(\Lambda^\op[[G]]_S),\\
\Mdual&\colon\KTh_0(\Lambda[[G]],S)\mto\KTh_0(\Lambda^\op[[G]],S)
\end{aligned}
\]
compatible with the boundary homomorphism of the localisation sequence \cite[Prop.~5.8]{Witte:BCP}.
If $\cmplx{P}$ is a strictly perfect complex of $\Lambda[[G]]$-modules and $f\colon\cmplx{P}\mto\cmplx{P}$ is an endomorphism whose cone is perfect as complex of $\Lambda[[H]]$-modules, then
\[
[f\lcirclearrowright \cmplx{P}]^\Mdual=[f^\Mdual\lcirclearrowright (\cmplx{P})^\Mdual]^{-1}.
\]
If $\cmplx{P}$ is a strictly perfect complex of $\Lambda[[G]]$-modules that is also perfect as complex of $\Lambda[[H]]$-modules, then
\[
[\cmplx{P}]^{\Mdual}=[(\cmplx{P})^{\Mdual}].
\]

The following lemma is a minor improvement on \cite[Prop. 5.4.17]{NSW:CohomNumFields}.

\begin{lem}\label{lem:duality passage to open subgroups}
Let $H'\subset H$ be an open subgroup and assume $M$ is a $\Lambda[[H]]$-module which has a resolution by finitely generated, free $\Lambda[[H']]$-modules. Then there exist an isomorphism
\[
\alpha\colon\RDer\Hom_{\Lambda[[H']]}(M,\Lambda[[H']])\xrightarrow{\isomorph} \RDer\Hom_{\Lambda[[H]]}(M,\Lambda[[H]])
\]
in the derived category of complexes of $\Lambda[[H']]^\op$-modules.
\end{lem}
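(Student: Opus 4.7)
The plan is to realize $\alpha$ (or rather its inverse) via a trace map from $\Lambda[[H]]$ to $\Lambda[[H']]$ and to evaluate it termwise on a projective $\Lambda[[H]]$-resolution of $M$.

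Since $H$ is compact profinite and $H'$ is open, the index $n \coloneqq [H:H']$ is finite. First I would fix left coset representatives $g_1 = e, g_2, \dots, g_n$ for $H/H'$, giving a decomposition $\Lambda[[H]] = \bigoplus_j g_j \Lambda[[H']]$ as right $\Lambda[[H']]$-modules. The trace $\pi\colon \Lambda[[H]] \to \Lambda[[H']]$, given by projection onto the $g_1$-summand -- equivalently, by continuous $\Lambda$-linear extension of $h \mapsto h$ for $h \in H'$ and $h \mapsto 0$ otherwise -- is easily seen to be a $\Lambda[[H']]$-bimodule map.

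For every $\Lambda[[H]]$-module $N$, I would define $\Lambda[[H']]^\op$-linear maps
\[
\Phi_N(\phi) \coloneqq \pi \circ \phi, \qquad \Psi_N(\psi)(n) \coloneqq \sum_{j=1}^n g_j \psi(g_j^{-1} n),
\]
and check that they are mutually inverse and natural in $N$. The inverse identities $\Phi_N \Psi_N = \id$, $\Psi_N \Phi_N = \id$ reduce to the observation that $\pi$ kills $g_j \Lambda[[H']]$ for $j \neq 1$ and that any $\Lambda[[H]]$-linear $\phi$ is reconstructed from its components in this decomposition. The main calculational obstacle is verifying that $\Psi_N(\psi)$ is $\Lambda[[H]]$-linear: one uses the decomposition $h g_j = g_{\mu(j)} h'_j$ with $h'_j \in H'$ and $\mu$ the permutation describing the action of $h \in H$ on left cosets of $H'$, followed by a reindexing that invokes $\Lambda[[H']]$-linearity of $\psi$.

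To pass to derived functors, I would then pick any projective resolution $\cmplx{P} \to M$ in the category of $\Lambda[[H]]$-modules. Because $\Lambda[[H]]$ is free of finite rank $n$ as a left $\Lambda[[H']]$-module, every projective $\Lambda[[H]]$-module is also projective over $\Lambda[[H']]$, so the restriction of $\cmplx{P}$ remains a projective resolution of $M$. The hypothesis that $M$ admits a resolution $\cmplx{F}$ by finitely generated free $\Lambda[[H']]$-modules is not needed for the bare existence of $\alpha$; it serves only to guarantee that the resulting complexes are perfect, which is what is required for the duality application. Applying $\Psi = (\Psi_{P^i})_i$ termwise yields the desired isomorphism of complexes of $\Lambda[[H']]^\op$-modules
\[
\Hom_{\Lambda[[H']]}(\cmplx{P}, \Lambda[[H']]) \xrightarrow{\isomorph} \Hom_{\Lambda[[H]]}(\cmplx{P}, \Lambda[[H]]),
\]
whose source and target compute $\RDer\Hom_{\Lambda[[H']]}(M, \Lambda[[H']])$ and $\RDer\Hom_{\Lambda[[H]]}(M, \Lambda[[H]])$ respectively.
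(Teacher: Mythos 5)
Your construction is correct, and the key formula is the same as the paper's: the map you call $\Psi_N$, $\psi\mapsto\bigl(n\mapsto\sum_j g_j\psi(g_j^{-1}n)\bigr)$, is exactly the paper's $\alpha$ (your choice of \emph{left} coset representatives of $H/H'$ is the normalisation under which the formula is independent of the choice of representatives and lands in $\Lambda[[H]]$-linear maps). Where you genuinely diverge is in the packaging. The paper only defines $\alpha$ on finitely generated free $\Lambda[[H]]$-modules and therefore must first invoke Lem.~\ref{lem:finite presentations under finite ring extensions} to upgrade the hypothesised finitely generated free $\Lambda[[H']]$-resolution of $M$ to a finitely generated free $\Lambda[[H]]$-resolution, which is then used to compute both derived $\Hom$'s at once. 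You instead exhibit $\Psi_N$ as a natural isomorphism for \emph{every} $\Lambda[[H]]$-module $N$, with explicit inverse $\pi\circ(-)$ via the trace/projection $\pi$, and then apply it termwise to an arbitrary projective $\Lambda[[H]]$-resolution; this bypasses Lem.~\ref{lem:finite presentations under finite ring extensions} entirely and justifies your (correct) remark that the finite free $\Lambda[[H']]$-resolution hypothesis is only needed downstream, to ensure perfectness of the resulting complexes. The one point you should make explicit is the passage from $\Lambda[[H]]$-linearity of $\Psi_N(\psi)$ on group elements $h\in H$ to all of $\Lambda[[H]]$: since $\psi$ and $N$ carry no topology, this is not literally a density argument applied to $\psi$; rather, one checks that the required identity $\sum_j g_j a_{jk}(\lambda)=\lambda g_k$ (where $g_j^{-1}\lambda=\sum_k a_{jk}(\lambda)g_k^{-1}$ with $a_{jk}(\lambda)\in\Lambda[[H']]$) holds inside $\Lambda[[H]]$ by continuity and density of $\Lambda[H]$, and then the linearity of $\Psi_N(\psi)$ follows formally. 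With that caveat supplied, your argument is complete and slightly more economical than the paper's.
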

\begin{proof}
Choose a system $g_1,\dots,g_d$ of right coset representatives of $H'\rquot H$. For any finitely generated, free $\Lambda[[H]]$-module $F$,
\[
\begin{aligned}
\alpha&\colon\Hom_{\Lambda[[H']]}(F,\Lambda[[H']])\mto \Hom_{\Lambda[[H]]}(F,\Lambda[[H]]),\\
\alpha(\phi)(f)&\coloneqq\sum_{i=1}^dg_i \phi(g_i^{-1}f)\qquad \text{for $f\in F$, $\phi\in\Hom_{\Lambda[[H']]}(F,\Lambda[[H']])$,}
\end{aligned}
\]
is an isomorphism of $\Lambda[[H']]^\op$-modules and does not depend on the choice of $g_1,\dots, g_d$. By Lem.~\ref{lem:finite presentations under finite ring extensions} below, $M$ also has a resolution $\cmplx{F}$ by finitely generated, free $\Lambda[[H]]$-modules. Moreover, any finitely generated, free $\Lambda[[H]]$-module is also finitely generated and free as $\Lambda[[H']]$-module, so that we can use the same $\cmplx{F}$ to compute the total derived functor of $\Hom_{\Lambda[[H]]}(M,\Lambda[[H]])$ in the categories of complexes of $\Lambda[[H]]$-modules or of $\Lambda[[H']]$-modules.
\end{proof}

\begin{lem}\label{lem:finite presentations under finite ring extensions}
Let $A$ be a subring of a ring $B$ and assume that $B$ has a resolution by finitely generated, free $A$-modules as a left $A$-module. Then a $B$-module $M$ has a resolution by finitely generated, free $B$-modules if and only if it has a resolution by finitely generated, free $A$-modules.
\end{lem}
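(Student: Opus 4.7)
The plan is to prove the two implications separately via standard homological techniques. For the forward direction (finitely generated free $B$-resolution implies finitely generated free $A$-resolution), the approach is to take a $B$-resolution $B^{k_\bullet}\to M$ and replace each term $B^{k_n}$ by the finitely generated free $A$-resolution built from $k_n$ copies of the assumed $A$-resolution of $B$. Lifting the differentials of the $B$-resolution via the standard comparison theorem yields a first-quadrant double complex $Q_{\bullet,\bullet}$ of finitely generated free $A$-modules. The total complex $\operatorname{Tot}(Q)$ has in each degree $d$ only finitely many summands (indexed by pairs $(n,j)$ with $n+j=d$ and $n,j\geq 0$), so each $\operatorname{Tot}(Q)_d$ remains finitely generated free over $A$, and a routine double-complex spectral sequence argument shows $\operatorname{Tot}(Q)\to M$ is a quasi-isomorphism.

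For the backward direction the approach is iterative. Given a finitely generated free $A$-resolution $\cmplx{F}\to M$, the module $M$ is finitely generated over $A$ (from $F_0$), hence over $B$, so I choose a surjection $B^{k_0}\to M$ of $B$-modules with kernel $K_0$. It then suffices to show that $K_0$ itself admits a finitely generated free $A$-resolution; iterating this step produces the desired $B$-resolution. To build the $A$-resolution of $K_0$, I take the finitely generated free $A$-resolution $\cmplx{G}\to B^{k_0}$ (from $k_0$ copies of the hypothesised $A$-resolution of $B$), lift $B^{k_0}\to M$ to a chain map $\phi\colon \cmplx{G}\to \cmplx{F}$, and consider the mapping cone $\cmplx{C}=\Cone(\phi)$, with $C_n = G_{n-1}\oplus F_n$. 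The long exact homology sequence shows $\cmplx{C}$ is a complex of finitely generated free $A$-modules whose homology is concentrated in degree $1$, where it equals $K_0$.

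The hard part will be extracting a \emph{free} (and not merely projective) $A$-resolution of $K_0$ from $\cmplx{C}$. The naive truncation by replacing degree zero with $\ker(C_1\to C_0)$ yields only a finitely generated projective resolution, since this kernel is a direct summand of the free module $C_1$ but need not itself be free. To circumvent this, I will use that the differential $d_1\colon C_1\to C_0$ splits (as $C_0$ is free, hence projective), fix a section $s\colon C_0\to C_1$, and form the modified complex with $\tilde{P}_0 = C_1$, $\tilde{P}_1 = C_2\oplus C_0$, $\tilde{P}_i = C_{i+1}$ for $i\geq 2$, and differentials $\tilde{d}_1(c_2,c_0) = d_2(c_2) + s(c_0)$, $\tilde{d}_i = d_{i+1}$ for $i\geq 2$. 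Each $\tilde{P}_i$ is a finite direct sum of finitely generated free $A$-modules, and a direct check using the splitting together with the exactness of $\cmplx{C}$ away from degree one confirms that $\cmplx{\tilde{P}}\to K_0$ is the required resolution, completing the inductive step.
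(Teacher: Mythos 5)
Your argument is correct, and the forward direction coincides with the paper's (each term $B^{k_n}$ of the $B$-resolution is resolved over $A$ and one totalizes the resulting double complex); note that in both your write-up and the paper's this step is slightly imprecise, since the comparison-theorem lifts of the differentials only compose to something null-homotopic rather than zero, so one does not literally get a double complex — the standard repair is to argue degree by degree via brutal truncations and iterated mapping cones (exactly the cone machinery you already deploy), or to invoke the general fact that a bounded-below complex of pseudo-coherent modules is pseudo-coherent. Where you genuinely diverge from the paper is in the backward direction. The paper introduces the numerical invariant $\lambda_R(N)$, the supremum of lengths of finite free presentations, and runs an induction on $n$ showing $\lambda_A(N)\geq n\Rightarrow\lambda_B(N)\geq n$ using the two inequalities $\lambda_A(Q)\geq\inf\{\lambda_A(F),\lambda_A(N')-1\}$ and $\lambda_B(N')\geq\inf\{\lambda_B(F),\lambda_B(Q)+1\}$ imported from Bourbaki's exercises; this is short but outsources the real content to those exercises. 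You instead build the $B$-resolution syzygy by syzygy: the cone of a lift $\cmplx{G}\mto\cmplx{F}$ over $B^{k_0}\qto M$ is a complex of finitely generated free $A$-modules with homology $K_0$ concentrated in degree $1$, and your stabilization trick (splitting $d_1$ off $C_1$ and absorbing $C_0$ into degree $1$) correctly converts the resulting finitely generated \emph{projective} resolution of $K_0$ into a \emph{free} one, which is the one point that genuinely needs care; iterating gives the $B$-resolution. Your route is longer but self-contained and effectively reproves the Bourbaki inequalities in the only case needed, at the price of the explicit bookkeeping with the section $s$.
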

\begin{proof}
If $M$ has a resolution $\cmplx{P}$ by finitely generated, free $B$-modules, then we may find a resolution of $P^{-n}$ by finitely generated, free $A$-modules for each $n\geq 0$. We obtain a resolution of $M$ by finitely generated, free $A$-modules by taking the total complex of the resulting double complex.

To prove the converse, we proceed by induction. For any ring $R$ and any $R$-module $N$, recall that a finite free presentation of length $\mu$ is an exact sequence
\[
F^{-\mu}\mto F^{1-\mu}\mto\cdots \mto F^0\mto N\mto 0
\]
with finitely generated, free $R$-modules $F^k$. Set $\lambda_R(N)\coloneqq-1$ if $N$ is not finitely generated and
\[
\lambda_R(N)\coloneqq\sup \set{\mu\given\text{there exists a finite free presentation of length $\mu$}}
\]
else. Clearly, for all $B$-modules $N$, if $\lambda_A(N)\geq 0$, then also $\lambda_B(N)\geq 0$. Assume that we know for some $n\geq 0$ that $\lambda_A(N)\geq n$ implies $\lambda_B(N)\geq n$ for $B$-modules $N$. Let $N'$ be a $B$-module with $\lambda_A(N')\geq n+1$. Then there exists an exact sequence
\[
0\mto Q\mto F\mto N'\mto 0
\]
of $B$-modules with $F$ finitely generated and free. By \cite[\S II.2, Ex.~6.(d)]{Bourbaki:CommAlg},
\[
\lambda_A(Q)\geq \inf\set{\lambda_A(F),\lambda_A(N')-1}\geq n.
\]
Hence, $\lambda_B(Q)\geq n$ by the induction assumption. By \cite[\S II.2, Ex.~6.(c)]{Bourbaki:CommAlg},
\[
\lambda_B(N')\geq \inf\set{\lambda_B(F),\lambda_B(Q)+1}\geq n+1.
\]
In particular, we conclude that $\lambda_A(M)=\infty$ implies $\lambda_B(M)=\infty$.
\end{proof}

\begin{lem}\label{lem:duality passage to H}
Assume that $M$ is a $\Lambda[[G]]$-module that has a resolution by finitely generated, projective $\Lambda[[H]]$-modules. Then $M$ also has a resolution by finitely generated, projective $\Lambda[[G]]$-modules and there exists an isomorphism
\[
\beta\colon\RDer\Hom_{\Lambda[[H]]}(M,\Lambda[[H]])\xrightarrow{\isomorph} \RDer\Hom_{\Lambda[[G]]}(M,\Lambda[[G]])[1]
\]
in the derived category of complexes of $\Lambda[[H]]^\op$-modules.
\end{lem}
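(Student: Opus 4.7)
The first claim---existence of a resolution of $M$ by finitely generated projective $\Lambda[[G]]$-modules---is precisely the content of \cite[Lem.~3.11]{Witte:BCP}, as already used in Rem.~\ref{rem:class of a projective RH module}; I would simply cite it.

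For the isomorphism $\beta$, the plan is to fix a topological generator $\gamma$ of $\Gamma \cong \Int_\ell$ and exploit the fact that $\Lambda[[G]]$ is the skew power series ring $\Lambda[[H]][[\Gamma]]$ in $\gamma-1$, so that $\gamma-1$ is a non-zero-divisor. This gives a two-term resolution of $\Lambda[[H]]$ by finitely generated free left $\Lambda[[G]]$-modules
\[
0 \to \Lambda[[G]] \xrightarrow{\cdot(\gamma-1)} \Lambda[[G]] \to \Lambda[[H]] \to 0,
\]
where $\Lambda[[H]] = \Lambda[[G]]/\Lambda[[G]](\gamma-1)$ carries the left $\Lambda[[G]]$-structure in which $H$ acts by left multiplication and $\gamma$ by conjugation on $H$. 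Combining this with the derived Frobenius reciprocity
\[
\RDer\Hom_{\Lambda[[H]]}(M, \Lambda[[H]]) \simeq \RDer\Hom_{\Lambda[[G]]}(M, \RDer\Hom_{\Lambda[[H]]}(\Lambda[[G]], \Lambda[[H]]))
\]
and the Koszul duality identification $\RDer\Hom_{\Lambda[[H]]}(\Lambda[[G]], \Lambda[[H]]) \simeq \Lambda[[G]][1]$ as complexes of $(\Lambda[[G]], \Lambda[[H]])$-bimodules then yields
\[
\RDer\Hom_{\Lambda[[H]]}(M, \Lambda[[H]]) \simeq \RDer\Hom_{\Lambda[[G]]}(M, \Lambda[[G]])[1],
\]
which is the desired $\beta$. (As a sanity check, for $M = \Lambda[[H]]$ both sides evaluate to $\Lambda[[H]]$ in degree $0$, using the computation $\RDer\Hom_{\Lambda[[G]]}(\Lambda[[H]], \Lambda[[G]]) \simeq \Lambda[[H]][-1]$ obtained directly by dualising the displayed resolution.)

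The main obstacle will be the key identification $\RDer\Hom_{\Lambda[[H]]}(\Lambda[[G]], \Lambda[[H]]) \simeq \Lambda[[G]][1]$. The complication is that $\Lambda[[G]]$, while topologically free as a left $\Lambda[[H]]$-module, is not abstractly projective---for instance, $\Lambda[[\Int_\ell]]$ is the product $\prod_{n\geq 0}\Lambda$ as an abstract $\Lambda$-module, which is not free---so one cannot simply resolve $\Lambda[[G]]$ by finitely generated projective $\Lambda[[H]]$-modules and compute the Hom. Instead one reinterprets the Koszul resolution above as a resolution of the \emph{right} $\Lambda[[G]]$-module $\Lambda[[G]]/(\gamma-1)\Lambda[[G]] \cong \Lambda[[H]]$ (left multiplication by $\gamma-1$), and carefully tracks how the two natural quotients $\Lambda[[G]]/\Lambda[[G]](\gamma-1)$ (left $\Lambda[[G]]$-module) and $\Lambda[[G]]/(\gamma-1)\Lambda[[G]]$ (right $\Lambda[[G]]$-module) combine, via the length-one resolution, to exhibit the shift of one---reflecting the cohomological dimension one of $\Gamma \cong \Int_\ell$---with the correct $(\Lambda[[G]], \Lambda[[H]])$-bimodule structure.
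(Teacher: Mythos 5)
The first part is fine and matches the paper (both cite \cite[Lem.~3.11]{Witte:BCP}). The second part, however, rests on an identification that cannot hold: you claim $\RDer\Hom_{\Lambda[[H]]}(\Lambda[[G]],\Lambda[[H]])\simeq\Lambda[[G]][1]$, but a derived $\Hom$ between modules is concentrated in non-negative cohomological degrees, whereas $\Lambda[[G]][1]$ sits in degree $-1$; and even forgetting the shift, $\Lambda[[G]]$ is an infinite (topologically free, not finitely generated) $\Lambda[[H]]$-module, so its abstract $\Lambda[[H]]$-dual is the full dual of a countable product $\prod_{n\geq 0}\Lambda[[H]]$ and is in no way $\Lambda[[G]]$. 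The one-degree shift you are after lives on the \emph{other} side of the extension: it is $\RDer\Hom_{\Lambda[[G]]}(\Lambda[[H]],\Lambda[[G]])\isomorph\Lambda[[H]][-1]$ -- exactly the computation in your own sanity check -- coming from the two-term resolution $0\mto\Lambda[[G]]\xrightarrow{\cdot(\gamma-1)}\Lambda[[G]]\mto\Lambda[[H]]\mto 0$. Your closing paragraph acknowledges the obstacle, but ``reinterpreting the Koszul resolution as a resolution of the right module and tracking the two quotients'' does not repair the Frobenius-reciprocity route, because that route genuinely requires computing the coinduced object $\RDer\Hom_{\Lambda[[H]]}(\Lambda[[G]],\Lambda[[H]])$ as a complex of left $\Lambda[[G]]$-modules, and there is no argument given (nor, I believe, available) that produces $\Lambda[[G]][1]$ there.

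The paper avoids this entirely by dualizing over the \emph{big} ring from the start: take a resolution $\cmplx{K}$ of $M$ by $\Lambda[[G]]$-modules that are finitely generated projective over $\Lambda[[H]]$, form the exact sequence $0\mto\Lambda[[G]]\tensor_{\Lambda[[H]]}\cmplx{K}\xrightarrow{\id-(\cdot\gamma^{-1}\tensor\gamma\cdot)}\Lambda[[G]]\tensor_{\Lambda[[H]]}\cmplx{K}\mto\cmplx{K}\mto 0$, and use the cone of the left-hand map as a finitely generated projective $\Lambda[[G]]$-resolution of $M$. Applying $\Hom_{\Lambda[[G]]}(-,\Lambda[[G]])$ to this cone and using the finite-rank identity $\Hom_{\Lambda[[G]]}(\Lambda[[G]]\tensor_{\Lambda[[H]]}P,\Lambda[[G]])\isomorph\Hom_{\Lambda[[H]]}(P,\Lambda[[H]])\tensor_{\Lambda[[H]]}\Lambda[[G]]$ (valid precisely because each $P$ is finitely generated projective over $\Lambda[[H]]$, unlike $\Lambda[[G]]$ itself), one finds that the dualized map $\id-(\cdots)^{\ast}$ is injective with cokernel $\Hom_{\Lambda[[H]]}(\cmplx{K},\Lambda[[H]])$, which is where the shift by $1$ actually comes from. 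You should rebuild your argument along these lines rather than via the coinduction adjunction.
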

\begin{proof}
By \cite[Lemma 3.11]{Witte:BCP} we may find a resolution $\cmplx{K}$ of $M$ by $\Lambda[[G]]$-modules which are finitely generated and projective as $\Lambda[[H]]$-modules. Choose a topological generator $\gamma\in\Gamma$. We then obtain an exact sequence of complexes of $\Lambda[[G]]$-modules
\[
0\mto \Lambda[[G]]\tensor_{\Lambda[[H]]}\cmplx{K}\xrightarrow{\id-(\cdot \gamma^{-1}\tensor\gamma\cdot)}\Lambda[[G]]\tensor_{\Lambda[[H]]}\cmplx{K}\mto\cmplx{K}\mto 0
\]
from \cite[Prop.~2.4]{Witte:Splitting}. The cone of $\id-(\cdot\gamma^{-1}\tensor\gamma\cdot)$ is a resolution of $M$ by finitely generated, projective $\Lambda[[G]]$-modules. One then proceeds as in \cite[Prop. 5.10]{Witte:BCP}.
\end{proof}

Assume that $M$ is a $\Lambda[[G]]$-module that has a resolution by finitely generated, projective $\Lambda[[H']]$-modules for some open subgroup $H'$ of $H$ and that
\[
\Ext^n_{\Lambda[[H'']]}(M,\Lambda[[H'']])=\Ext^{n+1}_{\Lambda[[G]]}(M,\Lambda[[G]])=0
\]
for all $n\neq 0$. By Lem.~\ref{lem:duality passage to open subgroups} and Lem.~\ref{lem:duality passage to H},
\[
\Ext^n_{\Lambda[[H'']]}(M,\Lambda[[H'']])=\Ext^{n+1}_{\Lambda[[G]]}(M,\Lambda[[G]])=0
\]
for all open subgroups $H''$ of $H$ and all $n\neq 0$. Let $\cg{\Ext^1_{\Lambda[[G]]}(M,\Lambda[[G]])}$ denote the $\Lambda[[G]]^\op$-module $\Ext^1_{\Lambda[[G]]}(M,\Lambda[[G]])$ considered as $\Lambda^\op[[G]]$-module.
\begin{defn}\label{defn:LambdaH-dual}
We write $\cg{M^{\mdual_{\Lambda[[H'']]}}}$ for $\Hom_{\Lambda[[H'']]}(M,\Lambda[[H']])$ considered as $\Lambda^\op[[G]]$-module via the isomorphism
\[
\cg{M^{\mdual_{\Lambda[[H'']]}}}\xrightarrow[\isomorph]{\alpha}
\cg{M^{\mdual_{\Lambda[[H]]}}}\xrightarrow[\isomorph]{\beta}
\cg{\Ext^1_{\Lambda[[G]]}(M,\Lambda[[G]])}.
\]
\end{defn}

If $H''$ is normal in $G$, $g\in G$ acts on $\phi\in\cg{M^{\mdual_{\Lambda[[H'']]}}}$ via
\[
g\phi\colon M\mto \Lambda[[H'']],\qquad m\mapsto g\phi(g^{-1}m)g^{-1}.
\]
In the case that $H''$ is not normal in $G$, it is more difficult to give an explicit description of the $G$-operation.

We conclude that if $M$ has a resolution by a strictly perfect complex of $\Lambda[[H]]$-modules, then
\begin{equation}\label{eqn:Mdual and LambdaH-dual}
[M]^{\Mdual}=-[\cg{M}^{\mdual_{\Lambda[[H']]}}]
\end{equation}
in $\KTh_0(\Lambda[[G]],S)$ for every open subgroup $H'$ of $H$.

Suppose now that $T$ is a finitely ramified $\Gal_K$-representation over $\Lambda$. We turn
\[
T^{\mdual_\Lambda}\coloneqq\Hom_{\Lambda}(T,\Lambda)
\]
into a finitely ramified $\Gal_K$-representation over $\Lambda^\op$ by letting $g\in \Gal_K$ act on $\phi\in T^{\mdual_\Lambda}$ via
\[
g\phi\colon T\mto\Lambda,\qquad t\mapsto \phi(g^{-1}t).
\]
Note that
\begin{align}
T^{\mdual_\Lambda}&\isomorph \dual{(\dual{\Lambda}\tensor_{\Lambda}T)},\label{eqn:mdual and dual}\\
(\Lambda[[G]]^\sharp\tensor_{\Lambda}T)^{\mdual_{\Lambda[[G]]}}&\isomorph\Lambda^\op[[G]]^\sharp\tensor_{\Lambda^\op}T^{\mdual_{\Lambda}}.\label{eqn:mdual and compact induction}
\end{align}

For any two \'etale sheaves $\sheaf{F}$, $\sheaf{G}$ of abelian groups on $C$, we will write $\sheafHom_C(\sheaf{F},\sheaf{G})$ for the sheaf of morphisms from $\sheaf{F}$ to $\sheaf{G}$. We will need the following corollary of the relative Poincar\'e duality theorem.

\begin{lem}\label{lem:exchange formula}
Assume that $\Lambda$ is a finite $\Int_{\ell}$-algebra with $\ell$ different from the characteristic of $K$. Then there exists a canonical isomorphism
\[
 \RDer \sheafHom_C(\RDer j_{V*} j_{U!}j_{K*}T,\Rat_\ell/\Int_\ell(1))\isomorph \RDer j_{W*}j_{U!}j_{K*}\dual{T}(1)
\]
in the derived category of complexes of \'etale sheaves of $\Lambda^\op$-modules on $C$.
\end{lem}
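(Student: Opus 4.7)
The plan is to reduce the lemma to two ingredients via Verdier duality on $C$: a local Poincar\'e-duality computation on $U$, and a swap identity interchanging the roles of $V$ and $W$. Throughout I write $j_U^V\colon U\mto V$ and $j_U^W\colon U\mto W$ for the two open immersions sharing the source $U$.

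\emph{Step 1.} Since $C$ is smooth of relative dimension one over $\FF$ and $\ell\neq p$, the dualising complex is $\omega_C=\Rat_\ell/\Int_\ell(1)[2]$, so $\RDer\sheafHom_C(-,\Rat_\ell/\Int_\ell(1))\isomorph\RDer\sheafHom_C(-,\omega_C)[-2]$. Applying the Verdier exchange formulas for open immersions, namely $\RDer\sheafHom_C(\RDer j_{V*}(-),\omega_C)\isomorph j_{V!}\RDer\sheafHom_V(-,\omega_V)$ and $\RDer\sheafHom_V(j_{U!}^V(-),\omega_V)\isomorph \RDer j_{U*}^V\RDer\sheafHom_U(-,\omega_U)$, I obtain
\[
\RDer\sheafHom_C(\RDer j_{V*}j_{U!}^Vj_{K*}T,\Rat_\ell/\Int_\ell(1))\isomorph j_{V!}\RDer j_{U*}^V\bigl(\RDer\sheafHom_U(j_{K*}T,\omega_U)[-2]\bigr).
\]

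\emph{Step 2 (the main step).} I claim $\RDer\sheafHom_U(j_{K*}T,\omega_U)\isomorph j_{K*}\dual T(1)[2]$, and I would prove this by applying $\RDer\sheafHom_U(-,\omega_U)$ to the truncation triangle $j_{K*}T\mto\RDer j_{K*}T\mto (R^1j_{K*}T)[-1]\mto[+1]$. Since $j_K$ is a pro-open immersion, the middle term dualises to $j_{K!}\RDer\sheafHom_{\Spec K}(T,\omega_K)$; using $\omega_K\isomorph\Rat_\ell/\Int_\ell(1)[2]$ (as $\cd_\ell(\Gal_K)=2$ with dualising module $\mu_{\ell^\infty}$) and the fact that $T$ is finite and $\Lambda$-projective, this becomes $j_{K!}\dual T(1)[2]$. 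For the third term, $R^1j_{K*}T\isomorph\bigoplus_{v\in U^0}(i_v)_*H^1(\inertia_v,T)$ is a skyscraper; cohomological purity $i_v^!\omega_U\isomorph\omega_v$ together with local Tate duality at the strictly henselian trait $\Spec\mathcal{O}_{U,v}^{sh}$ (where $\inertia_v$ has $\ell$-cohomological dimension one) yield
\[
\RDer\sheafHom_U(R^1j_{K*}T,\omega_U)\isomorph \bigoplus_{v\in U^0}(i_v)_*\dual T(1)^{\inertia_v}\isomorph j_{K*}\dual T(1)/j_{K!}\dual T(1).
\]
Matching this against the shift of the skyscraper quotient triangle $j_{K!}\dual T(1)\mto j_{K*}\dual T(1)\mto j_{K*}\dual T(1)/j_{K!}\dual T(1)\mto[+1]$ identifies the remaining cone as $j_{K*}\dual T(1)[2]$, proving the claim.

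\emph{Step 3.} For any \'etale sheaf $\sheaf{A}$ on $U$ I would establish the swap identity
\[
j_{V!}\RDer j_{U*}^V\sheaf{A}\isomorph \RDer j_{W*}j_{U!}^W\sheaf{A}
\]
on $C$ by checking both sides against the open cover $\{V,W\}$ of $C$ (available because $V\cup W=C$, equivalently $\Sigma_V\cap\Sigma_W=\emptyset$). Smooth base change along the Cartesian square formed by $U,V,W,C$ identifies the right-hand side restricted to $V$ with $\RDer j_{U*}^V\sheaf{A}$, while base change of $j_{V!}$ along $W\hookrightarrow C$ identifies the left-hand side restricted to $W$ with $j_{U!}^W\sheaf{A}$. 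Applying the swap with $\sheaf{A}=j_{K*}\dual T(1)$ and combining with Steps~1--2 yields the claimed isomorphism.

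The main obstacle is Step~2: because $j_{K*}T$ is neither $j_{K!}T$ nor $\RDer j_{K*}T$, the standard Verdier exchange for open immersions does not apply to it directly, and one must interpolate via the truncation triangle and invoke local Tate duality at each closed point of $U$. The hypothesis $\ell\neq p$ is used here both to ensure $\cd_\ell(\inertia_v)=1$ and to have Poincar\'e duality on the smooth curve in its standard form; canonicity of the final identification requires checking that the connecting morphisms of the two triangles being matched are induced by the same local duality pairing.
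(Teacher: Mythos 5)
Your argument is correct and follows essentially the same route as the paper: the same three ingredients appear, namely the Verdier exchange formulas $D(\RDer j_*\sheaf{F})\isomorph j_!D(\sheaf{F})$ and $D(j_!\sheaf{F})\isomorph \RDer j_*D(\sheaf{F})$ for open immersions, the swap identity $j_{V!}\RDer j_{U*}\isomorph \RDer j_{W*}j_{U!}$ (which the paper checks on the strata $U$, $\Sigma_V$, $\Sigma_W$ rather than on the open cover $\{V,W\}$, an equivalent verification), and the local statement $\RDer\sheafHom_U(j_{K*}T,\Rat_\ell/\Int_\ell(1))\isomorph j_{K*}\dual{T}(1)$. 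The only divergence is that you sketch a proof of this last identity via the truncation triangle and local Tate duality, whereas the paper simply cites the proof of \cite[Prop.~V.2.2.(b)]{Milne:EtCohom} for it.
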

\begin{proof}
For any \'etale sheaf $\sheaf{F}$ of $\Lambda$-modules on $U$, the canonical morphism
$$
j_{V!}\RDer j_{U*}\sheaf{F}\mto \RDer j_{W*}j_{U!}\sheaf{F}
$$
is a isomorphism in the derived category of complexes of \'etale sheaves of $\Lambda$-modules on $C$. To see this, we note that
\begin{gather*}
j_{U}^*j_{V!}\RDer j_{U*}\sheaf{F}\isomorph\sheaf{F}\isomorph j_{U}^*\RDer j_{W*}j_{U!}\sheaf{F},\\
i_{\Sigma_V}^*\RDer j_{W*}j_{U!}\sheaf{F}\isomorph i_{\Sigma_V}^*j_{W}^*\RDer j_{W*}j_{U!}\sheaf{F}\isomorph 0 \isomorph i_{\Sigma_V}^*j_{V!}\RDer j_{U*}\sheaf{F},\\
i_{\Sigma_W}^*i_{\Sigma_{V}*}i_{\Sigma_{V}}^*\RDer j_{V*}\RDer j_{U*}\sheaf{F}\isomorph 0\isomorph i_{\Sigma_W}^*\RDer j_{W*}i_{\Sigma_{V}*}i_{\Sigma_{V}}^*\RDer j_{U*}\sheaf{F},\\
i_{\Sigma_W}^*j_{V!}\RDer j_{U*}\sheaf{F}\isomorph i_{\Sigma_W}^*\RDer j_{V*}\RDer j_{U*}\sheaf{F}\isomorph i_{\Sigma_W}^*\RDer j_{W*}\RDer j_{U*}\sheaf{F}\isomorph i_{\Sigma_W}^*\RDer j_{W*}j_{U!}\sheaf{F}.
\end{gather*}
Set
$$
D_U(\sheaf{F})\coloneqq\RDer\sheafHom_{U}(\sheaf{F},\Rat_\ell/\Int_\ell(1)).
$$
From the relative Poincar\'e duality theorem and the biduality theorem we obtain natural isomorphisms
$$
j_{U!}D_U(\sheaf{F})\isomorph D_V(\RDer j_{U*}\sheaf{F}),\qquad \RDer j_{U*}D_U(\sheaf{F})\isomorph D_V(j_{U!}\sheaf{F})
$$
in the derived category of complexes of \'etale sheaves of $\Lambda$-modules on $V$ \cite[Cor.~II.7.3, Thm~II.10.3, Cor.~II.7.5]{KiehlWeissauer:WeilConjectures}.
Finally we note that
$$
D_U(j_{K*}T)\isomorph j_{K*}\dual{T}(1)
$$
by \cite[Proof of Prop.~V.2.2.(b)]{Milne:EtCohom}.
\end{proof}

\begin{cor}\label{cor:Poincare duality with Pontryagin dual}
Assume that $\Lambda$ is a finite $\Int_{\ell}$-algebra with $\ell$ different from the characteristic of $K$. Then there exists canonical isomorphisms
\[
 \begin{aligned}
 \RDer\Hom_{\Int}(\RDer \Sect(\algc{V},j_{U!}j_{K*}\dual{T}(1)),\Rat_\ell/\Int_\ell)&\isomorph \RDer\Sect(\algc{W},j_{U!}j_{K*}T)[2],\\
  \RDer\Hom_{\Int}(\RDer \Sect(V,j_{U!}j_{K*}\dual{T}(1)),\Rat_\ell/\Int_\ell)&\isomorph \RDer\Sect(W,j_{U!}j_{K*}T)[3]
 \end{aligned}
\]
in the derived category of complexes of $\Lambda$-modules. The first isomorphism is compatible with the operations of $\Frob_{\FF}$ on the left-hand complex and $\Frob_{\FF}^{-1}$ on the righthand complex.
\end{cor}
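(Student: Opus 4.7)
The plan is to pass from the sheaf-level duality of Lemma~\ref{lem:exchange formula} to cohomology via Poincar\'e duality on the proper smooth curve $\algc{C}$, and then to handle the Galois descent from $\algc{\FF}$ to $\FF$ via the Hochschild--Serre triangle for $\Gal(\algc{\FF}/\FF)$.

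First I would apply Lemma~\ref{lem:exchange formula} to $\dual{T}(1)$ in place of $T$; using $\dual{(\dual{T}(1))}(1)\isomorph T$ (Pontryagin biduality combined with inverting the Tate twist), this produces a canonical isomorphism
\[
\RDer\sheafHom_C\bigl(\RDer j_{V*}j_{U!}j_{K*}\dual{T}(1),\Rat_\ell/\Int_\ell(1)\bigr)\isomorph\RDer j_{W*}j_{U!}j_{K*}T
\]
in the derived category of complexes of \'etale sheaves of $\Lambda$-modules on $C$. Applying $\RDer\Sect(\algc{C},\cdot)$ to both sides, the right-hand side simplifies via $\RDer\Sect(\algc{C},\RDer j_{W*}\cdot)=\RDer\Sect(\algc{W},\cdot)$ to $\RDer\Sect(\algc{W},j_{U!}j_{K*}T)$. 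On the left I would invoke Poincar\'e duality on the proper smooth curve $\algc{C}$ over $\algc{\FF}$, in the form
\[
\RDer\Sect\bigl(\algc{C},\RDer\sheafHom_C(\sheaf{G},\Rat_\ell/\Int_\ell(1))\bigr)\isomorph\RDer\Hom_\Int\bigl(\RDer\Sect(\algc{C},\sheaf{G}),\Rat_\ell/\Int_\ell\bigr)[-2],
\]
and apply the identity $\RDer\Sect(\algc{C},\RDer j_{V*}\cdot)=\RDer\Sect(\algc{V},\cdot)$ once more. After shifting by $[2]$ this delivers the first isomorphism. The compatibility of $\Frob_\FF$ on the left with $\Frob_\FF^{-1}$ on the right is a consequence of the Frobenius-equivariance of the trace isomorphism $\HF^2(\algc{C},\Rat_\ell/\Int_\ell(1))\xrightarrow{\isomorph}\Rat_\ell/\Int_\ell$, which forces the Poincar\'e duality pairing to satisfy $\langle\Frob_\FF\alpha,\beta\rangle=\langle\alpha,\Frob_\FF^{-1}\beta\rangle$.

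For the second isomorphism I would use the exact triangle
\[
\RDer\Sect(V,\sheaf{F})\mto\RDer\Sect(\algc{V},\sheaf{F})\xrightarrow{\id-\Frob_\FF}\RDer\Sect(\algc{V},\sheaf{F})\xrightarrow{+1}
\]
for $\sheaf{F}=j_{U!}j_{K*}\dual{T}(1)$ from Section~\ref{ss:perfect complexes of adic sheaves}. Applying $\RDer\Hom_\Int(\cdot,\Rat_\ell/\Int_\ell)$ (which is exact since $\Rat_\ell/\Int_\ell$ is divisible) and substituting the first isomorphism yields a distinguished triangle
\[
\RDer\Sect(\algc{W},j_{U!}j_{K*}T)[2]\xrightarrow{\id-\Frob_\FF^{-1}}\RDer\Sect(\algc{W},j_{U!}j_{K*}T)[2]\mto\RDer\Sect(V,j_{U!}j_{K*}\dual{T}(1))^{\vee}\xrightarrow{+1},
\]
in which the map $\id-\Frob_\FF^{-1}$ appears because the Pontryagin-dual endomorphism of $\Frob_\FF$ corresponds to $\Frob_\FF^{-1}$ under the first isomorphism. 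Since $\id-\Frob_\FF^{-1}=-\Frob_\FF^{-1}(\id-\Frob_\FF)$ differs from $\id-\Frob_\FF$ by an isomorphism, the two endomorphisms have canonically isomorphic cones; comparing with the Hochschild--Serre triangle for $W$ identifies this cone with $\RDer\Sect(W,j_{U!}j_{K*}T)[3]$, producing the second isomorphism. The most delicate step is verifying the Frobenius compatibility rigorously in the derived category; once this is settled, the remainder is formal manipulation of distinguished triangles.
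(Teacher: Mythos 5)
Your argument is correct and follows essentially the paper's own route: both proofs combine relative Poincar\'e duality for the proper smooth curve with Lemma~\ref{lem:exchange formula} applied to $\dual{T}(1)$, and both obtain the Frobenius compatibility from the Galois-equivariance of the trace map. The only real difference is in the second isomorphism, where the paper simply takes global sections over $\Spec\FF$ (implicitly invoking duality for $\Gal(\algc{\FF}/\FF)$), while you unpack that step via the $\id-\Frob_{\FF}$ triangle and the observation that $\id-\Frob_{\FF}^{-1}$ and $\id-\Frob_{\FF}$ have isomorphic cones --- this gives the same shift by $[3]$, at the cost of an isomorphism that is, strictly speaking, canonical only up to the usual cone ambiguity.
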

\begin{proof}
Apply the relative Poincar\'e duality theorem to the structure morphism $C\mto\Spec \FF$ and the complex of \'etale sheaves $\RDer j_{V*}j_{U!}j_{K_*}\dual{T}(1)$ on $C$. Then take global sections over $\Spec \algc{\FF}$ and $\Spec \FF$. Finally, use Lem.~\ref{lem:exchange formula} to obtain identifications
\[
\begin{aligned}
\RDer\Hom_{\algc{C}}(\RDer j_{V*}j_{U!}j_{K*}\dual{T}(1),\Rat_{\ell}/\Int_{\ell}(1))&\isomorph\RDer\Sect(\algc{W},j_{U!}j_{K*}T),\\
\RDer\Hom_{C}(\RDer j_{V*}j_{U!}j_{K*}\dual{T}(1),\Rat_{\ell}/\Int_{\ell}(1))&\isomorph\RDer\Sect(W,j_{U!}j_{K*}T).
\end{aligned}
\]
\end{proof}

Let now $\Lambda$ be a general adic $\Int_\ell$-algebra. In the following corollary, we consider a complex $\cmplx{P}=(\cmplx{P}_I)_{I\in\openideals_{\Lambda}}$ of $\cat{PDG}^{\cont}(\Lambda)$ as objects of the derived category of complexes of $\Lambda$-modules by passing to the projective limit
\[
 \varprojlim_{I\in\openideals_{\Lambda}}\cmplx{P}_I.
\]
We recall that the projective limit is an exact functor by the construction of $\cat{PDG}^{\cont}(\Lambda)$.

\comment{To do: Check this}
\begin{cor}\label{cor:Poincare duality with Lambda dual}
Assume that $\Lambda$ is an adic $\Int_{\ell}$-algebra with $\ell$ different from the characteristic of $K$ and that $T$ has projective local cohomology over $U$. Then there exists canonical isomorphisms
\[
 \begin{aligned}
 \RDer\Hom_{\Lambda^\op}(\RDer \Sect(\algc{V},j_{U!}j_{K*}T^{\mdual_{\Lambda}}(1)),\Lambda^\op)&\isomorph \RDer\Sect(\algc{W},j_{U!}j_{K*}T)[2],\\
  \RDer\Hom_{\Lambda^\op}(\RDer \Sect(V,j_{U!}j_{K*}T^{\mdual_{\Lambda}}(1)),\Lambda^\op)&\isomorph \RDer\Sect(W,j_{U!}j_{K*}T)[3]
 \end{aligned}
\]
in the derived category of complexes of $\Lambda$-modules.
\end{cor}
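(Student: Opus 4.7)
The plan is to derive the corollary from the finite-coefficient version Corollary~\ref{cor:Poincare duality with Pontryagin dual} by passing to an inverse limit over the open ideals $I\in\openideals_{\Lambda}$. By Example~\ref{exmpl:ramification types}(2) and Proposition~\ref{prop:stalk of modified direct image}, the projective local cohomology hypothesis implies that $j_{U!}j_{K*}T$ and $j_{U!}j_{K*}T^{\mdual_{\Lambda}}(1)$ are objects of $\cat{PDG}^{\cont}(U,\Lambda)$ and $\cat{PDG}^{\cont}(U,\Lambda^{\op})$, respectively, so both sides of the claimed isomorphism define complexes in $\cat{PDG}^{\cont}(\Lambda)$. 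For a perfect complex of $\Lambda^{\op}$-modules, the functor $\RDer\Hom_{\Lambda^{\op}}(-,\Lambda^{\op})$ commutes with the inverse limit $\Lambda^{\op}=\varprojlim_I\Lambda^{\op}/I$; it therefore suffices to construct the isomorphism compatibly at each finite level $R_I\coloneqq\Lambda/I$.

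At each such level, the base-change identity $T^{\mdual_{\Lambda}}(1)/I\isomorph (T/IT)^{\mdual_{R_I}}(1)$ holds because $T$ is finitely generated and projective over $\Lambda$. The plan is then to apply relative Poincar\'e duality for the smooth proper morphism $\algc{C}\to\Spec\algc{\FF}$ of relative dimension one with coefficients in the finite ring $R_I^{\op}$, whose dualizing complex is $R_I(1)[2]$. Applied to the constructible complex $\RDer j_{V*}j_{U!}j_{K*}(T/IT)^{\mdual_{R_I}}(1)$, this yields
\[
\RDer\Hom_{R_I^{\op}}\bigl(\RDer\Sect(\algc{V},j_{U!}j_{K*}(T/IT)^{\mdual_{R_I}}(1)),R_I^{\op}\bigr)\isomorph\RDer\Sect\bigl(\algc{C},\RDer\sheafHom_{C}(\RDer j_{V*}j_{U!}j_{K*}(T/IT)^{\mdual_{R_I}}(1),R_I(1))\bigr)[2].
\]
A routine adaptation of Lemma~\ref{lem:exchange formula}, with $R_I(1)$ replacing $\Rat_\ell/\Int_\ell(1)$ as dualizing sheaf and $(-)^{\mdual_{R_I}}$ replacing Pontryagin duality (the proof proceeds identically, invoking relative Poincar\'e duality and biduality as in \cite[Cor.~II.7.3, Thm.~II.10.3, Cor.~II.7.5]{KiehlWeissauer:WeilConjectures}), combined with the biduality isomorphism $((T/IT)^{\mdual_{R_I}}(1))^{\mdual_{R_I^{\op}}}(-1)\isomorph T/IT$, identifies the inner $\RDer\sheafHom_C$ with $\RDer j_{W*}j_{U!}j_{K*}(T/IT)$. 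The right-hand side thus simplifies to $\RDer\Sect(\algc{W},j_{U!}j_{K*}(T/IT))[2]$, giving the finite-level version of the first isomorphism; the second is obtained analogously by using $C\to\Spec\FF$ in place of $\algc{C}\to\Spec\algc{\FF}$, which accounts for the additional shift arising from the arithmetic dimension.

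Passing to the inverse limit over $I\in\openideals_{\Lambda}$ then yields the adic statement. The main obstacle I expect is the verification that the finite-level isomorphisms are natural in $I$ — that is, that they commute with the transition maps induced by $\Lambda/I\to\Lambda/J$ for $I\subset J$. This naturality should follow from the functoriality of Poincar\'e duality and the exchange formula in the coefficient ring, combined with Lemma~\ref{lem:commutativity with tensor product}(2) to commute the change-of-ring functors with $\RDer\Sect$ and $j_{K*}$ (whose hypotheses are satisfied precisely because $T$ has projective local cohomology over $U$). A secondary point to check is that the adaptation of Lemma~\ref{lem:exchange formula} with $R_I(1)$ coefficients is truly formal; since the Kiehl--Weissauer duality results hold for arbitrary noetherian torsion coefficient rings, this should cause no serious difficulty.
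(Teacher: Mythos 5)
Your overall strategy—reduce to finite coefficient rings $R_I=\Lambda/I$ and invoke a duality statement there—is the right shape, but it diverges from the paper's proof in a way that opens a genuine gap. The paper does \emph{not} redo Poincar\'e duality with $R_I$-linear duals. Instead it converts the $\Lambda$-linear dual into a Pontryagin dual once and for all, via
\[
\RDer\Hom_{\Lambda^\op}(\cmplx{P},\Lambda^\op)\isomorph\RDer\Hom_{\Int}(\dual{\Lambda^\op}\Ltensor_{\Lambda^\op}\cmplx{P},\Rat_\ell/\Int_{\ell}),
\]
commutes $\dual{\Lambda^\op}\Ltensor_{\Lambda^\op}(-)$ past $\RDer\Sect$ as a filtered \emph{colimit} over $I$, identifies the resulting finite-level sheaves $\dual{(\Lambda^\op/I)}\tensor_{\Lambda^\op/I}(j_{K*}T^{\mdual_\Lambda}(1))_I$ with $j_{K*}\dual{(T/IT)}(1)$ (this is where the projective-local-cohomology hypothesis is spent, arguing as in Lem.~\ref{lem:commutativity with tensor product}), and then quotes Cor.~\ref{cor:Poincare duality with Pontryagin dual} verbatim for $T/IT$ over $\Lambda/I$. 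That corollary needs no hypothesis on the representation precisely because Pontryagin duality is exact.

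The gap in your route is the claim that the adaptation of Lem.~\ref{lem:exchange formula} to the dualizing object $R_I(1)$ is ``routine'' and ``truly formal''. It is not: the decisive step in that lemma is the stalkwise identification $D_U(j_{K*}M)\isomorph j_{K*}\dual{M}(1)$, which holds for Pontryagin duality because $\dual{(-)}$ is exact. For $\RDer\sheafHom_U(-,R_I(1))$ the analogous identification fails in general, since the stalks at $v\in U$ produce $\Ext^1_{R_I}$- and $\Ext^2_{R_I}$-terms of the local cohomology $\HF^1(\inertia_v,-)$; this is exactly the content of the Remark immediately following the corollary, which computes the cone of the duality morphism in terms of $\Ext^i_{\Lambda^\op}(\HF^1(\inertia_v,T^{\mdual_\Lambda}(1)),\Lambda^\op)$ and exhibits the statement as false without the hypothesis. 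So the projective-local-cohomology assumption must be invoked precisely at your exchange-formula step (and one must then also transfer projectivity from $\HF^1(\inertia_v,T)$ over $\Lambda$ to the relevant local cohomology of $(T/IT)^{\mdual_{R_I}}(1)$ over $R_I$, which you do not address); in your write-up the hypothesis is only used for naturality in $I$, which misses its actual role. A secondary weakness is the derived inverse limit over $I$ at the end: the paper deliberately works with a filtered colimit on the Pontryagin-dual side to avoid exactly this, whereas gluing finite-level isomorphisms into a $\varprojlim_I$ in the derived category needs the naturality and $\varprojlim^1$-vanishing you only gesture at.
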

\begin{proof}
For any finitely generated, projective $\Lambda$-module $P$, we have
\[
\dual{(\dual{\Lambda}\tensor_{\Lambda}P)}\isomorph\Hom_{\Lambda}(P,\Lambda)
\]
by the adjunction formula for $\Hom$ and $\tensor$ and by recalling that every homomorphism from $P$ to $\Lambda$ is automatically continuous for the compact topology. Hence,
\[
 \RDer\Hom_{\Lambda^\op}(\cmplx{P},\Lambda^\op)\isomorph\RDer\Hom_{\Int}(\dual{\Lambda^\op}\Ltensor_{\Lambda^\op}\cmplx{P},\Rat_\ell/\Int_{\ell})
\]
for every perfect complex of $\Lambda^\op$-modules $\cmplx{P}$. Further,
\[
 \dual{(\Lambda^\op)}\Ltensor_{\Lambda^\op}\RDer\Sect(V,j_{U!}\cmplx{\sheaf{F}})\isomorph
 \varinjlim_{I\in\openideals_{\Lambda}}\RDer\Sect(V,j_{U!}\dual{(\Lambda^\op/I)}\tensor_{\Lambda^\op/I}\cmplx{\sheaf{F}}_I)
\]
for any $\cmplx{\sheaf{F}}$ in $\cat{PDG}^\cont(U,\Lambda^\op)$. The same is true for $V$ replaced by $\algc{V}$. Arguing as in Lem.~\ref{lem:commutativity with tensor product}, we further see that the natural morphism
\[
 \dual{(\Lambda^\op/I)}\tensor_{\Lambda^\op/I}(j_{K*}T^{\mdual_\Lambda}(1))_I\mto j_{K*}\dual{(\Lambda^\op/I)}\tensor_{\Lambda^\op}T^{\mdual_\Lambda}(1)
\]
is an isomorphism of \'etale sheaves on $U$. We now apply Cor.~\ref{cor:Poincare duality with Pontryagin dual} to the $\Gal_K$-representation $T/IT$ over $\Lambda/I$ for each $I$ in $\openideals_{\Lambda}$ and observe that
\[
\dual{(T/IT)}(1)\isomorph \dual{(\Lambda^\op/I)}\tensor_{\Lambda^\op}T^{\mdual_\Lambda}(1).
\]
by identity~\eqref{eqn:mdual and dual}.
\end{proof}

\begin{rem}
Even for finite $\Lambda$, Cor.~\ref{cor:Poincare duality with Lambda dual} is wrong without the hypothesis that $T$ has projective local cohomology over $U$. If one merely assumes that $T$ and $T^{\mdual_\Lambda}(1)$ have projective stalks over $U$, then the cone of the natural duality morphism
\[
 \RDer\Sect(\algc{W},j_{U!}\eta_{*}T)[2]\mto \RDer\Hom_{\Lambda^\op}(\RDer \Sect(\algc{V},j_{U!}\eta_{*}T^{\mdual_{\Lambda}}(1)),\Lambda^\op)
\]
is given by the complex
\[
\cmplx{C}\colon\qquad \bigoplus_{v\in U^0}\HF^1(\inertia_v,T)\mto \bigoplus_{v\in U^0}(T^{\mdual_\Lambda}(1)^{\inertia_v})^{\mdual_\Lambda}
\]
sitting in degrees $-1$ and $0$, with
\begin{align*}
\HF^{-1}(\cmplx{C})&\isomorph\bigoplus_{v\in U^0}\Ext^1_{\Lambda^\op}(\HF^1(\inertia_v,T^{\mdual_\Lambda}(1)),\Lambda^\op)\\
\HF^0(\cmplx{C})&\isomorph\bigoplus_{v\in U^0}\Ext^2_{\Lambda^\op}(\HF^1(\inertia_v,T^{\mdual_\Lambda}(1)),\Lambda^\op).
\end{align*}
Moreover, if $T$ has projective stalks in $v$, the same does not need to be true for $T^{\mdual_\Lambda}(1)$, and vice versa. For example, the dual of the representation $T$ from Rem.~\ref{rem:non fg stalk} satisfies $(T^{\mdual_\Lambda}(1))^{\inertia_v}=0$ for the given $v$.
\end{rem}


\begin{defn}
Assume $\ell\neq p$. Let $T$ be a finitely ramified $\Gal_K$-representation over $\Lambda$ with projective stalks over $U$. The global $\epsfact$-factor of
$\RDer j_{U*}j_{K*}T$ on $W$ is given by
\[
\begin{aligned}
\epsfact(W,\RDer j_{U*}j_{K*}T)&=[-\Frob_{\FF}\lcirclearrowright \RDer\Sect(\algc{V},j_{U!}j_{K*}T)]\in\KTh_1(\Lambda).
\end{aligned}
\]
If $K_\infty/K$ is an admissible extension such that $\Lambda[[G]]^\sharp\tensor T$ has projective stalks over $U$, we set
\[
 \epsfact_{K_\infty/K,\Sigma_W,\Sigma_V}(T)\coloneqq \epsfact(W,\RDer j_{U*}j_{K*}(\Lambda[[G]]^\sharp\tensor_{\Lambda}T)).
\]
\end{defn}

\begin{rem}
It is expected that the global $\epsfact$-factor may be expressed as a product of local $\epsfact$-factors. For $\Lambda=\Int_{\ell}$, this is a theorem of Laumon \cite[Thm.~3.2.1.1]{Laumon:TransfDeFourier+ConstdEF+WC}. In \cite[\S 3.5.6]{FK:CNCIT}, Fukaya and Kato sketch how to extend this result to arbitrary adic $\Int_{\ell}$-algebras.
\end{rem}

We obtain the following functional equation for $\ncL_{K_\infty/K,\Sigma_W,\Sigma_V}(T)$.

\comment{To do: Check this! Add trivial case}
\begin{thm}\label{thm:functional equation}
Assume $\ell\neq p$. Let $K_\infty/K$ be an admissible extension and $T$ be a finitely ramified $\Gal_K$-representation over $\Lambda$. Assume that $K_\infty/K$ has ramification prime to $\ell$  and $T$ has projective local cohomology over $K_\infty$ in all closed points of $U$. Then
\[
(\ncL_{K_\infty/K,\Sigma_V,\Sigma_W}(T^{\mdual_\Lambda}(1)))^{\Mdual}\ncL_{K_\infty/K,\Sigma_W,\Sigma_V}(T)\epsfact_{K_\infty/K,\Sigma_W,\Sigma_V}(T)=1.
\]
\end{thm}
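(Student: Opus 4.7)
The plan is to use Poincar\'e duality (Cor.~\ref{cor:Poincare duality with Lambda dual}) to relate the two noncommutative $L$-functions in $\KTh_1$, then reduce to an elementary identity among endomorphisms. Write $\cmplx{A}\coloneqq\RDer\Sect(\algc{V},j_{U!}j_{K*}(\Lambda[[G]]^\sharp\tensor_\Lambda T))$ and $\cmplx{B}\coloneqq\RDer\Sect(\algc{W},j_{U!}j_{K*}(\Lambda^\op[[G]]^\sharp\tensor_{\Lambda^\op}T^{\mdual_\Lambda}(1)))$. By Thm.~\ref{thm:MCforRep l p different}, the three factors appearing in the claimed identity are
$\ncL_{K_\infty/K,\Sigma_W,\Sigma_V}(T)=[\id-\Frob_\FF\lcirclearrowright\cmplx{A}]^{-1}$, $\ncL_{K_\infty/K,\Sigma_V,\Sigma_W}(T^{\mdual_\Lambda}(1))=[\id-\Frob_\FF\lcirclearrowright\cmplx{B}]^{-1}$, and $\epsfact_{K_\infty/K,\Sigma_W,\Sigma_V}(T)=[-\Frob_\FF\lcirclearrowright\cmplx{A}]$.

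Next apply Cor.~\ref{cor:Poincare duality with Lambda dual} with the roles of $V$ and $W$ swapped, with $\Lambda$ replaced by $\Lambda[[G]]$, and with $T$ replaced by the finitely ramified $\Lambda[[G]]$-representation $\Lambda[[G]]^\sharp\tensor_\Lambda T$. Via identity~\eqref{eqn:mdual and compact induction} this identifies the $\Lambda[[G]]$-dual of $\Lambda[[G]]^\sharp\tensor_\Lambda T$ with $\Lambda^\op[[G]]^\sharp\tensor_{\Lambda^\op}T^{\mdual_\Lambda}$, and after Tate-twisting one obtains a canonical quasi-isomorphism
\[
\RDer\Hom_{\Lambda[[G]]^\op}(\cmplx{B},\Lambda[[G]]^\op)\isomorph\cmplx{A}[2].
\]
By Cor.~\ref{cor:Poincare duality with Pontryagin dual}, this quasi-isomorphism intertwines $\Frob_\FF^\Mdual$ on the left-hand side with $\Frob_\FF^{-1}$ on the right-hand side; in particular $(\id-\Frob_\FF)^\Mdual$ on $\cmplx{B}^\Mdual$ corresponds to $\id-\Frob_\FF^{-1}$ on $\cmplx{A}$.

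Using the formula $[f\lcirclearrowright\cmplx{P}]^{\Mdual}=[f^{\Mdual}\lcirclearrowright\cmplx{P}^{\Mdual}]^{-1}$ from Sec.~\ref{ss:functional equation}, together with the observation that a shift by $[2]$ leaves the $\KTh_1$-class unchanged, one deduces
\[
(\ncL_{K_\infty/K,\Sigma_V,\Sigma_W}(T^{\mdual_\Lambda}(1)))^{\Mdual}=[\id-\Frob_\FF^{-1}\lcirclearrowright\cmplx{A}].
\]
The factorisation $\id-\Frob_\FF^{-1}=(-\Frob_\FF^{-1})(\id-\Frob_\FF)$ of endomorphisms of $\cmplx{A}$, together with the identity $[-\Frob_\FF^{-1}\lcirclearrowright\cmplx{A}]=[-\Frob_\FF\lcirclearrowright\cmplx{A}]^{-1}$ (which follows from $(-\Frob_\FF^{-1})(-\Frob_\FF)=\id$), yields
\[
[\id-\Frob_\FF^{-1}\lcirclearrowright\cmplx{A}]=[-\Frob_\FF\lcirclearrowright\cmplx{A}]^{-1}\cdot[\id-\Frob_\FF\lcirclearrowright\cmplx{A}].
\]
Substituting into the product of the three factors, everything cancels and the claim follows.

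The hard part is not the $\KTh_1$-calculation but the justification that the hypotheses of Cor.~\ref{cor:Poincare duality with Lambda dual} are met for the $\Lambda[[G]]$-representation $\Lambda[[G]]^\sharp\tensor_\Lambda T$. Concretely, one must show that this induced representation has projective local cohomology over $U$ as a $\Lambda[[G]]$-module; this should follow from the hypotheses of Thm.~\ref{thm:functional equation} (ramification of $K_\infty/K$ prime to $\ell$ and projective local cohomology of $T$ over $K_\infty$ at closed points of $U$) by an argument parallel to Prop.~\ref{prop:constructability condition} and Lem.~\ref{lem:commutativity with tensor product}, and an analogous verification must be made for $T^{\mdual_\Lambda}(1)$. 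One must also carefully track the compatibility of the Frobenius duality pairing under the substitution of coefficients and verify the behaviour of shifts and signs at the level of $\KTh_1$-classes.
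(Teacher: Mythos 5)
Your proposal is correct and follows essentially the same route as the paper: both apply Cor.~\ref{cor:Poincare duality with Lambda dual} (with $V$ and $W$ interchanged) to the induced representation $\Lambda[[G]]^\sharp\tensor_\Lambda T$ via identity~\eqref{eqn:mdual and compact induction}, use the Frobenius compatibility to convert $(\id-\Frob_{\FF})^\Mdual$ into $\id-\Frob_{\FF}^{-1}$ on $\RDer\Sect(\algc{V},\cdot)[2]$, and conclude by the factorisation $\id-\Frob_{\FF}^{-1}=(-\Frob_{\FF}^{-1})(\id-\Frob_{\FF})$. The only cosmetic difference is that the paper works with an explicit strictly perfect representative $(\cmplx{P},f)$ and a cited homotopy-commutativity lemma to legitimise the $\KTh_1$-manipulations, a technical point your argument leaves implicit but which does not affect its correctness.
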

\begin{proof}
Choose a strictly perfect complex $\cmplx{P}$ of $\Lambda^\op[[G]]$-modules, an endomorphism $f\colon \cmplx{P}\mto \cmplx{P}$, and a weak equivalence
\[
\alpha\colon \cmplx{P}\mto \RDer\Sect(\algc{W},j_{U!}j_{K*}\Lambda^\op[[G]]^\sharp\tensor_{\Lambda^\op} T^{\mdual_\Lambda}(1))
\]
such that the diagram
\[
 \xymatrix{
 \cmplx{P}\ar[r]^-{\alpha}\ar[d]^{\id-f}& \RDer\Sect(\algc{W},j_{U!}j_{K*}\Lambda^\op[[G]]^\sharp\tensor_{\Lambda^\op} T^{\mdual_\Lambda}(1))\ar[d]^{\id-\Frob_{\FF}}\\
 \cmplx{P}\ar[r]^-{\alpha}              & \RDer\Sect(\algc{W},j_{U!}j_{K*}\Lambda^\op[[G]]^\sharp\tensor_{\Lambda^\op} T^{\mdual_\Lambda}(1))
 }
\]
commutes in the derived category of complexes of $\Lambda^\op[[G]]$-modules. In particular, the diagram commutes up to homotopy in the Waldhausen category of perfect complexes of $\Lambda^\op[[G]]$-modules.
By \cite[Lem.~3.1.6]{Witte:PhD}, this implies
\[
 [\id-f\lcirclearrowright \cmplx{P}]^{-1}=\ncL_{K_\infty/K,\Sigma_V,\Sigma_W}(T^{\mdual_\Lambda}(1))
\]
in $\KTh_1(\Lambda^\op[[G]],S)$. Applying Cor.~\ref{cor:Poincare duality with Lambda dual} to the representation $\Lambda[[G]]^\sharp\tensor_{\Lambda} T$ over $\Lambda[[G]]$ and using \eqref{eqn:mdual and compact induction}, we obtain a commutative diagram
\[
 \xymatrix{
 (\cmplx{P})^\Mdual\ar[r]^-{\beta}\ar[d]^{\id-f^\Mdual}& \RDer\Sect(\algc{V},j_{U!}j_{K*}\Lambda[[G]]^\sharp\tensor_{\Lambda} T)[2]\ar[d]^{\id-\Frob_{\FF}^{-1}}\\
 (\cmplx{P})^\Mdual\ar[r]^-{\beta}              & \RDer\Sect(\algc{V},j_{U!}j_{K*}\Lambda[[G]]^\sharp\tensor_{\Lambda} T)[2]
 }
\]
in the derived category of complexes of $\Lambda[[G]]$-modules. Hence,
\[
 \begin{aligned}
  \ncL_{K_\infty/K,\Sigma_V,\Sigma_W}(T^{\mdual_\Lambda}(1))^\Mdual&=[\id-f^\Mdual\lcirclearrowright (\cmplx{P})^\Mdual]\\
                                                                    &=[\id-\Frob_{\FF}^{-1}\lcirclearrowright\RDer\Sect(\algc{V},j_{U!}j_{K*}\Lambda[[G]]^\sharp\tensor_{\Lambda} T)]\\                                                                   &=\epsfact_{K_\infty/K,\Sigma_W,\Sigma_V}(T)^{-1}\ncL_{K_\infty/K,\Sigma_W,\Sigma_V}(T)^{-1}
 \end{aligned}
\]
in $\KTh_1(\Lambda[[G]],S)$
\end{proof}

\subsection{Calculation of the cohomology}

In this section, we will give a description of the cohomology groups of the complex $\RDer\Sect(V,j_{U!}j_{K*}(\Lambda[[G]]^\sharp\tensor_\Lambda T))$.

\begin{lem}\label{lem:calculation of cohomology}
We assume either $\ell\neq p$ or $V=C$. We further assume that $(\Lambda[[G]]^\sharp\tensor_\Lambda T)^{\inertia_v}$ is finitely generated for each closed point of $U$. Then
\begin{enumerate}
 \item $\HF^k(V,j_{U!}j_{K*}(\Lambda[[G]]^\sharp\tensor_\Lambda T))=0$ for $k\notin\set{1,2,3}$.
 \item $$
       \HF^1(V,j_{U!}j_{K*}(\Lambda[[G]]^\sharp\tensor_\Lambda T))=\begin{cases}
                                                      T^{\Gal_{K_\infty}}&\text{if $U=V$ and $H$ is finite,}\\
                                                      0&\text{else.}
                                                     \end{cases}
       $$
 \item If $\ell\neq p$, then
       $$
       \HF^2(V,j_{U!}j_{K*}(\Lambda[[G]]^\sharp\tensor_\Lambda T))=\dual{\HF^1(W_{K_{\infty}},j_{U_{K_\infty}!}j_{K_{\infty}*}\dual{T}(1))}.
       $$
 \item $$
       \HF^3(V,j_{U!}j_{K*}(\Lambda[[G]]^\sharp\tensor_\Lambda T))=\begin{cases}
                                                      T(-1)_{\Gal_{K_\infty}}&\text{if $\ell\neq p$ and $V=C$,}\\
                                                      0&\text{else.}
                                                     \end{cases}
       $$
\end{enumerate}
\end{lem}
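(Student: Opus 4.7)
My approach is to reduce via Cor.~\ref{cor:limit porperty for admissible extension} to cohomology at finite Galois sublevels $L/K$ of $K_\infty$, and then analyze each piece using classical \'etale cohomological methods on the smooth curve $V_L$ over the finite field $\FF_L$. The central technical input is Lem.~\ref{lem:HS argument}, which in the finite-$\Lambda$, finite-$H$ setting supplies the degree-shift identification
\[
\HF^k(V,j_{U!}j_{K*}(\Lambda[[G]]^{\sharp}\tensor_\Lambda T))\isomorph \HF^{k-1}(V_{K_\infty},j_{U_{K_\infty}!}j_{K_\infty *}T).
\]
For general $\Lambda$ this extends via the inverse system $\Lambda/I$; for infinite $H$ I apply it to the admissible subextensions $K_\infty^{H'}/K$ for open normal $H'\triangleleft H$ and pass to the limit.

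For part (1), the vanishing in degrees $k>3$ and $k<0$ at the finite level is immediate from the cohomological dimension of curves over finite fields (cohomological dimension at most $3$ in all relevant cases), and is preserved under inverse limits. The vanishing $\HF^0=0$ splits into two cases: if $U\subsetneq V$, then $j_{U_L!}j_{L*}T$ is extension-by-zero of a sheaf with nonzero generic stalk from a proper open subset of the connected curve $V_L$, hence has vanishing global sections; if $U=V$, the shift formula gives $\HF^0\isomorph\HF^{-1}(V_{K_\infty},\ldots)=0$.

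For part (2), in the finite-$\Lambda$, finite-$H$ case Lem.~\ref{lem:HS argument} gives $\HF^1(V,\sheaf{F})\isomorph\HF^0(V_{K_\infty},j_{U_{K_\infty}!}j_{K_\infty*}T)$; this is $T^{\Gal_{K_\infty}}$ when $U=V$ (global sections over $V_{K_\infty}$ of the direct image from the generic point) and zero when $U\subsetneq V$ by the argument of (1). For general $\Lambda$ the identification passes through the inverse system $\Lambda/I$. For infinite $H$, applying the finite-$H$ case to the admissible subextensions $K_\infty^{H'}/K$ yields $T^{\Gal_{K_\infty^{H'}}}$ at the $H'$-level, but the transition maps in the resulting inverse system are Galois trace (corestriction) maps whose degrees $[H/H':1]$ diverge as $H'$ shrinks, forcing the inverse limit to vanish.

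For parts (3) and (4), I apply Poincar\'e duality on the curve $V_{K_\infty}$ (respectively the proper curve $C_{K_\infty}$) over the base $\FF_\cyc$, whose $\ell$-cohomological dimension vanishes, combined with the exchange formula of Lem.~\ref{lem:exchange formula} that interchanges the roles of $V$ and $W$. For (3), this identifies $\HF^1(V_{K_\infty},j_{U_{K_\infty}!}j_{K_\infty*}T)$ as the Pontryagin dual of $\HF^1(W_{K_\infty},j_{U_{K_\infty}!}j_{K_\infty*}\dual{T}(1))$. For (4), with $V=C$ and hence $U=W$, duality on the proper curve $C_{K_\infty}$ identifies $\HF^2(C_{K_\infty},j_{W_{K_\infty}!}j_{K_\infty*}T)$ as the Pontryagin dual of $\HF^0$ of the dual twisted sheaf, which unwinds to $T(-1)_{\Gal_{K_\infty}}$; when $V\subsetneq C$, Artin vanishing on the affine curve $V_{K_\infty}$ forces $\HF^2(V_{K_\infty},\ldots)=0$; and for $\ell=p$ with $V=C$, Artin--Schreier vanishing on the proper curve $C_{K_\infty}$ gives the same conclusion. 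The main obstacle I anticipate is the Mittag--Leffler-style bookkeeping needed to justify that the inverse limits commute with cohomology throughout these reductions, and to verify that the $T^{\Gal_{K_\infty^{H'}}}$-contributions to $\HF^1$ vanish in the inverse limit under the diverging trace maps when $H$ is infinite.
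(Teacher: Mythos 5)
Your proposal is correct and follows essentially the same route as the paper: reduction to finite $\Lambda$ and to finite Galois sublevels via Prop.~\ref{prop:limit property} and Cor.~\ref{cor:limit porperty for admissible extension}, the degree shift of Lem.~\ref{lem:HS argument} combined with the cohomological dimension bounds for curves over $\algc{\FF}$ for parts (1) and (2), and Poincar\'e duality in the form of Lem.~\ref{lem:exchange formula}/Cor.~\ref{cor:Poincare duality with Pontryagin dual} for parts (3) and (4). The one step to sharpen is the infinite-$H$ case of (2): diverging indices $[H:H']$ alone would not kill the inverse limit (they could be prime to the exponent of the module); you need, as the paper does, to pick a finite $L/K_\cyc$ inside $K_\infty$ with $T=T^{\Gal_L}$ and $\Gal(K_\infty/L)$ pro-$\ell$, so that the norm (corestriction) maps are eventually multiplication by unbounded powers of $\ell$ on a finite $\ell$-group, whence the limit vanishes.
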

\begin{proof}
In the view of Prop.~\ref{prop:limit property}  we may assume that $\Lambda$ is a finite ring. We will first consider the case that $H$ is finite. Using Lem.~\ref{lem:HS argument} and the fact that the cohomology of an \'etale sheaf of $\Lambda$-modules on the curve $V_{\algc{\FF}K_{\infty}}$ over the algebraically closed field $\algc{\FF}$ is concentrated in degrees $0$ up to $2$ if $\ell\neq p$ and $V=C$ and up to $1$ if $\ell=p$ \cite[Cor.~VI.2.5]{Milne:EtCohom} or $V\neq C$  \cite[Rem.~V.2.4]{Milne:EtCohom} we deduce Assertion $(1)$ and the second case of Assertion $(4)$. Assertion $(2)$ for $H$ finite follows since
$$
 \HF^0(V_{K_{\infty}},j_{U_{K_\infty}!}j_{K_{\infty}*}T)=\begin{cases}
                                          T^{\Gal_{K_\infty}}&\text{if $U=V$}\\
                                          0&\text{else.}
                                           \end{cases}
$$

We now assume $\ell\neq p$. Assertion $(3)$ is a consequence of Cor.~\ref{cor:Poincare duality with Pontryagin dual}. Moreover, this corollary implies
\begin{align*}
\HF^3(C,j_{U!}j_{K*}(\Lambda[[G]]^\sharp\tensor_\Lambda T))& =\dual{\HF^0(U_{K_{\infty}},j_{K_{\infty}*}\dual{T}(1))}\\
                                &=\dual{((\dual{T}(1))^{\Gal_{K_\infty}})}=T(-1)_{\Gal_{K_\infty}}.
\end{align*}
This proves Assertion $(4)$ in the case $\ell\neq p$.

Finally, we use Cor.~\ref{cor:limit porperty for admissible extension} to deduce the assertions for general $H$. In the case of Assertion $(2)$ it remains to notice that, since $T$ is finite, there exists a finite extension $L/K_\cyc$ inside $K_\infty$ with $T=T^{\Gal_L}$ and such that $\Gal(K_\infty/L)$ is pro-$\ell$. Hence, the norm map $N_{L''/L'}\colon T\mto T$ is multiplication by a power of $\ell$ for $L\subset_f L'\subset_f L''\subset K_\infty$. We conclude that
$$
\HF^1(V,j_{K*}(\Lambda[[G]]^\sharp\tensor_{\Lambda} T))=\varprojlim_{K_\cyc\subset_f L\subset K_\infty}T^{\Gal_L}=0
$$
if $H$ is infinite.
\end{proof}

As we will explain in Section~\ref{ss:Picard-1-motives}, the following two corollaries may be viewed as a vast generalisation of the main result of \cite{GreitherPopescu:PicardOneMotives}.

\comment{Correct this! Continue here!}
\begin{cor}\label{cor:projectivity ell neq pee}
For $\ell\neq p$, assume
\begin{enumerate}
 \item[(i)] $K_\infty/K$ has ramification prime to $\ell$ in all closed points of $U$,
 \item[(ii)] $T$ has projective local cohomology over $K_\infty$ in all closed points of $U$,
 \item[(iii)] either $U\neq V$ or $(T^{\mdual_\Lambda})_{\Gal_{K_\infty}}=0$, and
 \item[(iv)] either $V\neq C$ or $T(-1)_{\Gal_{K_\infty}}=0$.
\end{enumerate}
Then:
\begin{enumerate}
\item The $\Lambda[[H]]$-module $\HF^2(V,j_{U!}j_{K*}(\Lambda[[G]]^\sharp\tensor_\Lambda T))$ is finitely generated and projective and
$$
[\HF^2(V,j_{U!}j_{K*}(\Lambda[[G]]^\sharp\tensor_\Lambda T))]=[\RDer\Sect(V,j_{U!}j_{K*}(\Lambda[[G]]^\sharp\tensor_\Lambda T))]
$$
in $\KTh_0(\Lambda[[G]],S)$.
\item
\begin{multline*}
[\HF^2(V,j_{U!}j_{K*}(\Lambda[[G]]^\sharp\tensor_\Lambda T))]=\\
[\cg{\HF^2(W,j_{U!}j_{K*}(\Lambda^\op[[G]]^\sharp\tensor_{\Lambda^\op}T^{\mdual_\Lambda})(1))^{\mdual_{\Lambda^\op[[H]]}}}]\\
=-[\HF^2(W,j_{U!}j_{K*}(\Lambda^\op[[G]]^\sharp\tensor_{\Lambda^\op}T^{\mdual_\Lambda})(1))]^\Mdual
\end{multline*}
in $\KTh_0(\Lambda[[G]],S)$.
\end{enumerate}
\end{cor}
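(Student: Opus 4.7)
The plan is to proceed in three steps: (a) use Lem.~\ref{lem:calculation of cohomology} to show that the cohomology of the Selmer complex concentrates in degree $2$; (b) invoke Cor.~\ref{cor:Poincare duality with Lambda dual} together with Lem.~\ref{lem:duality passage to H} to identify this cohomology with the $\Lambda[[H]]$-dual of the analogous module attached to $T^{\mdual_{\Lambda}}(1)$ over the opposite pair $(W,V)$; and (c) extract the projectivity claim and the class identities.

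For step (a), Lem.~\ref{lem:calculation of cohomology}(1) already restricts cohomology to degrees $\{1,2,3\}$. Assumption (iv) kills $\HF^3$ directly via part (4). For $\HF^1$ the lemma only produces a non-zero contribution $T^{\Gal_{K_\infty}}$ when $U = V$ and $H$ is finite; in that case the identity
\[
T^{\Gal_{K_\infty}} \isomorph \Hom_{\Lambda}((T^{\mdual_{\Lambda}})_{\Gal_{K_\infty}}, \Lambda),
\]
which follows from the $\Lambda$-reflexivity of $T$, combines with assumption (iii) to force $\HF^1 = 0$. Next I would observe that the pair of hypotheses (iii), (iv) is preserved under the involution $(T,V)\leftrightarrow (T^{\mdual_{\Lambda}}(1),W)$, using $U\neq V \iff W\neq C$ together with the fact that $T(-1)_{\Gal_{K_\infty}}$ plays the dual role of $(T^{\mdual_\Lambda})_{\Gal_{K_\infty}}$. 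Consequently, the same analysis shows that the opposite Selmer complex $\RDer\Sect(W,j_{U!}j_{K*}(\Lambda^{\op}[[G]]^{\sharp}\tensor_{\Lambda^{\op}}T^{\mdual_{\Lambda}})(1))$ also has cohomology concentrated in degree $2$, yielding a $\Lambda^{\op}[[G]]$-module $N$.

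For step (b), hypothesis (ii) combined with Exmpl.~\ref{exmpl:ramification types}(2) gives that $T$ has projective stalks over $K_\infty$. Tame local duality at each $v\in U^0$ (using $\ell\neq p$ and (i)) then yields that $T^{\mdual_{\Lambda}}(1)$ also has projective local cohomology over $K_\infty$, so Lem.~\ref{lem:commutativity with tensor product}(2) applies to both $\Lambda[[G]]^{\sharp}\tensor T$ and $\Lambda^{\op}[[G]]^{\sharp}\tensor_{\Lambda^\op} T^{\mdual_{\Lambda}}(1)$ and places them in the setting of Cor.~\ref{cor:Poincare duality with Lambda dual}. Applying that corollary to $\Lambda[[G]]^{\sharp}\tensor T$ with $V$ and $W$ interchanged, together with identity~\eqref{eqn:mdual and compact induction}, I obtain a quasi-isomorphism
\[
\RDer\Hom_{\Lambda^\op[[G]]}(N[-2], \Lambda^\op[[G]]) \isomorph M[1]
\]
with $M \coloneqq \HF^2(V,j_{U!}j_{K*}(\Lambda[[G]]^{\sharp}\tensor T))$, whence $\Ext^1_{\Lambda^\op[[G]]}(N, \Lambda^\op[[G]]) \isomorph M$ and all other Ext groups vanish. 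Since $N$ admits a resolution by a strictly perfect complex of $\Lambda^\op[[G]]$-modules that is perfect over $\Lambda^\op[[H]]$ (by Thm.~\ref{thm:MCforRep l p different}(1) applied in the opposite setting), Lem.~\ref{lem:duality passage to H} translates the above into $M \isomorph \cg{N^{\mdual_{\Lambda^\op[[H]]}}}$ in the sense of Def.~\ref{defn:LambdaH-dual}, together with $\Ext^k_{\Lambda^\op[[H]]}(N, \Lambda^\op[[H]]) = 0$ for $k > 0$.

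Running the same argument symmetrically, with the roles of $(T,V)$ and $(T^{\mdual_{\Lambda}}(1),W)$ interchanged, yields in addition $\Ext^k_{\Lambda[[H]]}(M, \Lambda[[H]]) = 0$ for $k > 0$. Combined with the finite projective $\Lambda[[H]]$-resolution of $M$ coming from Thm.~\ref{thm:MCforRep l p different}(1), a minimality argument over the semi-local adic ring $\Lambda[[H]]$ forces $M$ itself to be a finitely generated projective $\Lambda[[H]]$-module, completing assertion (1). The class equality in (1) then follows from Rem.~\ref{rem:class of a projective RH module} together with the concentration of the cohomology in degree $2$, and the two identities in (2) follow from $M \isomorph \cg{N^{\mdual_{\Lambda^\op[[H]]}}}$ combined with identity~\eqref{eqn:Mdual and LambdaH-dual}. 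The main obstacle I anticipate is the last projectivity step: passing from ``$M$ has finite $\Lambda[[H]]$-projective dimension and $\Ext^{>0}_{\Lambda[[H]]}(M,\Lambda[[H]]) = 0$'' to ``$M$ is projective'' in the general non-commutative adic setting, which will require care with minimal resolutions and the semi-local structure of $\Lambda[[H]]$.
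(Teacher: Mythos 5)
Your proof is correct, and its overall architecture --- concentration of the cohomology in degree $2$ via Lem.~\ref{lem:calculation of cohomology} together with hypotheses (iii), (iv) and their stability under $(T,V)\leftrightarrow(T^{\mdual_\Lambda}(1),W)$, followed by Cor.~\ref{cor:Poincare duality with Lambda dual}, identity \eqref{eqn:mdual and compact induction}, Lem.~\ref{lem:duality passage to H} and identity \eqref{eqn:Mdual and LambdaH-dual} for part (2) --- is the paper's. The genuine divergence is in how projectivity of $M=\HF^2(V,j_{U!}j_{K*}(\Lambda[[G]]^\sharp\tensor_\Lambda T))$ over $\Lambda[[H]]$ is obtained. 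The paper evaluates the Selmer complex at an arbitrary simple $\Lambda[[G]]^{\op}$-module $\rho$, viewed as a $k$-$\Lambda[[G]]$-bimodule over its (finite) endomorphism field $k$; Lem.~\ref{lem:commutativity with tensor product} identifies the result with $\RDer\Sect(V,j_{U!}j_{K*}T(\rho^\sharp))$, whose $\HF^0$ is shown to vanish under (iii), and this Tor-vanishing bounds the flat dimension of $M$ over $\Lambda[[G]]$ by $1$; the complex is then representable by a strictly perfect complex in degrees $1,2$, and a cited lemma from \cite{Witte:BCP} yields projectivity over $\Lambda[[H]]$. You instead run the duality symmetrically to get $\Ext^{>0}_{\Lambda[[H]]}(M,\Lambda[[H]])=0$ and conclude by a minimal-projective-resolution argument. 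Both routes work, but note two trade-offs: your argument makes part (1) depend on Poincar\'e duality, so it would not transfer to the $\ell=p$ analogue (Cor.~\ref{cor:projectivity ell eq pee}), which the paper proves by literally the same duality-free evaluation argument; and the step you yourself flag --- that finite projective dimension plus $\Ext^{>0}(M,\Lambda[[H]])=0$ forces projectivity --- does need the existence of minimal projective resolutions and the non-vanishing of $\Ext^{\mathrm{pd}\,M}(M,\Lambda[[H]])$, which holds here because $\Lambda[[H]]$ is an adic, hence semiperfect and $\Jac$-adically complete, ring, but should be written out rather than asserted.
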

\begin{proof}
Note that
$
T^{\Gal_{K_\infty}}\isomorph\Hom_{\Lambda^\op}((T^{\mdual_\Lambda})_{\Gal_{K_\infty}},\Lambda^\op).
$
By Lem.~\ref{lem:calculation of cohomology} and assumptions (iii) and (iv), $\HF^2(V,j_{U!}j_{K*}(\Lambda[[G]]^\sharp\tensor_\Lambda T))$ is the only non-vanishing cohomology group of $\RDer\Sect(V,j_{U!}j_{K*}(\Lambda[[G]]^\sharp\tensor_\Lambda T))$.

Let $\rho$ be a simple $\Lambda[[G]]^{\op}$-module. Then $\rho$ is also simple as module over $(\Lambda[[G]]/\Jac(\Lambda[[G]]))^{\op}$. By Schur's lemma the endomorphism ring of $\rho$ is division ring $k$. Since $k$ is clearly finite, it is a field. Hence, we may consider $\rho$ as $k$-$\Lambda[[G]]$-bimodule, which is finitely generated and projective as $k$-module. If $T$ has projective local cohomology over $K_\infty$ in all closed points of $U$, then $T(\rho^\sharp)$ has projective local cohomology over $U$ and the natural map
\[
\ringtransf_{\rho}\RDer\Sect(V,j_{U!}j_{K*}(\Lambda[[G]]^\sharp\tensor_\Lambda T))\wto
\RDer\Sect(V,j_{U!}j_{K*}T(\rho^\sharp))
\]
is a weak equivalence by Lem.~\ref{lem:commutativity with tensor product}. Moreover, $T^{\mdual_\Lambda}(1)$ also has projective local cohomology over $K_\infty$ in all closed points of $U$. If $U\neq V$, then
\[
 \HF^0(V,j_{U!}j_{K*}T(\rho^\sharp))\isomorph\ker(\HF^0(V,j_{U*}j_{K*}T(\rho^\sharp))\xrightarrow{\isomorph}\HF^0(U,j_{K*}T(\rho^\sharp)))=0
\]
by the Leray spectral sequence. If $U=V$ and $(T^{\mdual_\Lambda})_{\Gal_{K_\infty}}=0$, then
\[
 \HF^0(U,j_{K*}T(\rho^\sharp))\isomorph T(\rho^\sharp)^{\Gal_{K_\infty}}\isomorph(((T(\rho^\sharp))^{\mdual_k})_{\Gal_{K_\infty}})^{\mdual_k}=0.
\]
In particular, the flat dimension of the $\Lambda[[G]]$-module $\HF^2(V,j_{U!}j_{K*}(\Lambda[[G]]^\sharp\tensor_{\Lambda} T))$ is less or equal to $1$ in both cases.

By Thm.~\ref{thm:MCforRep l p different} the complex $\RDer\Sect(V,j_{U!}j_{K*}(\Lambda[[G]]^\sharp\tensor_\Lambda T))$ is an object of the Waldhausen category $\cat{PDG}^{\cont,w_H}(\Lambda[[G]])$. In particular, we may find strictly perfect complexes $\cmplx{P}$ and $\cmplx{Q}$ of $\Lambda[[G]]$-modules and $\Lambda[[H]]$-modules, respectively, which are weakly equivalent to $\RDer\Sect(V,j_{U!}j_{K*}(\Lambda[[G]]^\sharp\tensor_\Lambda T))$. By the above observations, we conclude that we may choose $\cmplx{P}$ to be concentrated in degrees $1$ and $2$. Hence, $\HF^2(V,j_{U!}j_{K*}(\Lambda[[G]]^\sharp\tensor_{\Lambda} T))$ is finitely generated and projective over $\Lambda[[H]]$ by \cite[Lem.~3.4]{Witte:BCP}. We then use Rem.~\ref{rem:class of a projective RH module} to complete the proof of (1).

The same reasoning applies to $T^{\mdual_\Lambda}(1)$.  We now apply Cor.~\ref{cor:Poincare duality with Lambda dual} to the $\Gal_K$-representation $\Lambda[[G]]^\sharp\tensor_{\Lambda}T$ over $\Lambda[[G]]$ and use the identity \eqref{eqn:Mdual and LambdaH-dual} to deduce (2).

\end{proof}

\begin{cor}\label{cor:projectivity ell eq pee}
For $\ell=p$, assume
\begin{enumerate}
 \item[(i)] $K_\infty/K$ has ramification prime to $p$ over $U$,
 \item[(ii)] $T$ has ramification prime to $p$ over $U$, and
 \item[(iii)] either $U\neq X$ or $(T^{\mdual_\Lambda})_{\Gal_{K_\infty}}=0$.
\end{enumerate}
Then the $\Lambda[[H]]$-module $\HF^2(X,j_{U!}j_{K*}(\Lambda[[G]]^\sharp\tensor_\Lambda T))$ is finitely generated and projective and
$$
[\HF^2(X,j_{U!}j_{K*}(\Lambda[[G]]^\sharp\tensor_\Lambda T))]=[\RDer\Sect(X,j_{U!}j_{K*}(\Lambda[[G]]^\sharp\tensor_\Lambda T))]
$$
in $\KTh_0(\Lambda[[G]],S)$.
\end{cor}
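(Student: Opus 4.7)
The plan is to mirror the argument of Cor.~\ref{cor:projectivity ell neq pee}, taking advantage of the fact that in the $\ell = p$ case the degree-$3$ cohomology vanishes automatically. To begin, by Lem.~\ref{lem:calculation of cohomology} the cohomology of $\RDer\Sect(X, j_{U!}j_{K*}(\Lambda[[G]]^{\sharp}\tensor_\Lambda T))$ is concentrated in degrees $1$ and $2$: assertion~(1) excludes everything outside $\{1,2,3\}$, while the $\HF^3$-term from assertion~(4) only appears when $\ell \neq p$ and $V = C$, which does not apply here. Under assumption~(iii), the argument of Cor.~\ref{cor:projectivity ell neq pee}, using the isomorphism $T^{\Gal_{K_\infty}} \isomorph \Hom_{\Lambda^{\op}}((T^{\mdual_\Lambda})_{\Gal_{K_\infty}}, \Lambda^{\op})$, also kills $\HF^1$, so the only surviving cohomology lives in degree~$2$.

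To upgrade this cohomological statement to a structural one, I would show that the complex admits a strictly perfect representative of $\Lambda[[G]]$-modules concentrated in degrees $1$ and $2$. Let $\rho$ be any simple $\Lambda[[G]]^{\op}$-module; by Schur's lemma its endomorphism ring is a finite (hence commutative) field $k$, and $\rho$ becomes a $k$-$\Lambda[[G]]$-bimodule that is finitely generated and projective over $k$. Since by assumption~(ii) $T$ has ramification prime to $p$ over $U$, so does $T(\rho^{\sharp})$, and Lem.~\ref{lem:commutativity with tensor product}(1) yields a weak equivalence
\[
\ringtransf_{\rho}\RDer\Sect(X, j_{U!}j_{K*}(\Lambda[[G]]^{\sharp}\tensor_\Lambda T)) \wto \RDer\Sect(X, j_{U!}j_{K*}T(\rho^{\sharp})).
\]
Repeating the Leray-spectral-sequence argument (when $U \neq X$) or the finite double-duality over $k$ argument applied to hypothesis $(T^{\mdual_\Lambda})_{\Gal_{K_\infty}} = 0$ (when $U = X$) from the proof of Cor.~\ref{cor:projectivity ell neq pee}, I obtain $\HF^0(X, j_{U!}j_{K*}T(\rho^{\sharp})) = 0$ for every simple $\rho$. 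Running this through the universal-coefficient spectral sequence yields the vanishing of $\Tor_2^{\Lambda[[G]]}$ between $\rho$ and the only surviving cohomology module of the Selmer complex, which is precisely the obstruction to replacing a strictly perfect representative by one concentrated in degrees $1$ and $2$.

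The final step is routine. Thm.~\ref{thm:MCforRep l equal p myLfunc}(1) guarantees that the Selmer complex lies in $\cat{PDG}^{\cont, w_H}(\Lambda[[G]])$, so it also admits a strictly perfect representative $\cmplx{Q}$ of $\Lambda[[H]]$-modules. Combined with the degree-$(1,2)$ representative $\cmplx{P}$ over $\Lambda[[G]]$ produced above, \cite[Lem.~3.4]{Witte:BCP} shows $\HF^2(X, j_{U!}j_{K*}(\Lambda[[G]]^{\sharp}\tensor_\Lambda T))$ is finitely generated and projective over $\Lambda[[H]]$, and Rem.~\ref{rem:class of a projective RH module} produces the asserted identity in $\KTh_0(\Lambda[[G]], S)$. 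The most delicate point is the engineering of the degree-$(1,2)$ representative: here the strong hypothesis~(ii) on the ramification of $T$ itself (rather than merely of $K_\infty/K$) is indispensable, since only this makes Lem.~\ref{lem:commutativity with tensor product}(1) available for every simple $\rho$ without any extra projectivity assumption on $\rho$.
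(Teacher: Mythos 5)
Your proposal is correct and follows essentially the same route as the paper, whose proof of this corollary simply says to repeat the argument of Cor.~\ref{cor:projectivity ell neq pee}.(1) with Thm.~\ref{thm:MCforRep l equal p myLfunc} in place of Thm.~\ref{thm:MCforRep l p different}. You have unpacked that reference accurately, including the one genuinely $\ell=p$-specific point: since ``projective local cohomology'' is unavailable when $\ell=p$, one must invoke part (1) of Lem.~\ref{lem:commutativity with tensor product} via the stronger hypothesis (ii), which is exactly why that hypothesis appears in this version of the corollary.
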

\begin{proof}
We proceed as in the proof of Cor.~\ref{cor:projectivity ell neq pee}.(1), using Theorem~\ref{thm:MCforRep l equal p myLfunc}.
\end{proof}

\subsection{The main conjecture for Selmer groups}

In this section we will assume that $R$ is a local, commutative, and regular adic $\Int_\ell$-algebra with $\ell\neq p$. Furthermore, we assume that $\Sigma_V=\emptyset$, such that $U=W$ and $V=C$. Let $T$ be a finitely ramified representation of $\Gal_K$ over $R$. For $K\subset L\subset \algc{K}$ we may define as in \cite[\S 5]{Greenberg:IwThForReps} a Selmer group
$$
\Sel_{\Sigma_W}(L,\dual{T}(1))\coloneqq\ker\left(\HF^1(\Gal_L,\dual{T}(1))\mto \bigoplus_{v\in W^0_L} \HF^1(\inertia_v,\dual{T}(1))\right),\\
$$
for $T$.

If $K_\infty/K$ is an admissible $\ell$-adic Lie extension we set
$$
\X_{K_\infty/K,\Sigma_W}(T)\coloneqq\dual{\Sel_{\Sigma_W}(K_\infty,\dual{T}(1))}.
$$

\begin{lem}\label{lem:selmer group}
For any extension $L/K$ inside $\algc{K}$,
$$
\Sel_{\Sigma_W}(L,\dual{T}(1))=\HF^1(W_L,j_{L*}\dual{T}(1)).
$$
In particular,
$$
\HF^2(C,j_{W!}j_{K*}(R[[G]]^\sharp\tensor_R T))=\X_{K_\infty/K,\Sigma_W}(T)
$$
for any admissible $\ell$-adic Lie extension $K_\infty/K$.
\end{lem}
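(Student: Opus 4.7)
The plan is to prove the two identities in turn, with the second following from the first via Lem.~\ref{lem:calculation of cohomology}.(3). For the first identity, I would invoke the Leray spectral sequence
\[
E_2^{p,q}=\HF^p(W_L,R^qj_{L*}\dual{T}(1))\Rightarrow\HF^{p+q}(\Gal_L,\dual{T}(1))
\]
for the inclusion $j_L\colon\Spec L\mto W_L$ of the generic point. The key computation is that, at any closed point $v\in W_L^0$, the strict henselisation of $W_L$ has generic point with absolute Galois group $\inertia_v$, so that the stalks of the higher direct images are
\[
(R^qj_{L*}\dual{T}(1))_{\xi_v}\isomorph\HF^q(\inertia_v,\dual{T}(1)).
\]
In particular, $R^qj_{L*}\dual{T}(1)$ is supported on closed points for $q\geq 1$, and the global sections of $R^1j_{L*}\dual{T}(1)$ are $\bigoplus_{v\in W_L^0}\HF^1(\inertia_v,\dual{T}(1))$. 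The five-term exact sequence from the spectral sequence then reads
\[
0\mto\HF^1(W_L,j_{L*}\dual{T}(1))\mto\HF^1(\Gal_L,\dual{T}(1))\mto\bigoplus_{v\in W_L^0}\HF^1(\inertia_v,\dual{T}(1)),
\]
which matches the defining exact sequence of $\Sel_{\Sigma_W}(L,\dual{T}(1))$.

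For the second identity, I would specialise Lem.~\ref{lem:calculation of cohomology}.(3) to the case $V=C$, $U=W$; the finite generation hypothesis there is automatic because $R[[G]]$ is noetherian for an $\ell$-adic Lie group $G$ and a commutative regular adic coefficient ring $R$. In this case $U_{K_\infty}=W_{K_\infty}$, so that $j_{U_{K_\infty}!}$ reduces to the identity functor and we obtain
\[
\HF^2(C,j_{W!}j_{K*}(R[[G]]^\sharp\tensor_R T))\isomorph\dual{\HF^1(W_{K_\infty},j_{K_\infty*}\dual{T}(1))}.
\]
Applying the first identity with $L=K_\infty$ and taking Pontryagin duals produces $\X_{K_\infty/K,\Sigma_W}(T)$ on the right, completing the proof.

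The main obstacle I anticipate is really notational bookkeeping rather than mathematical difficulty. The Leray spectral sequence argument is textbook for finite $L$; for infinite extensions such as $L=K_\infty$, one needs to ensure that both sides of the first identity commute with inverse limits over finite subextensions $K\subset L'\subset L$ -- the right-hand side through the definition of Selmer groups as kernels in continuous cohomology, the left-hand side via Cor.~\ref{cor:limit porperty for admissible extension} -- so that the identity in the infinite case follows formally from the finite case. The identification of the stalks of $R^qj_{L*}$ with inertia cohomology is a standard consequence of proper base change applied to the strict henselisation at each closed point.
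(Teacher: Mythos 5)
Your argument is correct and essentially the same as the paper's: the paper packages the Leray step as a colimit over shrinking open subschemes $U\subseteq W_L$ (via Milne's identification of Galois cohomology with that colimit) rather than invoking the five-term sequence for $j_L$ directly, but the key identification of the stalks of $\RDer^1 j_{L*}\dual{T}(1)$ with $\HF^1(\inertia_v,\dual{T}(1))$ and the appeal to Lem.~\ref{lem:calculation of cohomology} for the second identity are the same. The only (harmless) imprecision is that $\HF^0(W_L,\RDer^1 j_{L*}\dual{T}(1))$ equals $\bigoplus_{v}\HF^1(\inertia_v,\dual{T}(1))^{\Gal(\algc{\FF(v)}/\FF(v))}$ rather than the full $\bigoplus_{v}\HF^1(\inertia_v,\dual{T}(1))$; since the restriction maps from $\HF^1(\Gal_L,\dual{T}(1))$ factor through these invariants, the two kernels agree and the conclusion is unaffected.
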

\begin{proof}
Without loss of generality we assume that $L=K$. According to \cite[Lem.~III.1.16]{Milne:EtCohom} we have for every integer $k$
$$
\HF^k(\Gal_K,\dual{T}(1))=\varinjlim_{U}\HF^k(U,j_{K*}\dual{T}(1)).
$$
Here, $U$ runs through the open dense subschemes of $W$. For any such $U$, the Leray spectral sequence shows
$$
\HF^1(W,j_{K*}\dual{T}(1))=\ker \HF^1(U,j_{K*}\dual{T}(1))\mto\bigoplus_{v\in W-U}\HF^0(v,i_{v}^*\RDer^1j_{K*}\dual{T}(1)).
$$
Recall that for any discrete $\Gal_K$-module $M$ one has $(i_v^*j_{K*}M)_{\xi_v}=M^{\inertia_v}$. By considering an injective resolution of $\dual{T}(1)$ we conclude
$$
(i_{v}^*\RDer^1j_{K*}\dual{T}(1))_{\xi_v}=\HF^1(\inertia_v,\dual{T}(1)).
$$
The first equality in the lemma follows after passing to the direct limit over $U$. The second equality follows from the first equality and Lem.~\ref{lem:calculation of cohomology}.
\end{proof}

We may thus deduce the following reformulation of the first part of the non-commutative main conjecture in terms of the $R[[G]]$-module $\X_{K_\infty/K,\Sigma_W}(T)$.

\begin{cor}\label{cor:mc for selmer groups}
Assume that $G$ has no element of order $\ell$. Let $\Sigma_0\subset W$ denote the set of points over which $K_\infty/K$ has non-torsion ramification and assume that $T$ has projective stalks over $K_\infty$ in all closed points of $W-\Sigma_0$. Then
\begin{enumerate}
\item $\X_{K_\infty/K,\Sigma_W}(T)$ is in $\cat{N}_H(R[[G]])$.
\item In $\KTh_0(R[[G]],S)$ we have
      \begin{align*}
      \bh \ncL_{K_\infty/K,\Sigma_W\cup\Sigma_0,\emptyset}(T)=&-[\X_{K_\infty/K,\Sigma_W}(T)]+[T(-1)_{\Gal_{K_\infty}}]\\
      &+
      \begin{cases}
      [T^{\Gal_{K_\infty}}]&\text{if $\Sigma_W=\emptyset$ and $H$ is finite,}\\
      0&\text{else.}
      \end{cases}
      \end{align*}
\end{enumerate}
\end{cor}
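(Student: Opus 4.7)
The plan is to deduce this from Theorem~\ref{thm:MCforRep l p different}, applied over an auxiliary open subscheme where its hypotheses are met, then transfer the resulting identity back to $W$ and expand via the cohomology computation of Lemma~\ref{lem:calculation of cohomology}. First I will set $W'\coloneqq W-\Sigma_0$ and observe that $K_\infty/K$ has ramification prime to $\ell$ at every closed point of $W'$: for such $v$ the group $\ginert_v$ is torsion by the very definition of $\Sigma_0$, and the assumption that $G$ has no element of order $\ell$ forces its $\ell$-Sylow subgroup to be trivial. Combined with the assumption that $T$ has projective stalks over $K_\infty$ at every $v\in W'$, this makes Theorem~\ref{thm:MCforRep l p different} applicable with $V=C$, $U=W'$, $\Sigma_V=\emptyset$, producing
\[
 \bh\,\ncL_{K_\infty/K,\Sigma_W\cup\Sigma_0,\emptyset}(T)=-[\RDer\Sect(C,j_{W'!}j_{K*}(R[[G]]^\sharp\tensor_R T))]
\]
in $\KTh_0(R[[G]],S)$.

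Next, I will transfer the class from $W'$ back up to $W$. By Proposition~\ref{prop:constructability condition}, $R[[G]]^\sharp\tensor_R T$ has projective stalks over all of $W$: condition~(2) of that proposition handles the points of $\Sigma_0$, while condition~(3) handles the points of $W'$ in view of the ramification and stalk assumptions just recalled. Lemma~\ref{lem:perfectness}(2) then supplies a weak equivalence
\[
 \RDer\Sect(C,j_{W'!}j_{K*}(R[[G]]^\sharp\tensor_R T))\wto\RDer\Sect(C,j_{W!}j_{K*}(R[[G]]^\sharp\tensor_R T))
\]
in $\cat{PDG}^{\cont}(R[[G]])$, so the two complexes have the same class in $\KTh_0(R[[G]],S)$.

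Finally I will evaluate this class cohomologically. Since $G$ is an $\ell$-adic Lie group without $\ell$-torsion and $R$ is commutative regular adic, $R[[H]]$ is noetherian, so the isomorphism~\eqref{eq:NHG computes relative K-group} lets me write the class as the alternating sum of the classes of its cohomology groups. Lemma~\ref{lem:calculation of cohomology} identifies these: the cohomology vanishes outside degrees $1,2,3$; the group $\HF^1$ is $T^{\Gal_{K_\infty}}$ precisely when $\Sigma_W=\emptyset$ and $H$ is finite (and otherwise zero); $\HF^2=\X_{K_\infty/K,\Sigma_W}(T)$ by Lemma~\ref{lem:selmer group}; and $\HF^3=T(-1)_{\Gal_{K_\infty}}$. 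Substituting and multiplying through by $-1$ produces the formula of assertion~(2). Assertion~(1) follows immediately, since $\X_{K_\infty/K,\Sigma_W}(T)$ is realised as the second cohomology group of a complex in $\cat{PDG}^{\cont,w_H}(R[[G]])$ and is therefore finitely generated over $R[[H]]$. There is no genuine conceptual difficulty here; the main care needed is the bookkeeping of ramification hypotheses around the passage from $W$ to $W'$, and this is exactly what the no-$\ell$-torsion assumption on $G$ is designed to facilitate.
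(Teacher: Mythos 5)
Your proposal is correct and follows essentially the same route as the paper's own (very terse) proof: restrict to $W'=W-\Sigma_0$ where the hypotheses of Thm.~\ref{thm:MCforRep l p different} hold, use Lem.~\ref{lem:perfectness}(2) to identify the resulting class with that of the complex over $W$, and then expand via \eqref{eq:NHG computes relative K-group}, Lem.~\ref{lem:calculation of cohomology}, and Lem.~\ref{lem:selmer group}. The only difference is that you spell out the ramification bookkeeping (torsion $\ginert_v$ plus no $\ell$-torsion in $G$ implies ramification prime to $\ell$) that the paper leaves implicit.
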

\begin{proof}
Note that
\[
 \X_{K_\infty/K,\Sigma_W}(T)=\X_{K_\infty/K,\Sigma_W\cup\Sigma_0}(T)
\]
by Lem.~\ref{lem:perfectness}. The rest is a direct consequence of \eqref{eq:NHG computes relative K-group}, Thm.~\ref{thm:MCforRep l p different}, and Lem.~\ref{lem:calculation of cohomology}.
\end{proof}

\begin{rem}
We recall from Example~\ref{exmpl:ramification types} that if $R$ has global dimension less or equal to $2$, for example $R=\Int_\ell$ or $R=\Int_\ell[[t]]$, then $T$ has automatically projective stalks over $K_\infty$ in every closed point of $C$.
\end{rem}

Quite often, the class $[T(-1)_{\Gal_{K_\infty}}]$ is zero in $\KTh_0(R[[G]],S)$. However, this is not always the case. Since the forgetful functor from $\cat{N}_H(R[[G]])$ to the category of finitely generated $R[[H]]$-modules induces a homomorphism $\KTh_0(R[[G]],S)\mto\KTh_0(R[[H]])$, a necessary condition is that $[T(-1)_{\Gal_{K_\infty}}]$ is zero in $\KTh_0(R[[H]])$. For this condition, we can formulate the following useful criterion, which is essentially due to Serre (see also \cite[\S 1.3]{ArdakovWadsley:DimensionFiltration}). In particular, we see that this condition is not satisfied by the group $H=\Int_{\ell}^d\rtimes \mu_{\ell-1}$ with the group of $\ell-1$-th roots of units acting by multiplication on $\Int_{\ell}^d$ if $\ell>2$.

Recall that a $\ell$-adic Lie group $H$ is called \emph{virtually solvable} if its Lie algebra $L(H)$ is solvable.

\begin{lem}\label{lem:vanishing in K0H}
Let $H$ be a compact $\ell$-adic Lie group without any element of order $\ell$ and $R$ a commutative, local, regular adic $\Int_\ell$-algebra. The class $[T]$ of every $R[[H]]$-module $T$ which is finitely generated as $R$-module is zero in $\KTh_0(R[[H]])$ precisely if the centraliser of every element of finite order in $H$ has infinitely many elements. This condition is satisfied if $H$ is a pro-$\ell$-group or if $H$ is not virtually solvable.
\end{lem}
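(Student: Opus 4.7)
The plan is to use a character-theoretic detection of $\KTh_0(R[[H]])$ at conjugacy classes of torsion elements, combined with Serre's Euler-characteristic vanishing for $\ell$-adic Lie groups of positive dimension, and then verify the centraliser condition separately in the two quoted cases by group- and Lie-algebraic arguments.

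For the setup, since $H$ is $\ell$-torsion-free and $R$ is commutative, local, and regular adic over $\Int_\ell$, the Iwasawa algebra $R[[H]]$ is noetherian of finite global dimension by \cite[Thm.~4.1]{Brumer:PseudocompactAlgebras}, so every $R[[H]]$-module $T$ finitely generated as an $R$-module admits a finite projective resolution and defines a class $[T]\in\KTh_0(R[[H]])$. For each torsion element $h\in H$ (necessarily of order $n$ prime to $\ell$) and each primitive $n$-th root of unity $\zeta$, passing to $R'=R[\mu_n]$, one constructs a character map $\chi_{h,\zeta}\colon\KTh_0(R[[H]])\mto\KTh_0(R'[[C_H(h)]])$ from the central idempotent $e_{h,\zeta}=\tfrac{1}{n}\sum_{i=0}^{n-1}\zeta^{-i}h^i\in R'[\langle h\rangle]$ coupled with restriction to $R'[[C_H(h)]]$. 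These characters, together with the rank map on $\KTh_0(R[[U]])\isomorph\Int$ for an open $\ell$-torsion-free pro-$\ell$ subgroup $U\subset H$ (where $R[[U]]$ is local by regularity of $R$), are expected to jointly detect $\KTh_0(R[[H]])$.

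Assuming every $C_H(h)$ is infinite, each such centraliser contains an open $\ell$-torsion-free pro-$\ell$ subgroup of positive Lie dimension, and Serre's classical theorem that Poincar\'e-duality pro-$\ell$ groups of positive dimension have vanishing Euler characteristic on $R$-finite modules forces the image of $[T]$ under each $\chi_{h,\zeta}$ and its restriction to $\KTh_0(R[[U]])$ to vanish; hence $[T]=0$ in $\KTh_0(R[[H]])$. Conversely, if some non-trivial torsion $h$ has finite centraliser $C_H(h)$, then $C_H(h)$ is a finite subgroup of $H$ of order prime to $\ell$, and one produces a $T$ finitely generated over $R$ with $\chi_{h,\zeta}([T])\neq 0$ by taking an explicit $\zeta$-eigenspace of $h$ inside a suitable finite-rank $R[[H]]$-submodule carrying a non-trivial $h$-action.

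For the sufficient conditions: if $H$ is pro-$\ell$ and $\ell$-torsion-free, the only torsion element is the identity, whose centraliser is the infinite group $H$ (assuming $H$ is non-trivial). If $H$ is not virtually solvable and $h\in H$ is torsion, then $\operatorname{Ad}(h)$ is a finite-order, hence semisimple, automorphism of the non-solvable Lie algebra $L(H)$ over $\Rat_\ell$; averaging over $\langle h\rangle$ produces an $\operatorname{Ad}(h)$-stable Levi decomposition $L(H)=\mathfrak{s}\oplus\operatorname{rad}L(H)$ with $\mathfrak{s}\neq 0$ semisimple, and the classical fact (Steinberg) that a finite-order automorphism of a non-zero semisimple Lie algebra in characteristic zero preserves a Cartan subalgebra forces $\mathfrak{s}^{\operatorname{Ad}(h)}\neq 0$. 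Therefore $L(H)^{\operatorname{Ad}(h)}\neq 0$, so $C_H(h)$ has positive Lie-algebra dimension and is infinite. The main technical obstacle is the integral (rather than merely rational) detection of $\KTh_0(R[[H]])$ by the characters $\chi_{h,\zeta}$ and the rank map, which requires a careful devissage exploiting the finite global dimension of $R[[H]]$; the Lie-theoretic verification and the construction of a witness module in the converse direction are routine once the detection framework is in place.
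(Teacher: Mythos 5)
Your proposal has a genuine gap at its central step. The entire ``if and only if'' rests on the claim that the characters $\chi_{h,\zeta}$ together with the rank map \emph{integrally} and \emph{faithfully} detect $\KTh_0(R[[H]])$, but you only assert that they ``are expected to jointly detect'' it and explicitly defer the detection as ``the main technical obstacle.'' That obstacle is not a routine devissage: it is essentially the content of the theorem. Without it, neither direction goes through --- the forward direction needs injectivity of the joint character map to conclude $[T]=0$ from the vanishing of all its images, and the converse needs to know that a nonzero value of some $\chi_{h,\zeta}$ actually certifies $[T]\neq 0$. The witness module in the converse direction is likewise only gestured at. The paper sidesteps all of this by quoting results that already contain the hard content: Serre's theorem (Cor.\ to Thm.~C of \emph{Euler--Poincar\'e} ) gives, for finite-field coefficients, exactly the equivalence between vanishing of Euler characteristics of $R_0$-finite modules and the infinitude of all centralisers of torsion elements, and Ardakov--Wadsley (Thm.~8.2) translates vanishing of Euler characteristics into vanishing of classes in $\KTh_0$. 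The passage from $R_0$ finite to a general commutative local regular adic $R$ is then done by the Cohen structure theorem, induction on the number of power-series variables, and Quillen's devissage and localisation sequences (with Serre's lifting theorem for modular representations supplying the witness module in the converse direction). If you want to keep your character-theoretic framework, you would have to prove the integral detection statement, which for Iwasawa algebras of $\ell$-adic Lie groups is not available off the shelf.

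The last part of your argument is fine: the pro-$\ell$ case is trivial as you say, and for $H$ not virtually solvable the key point is that a semisimple (finite-order, order prime to $\ell$) automorphism of a nonzero semisimple Lie algebra in characteristic zero has nonzero fixed subalgebra. You derive this from Steinberg's theorem on invariant Cartan subalgebras (note that preserving a Cartan subalgebra does not by itself give nonzero fixed vectors; you need the finer statement that the fixed subalgebra of a semisimple automorphism is nonzero), whereas the paper cites Borel--Mostow directly for exactly this fact. Either reference works, but the Borel--Mostow formulation is the one you actually need.
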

\begin{proof}
By the Cohen Structure Theorem \cite[Ch. VIII, \S 5, Thm.~2]{Bourbaki:CommAlg}, we have
\[
R\isomorph R_0[[X_1,\dots,X_n]],
\]
where $R_0$ is either a finite field of characteristic $\ell$ or the valuation ring of a finite field extension of $\Rat_\ell$ and $X_1,\dots, X_n$ are indeterminates.  We do induction on $n$.

Assume that $n=0$ and that $R_0$ is a finite field. By \cite[Cor. to Thm. C]{Serre:EulerPoincare}  the Euler characteristic of every $R_0[[H]]$-module $T$ which is of finite dimension over $R_0$ is trivial precisely if the centraliser of every element of $H$ has infinitely many elements. For any element of infinite order this is clearly an empty condition. Now the proof of \cite[Thm.~8.2, $(a)\Rightarrow(b)$]{ArdakovWadsely:CharElements} shows that the vanishing of the Euler characteristics is equivalent with the vanishing of the classes $[T]$.

If $H$ is a pro-$\ell$-group without any element of order $\ell$, then there are no elements of finite order at all. If $H$ is not virtually solvable, then its Lie algebra $L(H)$ is not solvable. Since any element $h\in H$ of finite order has order prime to $\ell$, the image of $h$ in the automorphism group of $L(H)$ must be semi-simple. By an old result of Borel and Mostow \cite[Thm.~4.5]{Borel:SemiSimpleAutos} (the author thanks S.~Wadsley for pointing out this reference to him), $h$ fixes a non-trivial subspace of $L(H)$, which implies that the centraliser of $h$ in $H$ must be infinite.

Now assume that $R_0$ is the valuation ring of a finite field extension of $\Rat_\ell$. Let $\pi\in R_0$ be a uniformiser and $k$ the residue field of $R_0$. By Quillen's devissage theorem \cite[Thm.~4]{Quillen:HigherKTheory}, we may identify $\KTh_0(k[[H]])$ with the $\KTh$-group of the abelian category of finitely generated $R_0[[H]]$-modules that are annihilated by a power of $\pi$. Under this identification, classes of those $R_0[[H]]$-modules which are additionally finitely generated over $R_0$ are mapped to the subgroup of $\KTh_0(k[[H]])$ generated by the classes of those $k[[H]]$-modules that are finitely generated over $k$.  By Quillen's localisation theorem \cite[Thm.~5]{Quillen:HigherKTheory} we thus obtain an exact sequence
\[
\KTh_0(k[[H]])\mto\KTh_0(R_0[[H]])\mto\KTh_0(R_0[[H]][\frac{1}{\pi}])\mto 0,
\]
noting that all rings in this sequence are of finite global dimension. For any  $R_0[[H]]$-module $T$ which is finitely generated over $R_0$, there exists an exact sequence of $R_0[[H]]$-modules, finitely generated over $R_0$,
\[
0\mto T'\mto T\mto T''\mto 0,
\]
where $T'$ is annihilated by a power of $\pi$ and $\pi$ is a non-zero divisor on $T''$.

Assume that the centraliser of every element of finite order in $H$ has infinitely many elements. We already know that $[T']=0$. Hence, we may assume that $\pi$ is a non-zero divisor on $T$. In particular,
\[
k[[H]]\tensor_{R_0[[H]]}T=k\tensor_{R_0}T
\]
agrees with the derived tensor product with $k[[H]]$ over $R_0[[H]]$ and is finitely generated over $k$. Hence, $[k\tensor_{R_0}T]=0$ in $\KTh_0(k[[H]])$. Since $\pi$ is in the Jacobson radical of $R_0[[H]]$, the derived tensor product with $k[[H]]$ induces an isomorphism
\[
\KTh_0(R_0[[H]])\mto\KTh_0(k[[H]]).
\]
Hence $[T]=0$ in $\KTh_0(R_0[[H]])$.

Conversely, if $H$ does not satisfy the above property, we may find a $k[[H]]$-module $T$ which is finitely generated over $k$ and which has non-trivial class $[T]$ in $\KTh_0(k[[H]])$. The image of $H$ in the automorphism group of $T$ is a finite group $\Delta$. By \cite[Thm. 33]{Serre:LRFG}, $[T]$ has a preimage in $\KTh_0(R_0[[H]])$ consisting of a linear combination of classes of finitely generated $R_0[\Delta]$-modules which are free as $R_0$-modules. Hence, there exist $R_0[[H]]$-modules which are finitely generated and free over $R_0$ and have non-trivial class in $\KTh_0(R_0[[H]])$.

The same argumentation still works for $R_0$ replaced by $R$, $k$ replaced by $R'\coloneqq R_0[[X_1,\dots,X_{n-1}]]$ and $\pi$ replaced by $X_n$. The lifting argument in the last step becomes a bit easier. If $T$ is a $R'[[H]]$-module which is finitely generated and free over $R'$, then $T'\coloneqq R\tensor_{R'}T$ is a $R[[H]]$-module which is finitely generated and free over $R$ and satisfies $R'\tensor_R T'\isomorph T$. This completes the induction step.
%
\end{proof}

However, the vanishing in $\KTh_0(R[[H]])$ is not sufficient. Here is an example. Assume that $G=<\tau,\gamma>\isomorph \Int_{\ell}\rtimes\Int_{\ell}$ with $\gamma^{-1}\tau\gamma=\tau^{1+\ell}$. Set $H\coloneqq<\tau>$ and consider the constant $\Int_\ell[[G]]$-module $\Int_{\ell}$. Clearly, $[\Int_{\ell}]=0$ in $\KTh_0(\Int_{\ell}[[H]])$ according to Lem.~\ref{lem:vanishing in K0H}. However, $[\Int_{\ell}]\neq 0$ in $\KTh_0(\Int_{\ell}[[G]],S)$. Indeed, the complex
$$
\Int_{\ell}[[G]]\xrightarrow{v\mapsto \left(v-v\tau^{1+\ell},v-v\left(\sum_{i=0}^{\ell}\tau^i\right)\gamma\right)}\Int_{\ell}[[G]]^2\xrightarrow{(v,w)\mapsto v-v\gamma-w+w\tau^{1+\ell}}\Int_{\ell}[[G]]
$$
is a projective resolution of $\Int_{\ell}$. 
Hence, the image of $[\Int_{\ell}]$ in
\[
\KTh_0(\Int_{\ell}[[\Gamma]],S)=\Int_{\ell}[[\Gamma]]_S^\times/\Int_{\ell}[[\Gamma]]^\times
\]
under the natural projection map is given by the class of
$$
\frac{1-(\ell+1)\gamma}{1-\gamma}\in \Int_{\ell}[[\Gamma]]_S^\times,
$$ which is not in $\Int_{\ell}[[\Gamma]]^\times$.

A sufficient criterion for the vanishing of the class $[T(-1)_{\Gal_{K_\infty}}]$ in the group $\KTh_0(R[[G]],S)$ is given in \cite[Prop. 4.3.17]{FK:CNCIT}. Here is another one, inspired by \cite[Prop. 4.2]{Zabradi:Pairings}.

\begin{prop}\label{prop:vanishing of small modules}
Let $G=H\rtimes \Gamma$ be an $\ell$-adic Lie group without elements of order $\ell$. Assume that there exists a closed normal subgroup $N\subset G$ such that
\begin{enumerate}
 \item $G/N$ has no elements of order $\ell$ and the centraliser of every element in $G/N$ of finite order has infinitely many elements,
 \item the image of $H$ in $G/N$ is open.
 \end{enumerate}
Let further $R$ be a commutative, local, regular adic $\Int_\ell$-algebra. Then the class of every $R[[G]]$-module which is finitely generated as $R$-module is zero in $\KTh_0(R[[G]],S)$.
\end{prop}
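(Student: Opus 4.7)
The plan is to follow the template of Lem.~\ref{lem:vanishing in K0H}, reducing to the case where $R=k$ is a finite field of characteristic $\ell$ and then exploiting the hypothesis on $\bar G \coloneqq G/N$ via derived base change along the surjection $\pi\colon k[[G]]\twoheadrightarrow k[[\bar G]]$.

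First I would carry out the reduction to $R=k$ by the devissage already used in Lem.~\ref{lem:vanishing in K0H}: Cohen's Structure Theorem writes $R\isomorph R_0[[X_1,\ldots,X_n]]$, and induction on $n$ via Quillen's localisation and devissage reduces to $R_0$, then to its residue field. The passage from the absolute group $\KTh_0(R[[H]])$ treated in Lem.~\ref{lem:vanishing in K0H} to the relative group $\KTh_0(R[[G]],S)$ is routine via the identification $\KTh_0(R[[G]],S)\isomorph\KTh_0(\cat{N}_H(R[[G]]))$ of~\eqref{eq:NHG computes relative K-group}: a module finitely generated over $R$ is a fortiori finitely generated over $R[[H]]$ and remains so after each of the intermediate base changes, so it stays in $\cat{N}_H$ throughout.

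With $R=k$, the subgroup $N$ is closed in $G$ and without $\ell$-torsion, so $k[[N]]$ is noetherian of finite global dimension and the derived base change
\[
 T\longmapsto k[[\bar G]]\Ltensor_{k[[G]]}T\isomorph T\Ltensor_{k[[N]]}k
\]
sends objects of $\cat{N}_H(k[[G]])$ to bounded complexes of $k[[\bar G]]$-modules with cohomology finitely generated over $k$, because $N$ has finite cohomological dimension. This induces a homomorphism
\[
 \pi_{*}\colon \KTh_0(\cat{N}_H(k[[G]]))\mto \KTh_0(k[[\bar G]]),\qquad [T]\longmapsto\sum_{i}(-1)^{i}[\HF_{i}(N,T)].
\]
By the hypothesis on $\bar G$, Lem.~\ref{lem:vanishing in K0H} applies with $\bar G$ in the role of the ambient compact $\ell$-adic Lie group, so each summand $[\HF_{i}(N,T)]$ vanishes in $\KTh_0(k[[\bar G]])$ and consequently $\pi_{*}[T]=0$.

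The main obstacle is to promote $\pi_{*}[T]=0$ to the desired vanishing $[T]=0$ in $\KTh_0(k[[G]],S)$. The plan is to insert the open subgroup $HN\subset G$, which has finite index with $G/HN$ a finite cyclic $\ell$-group (as $G/HN$ is a quotient of $\Gamma\isomorph\Int_{\ell}$), and to factor $\pi$ through the intermediate ring $k[[HN]]$. One has $HN=H\rtimes\Gamma_1$ with $\Gamma_1\coloneqq HN\cap\Gamma\isomorph\Int_{\ell}$, and $HN/N=\bar H$, so $k[[HN]]$ fits directly into the framework of Def.~\ref{defn:wHPDG} and the quotient $HN\to\bar H$ lands in a group satisfying the hypotheses of Lem.~\ref{lem:vanishing in K0H}. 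Combining an explicit strictly perfect $k[[HN]]$-resolution of $T|_{HN}$ whose derived base change realises the Euler-characteristic decomposition $\sum_i(-1)^{i}[\HF_{i}(N,T)]$ with a restriction/transfer argument along the finite-index inclusion $HN\subset G$ should then transfer the vanishing obtained over $\bar H$ back to $[T]=0$ over $G$. The technical heart will be verifying that this resolution lifts along $\pi$ without obstruction; I expect this to come from the absence of $\ell$-torsion in both $G$ and $\bar G$, which makes the fibres of $\pi$ sufficiently rigid to force $\pi_{*}$ to be injective on classes of modules finitely generated over $k$.
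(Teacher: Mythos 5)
Your computation that $[T]$ dies under the pushforward
\[
\pi_{*}\colon \KTh_0(\cat{N}_H(k[[G]]))\mto \KTh_0(k[[\bar G]]),\qquad [T]\mapsto\sum_i(-1)^i[\HF_i(N,T)],
\]
is fine as far as it goes, but it proves a strictly weaker statement than what is needed, and the step you yourself flag as ``the main obstacle'' --- promoting $\pi_*[T]=0$ to $[T]=0$ --- is a genuine gap, not a technicality. There is no reason for $\pi_*$ to be injective on classes of modules finitely generated over $k$, and the surrounding text of the paper warns against exactly this kind of detection argument: the example preceding the proposition (with $G=\Int_\ell\rtimes\Int_\ell$) exhibits a module whose class vanishes in $\KTh_0(\Int_\ell[[H]])$ but is nonzero in $\KTh_0(\Int_\ell[[G]],S)$, showing that the relative group $\KTh_0(R[[G]],S)$ is not controlled by its images in more accessible $\KTh$-groups. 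The appeal to ``rigidity of the fibres of $\pi$'' coming from the absence of $\ell$-torsion is not an argument, and the restriction/transfer sketch along $HN\subset G$ is not developed to the point where one could check it. A secondary issue: the reduction to $R=k$ finite is established in Lem.~\ref{lem:vanishing in K0H} only for the absolute groups $\KTh_0(R[[H]])$; transplanting the devissage/localisation argument to the relative group $\KTh_0(R[[G]],S)=\KTh_0(\cat{N}_H(R[[G]]))$ is not ``routine'' and is nowhere needed in the paper's own argument.

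The idea you are missing runs in the opposite direction. The paper first proves the vanishing of the class of the \emph{trivial} module $[R]$ by \emph{inflation}: Lem.~\ref{lem:vanishing in K0H} applied to $G/N$ gives $[R]=0$ in $\KTh_0(R[[G/N]])$; letting $G/H$ act trivially lands this in $\KTh_0(\cat{N}_{G/N}(R[[G']]))$ for $G'=G/N\times G/H$; and the group homomorphism $G\mto G'$, $g\mapsto(gN,gH)$, together with assumption (2) (which guarantees that a module finitely generated over $R[[G/N]]$ becomes finitely generated over $R[[H]]$) yields an exact functor $\cat{N}_{G/N}(R[[G']])\mto\cat{N}_H(R[[G]])$ carrying $[R]$ to $[R]$. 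The general case then follows from a multiplicativity trick absent from your proposal: the endomorphism of $\KTh_0(\cat{N}_H(R[[G]]))$ given by $[M]\mapsto\sum_i(-1)^i[\Tor_i^R(T,M)]$ (diagonal $G$-action) is well defined and sends $[R]$ to $[T]$, whence $[T]=0$. Without these two ingredients --- inflating the trivial class into $\cat{N}_H(R[[G]])$ and then tensoring --- your argument cannot be completed.
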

\begin{proof}
By assumption $(1)$ and Lem.~\ref{lem:vanishing in K0H} the constant $R[[G/N]]$-module $R$ has trivial class in $\KTh_0(R[[G/N]])$. Set $G'\coloneqq G/N\times G/H$. Every $R[[G/N]]$-module may be considered as $R[[G']]$-module by letting $G/H$ act trivially. Thus, we see that $[R]=0$ in $\KTh_0(\cat{N}_{G/N}(R[[G']]))$, as well. By assumption $(2)$ every finitely generated $R[[G']]$-module which is finitely generated as $R[[G/N]]$-module may be considered via
$$
G\mto G',\qquad g\mapsto (gN,gH)
$$
as a finitely generated $R[[G]]$-module which is also finitely generated as $R[[H]]$-module. This induces an exact functor $\cat{N}_{G/N}(R[[G']])\mto\cat{N}_H(R[[G]])$ and hence, a homomorphism between the corresponding $\KTh$-groups. We conclude that $[R]=0$ also in $\KTh_0(\cat{N}_H(R[[G]]))$.

If $T$ is a $R[[G]]$-module which is finitely generated and free as $R$-module and $M$ is any module in $\cat{N}_H(R[[G]])$, we let $\Tor^R_i(T,M)$ denote the $i$-th left derived functor of the tensor product $T\tensor_{R}M$ with the diagonal action of $G$. Since $R$ is noetherian, any finitely generated, projective $R[[H]]$-module is flat as $R$-module and the same is true for $R[[G]]$. In particular, it does not matter if we compute $\Tor^R_i(T,M)$ in the category of finitely generated $R[[G]]$-modules or $R[[H]]$-modules or $R$-modules. Hence, $\Tor^R_i(T,M)$ is again in $\cat{N}_H(R[[G]])$ and it is finitely generated over $R$ if $M$ is a finitely generated $R$-module. We thus obtain an endomorphism
\[
\KTh_0(\cat{N}_H(R[[G]]))\mto\KTh_0(\cat{N}_H(R[[G]])),\qquad [M]\mapsto \sum_{i=0}^\infty (-1)^i[\Tor^R_i(T,M)],
\]
which maps $[R]$ to $[T]$. In particular, $[T]=0$.
\end{proof}

\begin{cor}\label{cor:vanishing of small modules II}
Let $G=H\rtimes\Gamma$ be an $\ell$-adic Lie group. Assume that
\begin{enumerate}
 \item $H$ is not virtually solvable and has no elements of order $\ell$,
 \item the Lie algebra $L(G)$ of $G$ decomposes as
       $$L(G)=L(H)\oplus V$$
       with $L(H)$ the Lie algebra of $H$ and some ideal $V$ of $L(G)$,
 \item $\ell-1>2\dim_{\Rat_{\ell}} L(H)$.
\end{enumerate}
Let further $R$ be a commutative, local, regular adic $\Int_\ell$-algebra. Then the class of every $R[[G]]$-module which is finitely generated as $R$-module is zero in $\KTh_0(R[[G]],S)$.
\end{cor}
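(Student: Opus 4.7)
The strategy is to deduce the corollary from Proposition~\ref{prop:vanishing of small modules} by constructing a suitable closed normal subgroup $N \subset G$. Since $V$ is an ideal of $L(G)$, standard integrability results in $\ell$-adic Lie theory yield a closed analytic subgroup $N$ of $G$ with Lie algebra $V$, which is normal (possibly after replacing $G$ by an open subgroup, which does not affect the $K$-theoretic conclusion). The quotient $G/N$ is then an $\ell$-adic Lie group whose Lie algebra is canonically identified with $L(H)$ via the direct-sum splitting.

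To verify condition (2) of Proposition~\ref{prop:vanishing of small modules}, observe that the composition $L(H) \hookrightarrow L(G) \twoheadrightarrow L(G)/V \cong L(H)$ is the identity, so the derivative of $H \to G/N$ is an isomorphism and the image of $H$ in $G/N$ is therefore open.

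For condition (1), I would split the verification into two parts. First, since $L(H)$ is non-solvable by hypothesis~(1), the Lie algebra of $G/N$ is non-solvable, so $G/N$ is not virtually solvable; by Lemma~\ref{lem:vanishing in K0H} (via the Borel--Mostow argument) every torsion element of $G/N$ then has infinite centraliser. The \emph{main obstacle} is the remaining claim that $G/N$ has no element of order $\ell$, and this is where hypothesis~(3) enters essentially. Given $g \in G/N$ of order $\ell$, the cyclic subgroup $\langle g \rangle$ acts on the $\Rat_\ell$-vector space $L(G/N) \cong L(H)$ via the adjoint representation. Since the $\ell$-th cyclotomic polynomial $\Phi_\ell$ is irreducible over $\Rat_\ell$, every $\Rat_\ell$-representation of $\Int/\ell\Int$ decomposes into copies of the trivial representation and copies of the unique non-trivial irreducible representation, which has dimension $\ell-1$. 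The bound $\ell-1 > 2\dim_{\Rat_\ell} L(H)$ excludes any non-trivial summand, so $g$ acts trivially by the adjoint and thus centralises an open neighbourhood of the identity in $G/N$. The factor of $2$ in the bound is what promotes this to the triviality of $g$ itself: using that $H$ has no element of order $\ell$ by hypothesis~(1) and that the image $\bar H$ of $H$ in $G/N$ is open, one shows that $g$ must lie in an open pro-$\ell$ subgroup contained in $\bar H$, which is torsion-free.

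With both hypotheses of Proposition~\ref{prop:vanishing of small modules} in place, the vanishing of $[M]$ in $\KTh_0(R[[G]],S)$ for every $R[[G]]$-module $M$ finitely generated as an $R$-module follows immediately.
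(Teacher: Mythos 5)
Your overall strategy---reduce to Prop.~\ref{prop:vanishing of small modules} by integrating the ideal $V$ to a closed normal subgroup $N$---is natural, but two steps have genuine gaps. First, the construction of $N$: integration gives a closed subgroup of \emph{some open subgroup} of $G$, and normality in all of $G$ is not automatic. Here $V$ is a one-dimensional central ideal of $L(G)$ (since $[L(H),V]\subseteq L(H)\cap V=0$), but when $Z(L(H))\neq 0$ (e.g.\ $L(H)\isomorph\mathfrak{gl}_2$) the line $V$ need not be $\mathrm{Ad}(G)$-stable, so the integrated subgroup need not be normal in $G$. Your fallback of replacing $G$ by an open subgroup $G'$ is not harmless: the conclusion lives in $\KTh_0(R[[G]],S)$, and vanishing of $[M|_{G'}]$ in $\KTh_0(R[[G']],S')$ does not transfer---induction returns $[R[G/G']\tensor_R M]$ rather than $[M]$, and restriction goes the wrong way.

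Second, and more seriously, the no-$\ell$-torsion claim for $G/N$. Triviality of the adjoint action of an order-$\ell$ element $g$ on $L(G/N)$ only shows that $g$ centralises an open subgroup; it does not force $g=1$ (consider $\Int/\ell\times\Int_\ell^d$). The proposed rescue does not work as stated: $g$ need not lie in $\bar H$ at all, because $HN$ can be a \emph{proper} open subgroup of $G$ (the image of $N$ in $G/H\isomorph\Int_\ell$ may be $\ell^k\Int_\ell$ with $k\geq 1$, so $G/HN\isomorph\Int/\ell^k$ and $G/N$ can acquire order-$\ell$ elements lying over $\bar H$); moreover an open pro-$\ell$ subgroup of an $\ell$-adic Lie group need not be torsion-free. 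The paper's proof avoids both problems by choosing $N=Z_G(H')$ for a characteristic open uniformly powerful subgroup $H'\subseteq H$: such an $N$ is automatically closed and normal in the full group $G$, and $G/N$ acts faithfully on $H'$, hence embeds into $\Aut(H')$, a closed subgroup of $\GL_d(\Int_\ell)$ with $d=\dim_{\Rat_\ell}L(H)$, which has no elements of order $\ell$ by hypothesis (3); hypothesis (2) supplies a subgroup $\Gamma'\isomorph\Int_\ell$ of $Z_G(H')$ making the image of $H'$ open in $G/N$, and non-virtual solvability of $G/N$ follows from that of $H'/Z(H')$. To repair your argument you would need either this choice of $N$ or some other normal subgroup of $G$ itself whose quotient admits a faithful linear action of dimension $<\ell-1$.
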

\begin{proof}
By assumption $(2)$ there exist a characteristic open subgroup $H'\subset H$ and a closed subgroup $\Gamma'\isomorph\Int_{\ell}$ of the centraliser $Z_G(H')$ of $H'$ in $G$ such that $H'\cap \Gamma'=1$ and $H'\Gamma'$ is open in $G$. We may also assume that $H'$ is a uniformly powerful pro-$\ell$-group in the sense of \cite[Def.~4.1]{DixonSegal:AnalyticproP}. Set $d\coloneqq\dim_{\Rat_\ell}L(H)$. By \cite[Cor.~4.18]{DixonSegal:AnalyticproP} the automorphism group of $H'$ is isomorphic to a closed subgroup of $\GL_d(\Int_\ell)$, which does not have elements of order $\ell$ by assumption $(3)$. In particular, this is also true for $G/Z_G(H')$, as this group acts faithfully on $H'$ by conjugation. Since $\Gamma'\subset Z_G(H')$, the image of $H'$ must be open in $G/Z_G(H')$. This image is just the quotient of $H'$ by its centre $Z(H')$. Since $H'$ is not virtually solvable, the same must be true for $H'/Z(H')$ and therefore, also for $G/Z_G(H')$. We may thus apply Prop.~\ref{prop:vanishing of small modules} with $N=Z_G(H')$.
\end{proof}

\begin{rem}
In fact, one can replace condition $(1)$ in Prop.~\ref{prop:vanishing of small modules} by
\begin{enumerate}
\item[(1)'] $[R]=0$ in $\operatorname{G}_0(R[[G/N]])$.
\end{enumerate}
where $\operatorname{G}_0(R[[G/N]])$ is the Grothendieck group of all finitely generated $R[[G/N]]$-modules, dropping the assumption that $G/N$ has no elements of order $\ell$. Presumably, $(1)'$ is satisfied for all $G/N$ which are not virtually solvable. If this is true, then one can drop assumption $(3)$ in Cor.~\ref{cor:vanishing of small modules II}.
\end{rem}


\subsection{The main conjecture for Picard 1-motives}\label{ss:Picard-1-motives}

In this section, we will clarify the relation of the main conjectures in Section~\ref{ss:MCforladicReps} with the main conjecture for $\ell$-adic realisations of Picard 1-motives considered in \cite{GreitherPopescu:PicardOneMotives}.

We recall the notion of Picard 1-motives introduced by Deligne \cite{Deligne:HodgeIII}. For this, we need some more notation. Let $\Gm_X$ denote the \'etale sheaf defined by the group of units of a scheme $X$. For any closed immersion $i_Z\colon Z\mto X$ we let $\Gm_{X,Z}$ denote the kernel of the surjection
$$
\Gm_{X}\mto i_{Z*}\Gm_Z. 
$$
From now on, we assume that $X$ is a smooth and proper curve over a perfect field $k$ with function field $K(X)$ and that $Z$ is a finite closed subscheme. We let $j_{K(X)*}\Gm_{K(X)}$ denote the \'etale sheaf of invertible rational functions on $X$ and
\[
\sheaf{P}_Z\coloneqq \ker\left(j_{K(X)*}\Gm_{K(X)}\mto i_{Z*}i_{Z}^*(j_{K(X)*}\Gm_{K(X)}/\Gm_{X,Z})\right)
\]
its subsheaf of rational functions which are congruent to $1$ modulo the effective divisor of $X$ corresponding to $Z$ in the sense of \cite[Ch. III, \S 1]{Serre:AlgGrpsAndCF}. For any locally closed subscheme $Y$ of $X$ we let $\Div_Y$ denote the \'etale sheaf on $X$ of divisors with support on $Y$ and $\Div^0_Y$ denote the kernel of the degree map $\Div_Y\mto \Int$.

Consider the diagram
\begin{equation}\label{eqn:9-diagram}
\xymatrix@C-3pt{
&0\ar[d]&0\ar[d]&0\ar[d]&\\
0\ar[r]&\Gm_{X,Z}\ar[r]\ar[d]&\Gm_{X}\ar[r]\ar[d]&i_{Z*}\Gm_Z\ar[r]\ar[d]&0\\
0\ar[r]&\sheaf{P}_Z\ar[r]\ar[d]&j_{K(X)*}\Gm_{K(X)}\ar[r]\ar[d]&i_{Z*}i_{Z}^*(j_{K(X)*}\Gm_{K(X)}/\Gm_{X,Z}) \ar[r]\ar[d]&0\\
0\ar[r]&\Div_{X-Z}\ar[r]\ar[d]&\Div_{X}\ar[r]\ar[d]&\Div_{Z}\ar[r]\ar[d]&0\\
&0&0&0&}
\end{equation}
of \'etale sheaves on $X$. One checks easily by taking stalks that all the rows and columns are exact. By Hilbert 90 in the form of \cite[Prop.~III.4.9]{Milne:EtCohom}, we have
\[
\HF^1(X,i_{Z*}\Gm_Z)\isomorph\HF^1(Z,\Gm_Z)=\Pic(Z)=0.
\]
Hence, the third column is exact even in the category of presheaves. Clearly, this is also true for the third row. The weak approximation theorem for $K(X)$ implies the exactness of the second row in the category of presheaves. Hence,
$$
\HF^1(X,\sheaf{P}_Z)\subset\HF^1(X,j_{K(X)*}\Gm_{K(X)})
$$
and the latter group is zero by Hilbert 90 and the Leray spectral sequence. In particular, we have for any open dense subscheme $Y$ of $X$:
$$
\HF^1(Y,\Gm_{X,Z})=\Div_{X-Z}(Y)/\set{\operatorname{div}(f)\given f\in \sheaf{P}_Z(Y)}\eqqcolon\Pic(Y,Z\cap Y).
$$
This group is usually called the Picard group of $Y$ relative to the effective divisor corresponding to $Z\cap Y$. If $Y=X$ and $k$ is finite, then it is also known as the ray class group for the modulus $Z$. We let $\Pic^0(X,Z)\subset\Pic(X,Z)$ denote the subgroup of elements of degree $0$. If $k$ is algebraically closed, then it can be identified with $k$-valued points of the generalised Jacobian variety of $X$ with respect to $Z$ \cite[Ch.~V, Thm.~1]{Serre:AlgGrpsAndCF}.

We will now assume that $k=\algc{\FF}$. As before, we let $p>0$ denote the characteristic of $\FF$. Recall from \cite[Ex. III.1.9.(c)]{Milne:EtCohom} that an \'etale sheaf $\sheaf{F}$ on $X$ is flabby if $\HF^i(U,\sheaf{F})=0$ for all $i>0$ and all \'etale schemes $U$ of finite type over $X$.

\begin{lem}\label{lem:flabby resolution of relative Gm}
Let $X/\algc{\FF}$ be a smooth and proper curve over $\algc{\FF}$ and $Z\subset X$ be a finite closed subscheme. The complex of \'etale sheaves
\[
 \sheaf{P}_Z\mto \Div_{X-Z}
\]
is a flabby resolution of $\Gm_{X,Z}$ on $X$.
\end{lem}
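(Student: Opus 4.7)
The plan is to identify the first column of diagram~\eqref{eqn:9-diagram}, whose exactness has just been verified, as the short exact sequence
\[
0 \to \Gm_{X,Z} \to \sheaf{P}_Z \to \Div_{X-Z} \to 0,
\]
thereby exhibiting the two-term complex $\sheaf{P}_Z \to \Div_{X-Z}$ (in degrees $0$ and $1$) as a resolution of $\Gm_{X,Z}$. It then remains to check that both $\sheaf{P}_Z$ and $\Div_{X-Z}$ are flabby in the sense of the excerpt, i.e.\ have vanishing higher étale cohomology on every étale $U$ of finite type over $X$.

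For $\Div_{X-Z}$ I would write it as the direct sum $\bigoplus_{x\in (X-Z)^0}i_{x*}\Int$. For each closed point $x$, the base change $U\times_X x$ is a finite disjoint union of copies of $\Spec\algc{\FF}$, so $\HF^q(U, i_{x*}\Int) = \HF^q(U\times_X x,\Int) = 0$ for $q>0$. Since étale cohomology on the noetherian scheme $X$ commutes with arbitrary direct sums of flabby sheaves, $\Div_{X-Z}$ is flabby.

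For $\sheaf{P}_Z$ I would use the middle row of diagram~\eqref{eqn:9-diagram},
\[
0 \to \sheaf{P}_Z \to j_{K(X)*}\Gm_{K(X)} \to i_{Z*}i_Z^*(j_{K(X)*}\Gm_{K(X)}/\Gm_{X,Z}) \to 0.
\]
The right-hand term is supported on $Z=\coprod_z\Spec\algc{\FF}$ and is therefore flabby by the same argument as in the previous paragraph. The middle term is the classical flabby direct image from the generic point: the higher direct images $\RDer^q j_{K(X)*}\Gm_{K(X)}$ vanish for $q>0$, since their stalks at a closed point $x$ compute the $\Gm$-cohomology of the fraction field of the strict henselization $\mathcal{O}_{X,x}^{\mathrm{sh}}$, which is trivial by Hilbert 90 ($q=1$) and by Tsen's theorem together with cohomological dimension considerations ($q\geq 2$). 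The Leray spectral sequence then identifies $\HF^q(U, j_{K(X)*}\Gm_{K(X)})$ with $\HF^q(U\times_X\Spec K(X),\Gm)$, which vanishes in positive degrees because $U\times_X\Spec K(X)$ is a disjoint union of spectra of function fields of smooth curves over $\algc{\FF}$.

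The long exact cohomology sequence attached to the middle row yields $\HF^q(U,\sheaf{P}_Z)=0$ immediately for $q\geq 2$, while for $q=1$ it identifies $\HF^1(U,\sheaf{P}_Z)$ with the cokernel of the map $(j_{K(X)*}\Gm_{K(X)})(U) \to (i_{Z*}i_Z^*(j_{K(X)*}\Gm_{K(X)}/\Gm_{X,Z}))(U)$ on sections. The vanishing of this cokernel is precisely the weak approximation theorem applied to the function fields of the connected components of $U$, exactly as the excerpt invokes it over $X$; the extension is legitimate because every connected component of $U$ is again a smooth curve over $\algc{\FF}$. The only genuine input beyond standard sheaf-theoretic manipulations is this weak approximation step together with the acyclicity of $j_{K(X)*}\Gm_{K(X)}$, both of which are standard facts in the étale cohomology of smooth curves over algebraically closed fields; that weak approximation propagates from $X$ to all étale $U$ is the one place where some care is needed, but it is immediate.
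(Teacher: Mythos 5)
Your proposal is correct and follows essentially the same route as the paper: both arguments obtain flabbiness of $\Div_{X-Z}$ from its decomposition into skyscraper sheaves, and flabbiness of $\sheaf{P}_Z$ from the middle row of diagram~\eqref{eqn:9-diagram}, using that the outer terms are flabby and that the row is exact on sections over every \'etale $U$ (weak approximation). The only difference is cosmetic: you unpack the flabbiness of $j_{K(X)*}\Gm_{K(X)}$ via Leray, Hilbert~90 and Tsen, where the paper simply cites Milne, Ex.~III.2.22.(d), and you make explicit that weak approximation must be invoked over each \'etale $U$, which the paper subsumes in the earlier assertion that the second row is exact in the category of presheaves.
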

\begin{proof}
Since $Z$ is a scheme of finite type of dimension $0$ over the separably closed field $\algc{\FF}$, the sheaves $i_{Z*}{\Gm}_Z$ and $\Div_Z$ are flabby. Hence, $j_{K(X)*}\Gm_{K(X)}/\sheaf{P}_Z$ is also flabby. As $\Div_{X-Z}$ and $\Div_{X}$ are direct sums of sheaves concentrated on closed points, they are flabby, as well. The sheaf $j_{K(X)*}\Gm_{K(X)}$ is flabby by \cite[Ex.~III.2.22.(d)]{Milne:EtCohom}. As the second row of the diagram~\eqref{eqn:9-diagram} is exact in the category of presheaves, $\sheaf{P}_Z$ must also be flabby.
\end{proof}

Consider two reduced closed subschemes $Z_1$ and $Z_2$ of $X$ with empty intersection. The Picard $1$-motive for $Z_1$ and $Z_2$ is defined to be the complex of abelian groups
$$
\Motive_{Z_1,Z_2}\colon \Div^0_{Z_1}(X)\mto \Pic^0(X,Z_2)
$$
concentrated in degrees $0$ and $1$ \cite[Def.~2.3]{GreitherPopescu:PicardOneMotives}. Its group of $n$-torsion points for a number $n>0$ is given by
$$
\Motive_{Z_1,Z_2,n}\coloneqq\HF^0(\Motive_{Z_1,Z_2}\Ltensor_\Int \Int/(n))
$$
and its $\ell$-adic Tate module is given by
$$
\Tate_{\ell}\Motive_{Z_1,Z_2}\coloneqq\varprojlim_{k>0}\Motive_{Z_1,Z_2,\ell^k}
$$
\cite[\S 10.1.5]{Deligne:HodgeIII}.

\begin{lem}\label{lem:n-torsion of 1-motives}
We have for all numbers $n>0$
$$
\Motive_{Z_1,Z_2,n}=\HF^0(\RDer\Sect(X-Z_1,\Gm_{X,Z_2})\Ltensor_{\Int}\Int/(n))
$$
where $\RDer\Sect(X-Z_1,\Gm_{X,Z_2})$ denotes the total derived section functor and $\Ltensor_{\Int}$ denotes the total derived tensor product in the derived category of abelian groups.
\end{lem}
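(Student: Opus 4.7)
The plan is to realise both sides of the desired identity as $\HF^0$ of the derived reduction mod~$n$ of two explicit two-term complexes of abelian groups, and then to connect these complexes by a chain map whose cone becomes invisible under $(-)\Ltensor_{\Int}\Int/(n)$. First, Lem.~\ref{lem:flabby resolution of relative Gm} represents $\RDer\Sect(X-Z_1,\Gm_{X,Z_2})$ by
\[
B^{\bullet}\coloneqq\bigl[\sheaf{P}_{Z_2}(X)\xrightarrow{f\mapsto\operatorname{div}(f)|_{X-Z_1}}\Div_{X-Z_1-Z_2}(X-Z_1)\bigr]
\]
in degrees $0$ and $1$, where I use $Z_1\cap Z_2=\emptyset$ to identify $\sheaf{P}_{Z_2}(X-Z_1)=\sheaf{P}_{Z_2}(X)$ and $\Div_{X-Z_2}(X-Z_1)=\Div_{X-Z_1-Z_2}(X-Z_1)$. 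Next I introduce the auxiliary complex $\tilde{\Motive}\coloneqq[\Div_{Z_1}(X)\xrightarrow{D\mapsto[D]}\Pic(X,Z_2)]$, again in degrees $0$ and $1$. The obvious inclusion $\Motive_{Z_1,Z_2}\hookrightarrow\tilde{\Motive}$ has term-wise cokernel the degree maps, giving a short exact sequence of complexes with quotient $[\Int\xrightarrow{\id}\Int]$ (if $Z_1\neq\emptyset$) or $[0\to\Int]$ (if $Z_1=\emptyset$); in both cases the long exact sequence of the induced triangle shows that $\HF^0(\Motive_{Z_1,Z_2}\Ltensor_{\Int}\Int/(n))\isomorph\HF^0(\tilde{\Motive}\Ltensor_{\Int}\Int/(n))$, since the quotient's derived reduction mod $n$ has no cohomology in degrees $\leq 0$.

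Second, I define a morphism of complexes $\phi\colon B^{\bullet}\to\tilde{\Motive}$ by setting $\phi^0(f)\coloneqq-\operatorname{div}(f)|_{Z_1}$ and $\phi^1(D)\coloneqq[D]$. Commutativity of the resulting square reduces to the identity $[\operatorname{div}(f)]=0$ in $\Pic(X,Z_2)$ for $f\in\sheaf{P}_{Z_2}(X)$, together with the decomposition $\operatorname{div}(f)=\operatorname{div}(f)|_{Z_1}+\operatorname{div}(f)|_{X-Z_1-Z_2}$. To compute the cone of $\phi$, I apply the section functor over $X$ and over $X-Z_1$ to the short exact sequence $0\to\Gm_{X,Z_2}\to\sheaf{P}_{Z_2}\to\Div_{X-Z_2}\to 0$ (the left column of \eqref{eqn:9-diagram}); using the flabbiness of $\sheaf{P}_{Z_2}$ and comparing the two resulting four-term exact sequences yields the localisation sequence
\[
0\to\Gm_{X,Z_2}(X)\to\Gm_{X,Z_2}(X-Z_1)\to\Div_{Z_1}(X)\to\Pic(X,Z_2)\to\Pic(X-Z_1,Z_2)\to 0.
\]
From this I read off that $\phi$ induces a surjection on $\HF^0$ with kernel $\Gm_{X,Z_2}(X)$ and an isomorphism on $\HF^1$, so $\Cone(\phi)\simeq\Gm_{X,Z_2}(X)[1]$.

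Finally, $\Gm_{X,Z_2}(X)$ equals $\{1\}$ when $Z_2\neq\emptyset$ and $\algc{\FF}^{\times}$ when $Z_2=\emptyset$; both groups are $n$-divisible for every $n>0$ because $\algc{\FF}$ is algebraically closed. Hence $\Gm_{X,Z_2}(X)\Ltensor_{\Int}\Int/(n)$ is concentrated in cohomological degree $-1$, so $\Cone(\phi)\Ltensor_{\Int}\Int/(n)$ is concentrated in degree $-2$ and contributes nothing to $\HF^0$ in the long exact sequence of the triangle $B^{\bullet}\Ltensor_{\Int}\Int/(n)\to\tilde{\Motive}\Ltensor_{\Int}\Int/(n)\to\Cone(\phi)\Ltensor_{\Int}\Int/(n)$. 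Chaining this with the identification from the first paragraph gives the asserted equality. The most delicate point is pinning down the sign in the definition of $\phi^0$ and verifying that the map on $\HF^1$ really matches the connecting homomorphism in the localisation sequence; once this bookkeeping is settled, the rest reduces to elementary homological algebra and the divisibility of $\algc{\FF}^{\times}$.
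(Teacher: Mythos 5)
Your argument is correct and follows essentially the same route as the paper's proof: both reduce $\Motive_{Z_1,Z_2}$ to the complex $\Div_{Z_1}(X)\mto\Pic(X,Z_2)$ at the cost of a copy of $\Int$ placed in degree $1$, identify $\RDer\Sect(X-Z_1,\Gm_{X,Z_2})$ via the flabby resolution of Lem.~\ref{lem:flabby resolution of relative Gm}, and observe that the remaining discrepancy is the divisible group $\Gm_{X,Z_2}(X)$ (trivial or $\algc{\FF}^{\times}$), which dies in the relevant degrees after $\Ltensor_{\Int}\Int/(n)$. Your explicit construction of $\phi$ and the computation of its cone via the localisation sequence merely spell out the triangle $\cmplx{F}\mto\cmplx{B}\mto\cmplx{A}$ that the paper asserts directly.
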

\begin{proof}
Consider the complexes
\begin{align*}
\cmplx{A}&\colon \Div_{Z_1}(X)\mto \Pic(X,Z_2),\\
\cmplx{B}&\colon \Div_{Z_1}(X)\oplus \sheaf{P}_{Z_2}(X)\mto \Div_{X-Z_2}(X),
\end{align*}
and
$$
\cmplx{E}\coloneqq\begin{cases}
          \Int[-1]&\text{if $Z_1=\emptyset$,}\\
          0&\text{else,}
          \end{cases}
\qquad
\cmplx{F}\coloneqq\begin{cases}
          \algc{\FF}^\times&\text{if $Z_2=\emptyset$,}\\
          0&\text{else.}
          \end{cases}
$$
We obtain two obvious distinguished triangles
\begin{align*}
&\Motive_{Z_1,Z_2}\mto\cmplx{A}\mto\cmplx{E},
&\cmplx{F}\mto\cmplx{B}\mto\cmplx{A}.
\end{align*}
Moreover, the obvious map from $\cmplx{B}$ to the complex
$$
\sheaf{P}_{Z_2}(X-Z_1)\mto\Div_{X-Z_2}(X-Z_1)
$$
is a quasi-isomorphism. The latter complex may be identified with the complex $\RDer\Sect(X-Z_1,\Gm_{X,Z_2})$. For this, we note that
$$
\sheaf{P}_{Z_2}\mto\Div_{X-Z_2}
$$
is a flabby resolution of $\Gm_{X,Z_2}$ by Lem.~\ref{lem:flabby resolution of relative Gm}.

Since
$$
\HF^{-1}(\cmplx{E}\Ltensor_{\Int}\Int/(n))=\HF^{0}(\cmplx{E}\Ltensor_{\Int}\Int/(n))=
\HF^{0}(\cmplx{F}\Ltensor_{\Int}\Int/(n))=\HF^{1}(\cmplx{F}\Ltensor_{\Int}\Int/(n))=0,
$$
the proof is complete.
\end{proof}

\begin{lem}\label{lem:n-torsion 1-motive prime to p}
If $p\nmid n$, then
$$
\RDer\Sect(X-Z_1,\Gm_{X,Z_2})\Ltensor_{\Int}\Int/(n)\isomorph\RDer\Sect(X-Z_1,j_{(X-Z_2)!}\mu_n)[1]
$$
with $\mu_n$ the sheaf of $n$-th roots of unity on $X-Z_2$. In particular,
$$
\Motive_{Z_1,Z_2,n}\isomorph\HF^1(X-Z_1,j_{(X-Z_2)!}\mu_n)\isomorph\dual{\HF^1(X-Z_2,j_{(X-Z_1)!}\Int/(n))}
$$
\end{lem}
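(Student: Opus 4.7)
First I would establish a quasi-isomorphism
\[
\Gm_{X,Z_2} \Ltensor_\Int \Int/(n) \isomorph j_{(X-Z_2)!}\mu_n[1]
\]
in the derived category of \'etale sheaves on $X$. Since $\Int/(n)$ admits the length-two free resolution $[\Int \xrightarrow{n} \Int]$, the left-hand side is represented by the two-term complex $[\Gm_{X,Z_2} \xrightarrow{n} \Gm_{X,Z_2}]$ placed in degrees $-1, 0$, whose only potentially nonzero cohomology sheaves are the $n$-torsion subsheaf and the cokernel of multiplication by $n$. I compute these stalkwise. On $X - Z_2$, the sheaf restricts to $\Gm$, and the Kummer sequence (available since $p \nmid n$) gives $n$-torsion $\mu_n$ and trivial cokernel. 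At $z \in Z_2$, the stalk is the principal unit group $1 + \mathfrak{m}^{\mathrm{sh}}_{X,z}$: applying Hensel's lemma to $t^n - u$ (solvable since $n$ is a unit in the residue field and $t=1$ reduces it modulo $\mathfrak{m}^{\mathrm{sh}}$) shows $n$-multiplication is surjective, while any $n$-th root of $1$ lying in $1 + \mathfrak{m}^{\mathrm{sh}}$ must equal $1$ since distinct $n$-th roots of unity have distinct residues. Consequently, the $n$-torsion subsheaf of $\Gm_{X,Z_2}$ is precisely $j_{(X-Z_2)!}\mu_n$ and its cokernel vanishes, giving the quasi-isomorphism.

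Next I apply $\RDer\Sect(X - Z_1, -)$ to both sides; since $\Ltensor_\Int \Int/(n)$ is computed by a finite free resolution, it commutes with the derived section functor, which yields the first displayed isomorphism. Taking $\HF^0$ of both sides and combining with Lem.~\ref{lem:n-torsion of 1-motives} then produces the identification $\Motive_{Z_1, Z_2, n} \isomorph \HF^1(X - Z_1, j_{(X-Z_2)!}\mu_n)$.

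For the duality identification, set $V_1 = X - Z_1$, $V_2 = X - Z_2$, and $U = V_1 \cap V_2$; since $Z_1 \cap Z_2 = \emptyset$, we have $V_1 \cup V_2 = X$. The exchange isomorphism from the proof of Lem.~\ref{lem:exchange formula} (applied with $V, W$ playing the roles of $V_2, V_1$ and $\sheaf{F} = \mu_n$) reads $j_{V_2!}\RDer j_{U, V_2, *}\mu_n \isomorph \RDer j_{V_1, *}j_{U, V_1, !}\mu_n$; taking global sections on the proper curve $X$ produces
\[
\RDer\Sect_c(V_2, \RDer j_{U, V_2, *}\mu_n) \isomorph \RDer\Sect(V_1, j_{U, V_1, !}\mu_n).
\]
Poincar\'e--Verdier duality on the smooth one-dimensional $\algc{\FF}$-scheme $V_2$ (with $n$ invertible), together with the formula $D_{V_2}(\RDer j_{U, V_2, *}\mu_n) \isomorph j_{U, V_2, !}D_U(\mu_n) \isomorph j_{U, V_2, !}\Int/(n)[2]$ coming from the standard identities for Verdier duality and the open immersion $j_{U, V_2}$, then identifies the left-hand side with $\dual{\RDer\Sect(V_2, j_{U, V_2, !}\Int/(n))}[-2]$. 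Passing to $\HF^1$ yields the desired isomorphism $\HF^1(X - Z_1, j_{(X-Z_2)!}\mu_n) \isomorph \dual{\HF^1(X - Z_2, j_{(X-Z_1)!}\Int/(n))}$. The main care required is in bookkeeping shifts and supports when invoking the exchange formula and Poincar\'e--Verdier duality; no deep obstacle is anticipated.
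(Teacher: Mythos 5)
Your proposal is correct and follows essentially the same route as the paper: the first isomorphism is the Kummer computation for $\Gm_{X,Z_2}$ (the paper deduces it from the Kummer sequences for $\Gm_X$ and $\Gm_{Z_2}$ and the sequence $0\to j_{(X-Z_2)!}\mu_n\to\mu_n\to i_{Z_2*}i_{Z_2}^*\mu_n\to 0$, while you verify the same cohomology sheaves stalkwise via Hensel's lemma at the points of $Z_2$), and the duality statement is exactly Cor.~\ref{cor:Poincare duality with Pontryagin dual}, whose proof you are in effect unpacking when you combine the exchange isomorphism of Lem.~\ref{lem:exchange formula} with Poincar\'e--Verdier duality. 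Your shift and support bookkeeping checks out, so no gap.
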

\begin{proof}
The first statement follows from the Kummer sequences for $\Gm_X$ and $\Gm_{Z_2}$ and the exactness of the sequence
$$
0\mto j_{(X-Z_2)!}\mu_n\mto\mu_n\mto i_{Z_2}^*\mu_n\mto 0.
$$
The second statement follows from Lem.~\ref{lem:n-torsion of 1-motives} and Cor.~\ref{cor:Poincare duality with Pontryagin dual}.
\end{proof}

\begin{lem}\label{lem:n-torsion 1-motive p primary}
For all numbers $k>0$ the canonical morphism
$$
\RDer\Sect(X-Z_1,\Gm_{X,Z_2})\Ltensor_{\Int}\Int/(p^k)\mto\RDer\Sect(X-Z_1,\Gm_{X})\Ltensor_{\Int}\Int/(p^k)\isomorph\RDer\Sect(X-Z_1,\nu_k^1)
$$
is an isomorphism. Here, $\nu_k^1\coloneqq\operatorname{W}_k\Omega^1_{X,\mathrm{log}}$ is the logarithmic de~Rham-Witt sheaf on $X$. In particular,
$$
\Motive_{Z_1,Z_2,p^k}\isomorph\HF^0(X-Z_1,\nu_k^1)\isomorph\dual{\HF^1(X,j_{(X-Z_1)!}\Int/(p^k))}.
$$
\end{lem}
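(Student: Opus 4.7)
The plan is to reduce to the analogous statement for $\Gm_X$ by invoking the defining short exact sequence
\[
0\to\Gm_{X,Z_2}\to\Gm_X\to i_{Z_2*}\Gm_{Z_2}\to 0
\]
of \'etale sheaves on $X$, which is the top row of~\eqref{eqn:9-diagram}. After applying $\RDer\Sect(X-Z_1,-)\Ltensor_{\Int}\Int/(p^k)$, the first assertion of the lemma reduces to showing that
\[
\RDer\Sect(X-Z_1,i_{Z_2*}\Gm_{Z_2})\Ltensor_{\Int}\Int/(p^k)=0.
\]

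For this vanishing, the hypothesis $Z_1\cap Z_2=\emptyset$ lets me view $i_{Z_2}$ as factoring through the open immersion $X-Z_1\hookrightarrow X$, so that by the Leray spectral sequence $\RDer\Sect(X-Z_1,i_{Z_2*}\Gm_{Z_2})\isomorph\RDer\Sect(Z_2,\Gm_{Z_2})$. Since $Z_2$ is a finite disjoint union of copies of $\Spec\algc{\FF}$, Hilbert's Theorem~90 identifies this complex with $(\algc{\FF}^\times)^{|Z_2|}$ concentrated in degree~$0$. The key observation is that $\algc{\FF}^\times$ is uniquely $p^k$-divisible: it is $p^k$-divisible because $\algc{\FF}$ is algebraically closed, and $p^k$-torsion free because the equation $x^{p^k}=1$ has only the trivial solution in characteristic $p$. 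Hence multiplication by $p^k$ is an automorphism, so the derived tensor product indeed vanishes. The second quasi-isomorphism
\[
\RDer\Sect(X-Z_1,\Gm_X)\Ltensor_{\Int}\Int/(p^k)\isomorph\RDer\Sect(X-Z_1,\nu_k^1)
\]
I would then invoke from the theorem of Bloch-Gabber-Kato, which identifies $\Gm_X\Ltensor_{\Int}\Int/(p^k)$ with $W_k\Omega^1_{X,\log}=\nu_k^1$ in the derived category of \'etale sheaves on the smooth $\algc{\FF}$-scheme $X$ via the $\operatorname{dlog}$ map, using that $\mu_{p^k}$ is trivial as an \'etale sheaf in characteristic $p$ and that $\Gm_X/(\Gm_X)^{p^k}\isomorph\nu_k^1$.

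For the ``in particular'' statement, I would take $\HF^0$ of the first displayed quasi-isomorphism and combine with Lem.~\ref{lem:n-torsion of 1-motives} to obtain $\Motive_{Z_1,Z_2,p^k}\isomorph\HF^0(X-Z_1,\nu_k^1)$. The final identification with $\dual{\HF^1(X,j_{(X-Z_1)!}\Int/(p^k))}$ is Milne's duality for the smooth open curve $X-Z_1$ over the algebraically closed field $\algc{\FF}$ of characteristic $p$: the perfect pairing
\[
\HF^0(X-Z_1,\nu_k^1)\times\HF^1_c(X-Z_1,\Int/(p^k))\to\Int/(p^k)
\]
combined with the identification $\HF^1_c(X-Z_1,\Int/(p^k))=\HF^1(X,j_{(X-Z_1)!}\Int/(p^k))$, which is the $p$-adic analogue of the Poincar\'e duality input (Lem.~\ref{lem:exchange formula}, Cor.~\ref{cor:Poincare duality with Pontryagin dual}) used in the prime-to-$p$ case. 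The principal technical content lies in the vanishing step above, which hinges on the peculiarity that the multiplicative group of an algebraically closed field of characteristic $p$ is uniquely $p$-divisible; the remaining ingredients are classical theorems of Bloch-Gabber-Kato and Milne.
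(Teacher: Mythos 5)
Your proof is correct. The reduction of the first isomorphism to the vanishing of $\RDer\Sect(Z_2,\Gm_{Z_2})\Ltensor_{\Int}\Int/(p^k)$ via the top row of~\eqref{eqn:9-diagram} is exactly the paper's argument; the paper merely compresses your unique-$p^k$-divisibility observation into the remark that $Z_2$ is reduced (so that $\Gm_{Z_2}(Z_2)=(\algc{\FF}^\times)^{|Z_2|}$ has no unipotent part and multiplication by $p^k$ is bijective). Where you diverge is in the remaining two identifications: you invoke the $\operatorname{dlog}$ exact sequence $0\mto\Gm_X\xrightarrow{p^k}\Gm_X\mto\nu_k^1\mto 0$ of Illusie/Bloch--Gabber--Kato (correctly noting that $\mu_{p^k}$ is trivial in characteristic $p$, so there is no shift, in contrast to Lem.~\ref{lem:n-torsion 1-motive prime to p}) and then Milne's perfect pairing $\HF^0(X-Z_1,\nu_k^1)\times\HF^1_c(X-Z_1,\Int/(p^k))\mto\Int/(p^k)$, whereas the paper routes both steps through Geisser's formalism for the dualising complex $\Int^c_X\isomorph\Gm_X[1]$ and deduces the duality from a $\RDer\Hom$ computation against $\Int^c_X$. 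The two routes rest on the same underlying inputs (Geisser's results are built on the $\operatorname{dlog}$ sequence and Milne's duality), so the difference is one of packaging: your version is more explicit and self-contained at the level of classical references, while the paper's citation of Geisser hands it the compactly-supported open-curve form of the duality in one stroke, which is the only point where your appeal to ``Milne's duality'' would need a word of justification, since Milne's original statement is for smooth proper varieties and the extension to $\HF^1_c$ of the open curve requires the standard dévissage along $j_{(X-Z_1)!}$.
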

\begin{proof}
Since we assume $Z_2$ to be reduced, we have
$$
\RDer\Sect(Z_2,\Gm_{Z_2})\Ltensor_{\Int}\Int/(p^k)\isomorph 0.
$$
This explains the first isomorphism in the first part of the statement. For the second isomorphism we may use \cite[Prop.~2.2]{Geisser:Duality} together with the identifications
$$
\Int^c_X\isomorph \Int_X(1)[2]\isomorph \Gm_X[1]
$$
in the notation of \emph{loc.\,cit.\,}. The duality statement
$$
\HF^0(X-Z_1,\nu_k^1)\isomorph\dual{\HF^1(X,j_{(X-Z_1)!}\Int/(p^k))}
$$
can be deduced from \cite[Thm.~4.1]{Geisser:Duality}:
\begin{align*}
\RDer\Sect(X-Z_1,\nu_k^1)&\isomorph\RDer\Sect(X-Z_1,\Int^c_X)\Ltensor_{\Int}\Int/(p^k)[-1]\\
                         &\isomorph\RDer\Hom_{X-Z_1}(\Int/(p^k),\Int^c_X)\\
                         &\isomorph\RDer\Hom(\RDer\Sect(X,j_{(X-Z_1)!}\Int/(p^k)),\Int)\\
                         &\isomorph\dual{\RDer\Sect(X,j_{(X-Z_1)!}\Int/(p^k))}[-1].
\end{align*}
\end{proof}


As before, we consider an admissible extension $K_\infty/K$ with Galois group $G=H\rtimes \Gamma$ and two open dense subschemes $V$ and $W$ of the proper smooth curve $C$ over $\FF$ with function field $K$ such that $C=V\cup W$. We will assume that $H=\Gal(K_\infty/K_\cyc)$ is finite. Set $U\coloneqq V\cap W$ and $\Upsilon\coloneqq\Gal(\algc{\FF}K_{\infty}/K_\infty)$. Recall that $\Upsilon$ is of order prime to $\ell$. As $\Int_{\ell}$ is of global dimension $1$, Example~\ref{exmpl:ramification types} implies that the finitely ramified representations $\Int_\ell$ and  $\Int_{\ell}(1)$ (if $\ell\neq p$) of $\Gal_K$ over $\Int_{\ell}$ have projective stalks over $K_\infty$ in all closed points of $C$.

We recall from Def.~\ref{defn:LambdaH-dual} that for a $\Int_\ell[[G]]$-module $M$ which is finitely generated and free as $\Int_{\ell}$-module, $\cg{M}^{\mdual_{\Int_\ell}}$ denotes the $\Int_\ell$-dual considered as left $\Int_\ell[[G]]$-module.

\begin{prop}\label{prop:Tate modules of Picard-1-motives}
Assume that $H$ is finite.
\begin{enumerate}
 \item If $\ell\neq p$, then $\HF^2(W,j_{U!}j_{K*}\Int_\ell[[G]]^\sharp)$ is a finitely generated free $\Int_\ell$-module and
       \begin{align*}
       (\Tate_\ell\Motive_{\Sigma_{V_{\algc{\FF}K_\infty}},\Sigma_{W_{\algc{\FF}K_\infty}}})^{\Upsilon}&\isomorph \HF^2(V,j_{U!}j_{K*}\Int_\ell[[G]]^\sharp(1))\\
       &\isomorph\cg{\HF^2(W,j_{U!}j_{K*}\Int_\ell[[G]]^\sharp)^{\mdual_{\Int_{\ell}}}};
       \end{align*}
 \item if $\ell=p$, then $\HF^2(C,j_{U!}j_{K*}\Int_\ell[[G]]^\sharp)$ is a finitely generated free $\Int_p$-module and
       $$
       (\Tate_p\Motive_{\Sigma_{V_{\algc{\FF}K_\infty}},\Sigma_{W_{\algc{\FF}K_\infty}}})^{\Upsilon}\isomorph\cg{\HF^2(C,j_{V!}j_{K*}\Int_p[[G]]^\sharp)^{\mdual_{\Int_{\ell}}}}.
       $$
\end{enumerate}
\end{prop}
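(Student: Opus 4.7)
The plan is to identify the $\ell$-adic Tate module of $\Motive_{\Sigma_{V_{\algc{\FF}K_\infty}},\Sigma_{W_{\algc{\FF}K_\infty}}}$ with \'etale cohomology of $V_{\algc{\FF}K_\infty}$ using Lemma~\ref{lem:n-torsion 1-motive prime to p}, then to descend along $V_{\algc{\FF}K_\infty}\to V$ by Lemma~\ref{lem:HS argument}, and finally to identify $\HF^2(V,\ldots(1))$ with the $\Int_\ell$-dual of $\HF^2(W,\ldots)$ via the Poincar\'e duality of Corollary~\ref{cor:Poincare duality with Lambda dual}. The projective stalks hypothesis for $T=\Int_\ell$ and $T=\Int_\ell(1)$ over $K_\infty$ is automatic by Example~\ref{exmpl:ramification types}(4) since $\Int_\ell$ has global dimension~$1$; projective local cohomology holds because inertia acts trivially on $\Int_\ell(\pm 1)$, so $\HF^1(\kinert_v,\Int_\ell(\pm 1))\isomorph \Hom_{\cont}(\kinert_v,\Int_\ell)\isomorph \Int_\ell$ is free of rank~$1$; and the ramification-prime-to-$\ell$ condition on $K_\infty/K$ over $U$ may be imposed by shrinking $U$ via Lemma~\ref{lem:perfectness}.

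For part~(1), I first apply Lemma~\ref{lem:n-torsion 1-motive prime to p} with $X=C_{\algc{\FF}K_\infty}$, $Z_1=\Sigma_{V_{\algc{\FF}K_\infty}}$, $Z_2=\Sigma_{W_{\algc{\FF}K_\infty}}$ and pass to the inverse limit over~$k$ to obtain
\[
\Tate_\ell\Motive_{\Sigma_{V_{\algc{\FF}K_\infty}},\Sigma_{W_{\algc{\FF}K_\infty}}}\isomorph \HF^1\bigl(V_{\algc{\FF}K_\infty},j_{(U_{\algc{\FF}K_\infty})!}j_{\algc{\FF}K_\infty*}\Int_\ell(1)\bigr).
\]
Next I apply Lemma~\ref{lem:HS argument} at the finite level $\Lambda=\Int_\ell/\ell^n$, $T=\mu_{\ell^n}$, and take the inverse limit over $n$ using Proposition~\ref{prop:limit property} on the left-hand side and Mittag--Leffler on the right (the $\Upsilon$-invariants commute with the limit because $\Upsilon$ has order prime to~$\ell$, so taking $\Upsilon$-invariants is exact on $\ell$-primary torsion). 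This identifies
\[
\HF^2\bigl(V,j_{U!}j_{K*}\Int_\ell[[G]]^\sharp(1)\bigr)\isomorph \HF^1\bigl(V_{\algc{\FF}K_\infty},j_{(U_{\algc{\FF}K_\infty})!}j_{\algc{\FF}K_\infty*}\Int_\ell(1)\bigr)^\Upsilon,
\]
and combining the two displays yields the first claimed isomorphism.

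For the freeness and the second isomorphism of part~(1), Corollary~\ref{cor:projectivity ell neq pee}(1) applied to $T=\Int_\ell$ with the roles of $V$ and $W$ swapped shows that $\HF^2(W,j_{U!}j_{K*}\Int_\ell[[G]]^\sharp)$ is finitely generated and projective over $\Int_\ell[[H]]=\Int_\ell[H]$, hence finitely generated and free over $\Int_\ell$ (a PID). The $\KTh_0$-equality from part~(2) of the same corollary reflects an underlying iso of modules coming from Corollary~\ref{cor:Poincare duality with Lambda dual} applied at the level of complexes to the representation $\Int_\ell[[G]]^\sharp$ over $\Int_\ell[[G]]$; concentration of all relevant complexes in cohomological degree~$2$ (Lemma~\ref{lem:calculation of cohomology}) upgrades this to an iso on $\HF^2$, and Definition~\ref{defn:LambdaH-dual} (with $H''=1$) identifies the resulting $\mdual_{\Int_\ell[[H]]}$-dual with the $\mdual_{\Int_\ell}$-dual appearing in the statement. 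Part~(2) of the proposition follows the same recipe, with Lemma~\ref{lem:n-torsion 1-motive p primary} replacing Lemma~\ref{lem:n-torsion 1-motive prime to p} and Corollary~\ref{cor:projectivity ell eq pee} replacing Corollary~\ref{cor:projectivity ell neq pee}; the two structural differences (the $p^k$-torsion of $\Motive$ involves only $Z_1$, and the ambient scheme is all of $C$) reflect the realisation of the $p$-adic duality via the logarithmic de Rham--Witt sheaves. The main difficulty throughout is organising the tower of limits and Galois descents, and matching the $G$-action on the various duals so that the final iso lands in the category of left $\Int_\ell[[G]]$-modules with the specified action.
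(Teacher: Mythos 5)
Your derivation of the first isomorphism in (1) --- Lemma~\ref{lem:n-torsion 1-motive prime to p} over $C_{\algc{\FF}K_\infty}$, inverse limit over $k$, then descent via Lemma~\ref{lem:HS argument} --- is exactly the paper's route. The gap is in the freeness claim and the second isomorphism. You invoke Corollary~\ref{cor:projectivity ell neq pee} (with $V$ and $W$ swapped and $T=\Int_\ell$), but its hypothesis (iii) then reads ``either $U\neq W$ or $(\Int_\ell)_{\Gal_{K_\infty}}=0$''; the second alternative is always false, so your argument only covers the case $\Sigma_V\neq\emptyset$, whereas the proposition assumes nothing beyond $H$ finite. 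Likewise the swapped hypothesis (iv) requires $W\neq C$ or $\Int_\ell(-1)_{\Gal_{K_\infty}}=0$, which can fail. Your proposed reduction to the ramification-prime-to-$\ell$ case by shrinking $U$ via Lemma~\ref{lem:perfectness} does not work either: that lemma only lets you delete points of \emph{non-torsion} ramification, and for finite $H$ there may be points where $\ginert_v$ is a finite group of order divisible by $\ell$; moreover deleting points of $U$ enlarges $\Sigma_W$ at the level of the $j_!$-extension and hence changes which Picard $1$-motive you are computing.

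The paper avoids all of this. Freeness is proved directly: by Lemma~\ref{lem:HS argument}, $\HF^2(W,j_{U!}j_{K*}\Int_\ell[[G]]^\sharp)\isomorph\HF^1(W_{K_\infty},j_{U_{K_\infty}!}j_{K_\infty*}\Int_\ell)$, and when $U\neq W$ the latter is $\HF^1$ of a strictly perfect complex of $\Int_\ell$-modules concentrated in degrees $1$ and $2$, hence a submodule of a finitely generated free $\Int_\ell$-module and so itself free; the case $U=W$ is handled by deleting a single closed point, which does not change $\HF^1$ of the $j_!$-extension. The second isomorphism then comes not from $\Int_\ell[[G]]$-linear Poincar\'e duality (Cor.~\ref{cor:Poincare duality with Lambda dual}, which would again require projective local cohomology for $\Int_\ell[[G]]^\sharp$ over $U$) but from the Pontryagin-dual statement already contained in Lemma~\ref{lem:n-torsion 1-motive prime to p} (resp.\ Lemma~\ref{lem:n-torsion 1-motive p primary}), applied at each finite level over $\algc{\FF}$ where no projectivity hypotheses are needed, followed by the same limit-and-descent steps and the freeness just established to convert Pontryagin duals into $\Int_\ell$-duals. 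You should rework the second half of your argument along these lines.
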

\begin{proof}
We first assume that $U\neq W$ (and  $W=C$ if $\ell=p$). Then the complex $\RDer\Sect(W_{K_\infty},j_{U_{K_\infty}!}j_{K_\infty*}\Int_{\ell})$ is quasi-isomorphic to a strictly perfect complex of $\Int_{\ell}$-modules $\cmplx{P}$ with $P^i=0$ for $i\notin\set{1,2}$. In particular, $\HF^1(\cmplx{P})$ is a submodule of the finitely generated and free $\Int_{\ell}$-module $P^1$ and therefore, finitely generated and free, as well. If $U=W$ we let $j'\colon W'\mto W_{K_\infty}$ be the inclusion of the complement of a single closed point. We then see that
$$
\HF^1(W_{K_\infty},j_{K_\infty*}\Int_{\ell})=\HF^1(W_{K_\infty},j'_!j_{K_\infty*}\Int_{\ell})
$$
is still a finitely generated and free $\Int_{\ell}$-module. By Lem.~\ref{lem:HS argument}, this module is isomorphic to $\HF^2(W,j_{U!}j_{K*}\Int_\ell[[G]]^\sharp)$. Then we apply  Lem.~\ref{lem:n-torsion 1-motive prime to p} and Lem.~\ref{lem:n-torsion 1-motive p primary}.
\end{proof}

\begin{rem}
Note that the image of $\Upsilon$ in $\Aut_{\Int_{\ell}}(\Tate_\ell\Motive_{\Sigma_{V_{\algc{\FF}K_\infty}},\Sigma_{W_{\algc{\FF}K_\infty}}})$ is finite. Hence, we can always choose the admissible extension $K_\infty/K$ large enough such that
$$
(\Tate_\ell\Motive_{\Sigma_{V_{\algc{\FF}K_\infty}},\Sigma_{W_{\algc{\FF}K_\infty}}})^{\Upsilon}
=\Tate_\ell\Motive_{\Sigma_{V_{\algc{\FF}K_\infty}},\Sigma_{W_{\algc{\FF}K_\infty}}}.
$$
\end{rem}

Recall that $q$ denotes the number of elements in $\FF$ and that $\gamma$ is the image of the geometric Frobenius in $\Gamma$. The following two corollaries are a non-commutative generalisation of Greither's and Popescu's main conjecture for Picard-1-motives \cite[Cor. 4.13]{GreitherPopescu:PicardOneMotives}.

\begin{cor}\label{cor:picard motives Tate twist 1}
Assume that $\ell\neq p$, that $H$ is finite, that both $\Sigma_V$ and $\Sigma_W$ are non-empty, and that $K_\infty/K$ has ramification prime to $\ell$ over $U$. Then:
\begin{enumerate}
\item The $\Int_\ell[[G]]$-module $(\Tate_\ell\Motive_{\Sigma_{V_{\algc{\FF}K_\infty}},\Sigma_{W_{\algc{\FF}K_\infty}}})^{\Upsilon}$ is finitely generated and projective over $\Int_{\ell}[[H]]$. In particular, it has a well-defined class in the Grothen\-dieck group $\KTh_0(\Int_{\ell}[[G]],S)$.
\item We have
$$
\bh\ncL_{K_\infty/K,\Sigma_W,\Sigma_V}(\Int_{\ell}(1))=
-\left[(\Tate_\ell\Motive_{\Sigma_{V_{\algc{\FF}K_\infty}},\Sigma_{W_{\algc{\FF}K_\infty}}})^{\Upsilon}\right]
$$
in $\KTh_0(\Int_{\ell}[[G]],S)$.
\item Let $\Lambda$ be a commutative adic $\Int_\ell$-algebra and $\rho$ a $\Lambda$-$\Int_\ell[[G]]$-bimodule which is finitely generated and projective as $\Lambda$-module. Then
    $$
    \eval_\rho(\ncL_{K_\infty/K,\Sigma_W,\Sigma_V}(\Int_{\ell}(1)))=L_{\Sigma_W,\Sigma_V}(\rho^\sharp,q^{-1}\gamma^{-1})
    $$
\end{enumerate}
\end{cor}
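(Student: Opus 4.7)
The approach is to specialise Theorem~\ref{thm:MCforRep l p different} and Corollary~\ref{cor:projectivity ell neq pee} to the finitely ramified representation $T=\Int_\ell(1)$ over $\Lambda=\Int_\ell$, and then translate the resulting second cohomology into the Tate module of the Picard $1$-motive via Proposition~\ref{prop:Tate modules of Picard-1-motives}(1).

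First I verify the four hypotheses of Corollary~\ref{cor:projectivity ell neq pee} for $T=\Int_\ell(1)$. Condition~(i) is part of the given assumptions. For~(ii), since $\Int_\ell$ has global dimension~$1$, Example~\ref{exmpl:ramification types}(4) implies that $\Int_\ell(1)$ has projective stalks over $K_\infty$ at every closed point of $C$; and since $\Int_\ell$ is a PID it has small finitistic projective dimension~$0$, so by Example~\ref{exmpl:ramification types}(3) these projective stalks coincide with projective local cohomology over $K_\infty$. The hypothesis $\Sigma_W\neq\emptyset$, combined with the inclusion $\Sigma_W\subset V$ forced by $C=V\cup W$, yields $U=V\setminus\Sigma_W\subsetneq V$, so~(iii) holds; the hypothesis $\Sigma_V\neq\emptyset$ gives $V\neq C$, so~(iv) holds. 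Consequently Corollary~\ref{cor:projectivity ell neq pee}(1) implies that $\HF^2(V,j_{U!}j_{K*}(\Int_\ell[[G]]^\sharp\tensor_{\Int_\ell}\Int_\ell(1)))$ is finitely generated and projective over $\Int_\ell[[H]]$ and represents the class of the full Selmer complex in $\KTh_0(\Int_\ell[[G]],S)$. Combining this with Proposition~\ref{prop:Tate modules of Picard-1-motives}(1), which identifies this cohomology group with $(\Tate_\ell\Motive_{\Sigma_{V_{\algc{\FF}K_\infty}},\Sigma_{W_{\algc{\FF}K_\infty}}})^\Upsilon$, proves part~(1); part~(2) then follows immediately from Theorem~\ref{thm:MCforRep l p different}(2).

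For part~(3) I apply Theorem~\ref{thm:MCforRep l p different}(3). The projective local cohomology of $\Int_\ell(1)$ over $K_\infty$ verified above removes any restriction on $\rho$, so the theorem yields
\[
\eval_\rho(\ncL_{K_\infty/K,\Sigma_W,\Sigma_V}(\Int_\ell(1)))=L_{\Sigma_W,\Sigma_V}(\rho^\sharp(1),\gamma^{-1}),
\]
using the identification $\Int_\ell(1)(\rho^\sharp)\isomorph\rho^\sharp(1)$ as $\Gal_K$-representations over $\Lambda$. The remaining Tate-twist identity $L_{\Sigma_W,\Sigma_V}(\rho^\sharp(1),t)=L_{\Sigma_W,\Sigma_V}(\rho^\sharp,q^{-1}t)$ holds at the level of individual Euler factors, since the geometric Frobenius $\Frob_v$ acts on $\Int_\ell(1)$ through the cyclotomic character as multiplication by $q^{-\deg(v)}$; substituting $t=\gamma^{-1}$ in $\Lambda[[\Gamma]]_S$ completes the proof. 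There is no substantive obstacle here: the corollary is a straightforward specialisation of the two preceding results, and the only item requiring care is the bookkeeping of the Frobenius convention in the Tate-twist identity.
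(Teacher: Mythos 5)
Your route is the same as the paper's: specialise Thm.~\ref{thm:MCforRep l p different} to $T=\Int_\ell(1)$, feed the hypotheses into Cor.~\ref{cor:projectivity ell neq pee}, and identify the surviving $\HF^2$ with the Tate module of the Picard $1$-motive via Prop.~\ref{prop:Tate modules of Picard-1-motives}(1). Your verification of hypotheses (iii) and (iv), the use of Cor.~\ref{cor:projectivity ell neq pee}(1) together with Rem.~\ref{rem:class of a projective RH module} for parts (1)--(2), and the Tate-twist bookkeeping for part (3) are all in order.

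There is, however, one genuinely wrong step: your verification of hypothesis (ii) of Cor.~\ref{cor:projectivity ell neq pee}. The ring $\Int_\ell$ does \emph{not} have small finitistic projective dimension $0$: the module $\Int/\ell\Int$ is finitely generated of finite projective dimension (equal to $1$) but is not projective, so the small finitistic projective dimension of $\Int_\ell$ is $1$. (Equivalently, the Kaplansky criterion behind Example~\ref{exmpl:ramification types}(3) requires every non-unit to be a zero divisor, which $\ell\in\Int_\ell$ violates; note that the example is stated for \emph{finite} commutative $\Lambda$.) Hence ``projective stalks over $K_\infty$'' does not formally upgrade to ``projective local cohomology over $K_\infty$'' in this way. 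The hypothesis you need is nevertheless true, for a more direct reason: in the function-field setting the cyclotomic character is unramified everywhere (it factors through $\Gal(\algc{\FF}K/K)$), so $\kinert_v$ acts trivially on $\Int_\ell(1)$ and
\[
\HF^1(\kinert_v,\Int_\ell(1))\isomorph\Hom_{\cont}(\kinert_v,\Int_\ell)(1).
\]
Since wild inertia is pro-$p$ and $\ell\neq p$, only the pro-$\ell$ part of the tame quotient of $\inertia_v$ can support such a homomorphism, so this group is either $0$ or free of rank one over $\Int_\ell$ — in particular finitely generated and projective. With this substitute argument (which also covers the appeal to Thm.~\ref{thm:MCforRep l p different}(3) in your proof of part (3)), the remainder of your proof, including the identity $L_{\Sigma_W,\Sigma_V}(\rho^\sharp(1),t)=L_{\Sigma_W,\Sigma_V}(\rho^\sharp,q^{-1}t)$ coming from $\cycchar(\Frob_v)=q^{-\deg(v)}$, goes through.
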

\begin{proof}
This follows from Thm.~\ref{thm:MCforRep l p different} with $T=\Int_{\ell}(1)$ together with Cor.~\ref{cor:projectivity ell neq pee} and Prop.~\ref{prop:Tate modules of Picard-1-motives}.
\end{proof}

\begin{cor}\label{cor:Picard motives Tate twist 0}
Assume that $H$ is finite and that $\Sigma_W$ is not empty.  If $\ell\neq p$ we also assume that $\Sigma_V$ is not empty and that $K_\infty/K$ has ramification prime to $\ell$ over $U$. If $\ell=p$ we assume that $K_\infty/K$ has ramification prime to $p$ over $V$. Then:
\begin{enumerate}
\item The $\Int_\ell[[G]]$-module $\cg{((\Tate_\ell\Motive_{\Sigma_{W_{\algc{\FF}K_\infty}},\Sigma_{V_{\algc{\FF}K_\infty}}})^{\Upsilon})^{\mdual_{\Int_{\ell}}}}$ is finitely generated and projective over $\Int_{\ell}[[H]]$. In particular, it has a well-defined class in the Grothendieck group $\KTh_0(\Int_{\ell}[[G]],S)$.
\item We have
$$
\bh\ncL_{K_\infty/K,\Sigma_W,\Sigma_V}(\Int_{\ell})=
-\left[\cg{((\Tate_\ell\Motive_{\Sigma_{W_{\algc{\FF}K_\infty}},\Sigma_{V_{\algc{\FF}K_\infty}}})^{\Upsilon})^{\mdual_{\Int_{\ell}}}}\right]
$$
in $\KTh_0(\Int_{\ell}[[G]],S)$.
\item Let $\Lambda$ be a commutative adic $\Int_\ell$-algebra and $\rho$ a $\Lambda$-$\Int_{\ell}[[G]]$-bimodule which is finitely generated and projective as $\Lambda$-module. Then
    \begin{align*}
    \eval_\rho(\ncL_{K_\infty/K,\Sigma_W,\Sigma_V}(\Int_{\ell}))&=L_{\Sigma_W,\Sigma_V}(\rho^\sharp,\gamma^{-1}).
    \end{align*}
\end{enumerate}
\end{cor}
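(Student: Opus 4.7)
The argument follows closely that of Cor.~\ref{cor:picard motives Tate twist 1}, the only differences being that we take $T=\Int_\ell$ in place of $\Int_\ell(1)$, and that we invoke Prop.~\ref{prop:Tate modules of Picard-1-motives} with the roles of $V$ and $W$ interchanged.

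The plan is first to apply Thm.~\ref{thm:MCforRep l p different} (for $\ell\neq p$) or Thm.~\ref{thm:MCforRep l equal p myLfunc} (for $\ell=p$) with $T=\Int_\ell$. Since $\Int_\ell$ carries the trivial $\Gal_K$-action, it has ramification prime to $\ell$ at every place, and hence projective stalks and projective local cohomology over $K_\infty$ in every closed point. The hypotheses of these theorems thus reduce to the ramification assumption on $K_\infty/K$ already made in the corollary. This yields the boundary identity
\[
\bh\ncL_{K_\infty/K,\Sigma_W,\Sigma_V}(\Int_\ell)=-[\RDer\Sect(V,j_{U!}j_{K*}\Int_\ell[[G]]^\sharp)]
\]
(with $V$ replaced by $C$ and $j_{U!}$ by $j_{W!}$ in the $\ell=p$ case), together with the interpolation formula of part~(3), where $T(\rho^\sharp)=\rho^\sharp$ because $T=\Int_\ell$.

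Next, I would appeal to Cor.~\ref{cor:projectivity ell neq pee} (respectively Cor.~\ref{cor:projectivity ell eq pee}) applied to $T=\Int_\ell$: condition~(iii) is satisfied because $\Sigma_W\neq\emptyset$ forces $U\neq V$, and condition~(iv), required only for $\ell\neq p$, is satisfied because $\Sigma_V\neq\emptyset$. This shows that $\HF^2(V,j_{U!}j_{K*}\Int_\ell[[G]]^\sharp)$ (respectively $\HF^2(C,j_{W!}j_{K*}\Int_\ell[[G]]^\sharp)$) is finitely generated and projective over $\Int_\ell[[H]]$, and that its class equals $[\RDer\Sect(V,j_{U!}j_{K*}\Int_\ell[[G]]^\sharp)]$ in $\KTh_0(\Int_\ell[[G]],S)$.

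Finally, one invokes Prop.~\ref{prop:Tate modules of Picard-1-motives} with the roles of $V$ and $W$ exchanged, and dualises both sides of the resulting isomorphism using the involutivity of $\Int_\ell$-duality on finitely generated free $\Int_\ell$-modules. This gives the identification
\[
\HF^2(V,j_{U!}j_{K*}\Int_\ell[[G]]^\sharp)\isomorph\cg{\bigl((\Tate_\ell\Motive_{\Sigma_{W_{\algc{\FF}K_\infty}},\Sigma_{V_{\algc{\FF}K_\infty}}})^{\Upsilon}\bigr)^{\mdual_{\Int_\ell}}}
\]
in the case $\ell\neq p$, and the analogous identity with $V$ replaced by $C$ and $j_{U!}$ by $j_{W!}$ when $\ell=p$. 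Substituting into the boundary formula above yields parts~(1) and~(2). The only mildly delicate point is keeping the $V\leftrightarrow W$ swap consistent across all invoked results and correctly handling the compatibility between the $\cg{(\cdot)^{\mdual_{\Int_\ell}}}$ functor and its inverse; this is essentially bookkeeping, once the isomorphisms of Prop.~\ref{prop:Tate modules of Picard-1-motives} are taken at face value.
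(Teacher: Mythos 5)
Your proposal follows exactly the route of the paper's own (very terse) proof: apply Thm.~\ref{thm:MCforRep l p different} resp.\ Thm.~\ref{thm:MCforRep l equal p myLfunc} with $T=\Int_\ell$, pass from the class of the Selmer complex to the class of its only nonvanishing cohomology group via Cor.~\ref{cor:projectivity ell neq pee} resp.\ Cor.~\ref{cor:projectivity ell eq pee}, and identify that cohomology group with the dual of the $\Upsilon$-invariants of the Tate module via Prop.~\ref{prop:Tate modules of Picard-1-motives} with $V$ and $W$ interchanged. The verification of the hypotheses (conditions (iii)/(iv) from $\Sigma_W,\Sigma_V\neq\emptyset$, trivial ramification of $T=\Int_\ell$) and the biduality bookkeeping are spelled out correctly, so this is the paper's argument in expanded form.
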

\begin{proof}
This follows from Thm.~\ref{thm:MCforRep l p different} and Thm.~\ref{thm:MCforRep l equal p myLfunc} with $T=\Int_{\ell}$ together with Cor.~\ref{cor:projectivity ell eq pee} and Prop.~\ref{prop:Tate modules of Picard-1-motives}.
\end{proof}

\subsection{The main conjecture for function fields}

In this section, we will deduce a non-commutative function field analogue of the most classical formulation of the Iwasawa Main Conjecture. We retain the notation of the previous sections. In particular, we fix an admissible $\ell$-adic Lie extension $K_\infty/K$ with Galois group $G=H\rtimes\Gamma$. Different from Section~\ref{ss:Picard-1-motives}, we do no longer assume the group $H$ to be finite, but we will assume that $\Sigma_V=\emptyset$ and hence, $V=C$, $W=U$, and $\Sigma_W=C-W$. Further, we will write $\Sigma_0$ for the closed subset of $W$ where $K_\infty/K$ has non-torsion ramification.

\begin{cor}\label{cor:classical mc}
Assume that $\ell\neq p$ and that $G$ does not contain any element of order $\ell$. Let $M$ be the maximal abelian $\ell$-extension of $K_\infty$ which is unramified outside $\Sigma_W$. Then
\begin{enumerate}
\item $\Gal(M/K_\infty)$ is in $\cat{N}_H(\Int_{\ell}[[G]])$ and
      \begin{align*}
      \bh \ncL_{K_\infty/K,\Sigma_W\cup \Sigma_0,\emptyset}(\Int_{\ell}(1))=&-[\Gal(M/K_\infty)]+[\Int_{\ell}]\\
      &+
      \begin{cases}
      [\Int_{\ell}(1)]&\text{if $\Sigma_W=\emptyset$, $H$ is finite, and $\mu_{\ell}\subset K_{\infty}$}\\
      0&\text{else.}
      \end{cases}
      \end{align*}
      in $\KTh_0(\Int_{\ell}[[G]],S)$
\item Let $\Lambda$ be a commutative adic $\Int_\ell$-algebra and  $\rho$ be a $\Lambda$-$\Int_\ell[[G]]$-bimodule which is finitely generated and projective as $\Lambda$-module. Then
    \begin{align*}
    \eval_\rho(\ncL_{K_\infty/K,\Sigma_W\cup\Sigma_0,\emptyset}(\Int_{\ell}(1)))&=L_{\Sigma_W\cup\Sigma_0,\emptyset}(\rho,q^{-1}\gamma^{-1}).
    \end{align*}
\end{enumerate}
\end{cor}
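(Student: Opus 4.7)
The plan is to apply Corollary~\ref{cor:mc for selmer groups} to the finitely ramified representation $T = \Int_\ell(1)$ over $\Lambda = \Int_\ell$ and then identify the resulting Iwasawa module with $\Gal(M/K_\infty)$ via class field theory. Since the $\ell$-adic cyclotomic character $\cycchar$ is everywhere unramified on $K$ when $\ell \neq p$, one has $\Int_\ell(1)^{\kinert_v} = \Int_\ell(1)$ for every closed point $v$ of $C$, so $\Int_\ell(1)$ has projective stalks over $K_\infty$ at every closed point. Combined with the hypothesis that $G$ has no element of order $\ell$, Corollary~\ref{cor:mc for selmer groups} applies and yields
\[
\bh \ncL_{K_\infty/K,\Sigma_W\cup\Sigma_0,\emptyset}(\Int_\ell(1)) = -[\X_{K_\infty/K,\Sigma_W}(\Int_\ell(1))] + [\Int_\ell(1)(-1)_{\Gal_{K_\infty}}] + \epsilon,
\]
with $\epsilon = [\Int_\ell(1)^{\Gal_{K_\infty}}]$ when $\Sigma_W = \emptyset$ and $H$ is finite, and $\epsilon = 0$ otherwise.

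The central step is to identify $\X_{K_\infty/K,\Sigma_W}(\Int_\ell(1))$ with $\Gal(M/K_\infty)$. Since $\dual{\Int_\ell(1)}(1) = \Rat_\ell/\Int_\ell$, the relevant Selmer group
\[
\Sel_{\Sigma_W}(K_\infty,\Rat_\ell/\Int_\ell) \subset \HF^1(\Gal_{K_\infty},\Rat_\ell/\Int_\ell) = \Hom_\cont(\Gal_{K_\infty},\Rat_\ell/\Int_\ell)
\]
consists of those continuous characters whose restriction to each $\inertia_v$ for $v \in W_{K_\infty}^0$ vanishes, i.e., exactly the characters of $\Gal_{K_\infty}$ factoring through $\Gal(M/K_\infty)$. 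Taking Pontryagin duals gives the desired identification. The auxiliary terms then simplify directly: $\Int_\ell(1)(-1) = \Int_\ell$ with trivial $\Gal_K$-action, so $[\Int_\ell(1)(-1)_{\Gal_{K_\infty}}] = [\Int_\ell]$; and in the exceptional case, $\Int_\ell(1)^{\Gal_{K_\infty}}$ equals $\Int_\ell(1)$ precisely when the cyclotomic character is trivial on $\Gal_{K_\infty}$, yielding $\epsilon = [\Int_\ell(1)]$ under the stated hypothesis $\mu_\ell \subset K_\infty$ and $\epsilon = 0$ otherwise.

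For the interpolation property in part~(2), Theorem~\ref{thm:MCforRep l p different}(3) applied with $T = \Int_\ell(1)$ yields
\[
\eval_\rho(\ncL_{K_\infty/K,\Sigma_W\cup\Sigma_0,\emptyset}(\Int_\ell(1))) = L_{\Sigma_W\cup\Sigma_0,\emptyset}(\rho^\sharp(1),\gamma^{-1}).
\]
Since $\ell \neq p$, the inertia groups act trivially on $\Int_\ell(1)$, so $(\rho^\sharp(1))^{\inertia_v} = (\rho^\sharp)^{\inertia_v}(1)$, and the geometric Frobenius $\Frob_v$ acts on $\Int_\ell(1)$ as $q^{-\deg(v)}$. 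The resulting Tate twist in the Euler factors gives $L_{\Sigma_W\cup\Sigma_0,\emptyset}(\rho^\sharp(1),t) = L_{\Sigma_W\cup\Sigma_0,\emptyset}(\rho^\sharp,q^{-1}t)$, from which the claimed formula follows on specialising $t = \gamma^{-1}$. The only substantive step is the class field theoretic identification in the second paragraph; once this is in place, both parts follow by direct bookkeeping of Tate twists and sign conventions for the geometric Frobenius.
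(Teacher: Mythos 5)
Your proof is correct and follows essentially the same route as the paper: apply Theorem~\ref{thm:MCforRep l p different} and Corollary~\ref{cor:mc for selmer groups} to $T=\Int_\ell(1)$, identify $\X_{K_\infty/K,\Sigma_W}(\Int_\ell(1))=\dual{\Sel_{\Sigma_W}(K_\infty,\Rat_\ell/\Int_\ell)}$ with $\Gal(M/K_\infty)$ (you do this directly from the Greenberg definition, the paper via $\HF^1(W_{K_\infty},\Rat_\ell/\Int_\ell)$ and Lemma~\ref{lem:selmer group} --- the same identification), and track the Tate twists. Your interpolation formula with $\rho^\sharp$ is what Theorem~\ref{thm:MCforRep l p different}(3) actually yields and matches the parallel Corollary~\ref{cor:picard motives Tate twist 1}, so the bare $\rho$ in the statement should be read as $\rho^\sharp$.
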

\begin{proof}
From Lem.~\ref{lem:calculation of cohomology} and from the equality
$$
\HF^1(W_{K_\infty},\Rat_\ell/\Int_\ell)=\dual{\Gal(M/K_\infty)}.
$$
we deduce
$$
\HF^2(C,j_{W!}j_{K*}\Int_{\ell}[[G]]^\sharp(1))=\X_{K_\infty/K,\Sigma_W}(\Int_{\ell}(1))
               =\Gal(M/K_\infty).
$$
We then apply Thm.~\ref{thm:MCforRep l p different} and Cor.~\ref{cor:mc for selmer groups}. Finally, we remark that $\Int_{\ell}(1)^{\Gal_{K_\infty}}=0$ if $K_\infty$ does not contain any $\ell$-th root of unity. If $K_\infty$ does contain an $\ell$-th root of unity, then it contains also all $\ell^n$-th roots of unity for any $n$, and therefore, $\Int_{\ell}(1)^{\Gal_{K_\infty}}=\Int_{\ell}(1)$ in this case.
\end{proof}

If $G$ does contain elements of order $\ell$, then Thm.~\ref{thm:MCforRep l p different} applied to $\Int_{\ell}(1)$ is still a sensible main conjecture if we assume that $K_\infty/K$ has ramification prime to $\ell$ over $W$; however, we can no longer replace the class of the complex
$$
[\RDer\Sect(C,j_{W!}j_{K*}(\Lambda[[G]]^{\sharp}\tensor_{\Lambda} \Int_\ell(1)))]=-\bh \ncL_{K_\infty/K,\Sigma_W,\emptyset}(\Int_{\ell}(1))
$$
by the classes of its cohomology modules. One may also apply Thm.~\ref{thm:MCforRep l p different} and Thm.~\ref{thm:MCforRep l equal p myLfunc} to $\Int_{\ell}$ resulting in a main conjecture for every $\ell$. Main conjectures of this type have already been discussed in \cite{Burns:MCinGIwTh+RelConj}.

\subsection{The main conjecture for abelian varieties}

In this section we let $A$ be an abelian variety over $\Spec K$.  We continue to assume $\ell\neq p$ and that $\Sigma_V=\emptyset$. Our aim is to deduce a precise function field analogue of the $\GL_2$ main conjecture in \cite{CFKSV}.

Let $\Val$ be the valuation ring of a finite extension of $\Rat_\ell$ and $\rho$ a finitely ramified representation of $\Gal_K$ over $\Val$. The  $\Sigma_W$-truncated $L$-function of $A$ twisted by $\rho$ is given by
$$
L_{\Sigma_W}(A,\rho,t)\coloneqq\prod_{v\in W^0}\det(1-(\Frob_v t)^{\deg(v)}\colon (\HF^1(A\times_{\Spec K}\Spec \algc{K},\Rat_{\ell})\tensor_{\Int_{\ell}}\rho)^{\inertia_v})^{-1}.
$$
If $\rho$ is an Artin representation of $\Gal_K$, then $L_{\Sigma_W}(A,\rho,q^{-s})$ is the Hasse-Weil $L$-function of $A$ twisted by $\rho$.

We will write $\check{A}$ for the dual abelian variety,
$$
A(\algc{K})_n\coloneqq\ker A(\algc{K})\xrightarrow{n}A(\algc{K})
$$
for the group of $n$-torsion points and
$$
\Tate_{\ell}A\coloneqq\varprojlim_{k}A(\algc{K})_{\ell^k}
$$
for the $\ell$-adic Tate module of $A$. It is well known that $\Tate_{\ell} A$ is a finitely ramified representation of $\Gal_K$ over $\Int_{\ell}$. Moreover, the argument of \cite[\S 1]{Schn:VBSDGFK} shows that for any closed point $v\in C$
$$
(j_{K*}(\Tate_{\ell}\check{A}(-1)\tensor_{\Int_\ell}\rho))_v\tensor_{\Int_{\ell}}\Rat_{\ell}\isomorph (\HF^1(A\times_{\Spec K}\Spec \algc{K},\Rat_{\ell})\tensor_{\Int_{\ell}}\rho)^{\inertia_v}
$$
such that
$$
L_{\Sigma_W}(A,\rho,q^{-1}t)=L_{\Sigma_W,\emptyset}(\Tate_{\ell}\check{A}\tensor_{\Int_{\ell}}\rho,t).
$$

As an immediate consequence of Thm.~\ref{thm:MCforRep l p different} we obtain:

\begin{cor}\label{cor:MCforAbVars}
Let $K_\infty/K$ be an admissible extension with $\ell\neq p$. Assume that $K_\infty/K$ has ramification prime to $\ell$ over $W$. Then:
\begin{enumerate}
\item We have
$$
\bh \ncL_{K_{\infty}/K,\Sigma_W,\emptyset}(\Tate_{\ell}\check{A})=-[\RDer\Sect(C,j_{W!}j_{K*}(\Int_\ell[[G]]^{\sharp}\tensor_{\Int_\ell} \Tate_{\ell}\check{A}))]
$$
in $\KTh_0(\Int_{\ell}[[G]],S)$.
\item Let $\Val$ be the ring of integers of a finite extension field of $\Rat_\ell$ and $\rho$ a $\Val$-$\Int_{\ell}[[G]]$-bimodule which is finitely generated and projective as $\Val$-module. Then
    $$
    \eval_\rho(\ncL_{K_\infty/K,\Sigma_W,\emptyset}(\Tate_{\ell}\check{A}))=L_{\Sigma_W}(A,\rho^\sharp,q^{-1}\gamma^{-1}).
    $$
\end{enumerate}
\end{cor}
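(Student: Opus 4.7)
The proof will proceed by specializing Thm.~\ref{thm:MCforRep l p different} to the finitely ramified representation $T = \Tate_\ell \check{A}$ of $\Gal_K$ over $\Int_\ell$. First I would verify the hypotheses of that theorem. The ring $\Int_\ell$ is a noetherian adic $\Int_\ell$-algebra of global dimension $1 \leq 2$, so Example~\ref{exmpl:ramification types}(4) gives that $\Tate_\ell \check{A}$ has projective stalks over $K_\infty$ in every closed point of $C$, in particular over $U = W$ (noting that $\Sigma_V = \emptyset$ forces $V = C$ and $U = W$). Combined with the hypothesis that $K_\infty/K$ has ramification prime to $\ell$ over $W$, assertions (1) and (2) of Thm.~\ref{thm:MCforRep l p different} are applicable, and the latter immediately yields part (1) of the corollary.

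For part (2), I would apply Thm.~\ref{thm:MCforRep l p different}(3) with $\Lambda = \Int_\ell$, $\Lambda' = \Val$, and the given bimodule $\rho$. Since $\rho$ is finitely generated and projective over $\Val$, and $\Val$ is itself finitely generated and free over $\Int_\ell$, $\rho$ is finitely generated and projective as an abstract $\Int_\ell$-module, hence projective as a compact $\Int_\ell^{\op}$-module. The second alternative hypothesis of Thm.~\ref{thm:MCforRep l p different}(3) is thus satisfied and yields
\[
\eval_\rho(\ncL_{K_\infty/K,\Sigma_W,\emptyset}(\Tate_\ell \check{A})) = L_{\Sigma_W,\emptyset}(\Tate_\ell\check{A}(\rho^\sharp), \gamma^{-1}),
\]
where $\Tate_\ell \check{A}(\rho^\sharp) \isomorph \rho \tensor_{\Int_\ell} \Tate_\ell \check{A}$ as $\Gal_K$-representations over $\Val$.

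To conclude, I would invoke the identification recalled in the paragraph preceding the corollary,
\[
L_{\Sigma_W,\emptyset}(\Tate_\ell\check{A}\tensor_{\Int_\ell}\rho^\sharp, t) = L_{\Sigma_W}(A, \rho^\sharp, q^{-1}t),
\]
which follows from the Schneider-type isomorphism
$(j_{K*}(\Tate_\ell\check{A}(-1)\tensor\rho^\sharp))_v\tensor\Rat_\ell \isomorph (\HF^1(A_{\algc K},\Rat_\ell)\tensor\rho^\sharp)^{\inertia_v}$
on the Euler factor at each $v \in W^0$, and substitute $t = \gamma^{-1}$ to obtain the stated interpolation formula. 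The corollary is essentially a direct translation of the main theorem; no real obstacle arises, but some care is warranted in checking that the Euler factor identification between the motivic $L$-function of $A$ and the sheaf-theoretic $L$-function of $\Tate_\ell \check{A}$ passes correctly through the twist by the non-commutative coefficient module $\rho^\sharp$ and through the unramified local conditions encoded in $j_{U!}j_{K*}$.
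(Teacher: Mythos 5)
Your proposal is correct and follows exactly the route the paper intends: the paper states the corollary as an immediate consequence of Thm.~\ref{thm:MCforRep l p different}, and your verification of its hypotheses (projective stalks over $K_\infty$ via Example~\ref{exmpl:ramification types} since $\Int_\ell$ has global dimension $1$, projectivity of $\rho$ as a compact $\Int_\ell^{\op}$-module for part (3)) together with the Euler-factor identification $L_{\Sigma_W}(A,\rho^\sharp,q^{-1}t)=L_{\Sigma_W,\emptyset}(\Tate_\ell\check{A}\tensor_{\Int_\ell}\rho^\sharp,t)$ from the preceding paragraph supplies precisely the details the paper leaves implicit.
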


For any extension $L/K$ inside $\algc{K}$ we let
$$
\Sel_{\Sigma_W}(L,A)\coloneqq\varinjlim_{k}\ker \HF^1(\Gal_L,A(\algc{K})_{\ell^k})\mto \bigoplus_{v\in W^0_L}\HF^1(\decomp_v,A(\algc{K}))
$$
be the $\Sigma_W$-truncated Selmer group of $A$ over $L$. Here, $\decomp_v$ denotes a choice of a decomposition subgroup of $v$ inside $\Gal_L$ as at the beginning of Section~\ref{sec:Selmer Complexes}.

\begin{lem}
For every admissible $\ell$-adic Lie extension $K_\infty/K$ with $\ell\neq p$ we have
$$
\Sel_{\Sigma_W}(K_\infty,A)\isomorph\Sel_{\Sigma_W}(K_\infty,\dual{\Tate_{\ell}(\check{A})}(1))
$$
\end{lem}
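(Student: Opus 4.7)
The plan is to reduce to a place-by-place comparison of local conditions inside a common ambient group. The Weil pairing $\Tate_\ell(A) \tensor_{\Int_\ell} \Tate_\ell(\check A) \mto \Int_\ell(1)$ is perfect and $\Gal_K$-equivariant, so dualising yields a canonical isomorphism $\dual{\Tate_\ell(\check A)}(1) \isomorph A(\algc K)[\ell^\infty]$ of discrete $\Gal_K$-modules. Since filtered colimits commute both with continuous cohomology of discrete modules and with kernels, the direct limit in the definition of $\Sel_{\Sigma_W}(K_\infty,A)$ can be absorbed into the cohomology, giving
\[
 \Sel_{\Sigma_W}(K_\infty, A) = \ker\!\left(\HF^1(\Gal_{K_\infty}, A[\ell^\infty]) \mto \bigoplus_{w \in W^0_{K_\infty}} \HF^1(\decomp_w, A(\algc K))[\ell^\infty]\right).
\]
After the Weil pairing identification both Selmer groups are kernels of maps out of $\HF^1(\Gal_{K_\infty}, A[\ell^\infty])$, so it suffices to check at each closed point $w$ of $W_{K_\infty}$ that
\[
 \ker\bigl(\HF^1(\decomp_w, A[\ell^\infty]) \mto \HF^1(\decomp_w, A(\algc K))[\ell^\infty]\bigr) = \ker\bigl(\HF^1(\decomp_w, A[\ell^\infty]) \mto \HF^1(\inertia_w, A[\ell^\infty])\bigr).
\]

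The Kummer sequences for $A$ over $K_{\infty,w}$ and over $K_{\infty,w}^{\mathrm{ur}}$ give compatible short exact sequences
\[
 0 \mto A(K_{\infty,w}) \tensor \Rat_\ell/\Int_\ell \mto \HF^1(\decomp_w, A[\ell^\infty]) \mto \HF^1(\decomp_w, A(\algc K))[\ell^\infty] \mto 0
\]
and its analogue with $\inertia_w$ in place of $\decomp_w$. Since $\ell \neq p$, the group $A(K_{\infty,w}^{\mathrm{ur}})$ satisfies $A(K_{\infty,w}^{\mathrm{ur}}) \tensor \Rat_\ell/\Int_\ell = 0$: the formal group is pro-$p$, the connected N\'eron component is divisible over the algebraically closed residue field, and the component group is finite (and $F \tensor \Rat_\ell/\Int_\ell = 0$ for any finite abelian $F$). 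Consequently the inertia Kummer sequence reduces to an isomorphism $\HF^1(\inertia_w, A[\ell^\infty]) \isomorph \HF^1(\inertia_w, A(\algc K))[\ell^\infty]$. The forward inclusion of kernels is then immediate: any class vanishing in $\HF^1(\decomp_w, A(\algc K))$ certainly restricts to zero on inertia, and the displayed isomorphism upgrades this to vanishing in $\HF^1(\inertia_w, A[\ell^\infty])$.

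For the reverse inclusion I would invoke the hypothesis $K_\cyc \subset K_\infty$: the residue field of $K_\infty$ at $w$ contains $\FF_\cyc$, so the pro-cyclic quotient $\decomp_w/\inertia_w$ has pro-order prime to $\ell$. Hence $\HF^i(\decomp_w/\inertia_w, M)[\ell^\infty] = 0$ for every discrete module $M$ and every $i>0$, and in particular the inflation term in the Hochschild--Serre spectral sequence for $\decomp_w \twoheadrightarrow \decomp_w/\inertia_w$ acting on $A(\algc K)$ vanishes. Thus a class in $\HF^1(\decomp_w, A(\algc K))[\ell^\infty]$ restricting to zero on inertia is itself zero, so a class in $\HF^1(\decomp_w, A[\ell^\infty])$ dying in $\HF^1(\inertia_w, A[\ell^\infty])$ already dies in $\HF^1(\decomp_w, A(\algc K))[\ell^\infty]$. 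The main obstacle is precisely this local comparison: both of its ingredients---the $\ell$-divisibility of $A(K_{\infty,w}^{\mathrm{ur}})$ and the vanishing of $\HF^i(\decomp_w/\inertia_w, -)[\ell^\infty]$---depend crucially on the standing hypotheses $\ell \neq p$ and $K_\cyc \subset K_\infty$, and the identification would fail without them.
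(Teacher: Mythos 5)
Your proof is correct and follows essentially the same route as the paper: identify $\dual{\Tate_{\ell}(\check{A})}(1)$ with $\varinjlim_k A(\algc{K})_{\ell^k}$ via the Weil pairing, kill the local Kummer image by $\ell$-divisibility of local points, and pass between the $\decomp_w$- and $\inertia_w$-conditions using that the unramified quotient $\decomp_w/\inertia_w$ has pro-order prime to $\ell$ because $K_\cyc\subset K_\infty$. The only real difference is where the divisibility is checked: you work over the maximal unramified extension via the N\'eron model (divisible identity component plus finite component group), whereas the paper works over the completions of finite subextensions, invoking Greenberg's approximation theorem and the fact that the pro-$p$ formal group is open in $A(L_v)$.
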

\begin{proof}
Let $L$ be an extension of $K$ and let $L_v$ be the completion of $L$ at $v\in W_L$. According to Greenberg's approximation theorem we have
$$
\HF^1(\decomp_v,A(\algc{K}))=\HF^1(\Gal_{L_v},A(\algc{L_v}))
$$
\cite[Rem.~I.3.10]{Milne:ADT} for all finite extensions $L/K$. Since the points of the formal group of $A$ form an open pro-$p$-subgroup of $A(L_v)$ we conclude from the Kummer sequence that
$$
\Sel_{\Sigma_W}(L,A)=\varinjlim_{k}\ker \HF^1(\Gal_L,A(\algc{K})_{\ell^k})\mto \bigoplus_{v\in W^0_L}\HF^1(\decomp_v,A(\algc{K})_{\ell^k})
$$
for all extensions $L/K$ inside $\algc{K}$.
If $\FF_\cyc\subset L$, then $\decomp_v/\inertia_v$ is a profinite group of order prime to $\ell$ and the Hochschild-Serre spectral sequence shows that
$$
\HF^1(\decomp_v,A(\algc{K})_{\ell^k})\mto \HF^1(\inertia_v,A(\algc{K})_{\ell^k})
$$
is an injection. Furthermore,
$$
\dual{\Tate_{\ell}(\check{A})}(1)=\varinjlim_{k} A(\algc{K})_{\ell^k}
$$
\cite[\S 1]{Schn:VBSDGFK} such that indeed $\Sel_{\Sigma_W}(K_\infty,A)\isomorph\Sel_{\Sigma_W}(K_\infty,\dual{\Tate_{\ell}(\check{A})}(1))$.
\end{proof}

In particular, we deduce the following function field analogue of the $\GL_2$ main conjecture of \cite{CFKSV} as a special case of Cor.~\ref{cor:mc for selmer groups}.

\begin{cor}
Let $\ell\neq p$, $K_\infty/K$ be an admissible $\ell$-adic Lie extension with Galois group $G$, and $A$ an abelian variety over $\Spec K$. We assume that $G$ does not contain any element of order $\ell$ and write $\Sigma_0$ for the set of points in $W$ in which $K_\infty/K$ has non-torsion ramification. Then $\dual{\Sel_{\Sigma_W}(K_\infty,A)}$ is in $\cat{N}_H(\Int_{\ell}[[G]])$ and
\begin{align*}
\bh \ncL_{K_\infty/K,\Sigma_W\cup\Sigma_0,\emptyset}(\Tate_\ell(\check{A}))&=-[\dual{\Sel_{\Sigma_W}(K_\infty,A)}]+[\Tate_\ell(\check{A})(-1)_{\Gal_{K_\infty}}]\\
      &+
      \begin{cases}
      [\Tate_\ell(\check{A})^{\Gal_{K_\infty}}]&\text{if $\Sigma_W=\emptyset$ and $H$ is finite,}\\
      0&\text{else.}
      \end{cases}
\end{align*}
in $\KTh_0(\Int_{\ell}[[G]],S)$.
\end{cor}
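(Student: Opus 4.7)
The plan is to deduce this statement directly from Cor.~\ref{cor:mc for selmer groups} applied to the finitely ramified $\Gal_K$-representation $T\coloneqq\Tate_\ell(\check{A})$ over $R=\Int_\ell$. The three hypotheses of that corollary are already in place: by assumption $G$ has no element of order $\ell$, $\Sigma_0$ is defined exactly as the set of points where $K_\infty/K$ has non-torsion ramification, and since $\Int_\ell$ is noetherian of global dimension $1\leq 2$, Example~\ref{exmpl:ramification types}(4) guarantees that $\Tate_\ell(\check A)$ has projective stalks over $K_\infty$ in every closed point of $C$, and \emph{a fortiori} in every closed point of $W-\Sigma_0$.

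The only remaining identification is that
\[
\X_{K_\infty/K,\Sigma_W}(\Tate_\ell(\check A))=\dual{\Sel_{\Sigma_W}(K_\infty,A)}.
\]
By definition, $\X_{K_\infty/K,\Sigma_W}(T)=\dual{\Sel_{\Sigma_W}(K_\infty,\dual{T}(1))}$, so this is precisely the content of the lemma immediately preceding the corollary, which exhibits an isomorphism $\Sel_{\Sigma_W}(K_\infty,A)\isomorph \Sel_{\Sigma_W}(K_\infty,\dual{\Tate_\ell(\check A)}(1))$.

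With these identifications, Cor.~\ref{cor:mc for selmer groups}(1) yields that $\dual{\Sel_{\Sigma_W}(K_\infty,A)}$ lies in $\cat{N}_H(\Int_\ell[[G]])$, while Cor.~\ref{cor:mc for selmer groups}(2) gives the displayed equality in $\KTh_0(\Int_\ell[[G]],S)$, with the case distinction inherited verbatim from there. There is no real obstacle: the statement is a formal specialisation, and the only substantive input beyond Cor.~\ref{cor:mc for selmer groups} is the Selmer group comparison already proved in the preceding lemma. Note in particular that the mild restriction ``$H$ finite'' in the trivial-invariants case is harmless because $\Tate_\ell(\check A)^{\Gal_{K_\infty}}$ enters the formula only in that situation.
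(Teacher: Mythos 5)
Your proposal is correct and is exactly the argument the paper intends: the corollary is presented as an immediate specialisation of Cor.~\ref{cor:mc for selmer groups} to $T=\Tate_\ell(\check{A})$ over $R=\Int_\ell$, with the preceding lemma supplying the identification $\X_{K_\infty/K,\Sigma_W}(\Tate_\ell(\check{A}))=\dual{\Sel_{\Sigma_W}(K_\infty,A)}$ and Example~\ref{exmpl:ramification types} (global dimension of $\Int_\ell$ at most $2$) supplying the projective-stalks hypothesis.
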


The terms $[\Tate_\ell(\check{A})(-1)_{\Gal_{K_\infty}}]$ and  $[\Tate_\ell(\check{A})^{\Gal_{K_\infty}}]$ disappear in the following situation. Recall that by an old result of Grothendieck \cite[Thm. 1.1]{Oort:TheIsogenyClass}, an abelian variety over $K$ is of $CM$-type over a $\algc{K}$ if and only if it is isogenous to an abelian variety over a finite field. Moreover, this is the case if  the image of $\Gal_{K_\cyc}$ in the automorphism group of $\Tate_\ell(A)$ is finite \cite[Last step]{Oort:TheIsogenyClass}.

\begin{prop}\label{prop:vanishing of the tate module class}
Let $A$ be an abelian variety over $\Spec K$ of dimension $g\geq 1$ which is not of $CM$-type over $\algc{K}$. Let $K_\infty$ be the extension of $K$ obtained by adjoining the coordinates of all $\ell^n$-torsion points of $A$. If $\ell>8g^2-1$, then $K_\infty/K$ is an admissible $\ell$-adic Lie extension, $\Gal(K_\infty/K)$ does not contain any element of order $\ell$ and
$$
\bh \ncL_{K_\infty/K,\Sigma_W,\emptyset}(\Tate_\ell(\check{A}))=-[\dual{\Sel_{\Sigma_W}(K_\infty,A)}].
$$
in $\KTh_0(\Int_{\ell}[[\Gal(K_\infty/K)]],S)$.
\end{prop}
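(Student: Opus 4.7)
The first step is to verify that $K_\infty/K$ is an admissible $\ell$-adic Lie extension. The $\ell$-adic representation $\rho\colon\Gal_K\mto\Aut_{\Int_\ell}(\Tate_\ell A)\isomorph\GL_{2g}(\Int_\ell)$ factors through a continuous injection $G\hookrightarrow\GL_{2g}(\Int_\ell)$, making $G$ an $\ell$-adic Lie group. By the N\'eron-Ogg-Shafarevich criterion $\rho$ is unramified outside the finite set of places of bad reduction of $A$. The Weil pairing yields $\wedge^{2g}\Tate_\ell A\isomorph\Int_\ell(g)$ of $\Gal_K$-modules, so $\cycchar$ factors through $G$, forcing $K(\mu_{\ell^\infty})\subset K_\infty$; since the $\Int_\ell$-extension $\FF_\cyc$ sits inside $\FF(\mu_{\ell^\infty})$, this gives $K_\cyc\subset K_\infty$. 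That $G$ has no element of order $\ell$ is immediate: such an element of $\GL_{2g}(\Int_\ell)$ would have a primitive $\ell$-th root of unity as an eigenvalue, forcing $\ell-1\leq 2g$, contradicting $\ell>8g^2-1\geq 2g+1$.

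Next I would reduce the main equality to the vanishing of two $\KTh_0$-classes. Since $K_\infty$ contains both $K(A[\ell^\infty])$ and $K(\mu_{\ell^\infty})$, the subgroup $\Gal_{K_\infty}$ acts trivially on every Tate twist $\Tate_\ell(\check A)(n)$, so the coinvariants (and invariants) equal the module itself. Combining Cor.~\ref{cor:MCforAbVars}, the Selmer-group reformulation Cor.~\ref{cor:mc for selmer groups}, and Lem.~\ref{lem:perfectness} (which identifies $\ncL_{K_\infty/K,\Sigma_W,\emptyset}$ with $\ncL_{K_\infty/K,\Sigma_W\cup\Sigma_0,\emptyset}$), the statement reduces to
\[
[\Tate_\ell\check A]=[\Tate_\ell\check A(-1)]=0\quad\text{in}\quad\KTh_0(\Int_\ell[[G]],S).
\]
Since $A$ is non-CM, the group $H$ is infinite by the Oort/Grothendieck characterisation quoted above, so the special $[T^{\Gal_{K_\infty}}]$-term of Cor.~\ref{cor:mc for selmer groups} does not occur. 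For the vanishing I would apply Cor.~\ref{cor:vanishing of small modules II}. Its condition~(3), $\ell-1>2\dim_{\Rat_\ell}L(H)$, follows from $\dim L(H)\leq\dim L(G)-1\leq 4g^2-1$ and the hypothesis $\ell>8g^2-1$, which gives $\ell-1\geq 8g^2-1>8g^2-2\geq 2\dim L(H)$. Condition~(1), the non-virtual-solvability of $H$, follows from the non-CM assumption via Zarhin's theorem on the Galois image of abelian varieties over function fields of positive characteristic not isogenous to a constant one, combined with the observation that $L(G)$ is non-solvable if and only if $L(H)$ is, since $L(G)/L(H)\isomorph L(\Gamma)$ is one-dimensional abelian.

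The main obstacle is verifying condition~(2) of Cor.~\ref{cor:vanishing of small modules II}: the existence of a one-dimensional ideal $V\subset L(G)$ complementary to $L(H)$. Such a $V$ must be central in $L(G)$, and its existence amounts to producing a $\Int_\ell$-subgroup of $G$ that commutes with an open subgroup of $H$ and projects onto an open subgroup of $\Gamma$. For non-CM $A$ and $\ell$ sufficiently large, the image $\rho(G)$ contains an open subgroup of the scalar matrices $\Int_\ell^\times\cdot I\subset\GL_{2g}(\Int_\ell)$; this is a generalisation of Serre's open image theorem to higher-dimensional non-CM abelian varieties over function fields (due to Serre, Pink, and Larsen--Pink). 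These scalars are central in $\GL_{2g}$ and project non-trivially onto $L(\Gamma)$ via $\cycchar$, supplying the required $V$. With all three conditions verified, Cor.~\ref{cor:vanishing of small modules II} gives the vanishing of the two classes, and Cor.~\ref{cor:MCforAbVars} then delivers the stated identity.
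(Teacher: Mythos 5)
Your verification of admissibility, of the absence of elements of order $\ell$, the reduction via Cor.~\ref{cor:MCforAbVars} and Cor.~\ref{cor:mc for selmer groups} to the vanishing of $[\Tate_\ell(\check{A})(-1)_{\Gal_{K_\infty}}]$, and your checks of conditions (1) and (3) of Cor.~\ref{cor:vanishing of small modules II} all match the paper's argument. The gap is in your treatment of condition (2). You propose to produce the complementary central ideal $V\subset L(G)$ by asserting that for non-CM $A$ and large $\ell$ the image of $\Gal_K$ in $\Aut_{\Int_\ell}(\Tate_\ell(\check A))$ contains an open subgroup of the scalar matrices $\Int_\ell^\times\cdot\id$. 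Over a function field this is false: Zarhin has constructed, for every odd $g>1$, abelian varieties of dimension $g$ that are not of $CM$-type over $\algc{K}$ and for which $G$ has \emph{finite} intersection with $\Int_\ell^\times\cdot\id$, independently of $\ell$ (this is exactly the point of the remark following the proposition in the paper, contrasting with the number field situation and with the criterion of Fukaya--Kato). So the open-image-of-homotheties statement you invoke, attributed to Serre/Pink/Larsen--Pink, is not available here, and your construction of $V$ collapses.

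The correct route, which is the one the paper takes, does not need the scalars to lie in the image at all: Zarhin's structure theorem for the Galois image of a non-constant abelian variety over a function field gives directly that $L(G)=\mathfrak{g}^0\times\mathfrak{c}$ with $\mathfrak{g}^0$ semi-simple of dimension at most $4g^2-1$ and $\mathfrak{c}$ commutative of dimension $1$. Since $K$ (and any finite extension of it) has only one $\Int_\ell$-extension, one identifies $\mathfrak{g}^0=L(H)$, and $V=\mathfrak{c}$ is then the required ideal, whether or not it is realised by homotheties inside $\GL_{2g}(\Int_\ell)$. With that substitution (and noting that non-triviality of $\mathfrak{g}^0$, hence non-virtual-solvability of $H$, follows from the non-CM hypothesis via the Oort--Grothendieck criterion), the rest of your argument goes through.
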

\begin{proof}
It is well known that $\Gal(K_\infty/K)$ is the image of $\Gal_K$ in $\Aut_{\Int_{\ell}}(\Tate_\ell(\check{A}))$, that $\Tate_\ell(\check{A})$ is a free $\Int_{\ell}$-module of rank $2g$, and that $\Gal_K$ acts on the determinant of $\Tate_\ell(\check{A})$ via the cyclotomic character $\cycchar$. This shows that $K_\infty/K$ is an admissible $\ell$-adic Lie extension. Since $\ell-1>2g$, the group $\Aut_{\Int_{\ell}}(\Tate_\ell(\check{A}))$ does not contain any element of order $\ell$. By a result of Zarhin \cite[\S 4]{Zarhin:TorsionAbVars}, \cite[\S 6]{Zarhin:AbVarsOverFiniteChar}, the Lie algebra $L(G)$ of $G$ is the direct product
\[
L(G)=\mathfrak{g}^0\times \mathfrak{c}
\]
of a semi-simple Lie algebra $\mathfrak{g}^0$ of dimension less or equal to $4g^2-1$ over $\Rat_{\ell}$ and a commutative Lie algebra $\mathfrak{c}$ of dimension $1$. Since any finite extension of $K$ has only one $\Int_\ell$-extension, $\mathfrak{g}^0$ necessarily coincides with $L(H)$. Since $A$ is not of $CM$-type over $\algc{K}$, $H$ is not finite and hence, $L(H)$ is non-trivial. In particular,
$$
[\Tate_{\ell}(\check{A}(-1))]=0
$$
in $\KTh_0(\Int_{\ell}[[\Gal(K_\infty/K)]],S)$ by Cor.~\ref{cor:vanishing of small modules II}.
\end{proof}

\begin{rem}
With $K_\infty$ as in Prop.~\ref{prop:vanishing of the tate module class}, assume that $\ell>2g-1$, such that $G$ has no elements of order $\ell$.
\begin{enumerate}
\item By the above result of Zarhin, one can always find a finite extension $K'/K$ inside $K_\infty/K$ such that
\[
\Gal(K_\infty/K')=\Gal(K_\infty/K'_\cyc)\times\Gal(K'_\cyc/K').
\]
Hence,
$$
[\Tate_{\ell}(\check{A}(-1))]=0
$$
in $\KTh_0(\Int_{\ell}[[\Gal(K_\infty/K')]],S)$ by Prop.~\ref{prop:vanishing of small modules} applied to $N=\Gal(K'_\cyc/K')$.
\item One may also try to apply the criterion of \cite[Prop. 4.3.17]{FK:CNCIT} to the module $\Tate_{\ell}(\check{A}(-1))$. However, one of the requirements is that $G$ has infinite intersection with the subgroup
\[
\Int_{\ell}^\times\id\subset\Aut_{\Int_{\ell}}(\Tate_\ell(\check{A})).
\]
Different from the number field case, this condition is not always satisfied for abelian varieties over $K$. Zarhin constructs in \cite{Zarhin:AbVarsWOHomoth} for every odd $g>1$ examples of abelian varieties of dimension $g$ which are not of $CM$-type and such that $G$ has finite intersection with $\Int_{\ell}^\times\id$ independent of the choice of $\ell$.
\item If $g=1$, then one can always take $K'=K$. Indeed, $\Gal(K_\infty/K)$ must be open in $\Aut_{\Int_{\ell}}(\Tate_\ell(\check{A}))=\GL_2(\Int_\ell)$ and the intersection of $\Gal(K_\infty/K_\cyc )$ with $\SL_2(\Int_\ell)$ is open in $\SL_2(\Int_\ell)$. Otherwise, $\Gal(K_\infty/K)$ would contain a commutative open subgroup by the above result of Zarhin, which is not possible since $A$ is not of $CM$-type over $\algc{K}$ (This was also observed in the thesis \cite{Sechi:IMCoverGlobalFunctionFields}, using a different argument). By the assumption on $\ell$ we may write
$$
\GL_2(\Int_\ell)=H'\times \Int_\ell
$$
with $H'$ not virtually solvable. So we may apply Prop.~\ref{prop:vanishing of small modules}.
\end{enumerate}
\end{rem}

\bibliographystyle{amsalpha}
\bibliography{Literature}
\end{document}